   \newtheorem{theorem}[subsubsection]{Theorem}
      \newtheorem*{theorem*}{Theorem}
   \newtheorem{proposition}[subsubsection]{Proposition}
   \newtheorem{lemma}[subsubsection]{Lemma}
   \newtheorem{corollary}[subsubsection]{Corollary}
   \newtheorem*{conjecture*}{Conjecture}
\theoremstyle{definition}
          \newtheorem*{exercise*}{Exercise}
   \newtheorem{example}[subsubsection]{Example}
   \newtheorem*{example*}{Example}
   \newtheorem{definition}[subsubsection]{Definition}
   \newtheorem*{definition*}{Definition}
   \newtheorem{remark}[subsubsection]{Remark}
\newcommand{\QQ}{{\mathbb{Q}}}
\newcommand{\NN}{{\mathbb{N}}}
\newcommand{\ZZ}{{\mathbb{Z}}}
\renewcommand{\AA}{{\mathbb{A}}}
\newcommand{\ce}{{\operatorname{Z}}}
\newcommand{\bA}{{\mathbf{A}}}
\newcommand{\cI}{{\mathcal I}}
\newcommand{\cJ}{{\mathcal J}}
\newcommand{\cK}{{\mathcal K}}
\renewcommand{\cL}{{\mathcal L}}
\newcommand{\cM}{{\mathcal M}}
\newcommand{\cO}{{\mathcal O}}
\def\<{\langle}
\def\>{\rangle}
\newcommand{\Ver}{\operatorname{Vert}}
\newcommand{\Link}{\operatorname{Link}}
\newcommand{\inte}{{\operatorname{int}}}
\newcommand{\integ}{{\operatorname{integ}}}
\newcommand{\Reg}{\operatorname{Reg}}
\newcommand{\strat}{\operatorname{strat}}
\newcommand{\val}{\operatorname{val}}
\newcommand{\bl}{\operatorname{bl}}
\newcommand{\Spec}{\operatorname{Spec}}
\newcommand{\Proj}{\operatorname{Proj}}
\newcommand{\Sing}{{\operatorname{Sing}}}
\newcommand{\Star}{\operatorname{Star}}
\newcommand{\Aut}{{\operatorname{Aut}}}
\newcommand{\spa}{{\operatorname{span}}}
\newcommand{\codim}{\operatorname{codim}}
\newcommand{\rank}{{\operatorname{rank}}}
\newcommand{\inn}{{\operatorname{in}}}
\newcommand{\sing}{{\operatorname{sing}}}
\def\:{{\colon}}
\def\.{{,\dots,}}
\def\dim{{\rm dim}}
\newcommand{\double}{\genfrac..{0pt}1
{\raise -1pt\hbox{$\scriptstyle\longrightarrow$}}{\raise 3pt\hbox
{$\scriptstyle\longrightarrow$}}}
\renewcommand{\setminus}{\smallsetminus}
\def\sat{{\rm sat}}
\def\et{_{\rm \acute et}}
\def\tototi{\mathbin{\mathop{\otimes}\limits^{\raise-1pt\hbox
{$\scriptscriptstyle {\rm L}$}}}}
\def\indlim{\mathop{\vrule width0pt height7pt depth
4pt\smash{\lim\limits_{\raise 1pt\hbox to 14.5pt
{\rightarrowfill}}}}}
\def\projlim{\mathop{\vrule width0pt height7pt depth
4pt\smash{\lim\limits_{\raise 1pt\hbox to 14.5pt
{\leftarrowfill}}}}}
\newcommand\displaceamount{3pt}
\newcommand{\doubledown}{\ar@<\displaceamount>[d]\ar@<-\displaceamount>[d]}
\newcommand{\doubleup}{\ar@<\displaceamount>[u]\ar@<-\displaceamount>[u]}
\newcommand{\doubleright}{\ar@<\displaceamount>[r]\ar@<-\displaceamount>[r]}
\newcommand{\tor}{{\operatorname{tor}}}
\newcommand{\exc}{{\operatorname{exc}}}
\newcommand{\Cart}{{\operatorname{Cart}}}
\def\reg{{\operatorname{reg}}}
\begin{document}
\title[Functorial  resolution except for log smooth locus.]{Functorial  resolution except  for toroidal locus. Toroidal compactification}

\author[J. W{\l}odarczyk] {Jaros{\l}aw W{\l}odarczyk}
\address{Department of Mathematics, Purdue University\\
150 N. University Street,\\ West Lafayette, IN 47907-2067}
\thanks{This research is supported by  BSF grant 2014365}

\date{\today}
\begin{abstract}
Let $X$ be any variety in characteristic zero. Let $V \subset X$ be an open subset that has toroidal singularities. We show the existence of a canonical desingularization of $X$ except for V.  It is a morphism  $f: Y \to X$ , which  does not modify the subset $ V $ and transforms  $X$ into a toroidal embedding $Y$, with singularities extending those on $V$. Moreover, the exceptional divisor has simple normal crossings on $Y$. 

The theorem naturally generalizes the Hironaka canonical desingularization.  It  does not modify the nonsingular locus $V$ and transforms $X$ into a nonsingular variety $Y$.

The proof uses, in particular, the canonical desingularization of logarithmic varieties recently proved by Abramovich -Temkin-W\l odarczyk.  It also  relies on  the established  here canonical functorial desingularization of locally toric varieties with an unmodified open toroidal subset.
As an application, we show the existence of a toroidal equisingular compactification of toroidal varieties.
All the results here can be linked  to  a simple functorial combinatorial desingularization algorithm developed in this paper.

  \end{abstract}
\maketitle
\setcounter{tocdepth}{1}

\tableofcontents


 \section{Introduction} 
 
 A question of   good compactifications of algebraic varieties is of fundamental nature. It was studied in  different contexts, by Nagata \cite{Nagata}, Satake \cite{Satake}, Baily-Borel \cite{BB}, Mumford \cite{AMRT}, Sumihiro \cite{Sumihiro} and many others.  
 In a smooth case or the case of isolated singularities it is certainly possible, using Nagata imbedding and Hironaka desingularization, to compactify the variety $X\subset \overline{X}$, such that $\overline{X}$ is smooth outside of the singularities on $X$, and the complement $\overline{X}\setminus X$ is a simple normal crossing divisor. 
 (\cite{Hironaka},\cite{Bierstone-Milman},\cite{Villamayor},\cite{Wlodarczyk},\cite{Kollar})
If $X$ admits some mild non-isolated singularities, we still would like to   have a good compactification with the boundary divisor having  simple intersections.

 The problem  is directly related to the existence of  good or partial resolutions of singularities. 
  The question of good desingularization was studied in the context of log resolution, in particular,  by Szab\'o \cite{Szabo}, K\'ollar in \cite{Kollar2}, and Bierstone-Milman \cite{B-M-except-I}. On the other hand in the papers  of Gonzalez-Perez-Teissier \cite{PT}, \, Tevelev \cite{Tevelev}, \, K\'ollar \cite{Kollar-toroidal} ,  different versions of good combinatorial partial desingularization were introduced.

 
 In this  paper we consider a problem  of a good partial  resolution, which does not modify  a given open  subset with a certain type of singularities, and no new type of singularities is introduced. Moreover 
 the boundary divisor have some simple intersections. 
 This type of partial resolution can be used, in particular for constructing good compactifications.
 

 The problem generalizes the Hironaka  desingularization
 theorem which does not modify the smooth locus of the scheme and the exceptional locus  is an SNC divisor.
 
 A particular question of the existence of  partial desingularization except for  normal crossing (NC)  locus was posed by K\'ollar in \cite{Kollar2}.

 Some results in this direction were proven by Bierstone-Milman in \cite{B-M-except-I}. They show that such a (partial) resolution except for   SNC  locus exists for any reduced and reducible scheme of finite type over a field of characteristic zero. Moreover, the SNC locus on the resolved (reducible) variety is the closure of the SNC locus on the given variety.
  
  They also observed in the example of the "pinch point" or "Whitney umbrella", that the partial resolution except for NC locus, should allow some
 more general singularities.

In their paper(s)  \cite{B-M-except-I}, and \cite{B-M-except-II} (jointly with Lairez) they  give  a complete list of possible singularities in a low dimension  and a low codimension- "more general pinch points" which need to be allowed  to resolve the schemes except for NC locus. 
  
  The present paper addresses the problem of good resolution in a  more general situation where the unmodified set is defined by a strict toroidal embedding.
This, in particular excludes the case of NC locus, which cannot be resolved. 
 One shall mention, that  Bierstone and Milman in their approach use their singularity invariant
 and alter some steps of their proof of Hironaka desingularization to prevent modification of  the NC and SNC locus. 
  Unfortunately, in general, the Bierstone-Milman invariant gives a very little geometric information, as it is specifically designed for the inductive structure of the resolution process.

 An alternative resolution tool giving a more  precise, more geometric and more efficient  control over the resolution process  was introduced by Mumford and others in \cite{KKMS}. They consider the language of  toroidal embeddings, which allows to translate the resolution problems into the  language of simpler combinatorial objects - conical complexes.  The method was initially introduced in \cite{KKMS} to solve  the problem of semistable reduction of a dominant morphism to a curve, and proved successful for solving many fundamental problems in  birational geometry, weak semistable reduction \cite{AK}, weak factorization theorem\cite{Wlodarczyk-toroidal},\cite{AKMW},\cite{Wlodarczyk-toroidal2}, 
  and many others.
The language of toroidal embeddings was also used by Mumford and his collaborators in \cite{AMRT},  to construct toroidal compactifications of locally symmetric varieties. Toroidal compactifications have many nice properties properties, and posses   simple, easy to describe singularities. They found many applications in the theory of Shimura Varieties.

 One of the technical novelties of the paper is a simple functorial combinatorial desingularization which can potentially lead to many new applications using combinatorial methods (Theorem \ref{can des}). In fact, all the results here can be linked to versions of this algorithm.
 
 The main disadvantage of the toroidal resolution method  is that it can be applied to very special toroidal singularities defined by the binomial equations. 
 In practice, it means that  to apply the method in general situation one needs to transform singularities  to  toroidal ones first.

 The structures on  toroidal embeddings defined by the divisor or, equivalently, monomials were further generalized in the language of Fontaine-Illusie logarithmic schemes founded in the papers of Kato \cite{Kato-log}.  This gives a more general viewpoint, where toroidal embeddings are called logarithmically smooth as they form a class of objects similar 
 to the smooth  varieties in the category of   reduced schemes of finite type over the field. Similarly to 
 smooth case the logarithmically smooth varieties have a relatively simple structure of the completions of local rings. They are generated by free parameters and an algebraically independent monomial part which forms a monoid. Moreover, similar to the smooth case,  the module of the logarithmic differentials is free of the rank equal to the dimension of the ring.
 The latter is the direct sum of the free parameters part and a free monomial part corresponding to the groupification of the monoid.

In the recent papers by Abramovich-Temkin-W\l odarczyk \cite{ATW-principalization}, \cite{ATW-toroidal} the authors prove 
 the canonical desingularization of logarithmic varieties.  The resolution is functorial with respect  to arbitrary logarithmically smooth morphisms. 
  The resulting resolved object is, as  dictated by 
  the strong functoriality properties,  a quasi-log smooth variety as in \cite{ATW-toroidal}, or a toroidal orbifold with a locally toric coarse moduli space, as in \cite{ATW-principalization}. The result is thus a counterpart of the Hironaka desingularization  in the logarithmic category. The  functoriality properties imply that the 
  log-smooth (toroidal) locus is unmodified in the process. Quasi log smooth varieties are log smooth in, so-called, Kummer \'etale topology (which allows to extract roots from the monomials). They are  locally toric varieties which are  finite toric quotients of log smooth varieties (toroidal embeddings). In particular, their logarithmic structure is defined by a smooth open subset which is the complement of a certain locally toric divisor.

 In the paper, we give a proof of 
 the canonical (partial) desingularizations of varieties with unmodified  an open toroidal subset: 
 

  We obtain several results in this direction.
 
 We prove a canonical desingularization of locally binomial varieties, and toroidal embeddings Theorems \ref{th: resolution4}, \ref{des toroidal}. Any locally binomial  variety or a toroidal embedding over a field of any characteristic can be resolved canonically by a projective morphism from a smooth variety such that the exceptional divisor is SNC. 
 
 \begin{theorem} \label{th: resolution A} Let $X$ be
 any \'etale locally binomial variety or a toroidal embedding  over a  field $K$ of any characteristic.  There exists a canonical resolution of singularities i.e. a birational projective $f: Y\to X$ such that
 \begin{enumerate}
 \item $Y$ is smooth over $K$.
 \item $f$ is an isomorphism over the open set of the  nonsingular points.
 
 \item The inverse image $f^{-1}(\Sing(X)$ of the singular locus $\Sing(X)$ is a  simple normal crossing  divisor on $Y$.
 \item If $(X,D_X)$ is toroidal then $(Y,D_Y)$ is strictly toroidal with $D_Y$ having SNC. Moreover the birational morphism $f:(Y,D_Y)\to (X,D_X)$ is  toroidal. 
 
 \item $f$ is a composition of the normalization and the normalized blow-ups of the locally monomial filtered centers $\{\cJ_{in}\}_{n\in \NN}$ \footnote{Definition \ref{filtered}} 
 defined locally by  valuations.

 \item
 $f$ commutes with smooth morphisms and field extensions, in the sense that the centers are transformed functorially, and the trivial blow-ups are omitted.

 \end{enumerate}

\end{theorem}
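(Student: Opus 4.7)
The plan is to reduce the assertion to a canonical combinatorial desingularization of the associated polyhedral cone complex. If $X$ is étale locally binomial, I first take the normalization $\nu: X' \to X$. Since étale locally $X \simeq \Spec K[M]/I$ for a binomial ideal $I$ in a finitely generated monoid algebra, its normalization is étale locally a disjoint union of affine normal toric varieties; hence $(X', D)$ is naturally a strict toroidal embedding. Normalization is canonical and compatible with smooth morphisms and field extensions, so it fits property (6), and it is the first operation in the composition (5). If $X$ was already toroidal, $\nu$ is the identity.

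Next, to the strict toroidal embedding $(X', D)$ I associate its Kempf--Knudsen--Mumford--Saint-Donat conical polyhedral complex $\Sigma$ with its integral structure. Desingularizing $X'$ while keeping $D$ SNC is equivalent to producing a regular (smooth) subdivision of $\Sigma$. I would apply the canonical combinatorial desingularization algorithm (Theorem \ref{can des}), which constructs such a regular subdivision as an iterated sequence of star subdivisions centered at canonically determined subcones, each specified by an intrinsically defined lattice valuation. Geometrically, each star subdivision corresponds to the normalized blow-up of a locally monomial ideal on the toroidal embedding, and the resulting composition yields the projective birational morphism $f: Y \to X$.

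Verifying (1)--(4) is then mechanical: smoothness of $Y$ and the SNC property of $D_Y$ come from regularity of the final fan; the identity over the smooth locus of $X$ follows because the algorithm leaves already regular cones untouched, and the smooth locus of a toroidal embedding is precisely the locus of regular cones; toroidality of $f$ is automatic from the construction, since every subdivision of $\Sigma$ corresponds to a toroidal birational morphism. Property (5) is the dictionary between combinatorial subdivision and normalized blow-ups of locally monomial centers, and (6) follows from the intrinsic, valuation-theoretic characterization of the combinatorial centers.

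The main obstacle lies in Theorem \ref{can des} itself: showing that one can canonically and functorially resolve a conical polyhedral complex. Classical fan desingularization (e.g., repeated star subdivision at the sum of primitive rays in a maximal cone) depends on a choice of cone or ray ordering, which is incompatible with arbitrary automorphisms of $\Sigma$ and with pullback along morphisms of cone complexes that identify or permute strata. The crux is to define the center at each step purely in terms of intrinsic lattice-theoretic invariants of the cones, most naturally via an order function in the spirit of the Hironaka and Abramovich--Temkin--W{\l}odarczyk framework, transported to the combinatorial setting.
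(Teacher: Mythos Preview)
Your approach is correct for the toroidal case and matches the paper's Theorem~\ref{th: resolution} (strict case) and Theorem~\ref{des toroidal} (general toroidal case via \'etale descent). The gap is in the locally binomial case.

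You write that after normalization ``$(X',D)$ is naturally a strict toroidal embedding.'' This is the step that fails. The normalization of an \'etale locally binomial variety is \'etale locally toric, but an \'etale locally toric variety is \emph{not} in general a toroidal embedding: there is no global divisor $D$ (equivalently, no open torus-like subset $U$) making the local toric charts compatible. Different charts $U_x \to X_{\sigma}$ and $U_{x'} \to X_{\sigma'}$ at nearby points need not induce the same divisorial stratification, and in fact the local cone $\sigma$ is only determined up to the automorphism group $\Aut(\sigma)$ (Demushkin's Theorem~\ref{th: Dem}). Consequently there is no KKMS complex $\Sigma$ to which one can apply Theorem~\ref{can des} directly; the object that replaces it is a \emph{semicomplex}, in which face inclusions are only defined up to $\Aut(\sigma)$.

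The paper handles this via the theory of stratified toroidal varieties (Chapter~\ref{stratified}): one equips $X'$ with the canonical stratification $\Sing(X')$ by singularity type, associates to it a semicomplex $\Sigma$ (Lemma~\ref{le: associated semicomplex}), and then runs the combinatorial algorithm of Theorem~\ref{can des} on the disjoint union $\overline{\Sigma}$ of the fans $\overline{\sigma}$. The nontrivial point is that the resulting local subdivisions $\Delta^\sigma$ glue to a global birational modification. This requires two ingredients beyond functoriality for local isomorphisms: the centers must be invariant under the full proalgebraic group $G_\sigma = \Aut(\widetilde{X}_\sigma)$ of stratum-preserving automorphisms of the completed local ring (not just the lattice automorphisms $\Aut(\sigma)$), and one must invoke Theorem~\ref{th: modifications} to pass from a canonical subdivision of the semicomplex to an actual toroidal modification of $X'$. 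The $G_\sigma$-invariance of the centers is the content of Lemma~\ref{minimal vectors} and its corollaries, and is the reason the paper insists on centers built from minimal internal vectors and canonical barycenters rather than, say, sums of primitive ray generators. Your last paragraph correctly identifies that the centers must be intrinsic, but the obstruction is not merely combinatorial automorphisms of $\Sigma$; it is the much larger groups $G_\sigma$, and bridging this requires the Hironaka condition and the machinery of canonical subdivisions (Definition~\ref{de: canonical}).
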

Recall that the problem of a functorial canonical resolution of locally binomial or locally toric  varieties was open in positive characteristic. The existence of noncanonical  resolution of locally toric varieties preserving the nonsingular locus was proven over any algebraically closed fields in \cite[Theorem 8.3.2]{Wlodarczyk-toroidal}. 

The Hironaka approach of the  embedded resolution to this problem works well only in the case of toroidal embeddings but fails in the locally binomial or the locally toric situation. We show  in Example \ref{Hir} that so-called  maximal contact which constitutes a basis of the Hironaka method, does not exist even in the locally binomial situation. On the other hand, a version of the "combinatorial maximal contact"  considered for "combinatorial blow-ups",  was constructed by Bierstone- Milman in the case of toroidal embeddings or locally binomial varieties defined by the divisors(\cite{B-M-toric}, see also \cite{Blanco},\cite{Blanco2},\cite{BE}). Using the Hironaka method they prove the
case of embedded desingularization of  toroidal embeddings,
or locally binomial varieties defined by the divisors.

 By modifying the algorithm we  
 show the canonical partial desingularization  except for toroidal subset for  locally binomial varieties, toroidal embeddings, logarithmic varieties, and for any varieties with a Weil divisor: Theorems \ref{th: resolution4}, \ref{des toroidal2}, \ref{th: resolution5}, and \ref{th: resolution6}.
 
 \begin{theorem} \label{th: resolutionB} Let $X$ be a variety
 over a  field $K$ of characteristic zero, and $V\subset X$ be its open subset which is a strict toroidal embedding $(V,D_V)$, defined by the divisor $D_V$.
 Assume that the divisor 
 $D_V$  on $V$  defining the  toroidal structure  on $V$ has  locally ordered components.
  Assume furthemore that one of the following holds
 \begin{enumerate}
 \item	$X$ is a logarithmic variety and $char(K)=0$.
 \item $X$ is any variety with a Weil divisor $D$ and $char(K)=0$.
 \item $X$ is a locally toric variety with a locally toric divisor $D$.
 \item $(X,D)$ is a toroidal embedding.
 \end{enumerate}

 We assume that the divisor $D_V$ is the restriction 
 of the divisor $D$ on $X$, or, as in the case (1), defines the 
 restricted logarithmic structure.
  In the second case we assume that the strict toroidal embedding $(V,D_V)$ is extendable.
  \footnote{Definition  \ref{order}} 
    Then there exists a canonical  resolution of singularities of $X$ except for  $V$ i.e. a birational projective morphism $f: Y\to X$ such that
 \begin{enumerate}
 \item $f$ is an isomorphism over the open set $V$.
\item The variety $(Y, D_Y)$ is a strict toroidal embedding, where $D_Y:=\overline{D_{V,Y}}$ is the closure of the divisor $D_V$ in $Y$.
 Moreover, $(Y, D_Y)$  is the  saturation\footnote{Definition \ref{saturation}}
 of the toroidal subset $(V, D_V)$ in $Y$. (In particular, $(Y, D_Y)$  has the same singularities as $(V, D_V)$.)
 \item The exceptional divisor $E_{\exc}\subseteq E_{V, Y}$ has simple normal crossings (SNC)  with $D_Y$ \footnote{Definition \ref{nc}}.



 \item $f$ is a composition of a sequence of the blow-ups at  functorial centers.
 \item
 $f$ commutes with field extensions  and smooth morphisms  respecting  the  structure on $X$, the  subset $V$, and the order of the components of $D_V$.
 \item In particular, if $G$ is an algebraic group acting on $(X,\cM)$ and preserving the logarithmic structure $\cM$ and the subset $V$,  and the components of $D_V$ on (a $G$-stable) $V$ then the action of $G$ on $X$ lifts to $Y$, and $f:Y\to X$ is $G$-equivariant.
 \end{enumerate}



\end{theorem}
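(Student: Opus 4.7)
The plan is to unify the four cases via logarithmic geometry, apply the Abramovich--Temkin--W{\l}odarczyk canonical desingularization of logarithmic varieties to reduce to the locally toric setting, and then execute a modified combinatorial algorithm that uses the ordering of components of $D_V$ to ensure that $V$ is preserved throughout.

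First I would reduce cases (2), (3), (4) to case (1). A Weil divisor $D$ on $X$ carries a canonical Deligne--Faltings log structure; a locally toric divisor and a strict toroidal embedding each carry their natural log structure. In every case the resulting log structure on $X$ restricts to the toroidal log structure on $V$ determined by $D_V$. Under the extendability hypothesis, the saturation of $(V, D_V)$ inside $X$ (Definition \ref{saturation}) determines a compatible global log structure, so all four cases reduce to a uniform input: a logarithmic variety $(X, \cM_X)$ with a log-smooth strict toroidal open subset $(V, D_V) \subset X$ having locally ordered components.

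Next I would apply the canonical desingularization of logarithmic varieties from \cite{ATW-principalization, ATW-toroidal}, producing a projective birational $X' \to X$ with $X'$ quasi-log smooth, equivalently with locally toric coarse moduli. Strong functoriality with respect to log smooth morphisms implies that this morphism is an isomorphism over the log-smooth locus of $X$, which contains $V$; hence $V$ is unmodified in $X'$. This reduces the problem to the following: given a locally toric variety $X'$ equipped with a divisor $D'$ extending $D_V$ and an open strict toroidal subset $(V, D_V) \subset X'$ with locally ordered components, construct a canonical blow-up sequence transforming $X'$ into a strict toroidal embedding $(Y, D_Y)$ without modifying $V$.

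This last reduction is the heart of the argument and the main obstacle. A locally toric variety differs from a strict toroidal embedding precisely in that its boundary components may be permuted by local monodromy, so that the combinatorial algorithm underlying Theorem \ref{th: resolution A} does not automatically produce functorial centers compatible with a strict structure on the base. The strategy is to use the local orderings of $D_V$ as a rigidification: on \'etale charts meeting $V$, the ordering extends canonically by the extendability assumption and trivializes the monodromy along the relevant components; on charts disjoint from $V$, any ordering is permitted and contributes only to a subsequent combinatorial desingularization that does not touch $V$. With this rigidification the combinatorial algorithm of Theorem \ref{can des} produces a canonical sequence of locally monomial filtered centers, all disjoint from $V$, whose composition $Y \to X'$ yields a strict toroidal embedding equal to the saturation of $(V, D_V)$.

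Property (1) holds by construction, (2) follows since the composite does not modify $V$ and the saturation is functorially determined by $(V, D_V) \subset Y$, and (3) is read off from the combinatorial algorithm: each blow-up center is a stratum of the toroidal structure, so its exceptional divisor is automatically in SNC position with the ordered boundary. Functoriality (4)--(5) with respect to smooth morphisms and field extensions, and equivariance under group actions (6), follow from the functoriality of each ingredient (the log structure assignments, the ATW desingularization, and the combinatorial algorithm), together with the fact that smooth morphisms and $G$-actions respecting $V$ and the ordering of $D_V$ transport the rigidification used in the last step.
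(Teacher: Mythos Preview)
Your strategy has the right shape for case (1), but there are two real gaps.

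First, the reduction of case (2) to (1): a Weil divisor on an arbitrary variety does \emph{not} in general yield a coherent or fs logarithmic structure (the paper's pinch-point remark after Theorem~\ref{th: resolution5} is exactly this obstruction). The paper's Lemma~\ref{extension} constructs such an extension, but only after a preliminary sequence of blow-ups that separate intersecting stratum closures and then principalize the divisors in the Cartier system; extendability is precisely the hypothesis making this work. Your one-line assignment of a Deligne--Faltings log structure skips this construction.

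Second, and more seriously, your treatment of the locally toric output of the ATW step is too vague to be a proof. You say the ordering of $D_V$ ``rigidifies'' the charts and ``trivializes the monodromy'' so that the combinatorial algorithm of Theorem~\ref{can des} applies. But on a locally toric (as opposed to toroidal) variety there is no global conical complex: the \'etale charts are compatible only up to automorphisms in $\Aut(\sigma)$, and more delicately every automorphism of $\widehat{X}^{\overline K}_x$ preserving strata---not just lattice automorphisms---must lift across the modification (Hironaka's condition). The paper handles this with the machinery of stratified toroidal varieties and semicomplexes (Section~\ref{stratified}): the key technical point is that the specific centers produced by the algorithm---minimal internal vectors and canonical barycenters---define valuations invariant under the proalgebraic groups $G_\sigma^0$ (Lemma~\ref{minimal vectors} and Corollaries~\ref{minimal vectors1},~\ref{minimal vectors2}), and together with $\Aut(\sigma)$-invariance coming from functoriality this yields canonical subdivisions in the sense of Definition~\ref{de: canonical} and hence a well-defined global modification by Theorem~\ref{th: modifications}. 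The ordering of $D_V$ enters not as a chart rigidification but as the marking on $\Ver(\Omega)$ demanded by the relative combinatorial desingularization Theorem~\ref{can des2}; Example~\ref{obstruction} shows that without this order the relative algorithm can fail to be equivariant.

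Structurally, the paper runs the reductions in the opposite direction from yours: cases (3) and (4) are proved directly (Theorems~\ref{th: resolution4} and~\ref{des toroidal2}), over a field of arbitrary characteristic; then (1) reduces to (3) via ATW (Theorem~\ref{th: resolution5}), and (2) reduces to (1) via the Extension Lemma (Theorem~\ref{th: resolution6}). Your route through (1) would recover (3) and (4) only in characteristic zero.
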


 In other words any such variety containing an open subset which is a strict toroidal embedding can be modified canonically by  a birational projective modification into a toroidal embedding.
 In the process the open subset is untouched  and the resulting variety is a  toroidal embedding with the singularities identical as on 
 the open subset. This means that the (irreducible) equisingular strata of the resolved toroidal embedding extend the strata on  the open unmodified set. Moreover, 
 the exceptional boundary divisor has {\it relatively simple normal crossings}.
 By a {\it relative simple normal crossing divisor} we mean here a divisor
 whose components are locally  described by a part of the coordinate system of free parameters.
 (see  Definition \ref{nc})

The desingularization theorems are proven in the following order.   
 First, we prove the canonical desingularization for locally binomial varieties and partial desingularization for locally binomial varieties with  locally toric Weil divisors over a 
 field of any characteristic (Theorems \ref{th: resolution2},  \ref{th: resolution4}) \footnote{ Note that our definition of toroidal embeddings differs slightly from the definition of Abramovich-Denef-Karu \cite[Section 2.2]{ADK}.
Both definitions agree over a perfect field.}

 Then, using the canonical desingularization of logarithmic varieties combined with the desingularization of locally binomial varieties with an unmodified open toroidal subset we show a canonical desingularization of logarithmic varieties except for  an open toroidal subset. (Theorem \ref{th: resolution5})
 To further generalize the result for arbitrary varieties with Weil divisors we canonically extend the logarithmic structure  from  the toroidal subset to the whole variety. (Lemma \ref{extension}).
 
 This can be done  for arbitrary varieties with  Weil divisor possessing an open  {\it extendable toroidal subset}. The  extendable toroidal embeddings satisfy certain conditions on restrictions of local Cartier divisors, like for example, toroidal varieties with quotient singularities, toroidal varieties with a single closed stratum or with extendable local Cartier divisors). (Definition \ref{trivia})
 
  As a consequence, we show the existence of
  a functorial partial resolution except for  open  extendable toroidal subsets for  arbitrary varieties with a given Weil divisor. (Theorem \ref{th: resolution6}).

 A particular version of this result shows that any variety with a locally toric singularity at a given point can be modified in such a way that  a neighborhood of the point will remain unchanged, and the resulting variety, when equipped with  a divisorial structure   will become a toroidal embedding with a unique closed 
 stratum passing through the point (Theorem \ref{compactification2}).
  
  Using the desingularization Theorem \ref{th: resolution6} we prove the existence of equisingular toroidal compactification of  extendable toroidal embeddings (Theorem \ref{compactification}).

 
 The canonical desingularization of logarithmic varieties from \cite{ATW-principalization}, \cite{ATW-toroidal} reduces the problem to the quasi-toroidal embeddings. The quasi-toroidal embeddings are,  in particular, locally toric,  with some locally toric divisors. This defines a natural  (non- smooth) stratification induced by a given divisor and the singularity type. The resulting variety is not a toroidal embedding but it is a stratified toroidal variety.  To deal with it we use the theory of stratified toroidal  varieties.
 The theory was developed as a tool in the proof of the Weak factorization theorem \cite{Wlodarczyk-toroidal}.
   It associates   with  the  variety a semicomplex and allows us to run certain (sufficiently functorial) algorithms. In particular, only very special centers of the modifications (star subdivisions) can be used. In section \ref{stratified} we give  a crash course on the theory of stratified toroidal varieties, recalling and reproving a few most  basic results used in the proof.

 To resolve locally toric singularities with an unmodified toroidal subset we develop a simple fast and efficient functorial desingularization combinatorial algorithm, and its relative version.  It  can  be applied to conical complexes and more general semicomplexes. The functoriality properties are critical  for gluing 
the algorithm on the more general objects. 

The method gives  a  functorial  resolution of locally toric or locally binomial  varieties with stratification over a field  in any characteristic and its relative version with an unmodified toroidal subset  (Theorems  \ref{th: resolution2}, \ref{th: resolution4}).

The functorial desingularization algorithm  of locally toric (or locally binomial) varieties over a field of any characteristic is much  simpler, more efficient, and more geometric than Hironaka resolution in characteristic zero. Combined  with the logarithmic desingularization of \cite{ATW-principalization} (\cite{ATW-toroidal}) it gives
  also a  more efficient algorithm of canonical desingularization of arbitrary varieties in characteristic zero. Moreover, the method gives a very good control over resolved singularities
 and allows  to avoid undesired modifications.

Another and perhaps most straightforward application of the combinatorial algorithm is for the toroidal embeddings.  In this case, we obtain a very simple and  
efficient method of  functorial resolution, and its relative version.  
(Theorems \ref{can des}, \ref{th: resolution}, \ref{des toroidal}, and \ref{des toroidal2}).

Recall that an embedded  functorial desingularization of  (not necessarily normal) toroidal embeddings  over a perfect field was  proven, as it was mentioned earlier,  by Bierstone-Milman in \cite{B-M-toric} (see also \cite{Blanco},\cite{Blanco2},\cite{BE}). They  extended resolution methods developed in characteristic zero.
Similar results were shown  by Nizio\l. She was  using a combinatorial interpretation of the simplified Hironaka algorithm in 
\cite[Theorem 5.10]{Niziol-toric}. The non-embedded resolution in characteristic 0, preserving a simple normal crossing locus  was also proven by Illusie-Temkin \cite[Theorem~3.3.16]{Illusie-Temkin}, Gillam-Molcho\cite[Theorem 9.4.5]{Gillam-Molcho}.  
Another simple combinatorial method  was provided in \cite[Theorem 4.4.2]{ACMW} by  Abramovich-Chen-Marcus-Wise. Their method is, however nonfunctorial and  modifies the normal crossings locus.

The combinatorial algorithm in this paper is based upon the ideas developed in \cite{KKMS}, and is closely related to the method considered in \cite{ACMW}. It is  fully functorial  and preserves  
the normal crossings locus.
The method  combines a version of barycentric subdivision with  the lattice reduction algorithm of \cite[Theorem 11*]{KKMS} with respect to a certain natural order introduced in section \ref{marking}. Unlike other functorial desingularizations ours
does not 
depend upon the toroidal structure and is controlled by simple geometric invariants without additional bulk. 
When forgetting about the divisors defining the  toroidal (or log smooth) structure we are left with locally toric varieties, which reduces the language to the previous situation of stratified toroidal varieties (Theorems  \ref{th: resolution2}) without changing the algorithm.
This  also explains why the algorithm works in a locally toric case in positive characteristic.  On the other hand, the presentation of the algorithm in the paper from toroidal embeddings to locally binomial varieties, illustrates the main feature  of the theory of stratified toroidal varieties which  studies the combinatorial modifications independent of locally toric coordinates.

The paper is organized as follows. In Chapter \ref{main} we formulate and prove the main theorems using the desingularization theorems  from Chapter \ref{stratified}.
In Chapter \ref{complexes} we introduce the basic definitions and results on toroidal embeddings, and conical complexes.  Chapter \ref{Desi} is entirely devoted to proof for  canonical desingularization of conical complexes.
 
 In Chapter \ref{relative complexes}, we introduce the language of relative conical complexes, and 
prove the  relative version of the canonical desingularization of the conical complexes.
The algorithm in the relative version is nearly identical, and the introduced notions are perfectly analogous to the standard nonrelative situation. However, the language of relative complexes is somewhat more involved and thus perhaps less intuitive than the language of complexes. That is why we deal separately with the nonrelative and relative cases, although the first one is the particular case of the second, with the trivial relative structure.

Chapter \ref{desing-toroid} contains a proof of the functorial desingularization of toroidal embeddings, and its relative version with unmodified open subset. The results are quite immediate consequences of the canonical desingularization of complexes in Chapters  \ref{Desi}, \ref{relative complexes}. 

Finally, Chapter \ref{stratified} contains a proof of the functorial desingularization of locally toric varieties over the fields and its relative version. The main tool in the proof is a theory of the stratified toroidal varieties. One  constructs a stratification defined by the singularity type and by a given divisor (in the relative situation). These data  define  the associated  conical semicomplex, which is , roughly, a collection of cones defined up to automorphisms groups associated with strata and some rather sparse
face relation defined by the strata 

The functoriality of the algorithm developed in Chapters \ref{Desi}, \ref{relative complexes} for complexes
and the canonicity of the centers allow us to run it on semicomplexes to give rise to the desingularization of stratified toroidal varieties.

\section{Main results}\label{main}
\subsection{Desingularization of logarithmic varieties except for log smooth locus}
 \subsubsection{Logarithmic varieties}
 The logarithmic structures are used in this paper only in the formulation of Theorem \ref{th: resolution5}, and in the Extension Lemma \ref{extension}, in Chapter \ref{main}.

 Recall that a {\it logarithmic structure} on a scheme $X$ of  finite type is given by a sheaf of monoids $\cM_X$ admitting a morphism  of the sheaves of monoids $j:\cM_X\to \cO_X$ under multiplication, such that $j^{-1}(O_X^*)\simeq \cO^*_X$, where 
 $\cO^*_X$ is the sheaf of  monoids of the invertible regular functions. 
 A { logarithimic structure} is called {\it coherent}  if  for any point $x\in X$  there is  a certain \'etale neighborhood $i_U: U\to X$, and a map of monoids $ P\to \Gamma(U,i_U^*(\cM_X))$, called {\it chart},
where $P$, is a   finitely generated monoid.
The map of the monoids
   induces a map of sheaves of monoids 
 $\alpha: P^a\to i_U^*(\cM_X)$, where $P^a$ is a locally constant sheaf defined by $P$.
 Moreover, we assume that
 $i_U^*(\cM_X)$ is isomorphic to the push out of $P^a\leftarrow \alpha^{-1}(\cO_U^*)\to \cO_U^*$.
 \cite[Sections 1.1, 1.2]{Kato-log} \cite[Sections 1.2]{Kato2}
 
 A monoid $P$ is {\it fine }   if   $P$ is finitely generated and  has no zero divisors, so injects in its finitely generated groupification $P^{gp}$. It is {\it fine and saturated} if it is fine
 and for any $a\in P^{gp}$, such that $a^n\in P$ we have that $a\in P$. In particular, the monoids defined by the intersections of rationally generated cones with lattices is an example of fine and saturated monoids. 
 
 A { coherent logarithimic structure} 
 is called {\it  fine and saturated} (fs) if  any point admits a chart $ P\to \Gamma(U,i_U^*(\cM_X))$, with $P$ being fine and saturated.\cite[Sections 1.1, 1.2]{Kato-log}

 Any  scheme, with a coherent logarithmic structure can be made canonically into a variety or a scheme with a {fine and saturated} structure by the natural canonical procedure, called {\it saturation} \cite[Proposition 1.2.9]{Kato2}, \cite[Proposition 2.1.5]{Ogus-logbook}. It is \'etale locally described by 
 $$X^{\sat}:=X\times_{\Spec{\ZZ[P]}}{\Spec{\ZZ[P^{\sat}]}},$$ with $$P^{\sat}:=\{a\in P^{gp} \mid a^n\in i(P) \},$$
 where $i:P\to P^{gp}$ is the natural map defined by the groupification. \cite[Proposition 1.2.9]{Kato2}
\cite[Page 1-2]{Cailotto}. 

 A logarithmic structure, which is  a sheaf of monoids $\cM_X$, is  usually defined in the \'etale topology (for  functoriality properties).
Note that, when working over nonclosed fields or $\ZZ$ one needs to pass to \'etale neighborhoods to have  nice properties of local rings and the corresponding monoids.

 A {coherent  or fs logarithmic structure} will be called {\it strict} if the charts are defined in the Zariski topology.

  \subsubsection{Stratifications on  schemes with logarithmic structures}
  Let $X$ be a scheme. A collection $S$ of disjoint locally closed subsets  of $X$, called {\it strata} will be called {\it stratification} if 
the closure of a stratum is a union of strata, and $X$ contains an open dense stratum. This defines  a natural order on the strata induced  by  
generization:
$$s\leq s'\quad \mbox{iff} \quad   \overline{s}\subseteq \overline{s'}.$$
 The closures of strata in the stratification will be called the {\it closed strata}. The collection of the corresponding closed strata will be denoted by $\overline{S}$.
 
 In this paper, we  shall consider  the resolution of 
 logarithmic schemes in two different general situations: 
 
 By a {\it logarithmic variety} (respectively a  {\it strict logarithmic variety})  we mean a variety equipped with a fine and saturated logarithmic structure $\cM$ (respectively a strict fs logarithmic structure). Such a variety possesses 
 a natural stratification  defined by the {\it rank} of the associated monoids $\cM/\cO^*$ \cite{Illusie-Temkin}, \cite{AT1}.
 
 On the other hand, we consider varieties with the logarithmic structure
defined by an open subset $U\subset X$.   The logarithmic structure $\cM$ on  $(X,U)$ is defined (in \'etale topology) as $$\cM=(\cO_X)\et\cap j_*(\cO_{U}^*)\et,$$ where $j:U\to X$ is the open immersion. The strict logarithmic structure is defined in the Zariski topology as $$\cM=(\cO_X)\cap j_*(\cO_{U}^*).$$ 
  
  If the  open subset $U$ is the complement of  a Weil divisor $D:=X\setminus U$, then we shall often use the divisor $D$ to describe the logarithmic  structure $\cM_D$ associated with $U$ identifying the logarithmic structure on $(X,D)$ with $(X,U)$.
 
 \subsubsection{Divisorial stratification}\label{divisorial}
 In general, the logarithmic structure on  $(X, D)$ is not coherent and does not have the stratification given by the rank. 
  However, one can  consider a divisorial stratification ${S}_D$ instead. Write
$D=\bigcup_{i\in J} D_i$ with irreducible Weil components $D_i$. The {\it closed strata} of $\overline{S}_D$  are defined by  the irreducible components of the intersections  $\bigcap_{i\in I} D_i$ of $D_i$. 
 The strata of  $S_D$  are the components of  $$\bigcap_{i\in I} D_i\setminus (\bigcup_{i\in (J\setminus I)} D_i).$$ 


 \subsubsection{Logarithmic structures on toroidal embeddings}
 
  Any toric variety $X\supset T$ over a base field $K$ admits a natural stratification by the orbits. It coincides with both stratifications: divisorial, defined by $D=X\setminus U$, and the one given by the rank of the rank of the logarithmic structure.
  
  Toroidal and strict  toroidal embeddings were introduced
 in \cite{KKMS} by Mumford and others (initially over an algebraically closed field). They are modeled by toric varieties. 
 
 We  review this theory over nonclosed fields in Section \ref{toroidal embeddings}. 
 Strict toroidal and  toroidal embeddings 
   are defined by an open subset $U\subset X$, and are locally (respectively locally in \'etale topology) \'etale isomorphic to toric varieties $(X_\sigma,T)$ with open torus $T$ corresponding  to open subset $U$.
  
  By definition, the sheaf of monoids $\cM$ is generated at any point $x\in X$ by the monoid of the effective Cartier divisors $\Cart^+_x(X, D)$ supported on $D$ and defined in a neighborhood (respectively \'etale neighborhood of $x$), so we can write  $$\cM_x=\Cart^+_x(X, D)\cdot \cO_{X,x}^* \subset \cO_X.$$

  The toric stratification by the orbits on a toric model $(X_\sigma, T)$ induces locally (respectively \'etale locally) the canonical stratification $S$ on  strict toroidal and toroidal embeddings. (see Section \ref{conical}). It coincides with the divisorial stratification in the case of strict toroidal embeddings.

 This stratification is equisingular, which  means that the completion of local rings at geometric $\overline{K}$-points on a stratum are isomorphic, where $\overline{K}$ is the closure of a base field $K$.

Each stratum $s$ on a strict toroidal embedding defines an open subset, called the {\it star} of $s$, defined as
$$\Star(s,S):=\bigcup_{ s'\geq {s} }{s'},$$
which is an open  neighborhood of $s$.

One can associate with a stratum $s$ the monoid $\Cart^+(s,S)$ of the effective Cartier divisors on $\Star(s,S)$ supported on  $\Star(s,S)\cap D$. This monoid is isomorphic to $\Cart^+_x(X, D)$ for any $x\in s$, with the natural isomorphism $$\Cart^+(s,S)\to \Cart^+_x(X, D)$$ given by the restriction.

The monoid $P:=\Cart^+(s,S)$ generates the logarithmic structure $\cM_{\Star(s,S)}$ on $\Star(s,S)$:
 $$\cM_{\Star(s,S)}=\Cart^+(s,S)\cdot \cO_{\Star(s,S)}^* \subset \cO_{\Star(s,S)}.$$ 
 For any point $x\in \Star(s,S)$ there is a natural local injective chart $P\to \cM(U)$, where $U\subset \Star(s,S)$ is an open neighborhood of $x$. Moreover
 the stratum $s$ on $U$ is defined as vanishing locus of $P\setminus \{1\}$.
 
 As it was observed in \cite{KKMS}, there exists a conical complex $\Sigma$ associated with a strict toroidal embedding $(X, D)$, with faces $\sigma\in \Sigma$  in bijective correspondence with strata $s=s(\sigma)\in S$ (see Section \ref{conical}, and Theorem \ref{cc}). 
 
 
 \subsubsection{Saturated toroidal subsets}
  
  \begin{definition}  \label{saturation} By a {\it toroidal subset}  of 
  a scheme with logarithmic structure $(X,\cM)$     
   we mean an open subset  $V\subset X$, with a    Weil divisor  $D_V$ such that $(V, D_V)$ is a strict toroidal embedding, which   defines the restricted log structure  $\cM_{|V}$. 
    By the {\it toroidal saturation} or simply {\it  saturation} of an open toroidal subset $(V, D_V)$ of  $(X,\cM)$ we mean the  maximal open toroidal subset $(V^0, D^0)$  of $(X,\cM)$ which       contains $(V, D_V)$, such that all the strata on $(V^0, D^0)$ intersect  the subset $V$ (so extend the strata in $(V, D_V)$). 
 
 A toroidal subset $(V, D_V)$ of  $(X,\cM)$ is called {\it  saturated} if it is equal to its  saturation. 
\end{definition}

 

 

The above  definitions apply to logarithmic varieties, and   varieties with Weil divisor.

\begin{example} In the case of the strict toroidal embedding any open subset is toroidal. The saturated subsets are just those which are the unions of some stars.

\end{example}

\begin{definition} \label{locus} The largest saturated toroidal subset  of  $(X,\cM)$ is called the {\it toroidal locus} (or the  {\it log smooth locus}).  It is the set of all points of $X$ where   $(X,\cM)$ is a strict toroidal embedding. We denote it by $(X,\cM)^{\tor}$.

\end{definition}

\begin{remark}\label{locus2}
 Observe that since the strata on  toroidal embeddings 
 are equisingular, any toroidal subset $V\subset X$ 
  has the same singularities as its toroidal saturation $V^0$. So the saturated subsets, in particular, represent the sets of all the points with certain given types of toroidal singularities (including information on the divisor). For example, the smallest nonempty saturated toroidal subset on $X$ is the set of all nonsingular points $(X^0)^{ns}$ of the set $X^0$ of the points where the  logarithmic structure is trivial $\cM=\cO^*$ (resp. $X^0:=X\setminus D$). The largest saturated toroidal subset  on $X$ is its {\it toroidal locus} $X^{\tor}$ . There are finitely many  saturated toroidal subsets on $X$, as these are exactly the unions of the stars of  strata on $X^{\tor}$.

\end{remark}
\begin{lemma} \label{extension1} Let $(X, D)$ be a toroidal embedding  embedding (which is not necessarily strict). Let $(V, D_V)$ be its  toroidal subset intersecting all the strata of $(X, D)$. Then  $(X, D)$ is strict toroidal and it is the toroidal saturation of $(V, D_V)$.
 \end{lemma}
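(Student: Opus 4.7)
The plan is to fix an arbitrary point $x\in X$ and show that $(X,D)$ is strict toroidal at $x$. This amounts to verifying that every Zariski-irreducible component of $D$ passing through $x$ is unibranch at $x$, or equivalently that the number $b(x)$ of analytic branches of $D$ at $x$ equals the number $c(x)$ of Zariski-irreducible components of $D$ containing $x$. Since the general inequality $b(x)\geq c(x)$ always holds, I only need to produce the reverse inequality, and I would do so by transporting the computation to a conveniently chosen witness point in $V$.

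Concretely, let $s$ be the toroidal stratum of $(X,D)$ through $x$ and pick $y\in s\cap V$, which exists by hypothesis. I would then invoke two standard properties of the toroidal stratification: (i) equisingularity along strata, giving an isomorphism of completed geometric local rings at $x$ and $y$ and in particular $b(x)=b(y)$; and (ii) the set of Zariski-irreducible components of $D$ meeting a given stratum $s$ is determined by $s$ itself (any such $D_i$ contains $\overline{s}$), so $c(x)=c(y)$. Both facts follow from the \'etale toric local model around any point of $s$. Now at $y$, strictness of $(V,D_V)$ together with the identification $D_V=D|_V$ (forced by both being the reduced divisors defining the restricted log structure $\cM|_V$) gives $b(y)=c(y)$: the branches of $D_V$ at $y$ are in bijection with the distinct Zariski-irreducible components of $D_V$ through $y$, and openness of $V\subset X$ identifies these with the branches and Zariski components of $D$ through $y$. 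Chaining the equalities $c(x)=c(y)=b(y)=b(x)$ yields strictness at $x$; since $x$ was arbitrary, $(X,D)$ is strict toroidal.

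For the saturation claim, once $(X,D)$ is known to be strict toroidal, $X$ itself is a toroidal subset of $(X,\cM_D)$ containing $(V,D_V)$; by hypothesis every stratum of $(X,D)$ meets $V$, so $X$ satisfies the defining condition of the saturation in Definition \ref{saturation}, and by maximality $V^0=X$. The main obstacle I anticipate is a clean justification that $b$ and $c$ are stratum-invariant; I would reduce this to the \'etale toric chart around any point of $s$, where analytic branches correspond to toric boundary divisors, taking some care to track how Zariski-irreducible components of $D$ on $X$ descend to, or are recovered from, the chart.
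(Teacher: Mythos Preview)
Your approach is correct and runs parallel to the paper's, with the same core idea: pick a witness point $y\in s\cap V$, use strictness of $(V,D_V)$ there, and transport the conclusion along the stratum to the arbitrary point $x$. The paper packages the transport step differently---it invokes Mumford's Lemma~\ref{Mum} at the generic point of $s$ (which lies in $V$) to obtain a \emph{bijection} between Zariski components of $D$ and \'etale-local components on a strict chart, then argues this bijection persists at $x$; you instead count branches and components numerically. These are equivalent bookkeeping for the same phenomenon.

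One point deserves a sentence of justification: you assert that strictness at $x$ ``amounts to'' each Zariski component of $D$ being unibranch at $x$, but the paper's criterion (Lemma~\ref{Mum2}) is that each component be \emph{normal}. Unibranch does not imply normal in general. Here it does, because once you know a component $D_i$ is unibranch at $x$, its pullback to a strict \'etale chart $U$ has a single irreducible component through the preimage of $x$, and that component is normal (Lemma~\ref{normal0}); normality then descends since it is \'etale-local. This is exactly the mechanism hidden in the paper's sentence ``Consequently, each such a component of $E$ itself is a toroidal embedding\ldots hence normal,'' so you should make it explicit. Your anticipated obstacle---the stratum-invariance of $c$---is handled just as you suggest: on the \'etale chart each toric boundary divisor meeting the orbit $\bar s$ contains its closure, so $D_i\cap s$ is open (and closed) in the connected stratum $s$.
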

\begin{proof} 
 
 
 Let $x$ be any point in a certain stratum $s$ on $X$. Consider   a Weil divisor $E$ consisting of the components of $D$, passing through $x\in E$ . Let $\pi: (U, D_{U})\to (X, D)$ be an \'etale neighborhood of $x$ which is strict toroidal. This map being \'etale preserves the strata (of the rank stratification), so that the inverse image of the  strata on $(X, D)$ are the strata on $(U, D_{U})$. Let $\overline{x}\in \overline{s}$ be a point over $x\in s$, where $\overline{s}$ is the stratum  over $s$.
 
 It defines an \'etale map between the relevant strict toroidal embeddings $\pi_{|U}:(U_V, D_{U_V})\to  (V, D_V)$, where $U_V:=\pi^{-1}(V)$. By  Lemma \ref{Mum}, there exists a bijective correspondence between  the components of the Weil divisor of $E$ passing through  a generic point of $s$ and the components of $D_U$ passing through a generic point of $\overline{s}$. 
 

But the components of  the divisor $E$ (respectively $D_U$) through $x$ and $s$ ( respectively $\overline{x}$ and $\overline{s}$) are the same. This follows from the construction of  the divisorial stratification on \'etale neighborhood $U$ and the stratification on $X$ with strata which are locally the images of strata on \'etale neighborhoods.

 Consequently, the natural surjection between the irreducible components of Weil divisor $D_{U}$ through $\overline{x}$ and $E$  through $x$ defined by $\pi: \Spec(\cO_{\overline{x},U})\to \Spec(\cO_{x,X})$ is a bijection. Thus the inverse image of an irreducible  divisorial component is locally an irreducible divisorial component.

Consequently, each such a component of $E$ itself  is a toroidal embedding  since it is strict toroidal in \'etale topology. (Its inverse image under \'etale morphism is a strict toroidal embedding). 
So, in particular, each component of $E$ is normal. This implies, by Lemma \ref{Mum2}, that $(X, D)$ is  strictly toroidal.  
\end{proof}

\begin{corollary} \label{extension2} Let $(X, D)$ be a toroidal embedding  embedding (which is not necessarily strict). Let $(V, D_V)$ be its  saturated toroidal subset. Then for any \'etale morphism $\phi: (U, D_U)\to (X, D)$ the inverse image $\phi^{-1}(V)$ is a saturated toroidal subset of $(U, D_U)$.
\end{corollary}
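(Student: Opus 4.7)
The plan is to verify two things: that $\phi^{-1}(V)$, equipped with the divisor $\phi^{-1}(D_V)$, is a toroidal subset of $(U, D_U)$; and that it is saturated. The second is the substantive point, and I would attack it using the characterization from Remark \ref{locus2} that saturated toroidal subsets are precisely the unions of stars of strata on the toroidal locus.

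First, for the toroidal-subset assertion, I would observe that the restriction $\phi|_{\phi^{-1}(V)}\colon \phi^{-1}(V)\to V$ is again \'etale, and that the \'etale pullback of a strict toroidal embedding is again strict toroidal. Indeed, strict toroidality is given by a Zariski cover of $V$ by opens carrying \'etale maps to toric varieties, and composing with $\phi$ produces Zariski-local \'etale charts on $\phi^{-1}(V)$. Since the log structure on $(U,D_U)$ is pulled back from $(X,D)$, the restricted log structure on $\phi^{-1}(V)$ coincides with the one defined by $\phi^{-1}(D_V) = D_U|_{\phi^{-1}(V)}$, so $(\phi^{-1}(V), \phi^{-1}(D_V))$ is indeed a toroidal subset, and in particular $\phi^{-1}(V)\subseteq U^{\tor}$.

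Next, for saturation, I would use two functorial properties of \'etale morphisms of toroidal embeddings: (a) $\phi$ preserves the rank (or divisorial) stratification, so that the strata of $U$ are preimages of strata of $X$; and (b) $\phi$ preserves the specialization order, i.e.\ if $s_U\le s'_U$ in $U$, then for the corresponding strata one has $s_X\le s'_X$ in $X$. Part (a) follows from the \'etale-local nature of the stratification; part (b) follows from openness of $\phi$ combined with continuity, so that $s_U\subset\overline{s'_U}$ yields $\phi(s_U)\subset \overline{\phi(s'_U)}\subset \overline{s'_X}$, and since closures of strata are unions of strata one concludes $s_X\subset\overline{s'_X}$. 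Given these, if $s_U$ is a stratum of $U^{\tor}$ contained in $\phi^{-1}(V)$ and $s'_U\ge s_U$, then the image strata satisfy $s_X\subset V$ and $s_X\le s'_X$; since $V$ is a union of stars in $X^{\tor}$, the star containing $s_X$ also contains $s'_X$, giving $s'_X\subset V$, hence $s'_U\subset \phi^{-1}(V)$. Thus $\phi^{-1}(V)$ is closed under the star operation in $U^{\tor}$, and hence is a union of stars, i.e.\ saturated.

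The main obstacle I expect is verifying the compatibility statements cleanly: that \'etale pullback preserves strict toroidality (so that $\phi^{-1}(X^{\tor})\subseteq U^{\tor}$), and that the specialization order on strata transports correctly under $\phi$. Both are essentially standard, but they hinge on the Zariski-local nature of strict toroidality together with the equisingularity of strata established in the preceding discussion.
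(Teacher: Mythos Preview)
Your argument is correct but follows a different route from the paper's. You verify saturation directly via the ``union of stars'' characterization of Remark~\ref{locus2}: since \'etale maps respect the toroidal stratification and its specialization order, star-closedness of $V$ in $X^{\tor}$ pulls back to star-closedness of $\phi^{-1}(V)$ in $U^{\tor}$. The paper instead argues indirectly through Lemma~\ref{extension1}: it sets $V' := \phi^{-1}(V)^{\sat}$, observes that the open image $\phi(V')\subset X$ is a toroidal embedding all of whose strata meet $V$, invokes Lemma~\ref{extension1} to conclude that $\phi(V')$ is strictly toroidal, and hence $\phi(V') \subseteq V^{\sat} = V$, forcing $V' = \phi^{-1}(V)$. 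The paper's route is shorter because it packages the stratification analysis into the already-proved Lemma~\ref{extension1}; yours is more hands-on and makes the mechanism transparent, at the cost of one extra verification you rightly flag as an obstacle --- that a stratum $s'_U \ge s_U$ in $U^{\tor}$ with $s_U\subset\phi^{-1}(V)$ still maps into $X^{\tor}$, which comes down to $X^{\tor}$ being open and a union of strata of $X$. (Minor quibble: your phrasing ``strata of $U$ are preimages of strata of $X$'' overstates (a); preimages can be disconnected, and all you need --- and use --- is that each stratum of $U$ maps into a stratum of $X$.)
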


\begin{proof} Let $V':=\phi^{-1}(V)^\sat$ be the saturation of $\phi^{-1}(V)^\sat$. Then its image \\$(\phi(V'), D\cap\phi(V')) $ is a toroidal embedding such that all the strata on $\phi(V')$ intersect $V$. Then, by Lemma \ref{extension1}, $\phi(V')$ is a 
strict toroidal embedding, and thus it is contained in the saturation $V^\sat=V$. This implies  that $\phi(V')=V$, and $V'=\phi^{-1}(V)$.

\end{proof}

  \subsubsection{Relative SNC divisors on toroidal varieties}
  In the classical Hironaka desingularization the exceptional locus has SNC (simple normal crossings). In the relative desingularization the exceptional divisor has similar properties.
  \begin{definition} \label{free} Let $(X, D)$ be a toroidal embedding. By  a {\it free coordinate system} on $(X, D)$ at a point $x$ we mean a set of parameters $u_1,\ldots,u_k$ on an open subscheme $U$ of  $X$ 
  such that there is   an \'etale  neighborhood  $U^{\et}\to U$ of $x$, and  a monoid $P$, with 
an  injective chart  $$P\to \cM({U^{\et}})=\cO({U^{\et}})\cap \cO^*({{U^{\et}}\setminus D_{U^{\et}}}),$$  and an \'etale  morphism ${U^{\et}}\to \Spec{K[x_1,\ldots,x_k,P]}$ defined by $x_i\mapsto u_i $, and $P\to \cO({{U^{\et}}})$, where $x_1,\ldots,x_k$ are free variables independent of $P$, 
 and the ideal $\cI_s$ of the stratum $s$ through $\overline{x}$ over $x$ is generated by $P\setminus\{1\}$.
 
  \end{definition}
\begin{remark} Equivalently $u_1,\ldots,u_k$ is a free coordinate system if their restrictions to the stratum $s$ define a local parameter system on a smooth subvariety $s$.

\end{remark}

  \begin{definition} \label{nc} Let $(X, D)$ be a toroidal embedding  and $E$ be a Weil divisor. We say that $E$ has {\it normal crossings} (NC) (resp. {\it simple normal crossings} (SNC)) on $(X, D)$ 
   if  it  is \'etale locally (resp. locally) defined by a part of a free coordinate system.  Equivalently we shall call $E$ a {\it relative NC divisor} (respectively a {\it relative SNC divisor}) on $(X, D)$.  
  
  \end{definition}
\begin{remark} It follows from Lemma \ref{2}, that if $E$ has  SNC on a  strict toroidal $(X, D)$ 
then then there is a part of a free coordinate system $u_1,\ldots,u_k$ locally on an open $U$, and an injective chart  $P\to \cM(U),$  with  \'etale  morphism ${U}\to \Spec{K[x_1,\ldots,x_k,P]}$, defined by $x_i\mapsto u_i $, and $P\to \cO({{U^{\et}}})$.
\end{remark}

One can rephrase Definition \ref{nc}
\begin{lemma} \label{1} A divisor $E$ has { normal crossings} (NC)  on $(X, D)$ iff 
$(X, D\cup E)$ is a toroidal embedding  and  any  point $x\in X$ admits an \'etale   neighborhood  $\alpha: U\to X$ of $x$ and \'etale morphism $$\phi: U\to X_{\sigma}\times X_{\tau}=X_{\sigma}\times \AA^n,$$ where $X_\tau=\AA^n$
such that, $\alpha^{-1}(D)_{|U}=\phi^{-1}(D_1\times X_{\tau})$,  
and $\alpha^{-1}(E)_{|U}=\phi^{-1}(X_{\sigma}\times E_1)$,
with a toric divisor $D_1$ on  $X_{\sigma}$, and $E_1$ is an SNC toric divisor on $X_{\tau}=\AA^n$. \qed \end{lemma}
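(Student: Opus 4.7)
The plan is to treat the two directions of Lemma \ref{1} as symmetric repackagings of the same combinatorial data: Definition \ref{free} records a free coordinate system via an étale chart to $\Spec K[x_1,\ldots,x_k,P]$, while the lemma states this chart in the form $X_\sigma\times \AA^n$, and the equivalence amounts to distributing the free variables between the two factors while keeping track of which variables define $E$.

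For the forward direction, I take a free coordinate system $u_1,\ldots,u_k$ on $(X,D)$ at $x$ on an étale neighborhood $U^{\et}\to X$ supplied by Definition \ref{nc}, with $E$ defined by a subfamily $u_{i_1},\ldots,u_{i_m}$. Definition \ref{free} then provides an injective chart $P\to \cM(U^{\et})$ and an étale morphism $U^{\et}\to \Spec K[x_1,\ldots,x_k,P]$ sending $x_i\mapsto u_i$. I regroup the target as $X_\sigma\times \AA^n$, where $\AA^n$ collects the $m$ coordinates defining $E$ and $X_\sigma:=\Spec K[P]\times \AA^{k-m}$ absorbs everything else; with this regrouping, $D$ is the pullback of $D_1\times \AA^n$ where $D_1\subset X_\sigma$ is the toric divisor coming from the $\Spec K[P]$-factor, and $E$ is the pullback of $X_\sigma\times E_1$ with $E_1=\{x_{i_1}\cdots x_{i_m}=0\}$ an SNC toric divisor on $\AA^n$. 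The ambient $X_\sigma\times \AA^n$ is a toric variety and $D_1\times \AA^n\cup X_\sigma\times E_1$ is its toric boundary, so $(X,D\cup E)$ is automatically toroidal.

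For the converse, I begin with a product étale chart $\phi\colon U\to X_\sigma\times \AA^n$ and must extract a free coordinate system on $(X,D)$ at $x$ in which $E$ is cut out by a subfamily. Let $p$ be the image of $\overline{x}$ in $X_\sigma$; after an étale shrinking, Lemma \ref{2} supplies a chart of $X_\sigma$ near $p$ of the form $\Spec K[y_1,\ldots,y_r,P]$, with $P$ the monoid generated by the components of $D_1$ through $p$ and $y_1,\ldots,y_r$ free parameters transverse to the stratum of $X_\sigma$ containing $p$, so that this stratum is cut out by $P\setminus\{1\}$. Amalgamating with the coordinates $x_1,\ldots,x_n$ of $\AA^n$ yields an étale morphism $U^{\et}\to \Spec K[y_1,\ldots,y_r,x_1,\ldots,x_n,P]$ exhibiting $y_1,\ldots,y_r,x_1,\ldots,x_n$ as a free coordinate system on $(X,D)$ at $x$, and $E$ is cut out by the sub-family of those $x_j$ defining $E_1$, as required by Definition \ref{nc}.

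The main obstacle is the amalgamation step in the converse: one must verify that the monoid $P$ extracted from the $X_\sigma$-factor really cuts out the stratum of $(X,D)$ through $\overline{x}$ after pullback through $\phi$, and that the free variables from $\AA^n$ remain algebraically independent of $P$ after amalgamation. Both reduce to a direct product computation combined with Lemma \ref{2}, and to the observation that the $\AA^n$-factor contributes no monoid part to the logarithmic structure of $(X,D)$ since $D$ pulls back from $D_1\times \AA^n$.
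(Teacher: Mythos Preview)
The paper gives no proof of this lemma (it ends with \qed immediately), treating it as a direct unwinding of Definitions \ref{free} and \ref{nc}. Your approach---regrouping the target $\Spec K[x_1,\ldots,x_k,P]$ as a product and then reversing the process---is exactly that unwinding, so at the level of strategy you are aligned with what the paper has in mind.

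Two points need tightening. First, in the forward direction you assert that $D_1\times \AA^n\cup X_\sigma\times E_1$ is the full toric boundary of $X_\sigma\times \AA^n$; with your choice $X_\sigma=\Spec K[P]\times\AA^{k-m}$ this is false when $k>m$, since the $\AA^{k-m}$-factor contributes boundary components not lying in $D_1$. The fix is immediate: to see that $(X,D\cup E)$ is toroidal, regroup instead as $\Spec K[x_1,\ldots,x_m,P]\times\AA^{k-m}$, so that $D\cup E$ pulls back from the full boundary of the first factor while the $\AA^{k-m}$ carries trivial log structure. This is still an \'etale chart in the sense of Definition \ref{strict}.

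Second, in the converse you invoke Lemma \ref{2} to produce a chart for $(X_\sigma,D_1)$ near $p$. This is both circular (Lemma \ref{2} appears after Lemma \ref{1} and its proof uses the fact that $(X,D\cup E)$ is toroidal, which is part of the present statement) and misapplied (Lemma \ref{2} concerns SNC divisors, not the existence of free-coordinate charts for $(X_\sigma,D_1)$). What you actually need is that $(X_\sigma,D_1)$ is itself a toroidal embedding near $p$, so that Definition \ref{free} applies there directly. This follows because $\phi$ is \'etale and $(U,D_U)$ is toroidal, hence so is $(X_\sigma\times\AA^n,D_1\times\AA^n)$ on the open image $\phi(U)$; the purely toric Lemma \ref{reg2} then gives the local splitting $\sigma=\tau\times\sigma_1$ with $\sigma_1$ regular and $D_1=D_\tau\times X_{\sigma_1}$, which is the chart you want. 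Replace the appeal to Lemma \ref{2} with this argument.
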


\begin{lemma} \label{2} With the preceding notation and the assumptions. The divisor $E$ has {\it simple normal crossings} (SNC)  on $(X, D)$ if it has NC and its components are normal.
If, additionally, $(X, D)$ is a strict toroidal embedding then $(X, D\cup E)$ is such. Moreover, any point $x\in X$ admits a neighborhood $ U$  and an \'etale morphism $$\phi: U\to X_{\sigma}\times X_{\tau}=X_{\sigma}\times \AA^n,$$ where $X_\tau=\AA^n$
such that, $D_{|U}=\phi^{-1}(D_1\times X_{\tau})$,  
and $E_{|U}=\phi^{-1}(X_{\sigma}\times E_1)$,
with a toric divisor $D_1$ on  $X_{\sigma}$, and $E_1$ is an SNC toric divisor on $X_{\tau}=\AA^n$.
\end{lemma}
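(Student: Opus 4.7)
The plan is to use Lemma \ref{1} to produce an étale-local toric product model for the triple $(X, D, E)$, then exploit the normality hypothesis to descend the defining equations of the components of $E$ from the étale to the Zariski topology. Once this descent is achieved, SNC follows immediately; and in the strict toroidal case the Zariski toric chart of $(X, D)$ combines with these Zariski equations to yield the desired Zariski étale morphism $\phi \colon U \to X_\sigma \times \AA^n$.

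\textbf{Step 1 (SNC).} Fix $x \in X$. By the NC hypothesis and Lemma \ref{1} there is an étale neighborhood $\pi \colon U' \to U$ of $x$ together with an étale morphism $\phi \colon U' \to X_\sigma \times \AA^n$ such that $D|_{U'} = \phi^{-1}(D_1 \times \AA^n)$ and $E|_{U'} = \phi^{-1}(X_\sigma \times E_1)$, where $E_1 = \{y_1 \cdots y_s = 0\}$ is a union of coordinate hyperplanes. Let $E^{(1)}, \ldots, E^{(r)}$ be the Zariski irreducible components of $E$ passing through $x$, and fix a lift $x' \in \pi^{-1}(x)$. Because each $E^{(j)}$ is normal, its étale pullback $\pi^{-1}(E^{(j)})$ is normal and thus unibranch at $x'$; hence exactly one irreducible component of $\pi^{-1}(E^{(j)})$ passes through $x'$ and, after shrinking $U'$, it coincides with $\phi^{-1}\{y_{i(j)} = 0\}$ for a uniquely determined index $i(j)$. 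The assignment $j \mapsto i(j)$ is a bijection onto $\{1, \ldots, s\}$, since two distinct étale hyperplanes $\phi^{-1}\{y_i = 0\}$ and $\phi^{-1}\{y_{i'} = 0\}$ passing through $x'$ cannot both lie in the same unibranch $\pi^{-1}(E^{(j)})$. Consequently each $E^{(j)}$ is étale-locally principal, hence locally Cartier at $x$ by étale descent, with a Zariski defining equation $v_j$ on some Zariski neighborhood $U_x \ni x$. On $\pi^{-1}(U_x)$ the functions $\pi^* v_j$ and $\phi^* y_{i(j)}$ are local equations of the same Cartier divisor and therefore differ by a unit, so replacing $\phi^* y_{i(j)}$ with $\pi^* v_j$ in the free coordinate system produces one in which $v_1, \ldots, v_r$ are Zariski parameters on $U_x$ cutting out $E$. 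This is SNC.

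\textbf{Step 2 (Strict toroidal case and local model).} Assume now that $(X, D)$ is strict toroidal. Then the free coordinate system and chart $P \to \cM$ for $(X, D)$ can be taken Zariski on a neighborhood of $x$, giving a Zariski étale morphism $\psi \colon U \to X_\sigma$ with $D|_U = \psi^{-1}(D_1)$. Combining $\psi$ with the Zariski functions $v_1, \ldots, v_r$ from Step 1, supplemented by Zariski free parameters $v_{r+1}, \ldots, v_n$ completing them to a free coordinate system on $U$, yields a Zariski étale morphism $\phi = (\psi, v_1, \ldots, v_n) \colon U \to X_\sigma \times \AA^n$ with $D|_U = \phi^{-1}(D_1 \times \AA^n)$ and $E|_U = \phi^{-1}(X_\sigma \times \{y_1 \cdots y_r = 0\})$. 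This realizes the claimed local model and exhibits $(X, D \cup E)$ as a strict toroidal embedding.

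The crux of the argument is the étale-to-Zariski descent in Step 1: one must confirm that the correspondence $j \mapsto i(j)$ between Zariski and étale-local components of $E$ through the chosen lift of $x$ is a bijection — for which the unibranch property coming from normality is precisely what is needed — and then verify that exchanging the étale coordinate $\phi^* y_{i(j)}$ for the Zariski equation $v_j$ is compatible, up to a unit, with the existing free coordinate system.
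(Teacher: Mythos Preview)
Your Step 1 is correct and is essentially the paper's argument: the paper packages the étale-to-Zariski descent through Lemma \ref{Mum}, while you carry it out directly via the unibranch property coming from normality. Both establish that the components of $E$ are locally Cartier on $X$ with Zariski equations $v_j$ that fit into a free coordinate system.

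Step 2 as written has a dimension-count error. If $\psi\colon U\to X_\sigma$ is a Zariski \emph{étale} chart for the strict toroidal $(X,D)$, then $\dim X_\sigma=\dim U$, and the map $(\psi,v_1,\ldots,v_n)\colon U\to X_\sigma\times\AA^n$ lands in a target of dimension $\dim U+n$, so it cannot be étale unless $n=0$. What you presumably intend is to take the chart $U\to\Spec K[x_1,\ldots,x_k,P]$ coming from Definition \ref{free} and \emph{replace} $r$ of the free coordinates $x_i$ by your $v_1,\ldots,v_r$ (which differ from the original ones by units on the étale cover, as you observed in Step 1); this exhibits the same target $\Spec K[P]\times\AA^k$ with $E$ now cut out by the first $r$ affine coordinates, and one then sets $X_\sigma=\Spec K[P]$ and $n=k$. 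The paper takes a cleaner route for this second part: since the components of $D$ are normal (Lemma \ref{normal0}) and those of $E$ are normal by hypothesis, and $(X,D\cup E)$ is toroidal by Lemma \ref{1}, Lemma \ref{Mum2} yields directly that $(X,D\cup E)$ is strict toroidal; its Zariski toric chart then splits as $X_\sigma\times\AA^n$ because the sub-divisor $D$ already defines a toroidal structure on the toric model.
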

\begin{proof} If $(X, D)$ is a toroidal embedding and the components of $E$ are locally defined
by free parameters on a toroidal embedding, then, by definition, they are toroidal embeddings and hence normal.
Conversely, if $E$ has NC on $(X, D)$, and its components are normal then, by definition, the components of $E$ are \'etale locally defined by  local parameters. Denote by $E'$ the inverse image of $E$ on an \'etale neighborhood.
 Since the components of $E$ are normal on $X$  then, by Lemma \ref{Mum},  the irreducible components of $E'$ on \'etale neighborhood of $X$ are locally the inverse images of the components on $X$. By  definition, the components of $E'$ are defined by  a part of a  free coordinate system in the \'etale neighborhood.
 Thus, by the second part of Lemma \ref{Mum},  these components descend to  Cartier divisors defined locally by a part of free coordinate system  on $(X, D)$ describing components of $E$. Thus $E$ has  SNC on $(X, D)$.

Now, if   $(X, D)$ is strict toroidal, then all the components of $D\cup E$ are normal, and since $(X, D\cup E)$ is toroidal, it is also strict toroidal by Lemma \ref{Mum2}. Thus  there is locally an \'etale morphism to $X_\sigma\times X_\tau$.  

\end{proof}

\begin{remark}

If $E$ is relative NC (respectively SNC) on $(X, D)$ then its 
 restriction  $E\cap (X\setminus D)$ to the logarithmically trivial locus $(X\setminus D)$ is an NC (resp. SNC) divisor in the usual sense on a smooth subset $(X\setminus D)$.
\end{remark}  

  \begin{lemma} Let $(X^0, D_{X^0})$ be a strict toroidal embedding, and $E$ has NC with $D_{X^0}$. Set $X:=X^0\setminus E$, $D_X:=D_{X^0}$. Then $(X^0, D_{X^0})$ is the  saturation of $(X, D_X)$.
  \end{lemma}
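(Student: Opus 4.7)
The plan is to apply Definition~\ref{saturation} directly and verify that $(X^{0}, D_{X^{0}})$ itself qualifies as the toroidal saturation of $(X, D_{X})$ inside $(X^{0}, D_{X^{0}})$. Since any open toroidal subset of $(X^{0}, D_{X^{0}})$ is a fortiori contained in $X^{0}$, maximality is automatic once the remaining conditions are checked. The statement therefore reduces to two claims: (i) $(X, D_{X})$ is genuinely an open toroidal subset of $(X^{0}, D_{X^{0}})$, and (ii) every stratum of the canonical stratification of $(X^{0}, D_{X^{0}})$ meets $X = X^{0}\setminus E$.

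Claim (i) is formal. The open subscheme $X\subset X^{0}$ of the strict toroidal embedding $(X^{0}, D_{X^{0}})$ is itself a strict toroidal embedding, with $D_{X} = D_{X^{0}}\cap X$ its restricted divisor; and the log structure $\cM_{X}$ coincides with $\cM_{X^{0}}|_{X}$ because the log structure associated to a Weil divisor is local on $X^{0}$ in the Zariski topology. The real content lies in (ii), equivalently in the statement that no stratum of $(X^{0}, D_{X^{0}})$ is contained in $E$. Fix a stratum $s$ and a point $x\in s$. By Lemma~\ref{2}, applied to the strict toroidal $(X^{0}, D_{X^{0}})$ and the relative SNC divisor $E$, there is a Zariski neighborhood $U$ of $x$ and an \'etale morphism $\phi\colon U\to X_{\sigma}\times X_{\tau} = X_{\sigma}\times \AA^{n}$ with $D_{X^{0}}|_{U} = \phi^{-1}(D_{1}\times \AA^{n})$ and $E|_{U} = \phi^{-1}(X_{\sigma}\times E_{1})$, where $E_{1}$ is an SNC toric divisor on $\AA^{n}$; in particular $E_{1}\subsetneq \AA^{n}$.

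Since the canonical stratification of $(X^{0}, D_{X^{0}})$ is \'etale locally pulled back from the orbit stratification of the toric model (Section~\ref{divisorial}), the strata on $U$ are exactly the preimages $\phi^{-1}(O\times \AA^{n})$ as $O$ ranges over $T$-orbits of $X_{\sigma}$; in particular $s\cap U = \phi^{-1}(O\times \AA^{n})$ for some $O$. Because $E_{1}\subsetneq \AA^{n}$, the set $\phi^{-1}\bigl(O\times(\AA^{n}\setminus E_{1})\bigr)$ is nonempty and contained in $s\setminus E\subseteq s\cap X$, so $s\cap X\neq\emptyset$. This proves (ii), and combined with (i) and the tautological maximality noted above, it yields that $(X^{0}, D_{X^{0}})$ is the saturation of $(X, D_{X})$. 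The only subtle point is the identification of the strata on $U$ with preimages of toric orbits crossed with the free factor $\AA^{n}$, but this is precisely the equisingularity property of the canonical stratification recorded earlier, so no new work is required.
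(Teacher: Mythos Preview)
Your approach is essentially the same as the paper's: reduce to the toric model $X_\sigma\times\AA^n$ and observe that every stratum $O\times\AA^n$ meets the complement of the free SNC divisor $E_1\subsetneq\AA^n$, hence every stratum of $(X^0,D_{X^0})$ meets $X$. The paper phrases this as ``$X_\sigma\times\AA^n$ is the saturation of $X_\sigma\times(\AA^n\setminus E)$'' and then invokes Lemma~\ref{extension1}; you unpack the definition of saturation directly, which is fine.

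There is one genuine slip. The hypothesis says $E$ has \emph{NC} with $D_{X^0}$, not SNC, so you cannot invoke Lemma~\ref{2} to get a Zariski chart; you must use Lemma~\ref{1}, which only gives an \'etale neighborhood $\alpha\colon U\to X^0$ together with $\phi\colon U\to X_\sigma\times\AA^n$. This is exactly why the paper's proof speaks of a ``strict toroidal \'etale neighborhood.'' The fix is immediate: strata of $(X^0,D_{X^0})$ pull back to strata on $U$, your computation shows $\alpha^{-1}(s)\setminus\alpha^{-1}(E)=\phi^{-1}\bigl(O\times(\AA^n\setminus E_1)\bigr)\neq\emptyset$, and since $\alpha$ maps this set into $s\setminus E$ you conclude $s\cap X\neq\emptyset$. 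With that correction your argument is complete and matches the paper's.
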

\begin{proof} We verify this property on a strict toroidal \'etale neighborhood, where it reduces to the obvious fact   for toric varieties that $X_{\sigma}\times \AA^n$ is the  saturation of $X_{\sigma}\times (\AA^n\setminus E)$. Then, by Lemma \ref{extension1}, $(X, D_X)$ is strict toroidal, and is the toroidal saturation of $(X^0, D_{X^0})$.

\end{proof}



\subsubsection{ Divisors with locally ordered components}


 \begin{definition} \label{order}Let $(X, D)$ be a strict toroidal embedding.
 We say that  a  Weil divisor $D$ on $X$ has {\it locally ordered components}  if 
there is given a partial order on the set of components, which is total for any subset of the components passing through a common point. 

 \end{definition} 
 \begin{remark}


 The condition of ordering components  for the canonical desingularization except  toroidal subset is  unavoidable given  Example \ref{obstruction}!

\end{remark}





\subsubsection{Desingularization of logarithmic varieties except for  log smooth locus}


 \begin{theorem} \label{th: resolution5} Let $(X,\cM)$
 be a logarithmic variety over a  field $K$ of characteristic zero. Let  $V$ be an open  toroidal  subset of  $X$ \footnote{Definition \ref{saturation}}. Assume that the divisor 
 $D_V$  on $V$  defining the  smooth logarithmic ( toroidal) structure  on $V$ has  locally ordered components.\footnote{Definition  \ref{order}} 
    
    There exists a canonical  resolution of singularities of $(X,\cM)$ except for  $V$ i.e. a birational projective morphism $f: Y\to X$ such that
 \begin{enumerate}
 \item $f$ is an isomorphism over the open set $V$.
\item The variety $(Y, D_Y)$ is a strict toroidal embedding, where $D_Y:=\overline{D_{V,Y}}$ is the closure of the divisor $D_V$ in $Y$.
 Moreover, $(Y, D_Y)$  is the  saturation\footnote{Definition \ref{saturation}}
 of the toroidal subset $(V, D_V)$ in $Y$. (In particular, $(Y, D_Y)$  has the same singularities as $(V, D_V)$.)
 \item The complement  $E_{V,Y}:=Y\setminus V$ of $V$ in $Y$ is a divisor with  simple normal crossings (SNC)  with $D_Y$ \footnote{Definition \ref{nc}}.
 So is the exceptional divisor $E_{\exc}\subseteq E_{V, Y}$.


\item In particular, if $V$ is smooth and $D_V$ is  an SNC divisor on  $V$ then 
$Y$ is smooth, and $D_Y\cup E_{V, Y}$ is an  SNC divisor.

 \item $f$ is a composition of a sequence of the blow-ups at  functorial centers.
 \item
 $f$ commutes with field extensions  and smooth morphisms  respecting  the logarithmic structure, the  subset $V$, and the order of the components of $D_V$.
 \item In particular, if $G$ is an algebraic group acting on $(X,\cM)$ and preserving the logarithmic structure $\cM$ and the subset $V$,  and the components of $D_V$ on (a $G$-stable) $V$ then the action of $G$ on $X$ lifts to $Y$, and $f:Y\to X$ is $G$-equivariant.
 \end{enumerate}



\end{theorem}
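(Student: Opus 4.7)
\emph{Strategy.} The plan is to factor the desired morphism $f\colon Y\to X$ as a composition $Y\to X_1\to X$, where the first arrow is the canonical logarithmic desingularization of Abramovich--Temkin--W{\l}odarczyk applied to $(X,\cM)$, and the second arrow is the canonical partial desingularization of the resulting locally toric variety $X_1$ with unmodified toroidal subset given by the saturation of $V$ in $X_1$, as supplied by Theorem \ref{th: resolution4}. The hypothesis that the components of $D_V$ are locally ordered will be used at the second step to render the choice of functorial centers unambiguous; that this condition is unavoidable is already flagged by Example \ref{obstruction}.

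\emph{Step 1: Log resolution via ATW.} Applying the ATW log resolution to $(X,\cM)$ yields a projective birational $\pi\colon X_1\to X$ that is an isomorphism over the log smooth locus of $(X,\cM)$. Since $V$ is toroidal, i.e.\ log smooth, it is contained in this locus and thus canonically identified with an open subset of $X_1$, with $D_V$ unchanged. The output $X_1$ is quasi-log smooth, hence a locally toric variety carrying a distinguished locally toric divisor $D_{X_1}$ that extends $D_V$. Let $V_1\subseteq X_1$ denote the toroidal saturation of $(V,D_V)$ inside $(X_1,D_{X_1})$; by Definition \ref{saturation} and Remark \ref{locus2}, $V_1$ is a strict toroidal embedding having the same singularities as $V$, and by Lemma \ref{extension1} every component of $D_{V_1}$ meets $V$, so the ordering of $D_V$ extends uniquely to an ordering on $D_{V_1}$.

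\emph{Step 2: Locally toric partial resolution.} Applying Theorem \ref{th: resolution4} to the locally toric $(X_1,D_{X_1})$ with the extendable strict toroidal subset $(V_1,D_{V_1})$ produces a projective birational $g\colon Y\to X_1$ that is an isomorphism over $V_1$ (hence over $V$) and turns $(Y,D_Y)$ into a strict toroidal embedding, where $D_Y=\overline{D_{V_1,Y}}=\overline{D_{V,Y}}$ is the closure of the boundary and $(Y,D_Y)$ is the saturation of $(V,D_V)$. The exceptional locus of $g$ is SNC relative to $D_Y$; combined with the fact that $f=\pi\circ g$ is an isomorphism over $V$, this identifies $E_{V,Y}=Y\setminus V$ with the total exceptional divisor of $f$ and gives (1)--(3). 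Part (4) follows because when $V$ is smooth and $D_V$ is SNC, the toroidal locus of $(X,\cM)$ agrees with the smooth SNC locus, so both steps reduce to the classical Hironaka resolution and $Y$ is smooth with $D_Y\cup E_{V,Y}$ an SNC divisor. Property (5) is built into both resolutions.

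\emph{Functoriality and the main obstacle.} Properties (6)--(7) come from combining the functoriality of ATW with respect to log-smooth morphisms and that of Theorem \ref{th: resolution4} with respect to smooth morphisms preserving the ordered divisorial data; any group action preserving $\cM$, $V$ and the ordering of $D_V$ lifts automatically through both steps. The principal technical obstacle is bridging the two functoriality frameworks: the ATW output is a priori only quasi-log smooth, and its locally toric divisor $D_{X_1}$ is not uniquely determined by $\cM$ alone. I expect the main effort to go into showing that once the ordering of $D_V$ is fixed, the saturation $V_1$ together with the canonicity of the ATW algorithm pins down a unique ordered extension $D_{X_1}$ compatible with every smooth test morphism, which is precisely what is needed to feed Theorem \ref{th: resolution4} and make the composition $f$ canonical.
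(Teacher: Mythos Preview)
Your overall two-step strategy---apply the Abramovich--Temkin--W{\l}odarczyk logarithmic desingularization, then invoke Theorem~\ref{th: resolution4}---matches the paper's. However, you have skipped a preliminary step that the paper carries out explicitly, and without it your argument for condition~(3) breaks.

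The paper first blows up the ideal of $X\setminus V$, so that the complement becomes a Cartier divisor $E_X$, and then enlarges the log structure $\cM$ by adjoining $E_X$ (via the saturated push-out $\cM\leftarrow\cO^*\to\cM_{E_X}$). After this modification, $V$ is \emph{saturated} in the new $(X,\cM)$, and $V_0:=V\setminus D_V$ coincides with the log-trivial locus. The paper states outright that this is ``done as a preliminary step to ensure the condition~(3) of the Theorem.'' With $V$ saturated, one can then apply Theorem~\ref{th: resolution4} directly to $(V,D_V)$ inside the quasi-toroidal output $\overline X$, and the conclusion $E_{V,Y}=Y\setminus V$ has SNC with $D_Y$ follows immediately from Theorem~\ref{th: resolution4}(4).

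In your version you instead pass to the saturation $V_1\supseteq V$ inside $X_1$ and apply Theorem~\ref{th: resolution4} to $V_1$. That yields $Y\setminus V_1$ SNC with $D_Y$, but you need $Y\setminus V$ to be SNC. Your sentence ``this identifies $E_{V,Y}=Y\setminus V$ with the total exceptional divisor of $f$'' is where the argument fails: $f$ may well be an isomorphism over points of the log-smooth locus of $(X,\cM)$ lying outside $V$ (for instance, over all of $V_1\setminus V$), so those points lie in $Y\setminus V$ but are not exceptional. The closed set $V_1\setminus V$ has no reason to be a divisor, let alone one with SNC on $(Y,D_Y)$. A small terminological slip reinforces this: Theorem~\ref{th: resolution4} requires the toroidal subset to be \emph{saturated}, not ``extendable'' (that is the hypothesis of Theorem~\ref{th: resolution6}).

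Finally, the issue you flag as the ``principal technical obstacle''---reconciling the two functoriality frameworks and pinning down $D_{X_1}$---is not where the real work lies. The divisor on the quasi-toroidal $\overline X$ is simply $\overline X\setminus V_0$, canonically determined once the preliminary step has fixed $V_0$, and the two functorialities compose without further effort. The genuine missing ingredient is the preliminary blow-up and log-structure enlargement.
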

\begin{proof} Let $X$ be a logarithmic variety.  We can assume that the  complement $X\setminus V$ is the support of a Cartier divisor $E_X$. To this end 
consider the blow-up of the ideal of the complement $X\setminus V$, and let $E_X$ be the exceptional divisor.  We can also assume that the logarithmic structure $\cM$ is not trivial anywhere outside of $V$, by replacing $\cM$ with the saturation of the log-structure generated by $\cM$ and $E_X$ (The push-out $\cM\leftarrow {\cO^*}\to \cM_{E_X}$.) Then, in particular, the components of $E_X$ are the closed irreducible strata of $\cM$. This is done as a preliminary step to ensure the condition (3) of the Theorem. The open subset $V$ is saturated on the newly created logarithmic variety $(X,\cM)$. Then the open subset $V_0:=V\setminus D_V$  coincides with the logarithmically trivial locus $X^{tr}$ of $X$.

By  \cite{ATW-principalization}, and \cite{ATW-toroidal} we can canonically desingularize $(X,\cM)$ that is transform birationally $X$ to a {\it quasi- toroidal embedding}  $\overline{X}$ (modifying its logaritmic struture on the way).  
The process is functorial for  logarithmically smooth morphisms, and thus  it preserves a toroidal subset
$V$.

The quasi-toroidal embedding $\overline{X}$ is , in particular, \'etale locally toric, with the 
locally toric divisor $$E_{V_0,\overline{X}}:=\overline{X}\setminus V_0=\overline{D_{V,\overline{X}}}\cup E_{V,\overline{X}},$$ where $\overline{D_{V,\overline{X}}}$ is the closure of $D_V\subset V$ in $\overline{X}$, and $E_{V,\overline{X}}:=\overline{X}\setminus V$.

By Theorem  \ref{th: resolution4},  applied to the open saturated toroidal subset $(V, D_V)\subset (\overline{X},E_{V_0,\overline{X}})$, the variety $\overline{X}$ can be transformed into a strict toroidal embedding $(Y, D_Y)$, where  $D_Y$ is the closure of  $D_V$, and   the complement divisor $E_{V,Y}:=Y\setminus V$ of $V$ in $Y$ is an SNC divisor on $(Y, D_Y)$.
The conditions (1)-(7) follow from Theorem  \ref{th: resolution4}. 
\end{proof}

\begin{remark} Consider a divisor  $D$ with the ''pinch point'' singularity defined by $y^2=x^2z$ in $\AA^3$. 
The natural logarithmic structure on $\AA^3$ defined by the complement of $D$   is not 
fine and saturated (not even coherent). 
Its restriction to  $V:=\AA^3\setminus \{0\}$ defines a toroidal embedding with NC singularities.
 The ''pinch point'' singularity however, cannot be resolved without modifying  the NC locus $V$. 
The open subscheme $V$ is not a   toroidal subset of $(\AA^3, D)$ since it is not a strict toroidal embedding. (see also \cite{B-M-except-I}).
\end{remark}

 




\subsection{Desingularization of varieties except for log smooth locus}

\subsubsection{Extendable  toroidal embeddings}
\begin{definition} \label{trivia} Let $(X, D)$ be a strict toroidal embedding  and $S$ be the induced stratification. For for any stratum $s\in S$ consider the monoid of the effective  Cartier divisors $\Cart(s,S)^+$ on an open  neighborhood $$U_s:=Star(s,S)=\bigcup_{s'\geq  s }{s'}$$ supported on $D_s:=D\cap U_s$. Then  $(X, D)$
  will be called {\it  extendable}  if there exists a {\it  Cartier system } $\Phi=\Phi(X)$, that is, a collection $\Phi=\{\Phi_s\}_{s\in S}$ of finite nonempty subsets $\Phi_s\subset \Cart(s,S)^+$ of effective Cartier divisors on $U_s$ for  $s\in S$, such that
   \begin{enumerate} 
   
   \item  $\Phi_s\subset \Cart(s,S)^+$   generates $\Cart(s,S)^+$ (as monoid).
   \item If $s\leq s'$ then $U_{s'}\subset U_s$, and the induced restriction of the Cartier divisors defines a surjective map $\Phi_s\to \Phi_{s'}$ of the sets.

   \end{enumerate}


\end{definition}





  \begin{example} 
  The SNC divisors on smooth varieties are extendable, with the Cartier systems $\Phi_s$, defined by the  components of the Weil divisors through $s$. 

\end{example}

\begin{lemma} Strict toroidal embeddings with quotient singularities are 
 extendable. Moreover the Cartier systems can be chosen functorial for smooth  morphisms dominating on the strata.
 \end{lemma}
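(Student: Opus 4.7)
The plan is to work locally at each stratum. By the quotient-singularity hypothesis, the toric local model at each $s\in S$ has a \emph{simplicial} cone $\sigma(s)\subset N_\RR$, and $M_s:=\Cart(s,S)^+$ identifies with $(M\cap\sigma(s)^\vee)/(M\cap\sigma(s)^\perp)$ for the character lattice $M=N^\vee$; this is a sharp fs finitely generated monoid which, via the multiplicity vector $\phi\mapsto(\operatorname{mult}_{D_i}\phi)_{D_i\ni s}$, sits as a finite-index submonoid of $\NN^{k(s)}$, where $k(s)$ is the number of components of $D$ through $s$.

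The first key step will be to prove that for $s\leq s'$ the restriction $M_s\twoheadrightarrow M_{s'}$ is \emph{surjective} as a monoid morphism. Using the simplicial splitting $\sigma(s)=\sigma(s')+\tau$, where $\tau$ is spanned by the complementary rays, let $\{f_j\}\subset M\otimes\QQ$ be the rational dual characters of the rays of $\tau$ (so $f_j\in\sigma(s')^\perp$); any $m'\in M\cap\sigma(s')^\vee$ representing an element of $M_{s'}$ will then lift via $m:=m'+\sum_j c_jN_jf_j$, where $N_j$ clears denominators and $c_j\gg 0$ is chosen large enough that $m\in M\cap\sigma(s)^\vee$.

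Next I would construct the Cartier system $\Phi=\{\Phi_s\}$ by descending induction on the poset $S$. Initialize $\Phi_{s_{\max}}:=\{0\}$ at the generic stratum. Assuming $\Phi_{s'}$ defined for all $s'>s$, for each $s'>s$ and each $\phi'\in\Phi_{s'}$ choose a lift $\tilde\phi'\in M_s$ (by the previous step), and then adjust $\tilde\phi'$ by an element of $\ker(M_s\to M_{s'})$ so that its restriction to $M_{s''}$ lies in $\Phi_{s''}$ for every other $s''>s$; this kernel is generated, after integer scaling, by the $f_j$ attached to rays of $\sigma(s)\setminus\sigma(s')$, giving enough freedom to satisfy the finitely many compatibility constraints. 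Supplement this collection of compatible lifts by additional generators of $M_s$, chosen to respect the same compatibility, to obtain $\Phi_s$ satisfying the two conditions of Definition~\ref{trivia}.

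Functoriality for smooth morphisms $f\colon X'\to X$ dominating on strata will follow because the cones and stratification pull back identically, so $f^*\Phi$ remains a Cartier system on $(X',f^{-1}(D))$. The hardest part is the simultaneous compatibility in the inductive step: a single lift $\tilde\phi'$ must restrict correctly to $\Phi_{s''}$ for every $s''>s$, not only the distinguished $s'$. Simpliciality of the cones, which provides the degrees of freedom in $\ker(M_s\to M_{s'})$, is precisely what enables the simultaneous adjustments, and arranging the bookkeeping of generators compatibly at every stratum is where the care lies.
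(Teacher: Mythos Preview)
Your approach differs substantially from the paper's, and the inductive step contains a genuine gap.

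The paper avoids induction altogether: it sets $\Phi_s := \Cart(s,S)^+_{\leq n}$, the finite set of all effective Cartier divisors on $U_s$ whose Weil-component coefficients are each $\leq n$, for a single global bound $n$ chosen large enough that (i) $n \geq n_E$ for every component $E$ of $D$ (where $n_E E$ is the least Cartier multiple) and (ii) each $\Phi_s$ generates the monoid. Restriction $\Phi_s \to \Phi_{s'}$ lands in $\Phi_{s'}$ trivially since coefficients are only dropped, and surjectivity follows by extending any $E_{s'}=\sum m_E E\in\Phi_{s'}$ to an effective Cartier divisor on $U_s$ (your first step) and then subtracting integer multiples of $n_{E'}E'$ from each new component $E'$ to force its coefficient below $n_{E'}\leq n$. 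Because $\Phi_s$ is defined intrinsically from $(X,D)$ and $n$, functoriality for smooth morphisms dominating strata is automatic.

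Your descending induction breaks down at the simultaneous-compatibility step. Given $\phi'\in\Phi_{s'}$ and two further strata $s''_1,s''_2>s$ sharing a ray $\rho$ lying in $\sigma(s)\setminus\sigma(s')$, the adjustment must produce a single coefficient $c_\rho$ so that the resulting restrictions land in both of the already-fixed finite sets $\Phi_{s''_1}$ and $\Phi_{s''_2}$. Nothing in your inductive hypothesis guarantees that the fibers of $\Phi_{s''_1}$ and $\Phi_{s''_2}$ over the data forced by $\phi'$ share a common value of $c_\rho$; having degrees of freedom in $\ker(M_s\to M_{s'})$ does not help when the targets are finite sets frozen at earlier stages of the induction. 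The same issue recurs for the ``additional generators of $M_s$'' you propose to adjoin. Moreover, your construction depends on arbitrary choices of lifts and adjustments, so even if the combinatorics could be made to close up, functoriality would not follow --- whereas the paper's uniform definition sidesteps both problems at once.
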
 
 \begin{proof} Define the  Cartier system $\Phi=\Phi(X):=\{\Phi_s\}$, with 
 $\Phi_s:=\Cart(s,U_s)^+_{\leq n}$ to be the set all the  effective Cartier divisors on $U_s\subset X$ which are linear combinations of their Weil components with the coefficients $\leq n$. 

This defines a  Cartier system  for sufficiently large $n$.
Indeed, for any irreducible Weil  divisor $E$ on $U$, let $n_E$ denote the smallest integer for which $n_E\cdot E$ is Cartier.
Let $n$ be the integer $\geq n_E$, where $E$ ranges over all the components of $D$ and such that each 
$\Phi_s=\Cart(s,S)^+_{\leq n}$ generates $\Cart(s,S)^+$ for each stratum $s$.

If $s\leq {s'}$ then  any effective Cartier divisor $E_{s'}=\sum m_EE$ in $\Phi_{s'}$, defined on $U_{s'}$ extends to  an effective   Cartier divisor $$E_{s}=\sum m_EE+\sum s_{E'}E'$$ in $\Cart(s,S)^+$ by an elementary fact for the cones (See also Example \ref{tr2}), where $E'$ are the Weil component in $U_s\setminus U_{s'}$. Then for any such $E'$   any  multiple  $k\cdot (n_{E'}E')$ is a Cartier divisor and can be subtracted from the effective Cartier divisor $E_{s}$. Consequently, the  coefficient $s_{E'}$ can be adjusted to a new (nonnegative) coefficient $s'_{E'}$ with $s'_{E'}<n_{E'}\leq n$ in the  presentation of the modified Cartier divisor  $$E'_{s}=\sum m_dE+\sum s'_{E'}E'\in \Phi_{s}.$$  
This shows the surjectivity of the map $\Phi_s\to \Phi_{s'}$.
\end{proof}

\begin{example} \label{tr1}
If  $(X, D_X)$ is a strict toroidal embedding for which any effective Cartier divisor on any subset $U_s\subset X$  supported on $D_X\cap U_s$ extends to an effective Cartier divisor on $X$ supported on $D_X$, then $(X, D_X)$ is  extendable. We can simply set $\Psi_{s}$ to be the set of the extensions of generators of $\Cart(s,S)^+$ to $X$ and put $\Psi=\bigcup_{s\in S} \Psi_{s}$. Then we set $\Phi_s$ to be the set of restrictions of $\Psi$ to $U_s$.

  \end{example}

 The conditions for strict extendable toroidal embeddings can  be translated into the language  of the associated conical complexes. (Section \ref{associated complex})

\begin{lemma} A strict toroidal embedding $(X, D)$ is extendable if the associated complex $\Sigma$ \footnote{Definition \ref{complex1}, Theorem \ref{cc}} satisfies the condition:
There exists  a functor $\cL$ associating with each  face $\sigma \in \Sigma$ a finite set of integral linear functions $\cL_\sigma$ from $\sigma \cap N_\sigma$ to $\ZZ_{\leq 0}$, such that 
\begin{enumerate}

\item $\cL_{\sigma}$ generates the monoid  $\sigma^\vee\cap M_\sigma$ of the  integral linear functions which are nonnegative on $\sigma\cap N_\sigma$.
\item The restriction defined by a  face inclusion $i_{\tau\sigma}:\tau\to \sigma$  yields the surjective map of the sets $\cL_{\sigma}\to \cL_{\tau}$.
\end{enumerate}

\end{lemma}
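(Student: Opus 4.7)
The plan is to transport the combinatorial datum $\cL$ on $\Sigma$ directly to a Cartier system $\Phi$ on $X$ via the standard dictionary between the conical complex and the monoids of effective boundary Cartier divisors, as recalled in Section~\ref{conical} and Theorem~\ref{cc}. The central input is the canonical isomorphism
\[
\Cart(s,S)^+ \;\cong\; \sigma^\vee\cap M_\sigma,
\]
associated to the bijection $\sigma\leftrightarrow s(\sigma)$ between faces of $\Sigma$ and strata of $S$: an effective Cartier divisor on $\Star(s,S)$ supported on $D_s=D\cap\Star(s,S)$ is identified with the integral linear function on $\sigma$ recording its orders of vanishing along the toric boundary components. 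Given $\cL$, I set $\Phi_s := \cL_{\sigma(s)}$ via this isomorphism, so $\Phi=\{\Phi_s\}_{s\in S}$ is the candidate Cartier system; finiteness and nonemptiness are automatic since each $\cL_\sigma$ is a finite generating set of the nontrivial monoid $\sigma^\vee\cap M_\sigma$.

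Condition~(1) of Definition~\ref{trivia} is then immediate from hypothesis~(1): since $\cL_\sigma$ generates $\sigma^\vee\cap M_\sigma$, the image $\Phi_s$ generates $\Cart(s,S)^+$. For condition~(2), I use the standard correspondence between the order on strata and the face order on cones: $s\leq s'$ holds precisely when $\sigma(s')$ is a face of $\sigma(s)$, since deeper strata correspond to larger cones. The open inclusion $U_{s'}=\Star(s',S)\hookrightarrow\Star(s,S)=U_s$ then induces the restriction of effective Cartier divisors $\Cart(s,S)^+\to\Cart(s',S)^+$, which under the dictionary is precisely the restriction of nonnegative integral linear functions from $\sigma(s)$ to its face $\sigma(s')$. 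Hypothesis~(2), applied to the face inclusion $i_{\sigma(s')\,\sigma(s)}$, therefore delivers the surjectivity $\Phi_s\twoheadrightarrow\Phi_{s'}$ required by Definition~\ref{trivia}(2).

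Since everything is formal once the dictionary is set up, I expect no serious obstacle in the argument itself. The only nontrivial ingredients are the canonical isomorphism $\Cart(s,S)^+\cong\sigma^\vee\cap M_\sigma$ and its naturality under face inclusions; both are foundational outputs of the KKMS construction developed in Section~\ref{conical}. The main point requiring care is keeping sign conventions consistent (effective versus nonnegative, face containment versus stratum order) with the conventions fixed there, but once this bookkeeping is done the proof is a direct unwinding of definitions.
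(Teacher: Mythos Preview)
Your proposal is correct and matches the paper's approach: the paper gives no explicit proof for this lemma, treating it as an immediate translation of Definition~\ref{trivia} into the language of the associated complex via the isomorphisms $\Cart(s,S)^+\cong(\sigma^\vee)^{\integ}$ and their naturality under restriction (Lemma~\ref{toroidal-cartier}, Corollaries~\ref{inclusion2} and~\ref{inclusion}). Your write-up supplies precisely this dictionary argument, with the order-reversal between strata and cones handled correctly.
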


We shall call such a complex {\it extendable}.

\begin{example} \label{tr2} If $\Sigma$ is a complex with a single
maximal face $\sigma$ then it is extendable. (See also Example \ref{tr1}).
in particular, any strict toroidal embedding $(X, D)$ with a single minimal (closed) stratum is extendable.
\end{example}
\begin{proof} Follows from a,  well-known fact,  that any nonnegative integral linear function  on a face $\tau$ of $\sigma$ extends to a nonnegative integral linear function  on $\sigma$. (See for instance \cite{Fulton}). So one can take $\cL_{\sigma}$ to be the set of extensions of the generators of  all $\tau^\vee\cap M_{\tau}$, where $\tau$ ranges over all faces of $\sigma$. Then we simply put
$\cL_{\tau}$ to be the set of the restrictions of the elements in $\cL_{\sigma}$  to $\tau$.

\end{proof}

\subsubsection{Extension of  log smooth logarithmic structures}
\begin{lemma}\label{extension} Let $(X, D)$ be a variety with a Weil divisor.
 Let $(V, D_V)$ be an extendable toroidal subset   of a variety $(X, D)$ with a Cartier system $\Phi$.
Then there is a canonical projective birational  modification $\tilde{X}$ of $X$ which is an isomorphism on $V$, and a canonical extension $\cM$ of the logarithmic structure on $(V, D_V)$ such that 
 $(\tilde{X},\cM)$ is a  logarithmic variety.
Moreover,  the  logarithmic structure $\cM$ on $\tilde{X}$ is functorial  for smooth morphisms respecting  $V$ and $D$ and a Cartier system $\Phi$.

\end{lemma}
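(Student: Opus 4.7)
The plan is to extend the fine and saturated log structure on $(V, D_V)$ to a canonical birational modification $\tilde{X}$ of $X$ by exploiting the Cartier system $\Phi$ to produce a finite family of Weil divisor data on $X$ that become Cartier after a single blow-up, and then defining $\cM$ via these Cartier divisors and saturation.

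First, recall that on each open star $U_s \subset V$ of a stratum $s$, the log structure on $(V, D_V)$ is given by $\cM_{V|U_s} = \Cart(s,S)^+ \cdot \cO_{U_s}^* \subset \cO_{U_s}$, and by hypothesis the finite subset $\Phi_s \subset \Cart(s,S)^+$ generates the monoid $\Cart(s,S)^+$. For each $E \in \Phi_s$, let $\overline{E}$ denote its closure in $X$, a Weil divisor supported on $D$, and let $\cI_E \subset \cO_X$ be its reflexive ideal sheaf. Since $V$ has finitely many strata and each $\Phi_s$ is finite, the family $\Psi := \{\overline{E} : E \in \bigsqcup_s \Phi_s\}$ is finite.

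Second, I would let $\tilde{X}$ be the normalized blow-up of $X$ at the product ideal $\cI := \prod_{\overline{E} \in \Psi} \cI_E$. Because each $E$ is already Cartier on the appropriate $U_s \subset V$, the restriction $\cI|_V$ is invertible, so $\tilde{X} \to X$ is an isomorphism over $V$. On $\tilde{X}$ every strict transform of an $\overline{E} \in \Psi$ becomes Cartier. Define the sheaf of monoids
\[
\cM := \cO_{\tilde{X}} \cap j_*\cO_{V_0}^*,
\]
where $j : V_0 \hookrightarrow \tilde{X}$ is the immersion of the log-trivial locus $V_0 := V \setminus D_V$. Over each $U_s \subset V$ this obviously agrees with $\cM_V$. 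At a point of $\tilde{X} \setminus V$ lying over a stratum $s$, the monoid $\Phi_s$, viewed through its strict transforms, now consists of Cartier divisors and provides a chart $\Phi_s \to \Gamma(U, \cM)$ on a suitable neighborhood $U$; the surjectivity of $\Phi_s \twoheadrightarrow \Phi_{s'}$ in the Cartier system ensures that charts associated to different strata $s \leq s'$ glue compatibly under restriction. Passing to the saturation yields an fs log structure $\cM$ on $\tilde{X}$.

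Third, canonicity and functoriality follow because every ingredient --- the stratification of $V$, the Cartier system $\Phi$, the closures $\overline{E}$, the product ideal $\cI$, the normalized blow-up, and the formation of $\cO_{\tilde{X}} \cap j_*\cO_{V_0}^*$ --- is constructed without non-canonical choices from the given data, and each pulls back compatibly along smooth morphisms that respect $V$, $D$, and $\Phi$. The main obstacle is verifying coherence of $\cM$ at points of $\tilde{X} \setminus V$: one must check that the chart built from $\{\overline{E}\}_{E \in \Phi_s}$ actually presents $\cM$ as the push-out $\Phi_s^a \leftarrow \alpha^{-1}(\cO^*) \to \cO^*$, which uses both the surjectivity condition in the Cartier system (to match generators on overlapping stars) and the fact that blowing up the single ideal $\cI$ makes enough of the closures simultaneously Cartier to present the stalks of $\cM$.
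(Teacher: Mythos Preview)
Your argument has a genuine gap at the step where you claim the blow-up of $\cI = \prod_{\overline{E} \in \Psi} \cI_E$ is an isomorphism over $V$. You justify this by saying ``each $E$ is already Cartier on the appropriate $U_s \subset V$,'' but that only tells you $\cI_E$ is invertible on the star $U_s$ from which $E$ came. For the \emph{product} to be invertible at a point $p \in V$ lying in a stratum $s$, you need \emph{every} $\cI_{\overline{E}}$ invertible at $p$, including those coming from $\Phi_{s'}$ with $s' > s$. The closure in $X$ of a Cartier divisor on a smaller star $U_{s'}$ is in general only a Weil divisor on the larger star $U_s$, and need not be Cartier there. Concretely, take $V = X = X_\sigma$ where $\sigma$ is the cone over a square with rays $\rho_1,\rho_2,\rho_3,\rho_4$ (this is extendable by Example~\ref{tr2}). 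On the smooth star $U_{s'} = X_{\langle \rho_1,\rho_2\rangle}$ the divisor $D_1$ is Cartier, and it can appear in $\Phi_{s'}$ since the restriction $\Phi_s \to \Phi_{s'}$ sends $D_1 + D_3 \mapsto D_1$. But the closure $\overline{D_1} = D_1$ is \emph{not} Cartier on $X_\sigma$, so your blow-up of $\cI$ genuinely modifies $V$ near the closed orbit.

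The paper avoids this by working \emph{locally} rather than globally. First, a preliminary sequence of blow-ups (separating incomparable stratum closures that meet outside $V$) arranges that every point of $X$ lies in a unique minimal stratum closure, so the strata $s$ on $V$ extend to a stratification $T=\{t(s)\}$ on $X$ with open stars $U_t \supset U_s$. Then on each $U_t$ one blows up only the product $\cI_t = \prod_{D \in \Phi_t} \cO(-D)$, where $\Phi_t$ consists of the closures \emph{in $U_t$} of the elements of $\Phi_s$. The surjectivity condition~(2) in Definition~\ref{trivia} is precisely what makes these local blow-ups agree on overlaps: for $t \le t'$ the set of restrictions $\{D|_{U_{t'}} : D \in \Phi_t\}$ equals $\Phi_{t'}$, so $\cI_t|_{U_{t'}}$ and $\cI_{t'}$ have the same normalized blow-up. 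On $V$ itself each $\cI_t|_{U_s}$ is invertible by construction (the divisors in $\Phi_s$ are Cartier on $U_s$), so $V$ is untouched. The log structure is then defined directly as $\cM_{\widetilde{U}_t} = P(\widetilde{U}_t)\cdot\cO^*$ where $P(\widetilde{U}_t)$ is the monoid generated by the inverse images $\sigma^{-1}(\Phi_t)$; coherence is immediate from this description, and condition~(1) of Definition~\ref{trivia} guarantees the restriction to $V$ recovers the original toroidal log structure. Your attempt to build a single global chart from $\Psi$ bypasses exactly the compatibility mechanism that the Cartier-system axioms are designed to provide.
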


\begin{proof}

 Let $D_X:=\overline{D_V}$ be the closure of the Weil divisor $D_V$.
Let $S$ be the  toroidal stratification  on the toroidal subset $(V, D_V)$. 
For any stratum $s\in S$ on $V$ consider the monoid $\Phi_s\subset\Cart^+(s,S)$ of effective Cartier divisors on $$V_s=\Star(s,S)\subset V.$$ The closures of the divisors in $\Phi_s$ can be thought as Weil divisors on $X$ . 

For any point $x\in X$ let $\overline{s_{1x}},\ldots,\overline{s_{n_xx}}$ be the minimal sets (with respect to inclusion) which are the closures of the strata of $S$ through $x$. Let  $n_x$ be their number. Observe that $n_x=1$ when $x\in V$, as any such point belongs to a unique minimal stratum of $S$. 
Set $$N:=\max \{n_x \mid x\in X\}$$ and denote  by $\cI_{\overline{s_{ix}}}$ the ideals of the subschemes $\overline{s_{ix}}$ and consider the blow-ups at $$\cI_{\overline{s_{1x}}}+\ldots+\cI_{\overline{s_{n_x}}}$$ cosupported on the disjoint (by maximality of $N$) subsets $\overline{s_{1x}}\cap \ldots\cap \overline{s_{n_xx}}$, with $n_x=N$.
In particular, as long as $N>1$ the centers of the blow-ups are disjoint from $V$.
 After each such a blow-up the strict transforms of $\overline{s_{1x}}, \ldots,\overline{s_{n_xx}}$ will have no common intersection, and 
the number $N$ drops. By continuing this process we arrive at the situation with $N=1$ so that each 
point belongs to a unique minimal closure of a single stratum. 

The process does not affect the open subset $V$. 
Moreover, for any two closures of strata $\overline{s}_1$  and $\overline{s}_2$ the intersection $\overline{s}_1\cap\overline{s}_2$ is the union of the closures of the strata, as any point $x\in \overline{s}_1\cap\overline{s}_2$ belong to a unique minimal $\overline{s}\subset \overline{s}_i$, for $i=1,2$.

Now for any $s\in S$ consider  the corresponding locally closed subset  $t=t(s)\in T$ defined as $$t:=\overline{s}\setminus (\bigcup_{s\not\subseteq \overline{s'}\subseteq \overline{s}} \overline{s'})$$

Then $t\cap V=s$,  $\overline{t}=\overline{s}$, and  all subsets $t$ are disjoint. Thus  $T$ is a stratification 
 . Moreover, the  order $\leq$ on $S$ defines the order
on $T$:
$t(s)\leq t(s')$ iff $\overline{s}\subset \overline{s'}$ iff    $s\leq s'$.
 
 So we can write $$t:=\overline{t}\setminus (\bigcup_{t'<t} \overline{t'}).$$ 

For any $t=t(s)\in T$, let $$U_t:=\Star(t,T)=\bigcup_{t\leq t'}{t'}$$ be an open subset of $X$. Then  $U_t\cap V=V_s$.


Denote by $\Phi_t$, with $t=t(s)\in T$  the set of closures in $U_t$ of divisors in $\Phi_s$  in $V_s$, which are Weil divisors on $U_t$. 

 If $t_1\leq t_2$ then, $U_{t_1}\supset U_{t_2}$, and,  by the assumption on $\Phi$,  there is a natural surjective map of the sets of divisors on $U_{t_2}$: $\Phi_{t_1|U_{t_2}}\to \Phi_{t_2}$. 

Consider  the ideal sheaf
 $$\cI_t:= \prod_{D\in \Phi_t}\cO_X(-D)$$ on the open subset $U_t$.
 We define a modification $\overline{X}\to X$ to be the  blow-up of the ideal sheaf $I_t$ on $U_t$. Note that the blow-up of $I_t$ on $U_t$ is equivalent to the composition of the blow-ups at $\cO(-D)$, with $D\in \Phi_t$. If $t\leq t'$ then,  by condition (2) of Definition \ref{trivia},  the set of  of the restrictions   $\cO(-D_{|U_{t'}})$, where $D\in \Phi_{t}$,
coincides with the set of the $\cO(-D')$ with
$ D'\in \Phi_{t'}$.

 Thus the blow ups of the ideals $I_t$ on $U_t$ glue to define a unique transformation $\sigma: \tilde{X}\to X$  which is an isomorphism on $V$.  


Moreover, any divisor $\sigma^{-1}(D)$, where $D\in \Phi_t$ is Cartier on $\widetilde{U}_t$. For any such $\widetilde{U}_t$ consider the monoid $P(\widetilde{U}_t)$ of the effective Cartier divisors on $\widetilde{U}_t$ generated by the inverse image $$\sigma^{-1}(\Phi_t):=\{\sigma^{-1}(D), D\in \Phi_t\}.$$ This induces the  canonical coherent logarithmic structure $$\cM_{\widetilde{U}_t}=P(\widetilde{U}_t)\cdot \cO_{\widetilde{U}_t}^*\subset \cO_{\widetilde{U}_t}$$ on each $\widetilde{U}_t$ which glues to the coherent logarithmic structure $\cM_{\widetilde{X}}\subset \cO_{\widetilde{X}}$ on $\widetilde{X}$.

 By condition (1) of Definition \ref{trivia}, 
the restriction $\sigma^{-1}(\Phi_t)\to \Phi_s$ of  Cartier diviors 
defines a surjective map to the set of generators $\Phi_s$ of the monoid $\Cart(V_s, D_V\cap V_s)$ of the Cartier divisors on $U_s$ supported on $D_V\cap U_s$. Thus the logarithmic structure  $\cM_{\widetilde{X}}$ extends the toroidal logarithmic structure on $V$ induced by $D_V$.

Finally the saturation functor {\it sat} transforms functorially  the logarithmic variety  $(\widetilde{X},\cM_{\widetilde{X}})$ with a  coherent logarithmic structure  into the variety $(\widetilde{X}^{\sat},\cM^{\sat}_{\widetilde{X}})$ with a  fine and saturated logarithmic structure. 


\end{proof}

\subsubsection{Desingularization of varieties except for a toroidal subset}

\begin{theorem} \label{th: resolution6} Let $X$
 be a  variety over a  field $K$ of characteristic zero, and $D_X$ be any Weil divisor on $X$.  Let  $(V, D_V)$ be an open   extendable toroidal subset of $(X, D_X)$\footnote{Definitions \ref{saturation}, \ref{trivia}}. Assume that $D_V$ has  locally ordered components \footnote{Definition  \ref{order}} .
  
   There exists a canonical  resolution of singularities of $(X, D_X)$  except for the toroidal subset $V$ i.e. a  projective birational morphism $f: Y\to X$ such that
 \begin{enumerate}
 \item $f$ is an isomorphism over the open set $V$ .
 
 \item The variety $(Y, D_Y)$ is a strict toroidal embedding, where $D_Y:=\overline{D_{V,Y}}$ is the closure of the divisor $D_V$ in $Y$.
 \item  $(Y, D_Y)$ is the  saturation of $(V, D_V)$\footnote{Definition \ref{saturation}}. In particular, $(Y, D_Y)$ is an extendable toroidal embedding.
 \item The complement $E_{V,Y}:=Y\setminus V$ is a divisor which has simple  normal crossings   with $D_Y$ \footnote{Definition \ref{nc}}. So is the exceptional divisor $E_{\exc}\subseteq E_{V,Y}$.

\item In particular, if $V$ is smooth and $D\cap V$ is an  SNC divisor on a smooth subset $V\subseteq X$ then 
$Y$ is smooth and  $D_Y\cup E_{V,Y}$ is an SNC  divisor.
 
 \item
 $f$ commutes with field extensions  and smooth morphisms respecting the saturated toroidal subset, the order of the components $D_V$, and the Cartier system on $(V, D_V)$.
 \item If $G$ is an algebraic group acting on $(X, D)$ and preserving the open subset $V$, and the components of the divisor $D_V$ on $V$ then the action of $G$ on $X$ lifts to $Y$, and $f:Y\to X$ is $G$-equivariant.

 \end{enumerate}

\end{theorem}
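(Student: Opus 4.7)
The plan is to reduce the theorem to Theorem \ref{th: resolution5} by first canonically extending the logarithmic structure from $(V, D_V)$ to a modification of $X$ via Lemma \ref{extension}.

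First, I would apply Lemma \ref{extension} to the extendable toroidal subset $(V, D_V) \subset (X, D_X)$ equipped with its Cartier system $\Phi$. This produces a canonical projective birational morphism $\sigma \colon \widetilde{X} \to X$, which is an isomorphism over $V$, together with a canonical fine and saturated logarithmic structure $\cM$ on $\widetilde{X}$ whose restriction to $V$ coincides with the toroidal log structure defined by $D_V$. In particular, $(V, D_V)$ sits inside $(\widetilde{X}, \cM)$ as an open toroidal subset whose components remain locally ordered.

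Next, I apply Theorem \ref{th: resolution5} to the logarithmic variety $(\widetilde{X}, \cM)$ together with its toroidal subset $(V, D_V)$. This yields a canonical projective birational morphism $g \colon Y \to \widetilde{X}$, which is an isomorphism over $V$, such that $(Y, D_Y)$ with $D_Y = \overline{D_{V,Y}}$ is the strict toroidal saturation of $(V, D_V)$ in $Y$, and such that $E_{V, Y} = Y \setminus V$ (and in particular the exceptional divisor) has simple normal crossings with $D_Y$. The composition $f := \sigma \circ g \colon Y \to X$ is then the desired morphism. Items (1), (2), (3), (4) are immediate from Theorem \ref{th: resolution5}, using that $\sigma$ is an isomorphism over $V$ so every condition formulated relative to $V$ transfers directly to $X$; the extendability of $(Y, D_Y)$ in (3) follows from Example \ref{tr1} since the closures in $Y$ of the divisors in $\Phi$ provide a global Cartier system extending $\Phi$. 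For (5), if $V$ is smooth and $D_V$ is SNC, then at each point $(V, D_V)$ admits only free coordinates, so its saturation $(Y, D_Y)$ is smooth with $D_Y$ an SNC divisor, and combining with (4) gives that $D_Y \cup E_{V, Y}$ is SNC.

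For the functoriality statements (6) and (7), both steps are functorial for the appropriate smooth morphisms: Lemma \ref{extension} is functorial for smooth morphisms respecting $V$, $D_X$ and the Cartier system $\Phi$, and Theorem \ref{th: resolution5} is functorial for smooth morphisms of logarithmic varieties respecting $V$ and the ordering of the components of $D_V$. Since $\cM$ is built canonically from $\Phi$, smooth morphisms of $(X, V, D_V, \Phi)$ lift canonically to morphisms of $(\widetilde{X}, \cM)$ respecting $V$ and the component ordering, so the two functorialities compose to give (6). Statement (7) is the instance in which the smooth morphisms come from a group action, giving a lift of the $G$-action first to $\widetilde{X}$ and then to $Y$. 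The main obstacle I anticipate is in establishing (7): one must verify that when $G$ preserves $(V, D_V)$ (and a $G$-invariant Cartier system $\Phi$ is assumed, or produced by orbit-completion), the construction of $\widetilde{X}$ in Lemma \ref{extension} really is $G$-equivariant on the nose, so that Theorem \ref{th: resolution5} applied $G$-equivariantly to $(\widetilde{X}, \cM)$ produces a $G$-equivariant $f \colon Y \to X$.
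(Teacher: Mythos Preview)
Your proposal is correct and follows essentially the same two-step approach as the paper: apply Lemma \ref{extension} to canonically extend the logarithmic structure from $(V,D_V)$ to a modification $\overline{X}$ of $X$, then apply Theorem \ref{th: resolution5} to the resulting logarithmic variety. Your write-up is in fact more detailed than the paper's proof, which simply cites these two results; your additional remarks on the extendability in (3) and the functoriality in (6)--(7) are reasonable elaborations of points the paper leaves implicit.
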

\begin{proof} By Lemma \ref{extension}, we extend canonically the logarithmic structure on $(V, D_V)$ to the structure $\cM_{\overline{X}}$ of logarithmic variety on a certain birational modification $\overline{X}$ of $X$ preserving $V$. 
To finish the proof we apply Theorem \ref{th: resolution5} to $(\overline{X},\cM_{\overline{X}})$.

\end{proof}

\begin{remark} The theorem extends  the nonembedded Hironaka desingularization. We consider the zero divisor $D=0$ on a variety $X$, and   the nonsingular locus $V=X^{ns}$. Then  $V=(X,0)^{tor}$ is the toroidal  locus of the  variety $(X,0)$  with a  single nonsingular stratum $V$ on $V$. Let $Y\to X$  be the desingularization except for $V$. 
 Since $Y$ is the saturation of $V$ it is nonsingular with one big smooth stratum $Y$ which is an extension of the unique smooth stratum $V$ on $(V,0)$. Consequently, the resolution $Y\to X$  except for $V$ is simply a canonical Hironaka desingularization which keeps nonsingular locus singular locus $V=X^{ns}$ untouched.
Moreover, the exceptional locus  $E=Y\setminus X^{ns}$ is a simple normal crossing divisor.

\end{remark}

The above theorem,   in very particular,  is closely related to a variant of the Bierstone-Milman desingularization theorem except for SNC locus. (\cite{B-M-except-I}, Theorem 3.4) Consider a reducible divisor $D$ on a
smooth variety $X$. Let $V=(X, D)^{snc}$ will be the locus of point in $X$ where $(X, D)$ is SNC.
 By  Theorem \ref{th: resolution6}, there exists a resolution $(Y, D_Y)$  of $(X, D)$ except for $V$,   which is an isomorphism on $V$ and such that  $D_Y\cup E_{V,Y}$ is SNC.
 
\subsubsection{Toroidal compactifications of toroidal embeddings }

As an immediate corollary from the above we obtain the following result.
\begin{theorem}\label{compactification}
Let $(X, D)$ be a  extendable toroidal embedding over a field of characteristic zero \footnote{Definitions \ref{saturation}, \ref{trivia}}.  There exists a 
toroidal compactification $(\overline{X},\overline{D})$ of $(X, D)$ such that 

\begin{enumerate}
\item $(\overline{X},\overline{D})$ is a complete strict toroidal embedding,  where $\overline{D}$ is the closure of $D$ in $\overline{X}$.
\item $(\overline{X},\overline{D})$ is the  saturation of $(X, D)$ in $(\overline{X},\overline{D})$\footnote{Definition \ref{saturation}}. In
particular it is  extendable (and admits the same singularities as $(X, D)$). 
\item The complement $E:=\overline{X}\setminus X$ of $X$ is a divisor, which has SNC with $\overline{D}$. \footnote{Definition \ref{nc}}
\item If $X$ is quasi-projective then $\overline{X}$ is projective.
\item Moreover, if an algebraic group $G$ acts on $(X, D)$ preserving the components then there exists a $G$-equivariant compactification $(\overline{X},\overline{D})$ satisfying the above properties.\end{enumerate}
\end{theorem}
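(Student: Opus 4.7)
The plan is to reduce the theorem to Theorem \ref{th: resolution6} by first producing any compactification of $X$ and then resolving its boundary while keeping the original toroidal embedding $(X, D)$ intact.

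First I would apply Nagata's compactification theorem to embed $X$ as a dense open subset of a (possibly highly singular) complete variety $X'$, and set $D' := \overline{D} \subset X'$, the closure of $D$. By construction $(X, D)$ is an open extendable toroidal subset of $(X', D')$: extendability is an intrinsic property of $(X, D)$, and $D' \cap X = D$ by definition. After fixing any total order on the (finitely many) irreducible components of $D$, the locally ordered components hypothesis of Theorem \ref{th: resolution6} is fulfilled.

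Next, apply Theorem \ref{th: resolution6} to $(X', D')$ with toroidal subset $V = X$. This produces a projective birational morphism $f \colon \overline{X} \to X'$ that is an isomorphism over $X$, with $(\overline{X}, \overline{D})$ a strict toroidal embedding equal to the saturation of $(X, D)$ in $\overline{X}$, and with the complement $E = \overline{X} \setminus X$ an SNC divisor relative to $\overline{D}$. Since $X'$ is complete and $f$ is proper, $\overline{X}$ is complete, yielding (1); claims (2) and (3) follow directly, and extendability of $(\overline{X}, \overline{D})$ is automatic from it being the saturation of the extendable $(X, D)$.

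For claim (4), if $X$ is quasi-projective one chooses the Nagata compactification $X'$ to be projective (take the closure of a locally closed immersion into $\PP^N$); then $f$ is a projective morphism onto a projective variety, so $\overline{X}$ is projective. For claim (5), use an equivariant version of Nagata's theorem (Sumihiro's theorem for normal varieties with an algebraic group action, or its subsequent extensions) to produce a $G$-equivariant compactification $X'$; the components of $D$ being $G$-stable makes any ordering on them trivially $G$-invariant, and the functoriality of the centers asserted in Theorem \ref{th: resolution6}(6)--(7) with respect to the smooth $G$-translation morphisms then lifts the $G$-action canonically to $\overline{X}$. The principal obstacle I anticipate is precisely this equivariance clause: while Nagata compactification is classical in the non-equivariant setting, producing a $G$-equivariant completion of a (not necessarily normal) variety with a possibly non-connected group action is subtler and requires invoking Sumihiro-type equivariant completion theorems; once such a compactification is in hand, the robust functoriality built into Theorem \ref{th: resolution6} does all the remaining work.
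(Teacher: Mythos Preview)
Your proposal is correct and follows essentially the same approach as the paper: take a Nagata (or projective) compactification $X_0$ with $D_0$ the closure of $D$, apply Theorem~\ref{th: resolution6} to the extendable toroidal subset $(X,D)\subset(X_0,D_0)$, and for the equivariant statement invoke Sumihiro's compactification theorem to obtain a $G$-equivariant $X_0$. The paper's proof is exactly this, stated more tersely.
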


\begin{proof} We  consider the Nagata (\cite{Nagata}) or a projective completion ${X}_0$ of $X$, and let ${D}_0$ be the closure of $D$ in ${X}_0$. 
To finish the proof we apply Theorem \ref{th: resolution6} to the toroidal subset $(X, D)\subset ({X}_0,{D}_0)$. If $G$ acts on $X$ we use the Sumihiro compactification theorem (\cite{Sumihiro}) to construct $X_0$. 

\end{proof}

The above theorem is a natural  extension of a much simpler fact of the compactification of toric varieties due to Sumihiro,(\cite{Sumihiro}) while controlling singularities. 
We give here a strenghtening of the Sumihiro result
\begin{theorem} Let $X$ be a (normal) toric variety containing a torus $T$, and let $D_X:=X\setminus T$ be a toric divisor.
The toric variety $(X, D_X)$, admits an equivariant toric compactification $\overline{X}$ having the same singularities as $X$, and such that
$E:=\overline{X}\setminus X$  is a (toric)  divisor having SNC with the closure $\overline{D_X}\subset \overline{X}$ of $D_X$.
\end{theorem}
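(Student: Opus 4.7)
The plan is to reduce the statement to a purely combinatorial assertion about fans and invoke the relative canonical desingularization of conical complexes developed in Chapters \ref{Desi} and \ref{relative complexes}. By Sumihiro's equivariant compactification theorem, $X$ embeds as a $T$-stable open subset of a complete normal toric variety $X_0$; let $\Sigma \subset \overline{\Sigma}_0$ be the respective fans in $N_\RR$, where $N$ is the cocharacter lattice of $T$. Setting $D_{X_0} := X_0 \setminus T$, the pair $(X_0, D_{X_0})$ is a strict toroidal embedding whose associated conical complex is the fan $\overline{\Sigma}_0$, and $(X, D_X)$ corresponds to the subcomplex $\Sigma$.

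Next, apply the relative canonical combinatorial desingularization algorithm to $\overline{\Sigma}_0$ with the subcomplex $\Sigma$ held fixed. By the canonicity and functoriality of the algorithm (Theorem \ref{can des}, relative form), every star subdivision has its center in a cone of $\overline{\Sigma}_0 \setminus \Sigma$, so $\Sigma$ is left untouched. The output is a complete fan $\overline{\Sigma} \supset \Sigma$ such that, for every cone $\tau \in \overline{\Sigma}$, the primitive generators of the rays of $\tau$ not lying in $\Sigma$ form part of a free $\ZZ$-basis of $N$ modulo the sublattice spanned by those rays of $\tau$ that do lie in $\Sigma$.

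Define $\overline{X} := X_{\overline{\Sigma}}$. Since $\Sigma \subset \overline{\Sigma}$, the open toric subvariety of $\overline{X}$ cut out by $\Sigma$ is $X$, and $\overline{X}$ is a complete $T$-toric variety. The closure $\overline{D_X}$ of $D_X$ in $\overline{X}$ is the union of toric divisors corresponding to rays of $\Sigma$, while the exceptional divisor $E := \overline{X} \setminus X$ is the union of toric divisors corresponding to the rays of $\overline{\Sigma} \setminus \Sigma$; the combinatorial SNC property of Step 2 translates, via Lemma \ref{2}, into $E$ having simple normal crossings with $\overline{D_X}$. Moreover, every cone of $\overline{\Sigma}$ is, up to taking a product with a standard smooth cone, a cone of $\Sigma$, which delivers the ``same singularities as $X$'' claim. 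For the equivariant strengthening, if a group $G$ acts on $X$ preserving the components of $D_X$, we invoke the $G$-equivariant form of Sumihiro's theorem to obtain $X_0$ with a $G$-action extending the $T$-action, and the canonical nature of the combinatorial algorithm immediately yields a $G$-equivariant output.

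The main obstacle is the relative combinatorial input: one must confirm both that the canonical centers chosen by the algorithm always lie in $\overline{\Sigma}_0 \setminus \Sigma$ (so that $X$ is truly preserved) and that the algorithm's ``smooth relative to $\Sigma$'' output correctly encodes the SNC condition of Definition \ref{nc} on $\overline{X}$. This is precisely what is established in the relative version of the canonical desingularization of conical complexes, so once that machinery is in hand, the toric statement follows with essentially no additional work.
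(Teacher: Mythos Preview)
Your argument is correct and follows essentially the same route as the paper: Sumihiro compactification followed by the relative canonical desingularization keeping the fan of $X$ fixed. The paper packages the second step as an application of Theorem~\ref{des toroidal2} to the strict toroidal embedding $(X_0,X_0\setminus T)$ with saturated open subset $V=X$, whereas you invoke the underlying combinatorial result directly; since Theorem~\ref{des toroidal2} is itself proved by applying Theorem~\ref{can des2} to the associated complex, the two are the same. Two small remarks: the relevant reference is Theorem~\ref{can des2} (the relative version), not Theorem~\ref{can des}; and you should note that the algorithm requires a total order on $\Ver(\Sigma)$, which is harmless here since one may simply fix any total order on the finitely many rays of the fan of $X$ (the torus $T$ fixes each component of $D_X$, so any such order is $T$-invariant).
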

\begin{proof} We find  a Sumihiro compactification $X_0\supseteq X$ first.  Note that since the set $X$ is $T$-stable it is the union of the orbits (strata) and thus it is  saturated in $X_0$.
Then  we apply a  $T$-equivariant toroidal desingularization from Theorem \ref{des toroidal2} except for $X$, to the toric compactification $X_0$, and $D=X_0\setminus T$.
\end{proof}

\subsubsection{Desingularization except for locally toric singularities}

\begin{theorem} \label{compactification2}
Let $X$ be a variety over a field 
of characteristic zero. Let $x\in X$ be a point where $X$ has a locally toric singularity \footnote{Definition \ref{locally}}. There exists a projective birational transformation $Y\to X$, which is an isomorphism over a certain neighborhood $U$
of $x\in X$,  and a Weil divisor $D_Y$ on $Y$ such that $(Y, D_Y)$ is a strict toroidal embedding which has  a single closed
stratum, and  all its strata pass through $x\in X$.

\end{theorem}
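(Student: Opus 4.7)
The plan is to reduce the statement to Theorem \ref{th: resolution6} applied to a cleverly chosen small toroidal subset.

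First, using the hypothesis that $X$ is locally toric at $x$, I would select a sufficiently small affine Zariski neighborhood $U$ of $x$ together with a Weil divisor $D_U$ on $U$ so that $(U,D_U)$ is a \emph{strict} toroidal embedding in which $x$ lies in the (necessarily unique) minimal stratum and $U$ coincides with the star of this stratum. Concretely, starting from the étale chart $\widetilde U_0\to X_\sigma$ provided by Definition \ref{locally}, one takes the toric boundary on $X_\sigma$, pulls it back to the étale neighborhood, and descends to $U$; after shrinking $U$ one arranges that $x$ is the only point lying over the closed torus orbit, so each irreducible toric Weil component has exactly one preimage component through $x$ and these descend to normal irreducible Weil divisors on $U$. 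This way every irreducible component of $D_U$ passes through $x$, every stratum of $(U,D_U)$ contains $x$ in its closure, and there is a unique closed stratum through $x$. We then fix an arbitrary total order on the finite set of irreducible components of $D_U$; this yields locally ordered components in the sense of Definition \ref{order}.

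Second, extend $D_U$ to a Weil divisor $D_X$ on $X$ by taking closures of the components of $D_U$. Then $(U,D_U)$ is an open strict toroidal subset of $(X,D_X)$, and since its associated conical complex has a unique maximal face (corresponding to the unique closed stratum), Example \ref{tr2} provides a Cartier system, showing that $(U,D_U)\subseteq(X,D_X)$ is an extendable toroidal subset in the sense of Definition \ref{trivia}.

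Third, apply Theorem \ref{th: resolution6} to $(X,D_X)$ with the toroidal subset $V=U$. This yields a projective birational morphism $f\colon Y\to X$ which is an isomorphism over $U$, such that $(Y,D_Y)$ is a strict toroidal embedding and equals the saturation of $(U,D_U)$ in $Y$. By the definition of saturation (Definition \ref{saturation}), the strata of $(Y,D_Y)$ are in order-preserving bijection with those of $(U,D_U)$ via extension. Consequently the unique closed stratum of $(U,D_U)$ extends to a unique closed stratum $s_0\subset Y$, which contains $x$. Every other stratum $s$ of $Y$ then satisfies $s_0\subseteq\overline s$, so $x\in\overline s$; that is, every stratum of $Y$ passes through $x$, as required.

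The main obstacle is the passage from the étale-local model at $x$ to an honest strict toroidal chart on a Zariski neighborhood $U$ in Step~1 (Definition \ref{locally} is phrased étale-locally, while the theorem demands a strict toroidal structure). The key point is that, for $x$ mapping to the closed orbit of $X_\sigma$, each toric boundary component meets the fiber over $x$ in a single irreducible Weil divisor component, so after shrinking $U$ these components descend to irreducible normal Weil divisors on $U$, forcing $(U,D_U)$ to be strict toroidal. Once this is established, the rest of the proof is a formal application of Theorem \ref{th: resolution6} combined with the bijective correspondence of strata for toroidal saturations.
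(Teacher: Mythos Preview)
Your proposal is correct and follows essentially the same route as the paper: choose a strict toroidal neighborhood $(U,D_U)$ of $x$ with a single closed stratum, note it is extendable by Example~\ref{tr2}, extend the divisor to $X$, and invoke Theorem~\ref{th: resolution6}; the saturation property of the output then forces all strata of $(Y,D_Y)$ to pass through $x$.

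Two minor remarks. First, the ``main obstacle'' you flag is not actually present: in Definition~\ref{locally} the adjective \emph{locally toric} (as opposed to \emph{\'etale locally toric}) already means the chart $\varphi\colon U_x\to X_\sigma$ is defined on a Zariski open neighborhood, so setting $D_U:=\varphi^{-1}(X_\sigma\setminus T)$ makes $(U,D_U)$ a strict toroidal embedding directly by Definition~\ref{strict}; no descent of divisor components from an \'etale cover is needed. Second, the paper replaces $U$ by its toroidal saturation $(V,D_V)$ in $(X,D_X)$ before applying Theorem~\ref{th: resolution6}, whereas you feed $U$ in directly; since the conclusion of Theorem~\ref{th: resolution6} already asserts that $(Y,D_Y)$ is the saturation of the input subset, both choices lead to the same $Y$.
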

\begin{proof} Consider the structure of a toroidal embedding on a neighborhood $(U, D_U)$ of $x$. We can assume (by shrinking $U$ if necessary) that the there exists a unique closed  stratum on $U$ which passes through $x$. This implies that $(U, D_U)$ is extendable, by Example  \ref{tr2}. Let $D_X=\overline{D_U}$ be the closure of $D_U$, and $(V, D_V)\subset (X, D_X)$ be the  saturation. 
 It suffices  to apply Theorem \ref{th: resolution6} to $(V, D_V)\subset (X, D_X)$.
 

\end{proof}

\section{Toroidal embeddings } \label{complexes}
\subsection{Toric varieties}
Let $K$ be a field and $T=\Spec (K[x_1,x_1^{-1},\ldots,x_n, x_n^{-1}])$ be an $n$-dimensional torus over $K$. 
Denote by $$M:={\rm Hom}_{alg.gr.}(T,K^*)=\langle x_1,x_1^{-1},\ldots,x_n, x_n^{-1}\rangle=\{x^\alpha\mid \alpha\in \ZZ^n\} $$ \noindent the lattice of the
group homomorphisms to $K^*$, i.e.  characters of $T$. Then
the dual lattice $$N=
{\rm Hom}_{alg.gr.}(K^*,T)\simeq M^\vee$$ can be 
identified with the lattice of $1$-parameter subgroups of $T$, that is the homomorphisms $$K^*\to T,\quad  t\mapsto (t^{a_1},\quad \ldots, t^{a_n}).$$ 
The
vector space $M^{\bf Q}:=M\otimes{\bf Q}$ is dual to $N^{\bf Q}:=
N\otimes{\bf Q}$. Denote by $(v,w)$  the relevant pairing for $v\in N^{\bf Q}, w\in M^{\bf Q}$.

 By a {\it cone} in this paper 
we  mean a convex set $$\sigma = {\bf Q}_{\geq 0}\cdot
v_1+\ldots+{\bf Q}_{\geq 0}\cdot v_k\subseteq N^{\bf Q}$$ in $N^{\bf Q}$, for some vectors $v_i\in N^{\bf Q}$.  
A cone is {\it strictly convex} if it contains no line. 
For any cone $\sigma\subset N^{\bf Q}$ we denote by $\sigma^\vee$ the dual cone, 
$$\sigma^\vee:=\{m\in M^{\bf Q} \mid (v,m) \geq 0 \,\, {\rm
for\,\, 
any} \,\,\,    v\in \sigma\}\subseteq  M^{\bf Q},$$ and  by $(\sigma^\vee)^{\integ}:=\sigma^\vee\cap M$ the monoid of the integral vectors in  $\sigma^\vee$.

This associates with a cone $\sigma$ a (normal) affine toric variety
 $$X_\sigma:=\Spec(K[(\sigma^\vee)^\integ])\supseteq T.$$



\begin{definition} (see \cite{Danilov},
\cite{Oda}).\label{de: fan} By a {\it fan}
$\Sigma $ in
$N^{\bf Q}$ we mean a finite collection of finitely 
generated strictly convex cones $\sigma$ in $N^{\bf Q}\supset N$ such 
that 

$\bullet$ any face of a cone in $\Sigma $ belongs to $\Sigma$,

$\bullet$ any two cones of $\Sigma $ intersect in a common face. 

By the {\it support} of the fan we mean the union of all
its faces, 
$|\Sigma|=\bigcup_{\sigma\in \Sigma}\sigma$.

If $\sigma$ is a face of $\sigma'$ we shall write $\sigma\preceq\sigma'$.

\end{definition}


To a fan $\Sigma $ there is associated a {\it toric variety}
$X_{\Sigma}\supset {\bf }T$, obtained by gluing $X_\sigma$, where $\sigma\in \Sigma$.

It is  a normal variety on which a torus
$T$ acts effectively with an open dense orbit 
(see \cite{KKMS}, \cite{Danilov}, \cite{Oda}, \cite{Fulton}).
\subsection{The orbit stratification of toric varieties}
\label{orbits}
 To 
each cone $\sigma\in \Sigma$
corresponds an  open affine invariant subset
$X_{\sigma}$ and its unique closed orbit $O_{\sigma}$. 

The orbits form a locally closed smooth stratification,
and $\tau\preceq \sigma$ if and only if $O_\sigma\subset\overline{O_\tau}$ (or in our notation $O_\sigma\leq {O_\tau}$).

\begin{definition}\label{de: star2} Let $\Sigma$ be a fan
and $\tau \in \Sigma$. The {\it star} of the cone $\tau$ is
defined as follows:
$${\rm Star}(\tau ,\Sigma):=\{\sigma \in \Sigma\mid 
\tau\preceq \sigma\},$$ 
\end{definition}

The
orbits in the
closure $\overline{O_\tau}$ of the  orbit $O_\tau$ on the toric variety $X_\Sigma$ correspond to the cones of 
${\rm Star}(\tau ,\Sigma)$. 

Denote by $N^{\bf Q}_\tau$ the subspace of $N^{\bf Q}$ spanned by the cone $\tau$. It defines the lattice  $N_\tau:=N\cap N^{\bf Q}_\tau$. The dual space  to $N^{\bf Q}_ \tau $  is isomorphic to $M^{\bf Q}_ \tau := M^{\bf Q}/ \tau ^\perp,$ where $ \tau ^\perp: =\{v\in M^{\bf Q} \mid (v,w)=0 \mid w\in  \tau \}$, and
 the dual cone to $\overline{ \tau }:=( \tau ,N^{\bf Q}_ \tau )$ is isomorphic to $\overline{ \tau }^\vee:= \tau ^\vee/ \tau ^\perp$ in  $M^{\bf Q}_{\tau}=M^{\bf Q}/\tau ^\perp$.

The subtorus $T_ \tau $ corresponding to the sublattice $N_ \tau :=N\cap N^{\bf Q}_ \tau $ is isomorphic to the stabilizer of the points in $O_ \tau $.

The quotient torus $\overline{T}_ \tau :=T/T_ \tau $ corresponds to the quotient space $\overline{N}^{\bf Q}_ \tau :={N}^{\bf Q}/N^{\bf Q}_ \tau $.
It acts transitively on the big orbit $O_ \tau \subset\overline{O_ \tau }$  making  $\overline{O_ \tau }$ into a toric variety.

Denote by $\pi_\tau: N^{\bf Q} \to \overline{N}^{\bf Q}_\tau$ the projection map.
The toric  subvariety $\overline{O_\tau}\supseteq O_\tau$ corresponds to the quotient fan 
$${\rm Star}(\tau ,\Sigma)/\tau:=\{ \pi_\tau(\sigma) \mid \sigma\in {\rm Star}(\tau ,\Sigma)\}.$$
(see \cite{KKMS}, \cite{Danilov}, \cite{Oda}, \cite{Fulton}).

If  $\sigma\in {\rm Star}(\tau ,\Sigma)$ then 
$$\pi_\tau(\sigma)^\vee=\sigma^\vee\cap \tau^\perp.$$ The natural embedding $\overline{O_\tau}\cap X_\sigma\to X_\sigma$ is given by the projection $$\sigma^\vee\cap M\to \sigma^\vee\cap \tau^\perp\cap M,$$ such that $m\mapsto m$ if $m\in \sigma^\vee\cap \tau^\perp\cap M$, and $m\mapsto 0$ otherwise. This  defines an epimorphism: $$K[(\sigma^\vee)^\integ]\to K[(\pi_\tau(\sigma)^\vee)^\integ],$$ 
identifying the closure of the orbit $\overline{O}_\tau$ with a toric variety.
In particular, we have that

\begin{lemma} \label{normalt} The closures of the toric orbits are normal (toric) varieties.
\end{lemma}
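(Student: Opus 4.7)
The plan is to exhibit $\overline{O_\tau}$ as the toric variety associated to the quotient fan $\mathrm{Star}(\tau,\Sigma)/\tau$ in $\overline{N}^{\mathbf Q}_\tau$, from which normality will follow from the general fact that toric varieties attached to fans of strictly convex rational polyhedral cones are normal.

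First I would cover $\overline{O_\tau}$ by the affine pieces $\overline{O_\tau}\cap X_\sigma$, with $\sigma$ ranging over $\mathrm{Star}(\tau,\Sigma)$. The paragraph preceding the lemma already provides, for each such $\sigma$, the surjection
\[
K[(\sigma^\vee)^{\mathrm{integ}}]\twoheadrightarrow K[(\pi_\tau(\sigma)^\vee)^{\mathrm{integ}}]
\]
sending $m$ to $m$ if $m\in \tau^\perp$ and to $0$ otherwise. This identifies $\overline{O_\tau}\cap X_\sigma$ with the affine toric variety $X_{\pi_\tau(\sigma)}=\mathrm{Spec}\,K[(\pi_\tau(\sigma)^\vee)^{\mathrm{integ}}]$ inside $X_\sigma$. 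Since $\pi_\tau(\sigma)$ is a strictly convex rational polyhedral cone in the lattice $\overline{N}_\tau=N/N_\tau$, the monoid $(\pi_\tau(\sigma)^\vee)^{\mathrm{integ}}$ is finitely generated and saturated in the lattice $\tau^\perp\cap M$, so its semigroup algebra is integrally closed in its field of fractions; thus each affine piece $\overline{O_\tau}\cap X_\sigma$ is normal.

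Next I would check that these affine pieces glue to the toric variety of the quotient fan. For $\sigma_1,\sigma_2\in\mathrm{Star}(\tau,\Sigma)$, the common face $\sigma_1\cap\sigma_2$ also lies in $\mathrm{Star}(\tau,\Sigma)$, and $\pi_\tau(\sigma_1\cap\sigma_2)=\pi_\tau(\sigma_1)\cap\pi_\tau(\sigma_2)$ is a common face of $\pi_\tau(\sigma_1)$ and $\pi_\tau(\sigma_2)$; this is precisely the compatibility that makes $\mathrm{Star}(\tau,\Sigma)/\tau$ a fan in $\overline{N}^{\mathbf Q}_\tau$ and makes the gluing maps for $X_{\mathrm{Star}(\tau,\Sigma)/\tau}$ match the restrictions of the surjections above to the intersections $X_{\sigma_1}\cap X_{\sigma_2}=X_{\sigma_1\cap\sigma_2}$. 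Thus the affine pieces assemble canonically into a morphism $X_{\mathrm{Star}(\tau,\Sigma)/\tau}\to \overline{O_\tau}$ which is an isomorphism on each chart and hence an isomorphism.

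The main (and only) delicate point I expect is the last verification, namely confirming that $\mathrm{Star}(\tau,\Sigma)/\tau$ is a bona fide fan, i.e.\ that the cones $\pi_\tau(\sigma)$ remain strictly convex and that projection commutes with taking faces. Strict convexity of $\pi_\tau(\sigma)$ follows from $\tau$ being a face of $\sigma$ (so that the lineality space of $\sigma$ modulo $N^{\mathbf Q}_\tau$ is trivial), and the face compatibility is standard once one notes $\tau\preceq \sigma_1\cap\sigma_2$. Everything else is then formal, and normality is inherited chart-by-chart from the normality of $X_{\pi_\tau(\sigma)}$.
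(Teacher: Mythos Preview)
Your proposal is correct and follows essentially the same approach as the paper. The paper treats the lemma as an immediate consequence of the discussion preceding it, where the epimorphism $K[(\sigma^\vee)^{\integ}]\to K[(\pi_\tau(\sigma)^\vee)^{\integ}]$ is described and $\overline{O_\tau}$ is identified with the toric variety of the quotient fan $\mathrm{Star}(\tau,\Sigma)/\tau$; you have simply spelled out the verification (saturation of the monoid, strict convexity of the projected cones, gluing compatibility) that the paper leaves implicit.
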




\subsection{Birational morphisms of toric varieties}
\begin{definition}(see \cite{KKMS}, \cite{Oda},
\cite{Danilov}, \cite{Fulton}). A
{\it birational toric morphism}  of toric
varieties $X_\Sigma \to X_{\Delta}$ is a 
morphism identical on $T\subset X_\Sigma, X_{\Delta}$. 
\end{definition}

\begin{definition} (see \cite{KKMS}, \cite{Oda},
\cite{Danilov}, \cite{Fulton}). 
A {\it subdivision} of a fan
$\Sigma$ is a fan $\Delta$ such that $|\Delta|=|\Sigma|$
and any cone $\sigma\in
\Sigma $ is the union of cones $\delta\in
\Delta$. 
\end{definition}

\begin{theorem}(see \cite{KKMS}, \cite{Oda},
\cite{Danilov}, \cite{Fulton})
There exists a bijective correspondence between  proper toric birational morphisms $X_\Sigma \to X_{\Delta}$ and the subdivisions $\Sigma$ of the fan $\Delta$. \end{theorem}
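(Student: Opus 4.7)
The plan is to construct the correspondence in both directions and show they are mutual inverses, using the affine toric dictionary between cones and $T$-equivariant affine varieties together with the valuative criterion of properness.

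First, given a subdivision $\Sigma$ of $\Delta$, I would construct the morphism $X_\Sigma \to X_\Delta$ as follows. For each cone $\sigma \in \Sigma$ there exists (uniquely, up to minimality) a cone $\delta \in \Delta$ with $\sigma \subseteq \delta$, hence $\delta^\vee \subseteq \sigma^\vee$. This induces an inclusion of monoids $(\delta^\vee)^{\integ} \hookrightarrow (\sigma^\vee)^{\integ}$ and a $K$-algebra map $K[(\delta^\vee)^{\integ}] \to K[(\sigma^\vee)^{\integ}]$, giving an affine toric morphism $X_\sigma \to X_\delta \hookrightarrow X_\Delta$ restricting to the identity on $T$. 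The key compatibility to check is that these maps agree on overlaps: if $\sigma_1, \sigma_2 \in \Sigma$ meet in the common face $\sigma_1 \cap \sigma_2$, their images in $X_\Delta$ agree on $X_{\sigma_1 \cap \sigma_2}$ because both factor through the map induced by $\delta^\vee \subseteq (\sigma_1 \cap \sigma_2)^\vee$. Thus the local morphisms glue to a global toric birational morphism $f_\Sigma \colon X_\Sigma \to X_\Delta$.

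Next, I would verify that $f_\Sigma$ is proper. Since $X_\Sigma$ is a $T$-variety and $f_\Sigma$ is $T$-equivariant, I use the valuative criterion applied to $1$-parameter subgroups (equivalently, to DVRs with residue field the algebraic closure of $K$). A $1$-parameter subgroup $\lambda_v \colon \gm \to T$ for $v \in N$ extends to $X_\Sigma$ (resp.\ $X_\Delta$) exactly when $v \in |\Sigma|$ (resp.\ $v \in |\Delta|$), and since $|\Sigma| = |\Delta|$, any valuation that extends to $X_\Delta$ has a unique lift to $X_\Sigma$; combined with separatedness of toric varieties this gives properness.

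For the converse direction, suppose $f \colon X_\Sigma \to X_\Delta$ is a proper toric birational morphism. For each $\sigma \in \Sigma$, the closed orbit $O_\sigma$ maps into some orbit $O_\delta$ of $X_\Delta$, and standard toric yoga (the correspondence between affine $T$-stable opens and cones) forces $X_\sigma$ to map into $X_\delta$, which at the level of cones means $\sigma \subseteq \delta$. Thus every cone of $\Sigma$ is contained in a cone of $\Delta$, so $|\Sigma| \subseteq |\Delta|$. Properness combined with the valuative criterion applied to $1$-parameter subgroups gives the reverse inclusion $|\Delta| \subseteq |\Sigma|$: any $v \in |\Delta|$ has $\lim_{t \to 0} \lambda_v(t)$ defined in $X_\Delta$, hence by properness lifts to $X_\Sigma$, which forces $v \in |\Sigma|$. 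Hence $|\Sigma| = |\Delta|$ and each $\delta \in \Delta$ is covered by the cones of $\Sigma$ contained in it; the fan axioms on $\Sigma$ then promote this to the subdivision condition.

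Finally, the two constructions are mutually inverse: starting from a subdivision and passing to $f_\Sigma$ and back recovers the same collection of containments $\sigma \subseteq \delta$; starting from $f$ and constructing $f_{\Sigma}$ from the resulting subdivision recovers $f$ because a toric morphism is determined by its restriction to $T$, which is the identity in both cases. The main obstacle is the properness step: one must carefully justify that support equality, not just containment of fans, is both necessary and sufficient for $f$ to be proper, and this is where the valuative criterion via $1$-parameter subgroups does the real work.
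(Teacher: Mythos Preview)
Your proposal is correct and follows the standard argument found in the cited references. Note that the paper itself does not actually prove this theorem: it simply cites \cite{KKMS}, \cite{Oda}, \cite{Danilov}, \cite{Fulton} and remarks that the statement, originally proved over an algebraically closed field, remains valid over an arbitrary base field with unchanged proof. So there is nothing in the paper to compare against beyond the references, and your outline is precisely the classical one.

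One small point worth tightening: when you invoke the valuative criterion only for $1$-parameter subgroups, you are implicitly using that for a $T$-equivariant morphism between normal toric varieties it suffices to test properness on $T$-invariant valuations (equivalently, on integral points of $N$). This reduction is standard --- it follows from $T$-equivariance and the fact that every point has a $T$-orbit closure meeting a $T$-fixed affine chart --- but it is the one step a careful reader might ask you to spell out.
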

The theorem was originally stated over algebraically closed field but remains valid without this assumption with unchanged proof.

\subsection{Toroidal embeddings} \label{toroidal embeddings}

Toroidal embeddings were introduced in \cite{KKMS} over an algebraically closed field.
The following definition over an arbitrary nonclosed field is essentially due to Mumford. It is equivalent to the definition of Kato (\cite{Kato-log}), who 
considered toroidal embeddings  in a more general context of logarithmic geometry (and refer to them as logarithmically smooth varieties). It is also equivalent to  another Mumford's  definition over a base field which is algebraically closed.

\begin{definition} \label{strict}  A {\it 
strict toroidal embedding} (respectively toroidal embedding)  is  a variety $X$ with an open subset
$U$ such that any point  $x\in X$ admits an open neighborhood $V\subset X$ (respectively an \'etale neighborhood $f: V\to X$), and an \'etale morphism $\phi: (V,U_V)\to (X_\sigma,T)$,  where $U_V=U\cap V$ (respectively $U_V=f^{-1}(U)$, and $\phi^{-1}(T)=U_Y$. 
Such a morphism is called an \'etale {\it chart}.
(In the sequel, and prequel we  often represent a toroidal embedding $(X,U)$ as $(X, D)$ for the reduced divisor $D=X\setminus U$.)
\end{definition}

The   definition and Lemma \ref{normalt} imply immediately that
\begin{lemma}
\label{normal0}  The irreducible components of the divisor $D=X\setminus U$ on a strict toroidal embedding are strict
toroidal embeddings. In particular, they are  normal.
\end{lemma}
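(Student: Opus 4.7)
Let $E$ be an irreducible component of $D = X\setminus U$, and define the candidate open subset of $E$ to be
\[
U_E \;:=\; E \setminus \bigcup_{E' \neq E} E',
\]
where $E'$ ranges over the other irreducible components of $D$. I will verify that $(E, U_E)$ satisfies Definition~\ref{strict}, i.e.\ is a strict toroidal embedding. Normality of $E$ will follow immediately, since strict toroidal embeddings are (by construction) étale-locally the pullback of a normal toric variety, and étale morphisms preserve normality.

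The local model is the essential observation: on the affine toric variety $X_\sigma$ the irreducible components of $D_\sigma := X_\sigma\setminus T$ are exactly the closures $\overline{O_{\rho_i}}$ of the codimension-one orbits, indexed by the rays $\rho_i$ of $\sigma$. By Lemma~\ref{normalt} each $\overline{O_{\rho_i}}$ is itself a normal toric variety with open torus $O_{\rho_i}$, so $(\overline{O_{\rho_i}}, O_{\rho_i})$ is a strict toroidal embedding. Now, given $x\in E$, pick a Zariski chart $\phi\colon V\to X_\sigma$ with $x\in V$ and $\phi^{-1}(T) = U_V$. Since $V$ is open and $E$ is irreducible, $E\cap V$ is open in $E$, hence irreducible and dense; it is contained in $D_V = \phi^{-1}(D_\sigma)$, so $\phi(E\cap V)$ is an irreducible subset of $D_\sigma$ of the same dimension as $\overline{O_{\rho}}$ for some ray $\rho$ (using that $\phi$ is dimension-preserving). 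Consequently $\phi(E\cap V)$ is dense in a unique component $\overline{O_\rho}$ of $D_\sigma$.

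To produce the étale chart for $(E, U_E)$, restrict $\phi$ along the closed immersion $\overline{O_\rho}\hookrightarrow X_\sigma$: the pullback $\phi^{-1}(\overline{O_\rho})\to\overline{O_\rho}$ is étale, hence normal (since $\overline{O_\rho}$ is normal), so its irreducible components coincide with its connected components and each maps étale onto $\overline{O_\rho}$. The subset $E\cap V$, being irreducible, closed in $V$, and mapping dominantly onto $\overline{O_\rho}$ with matching dimension, is one such component. Thus $\phi|_{E\cap V}\colon E\cap V \to \overline{O_\rho}$ is étale. Finally, the preimage of $O_\rho$ inside $\overline{O_\rho}$ is $\overline{O_\rho}\setminus \bigcup_{i\neq\rho}\overline{O_{\rho_i}}$, and its preimage inside $E\cap V$ is precisely $U_E\cap V$, because any other irreducible component of $D$ meeting $V$ must map dominantly into some $\overline{O_{\rho_i}}$ with $i\neq\rho$. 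This yields the required chart $(E\cap V,\,U_E\cap V)\to(\overline{O_\rho},\,O_\rho)$ at $x$, and in particular, $E\cap V$ is normal as an étale pullback of the normal variety $\overline{O_\rho}$; since normality is étale-local, $E$ is normal.

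The main thing to check carefully is the bookkeeping of components: one must verify that under $\phi$ an irreducible component of $E\cap V$ maps to a single $\overline{O_\rho}$ (rather than across several), and that the globally defined complement $U_E$ restricts chart-locally to the preimage of the open orbit $O_\rho$. Both facts rely on dimension counting via étaleness together with the identification $O_\rho = \overline{O_\rho}\setminus\bigcup_{i\neq\rho}\overline{O_{\rho_i}}$; no deeper ingredient than Lemma~\ref{normalt} is needed.
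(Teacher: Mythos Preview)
Your argument is correct and follows the same route as the paper's (terse) proof: both use that $\phi^{-1}(\overline{O_\rho})$ is normal, being the \'etale pullback of the normal toric orbit closure from Lemma~\ref{normalt}, so its irreducible and connected components coincide, whence $E\cap V$ is open-and-closed in it and inherits an \'etale chart to the toric variety $(\overline{O_\rho},O_\rho)$. One small correction in your final bookkeeping: another component $E'\neq E$ meeting $V$ \emph{can} map to the same ray $\rho$, but then $E'\cap V$ and $E\cap V$ are distinct connected components of the normal scheme $\phi^{-1}(\overline{O_\rho})$ and are therefore disjoint on $V$, so such $E'$ contribute nothing and the identification $(\phi|_{E\cap V})^{-1}(O_\rho)=U_E\cap V$ still holds (the reverse inclusion, which you do not spell out, follows by the symmetric argument).
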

\begin{proof}
 By Lemma \ref{normal}, the components of  a  toric divisor are normal. On the other hand, the inverse images of the components of a  toric divisor under \'etale morphism, are also normal and thus, in particular, locally irreducible. Hence the irreducible components of $D$ are locally the inverse images of  irreducible components of a toric divisor, and thus are normal. Hence they are \'etale equivalent to the components of the toric divisor, which are, by Lemma \ref{normal}, isomorphic to toric varieties.
\end{proof}

\begin{remark}
 The irreducible components of the divisor $D$ on toroidal embeddings are not necessarily normal, as those components may   admit self-intersections.
\end{remark}

\begin{remark} \label{equ}
\begin{enumerate}
\item Equivalently, one rephrases  Definition \ref{strict} of toroidal embeddings in the language of the completion of local rings at $K$-points: 
A variety $X$ over an algebraically closed field $K=\overline{K}$ with an open subset
$U$  is 
 a {\it toroidal embedding} if  
 for any point $x\in X$ there is an isomorphism
$$\widehat{\phi}: \widehat{\cO_{X,x}}\to \widehat{\cO(X_{\sigma})_y},$$ where $X_{\sigma}$ is  a toric variety containing a torus $T$ and corresponding to the cone $\sigma$ of the maximal dimension and $y\in O_\sigma$ is a  closed  point in the orbit $\cO_\sigma$, and  $\widehat{\phi}$ takes  the ideal of $X\setminus U$ to that of $X_\sigma\setminus T$. 
The above definition is also due to Mumford \cite{KKMS}. 

\item A toroidal embedding is {\it strict} (or without self-intersections) if,
additionally,  the irreducible components of the divisor $D=X\setminus U$ are normal (so they do not have self-intersections). (see \cite{KKMS}, and  Lemma \ref{Mum2}).
This characterization  of strict toroidal embeddings is due to Mumford \cite{KKMS}. 
 Later it was proven over any field not necessarily algebraically closed, in particular, in \cite{Denef} (see also Lemma \ref{Mum2}). 
 
 \item The theory of toric, and toroidal varieties was initially considered over algebraically closed fields, and mostly in the language of the completions of the local rings. As was observed by Kato in \cite{Kato-log}, most of the results can be extended to the case of nonclosed fields in a  more convenient language of  logarithmic geometry which uses  charts in the Zariski  or \'etale topology and does not require assumption on the algebraically closed base field. 
 \end{enumerate}
 \end{remark}


\subsection{Conical complexes}\label{conical}

The notion of the {\it conical complex associated with  a strict toroidal embedding} is a  natural extension of the fan associated with a toric variety.

The following definition of the conical complex is equivalent to the original one  from \cite{KKMS}. We use this formalism since in the later sections we are going to consider a variation of this notion in a more general setting of semicomplexes. (Definition \ref{semii})

\begin{definition} \label{complex1}

 By a {\it conical  complex}
$\Sigma $  we mean a finite partially ordered  set of finitely 
generated strictly convex cones $\sigma$ of maximal dimension in $N^{\QQ}_\sigma\supset N_\sigma$ such that 

\begin{enumerate}


\item For any  $\tau\preceq \sigma$ there is a linear injective  map $i_{\tau,\sigma}:\tau \to \sigma$ such that
$i_{\tau,\sigma}(\tau)$ is a face of $\sigma$, with the lattice  $i_{\tau,\sigma}(N_\tau)$ saturated in $N_\sigma$.  Moreover, each face of $\sigma$ can be presented in such a form.

\item If $\tau\preceq\sigma\preceq \delta$ then $i_{\tau\delta}=i_{\sigma\delta}i_{\tau\sigma}$.
\end{enumerate}

\end{definition}

The definition implies that the intersection of 
two cones is 
the union of common faces.


\subsection{Support of a complex}
By the {\it support} of a complex $\Sigma$ we mean the topological space
 
$$|\Sigma|:=\coprod_{\sigma\in \Sigma}\sigma/\sim $$ 
where $\sim$ is the equivalence relation generated by the inclusions $i_{\tau\sigma}: \tau\to \sigma$.

There is an inclusion $\phi_\sigma: \sigma\to |\Sigma|$, onto the closed subset  $|\sigma|\subset |\Sigma|$ homeomorphic to $\sigma$. 

Denote by $\inte(\sigma)$ the relative interior of the cone $\sigma$. This means the interior of $\sigma$ in $N^{\QQ}_\sigma$.
There is an inclusion 
$\inte(\sigma)\to |\Sigma|$ onto a locally subset $|\inte(\sigma)|$ which allows to write the support of $ |\Sigma|$ as the disjoint  union 
$$|\Sigma|:=\bigcup_{\sigma\in \Sigma}\inte(\sigma)$$
In general,  by the {\it support} of  any subset $\Sigma_0$ of a complex $\Sigma$ is defined as $$|\Sigma_0|:=\bigcup_{\sigma\in \Sigma_0} \inte(\sigma)$$

\subsection{Mumford's definition of complexes}
Using the above we see that the conical complexes  define  topological  spaces that are covered by the closed cones. This allows to define conical complexes  as topological spaces with a local cone structure. 

\begin{definition}(\cite{KKMS} \cite{Payne}) A conical  complex $\Sigma$ is a topological space $|\Sigma|$  together with a finite collection $\Sigma$ of closed subsets $|\sigma|\in \Sigma$ of $|\Sigma|$ 
such that
\begin{enumerate}
\item For each $|\sigma|$ there is  a finitely generated lattice $M_\sigma$ of continuous functions on $|\sigma|$, and the dual lattice  $N_\sigma = Hom(M,\ZZ)$ in the vector space $N^{\bf Q}_\sigma=N_\sigma\otimes Q$.

\item The natural map $\phi_\sigma: |\sigma| \to N^{\bf Q}_\sigma$  given by $x\mapsto (u\to u(x))$ maps $\sigma$ homeomorphically onto a rational convex  cone $\sigma:=\phi_\sigma(|\sigma|)$. 

\item  The inverse image  
under $\phi_\sigma$
 of each face   of $\sigma$ is some $|\tau| \in \Sigma$, with $|\tau|\subset |\sigma|$ and $M_\tau = \{u_{|\tau}  : u \in M_\sigma\}$.
\item  The topological space $|\Sigma|$ is the disjoint union of the relative interiors of the $|\sigma|\in \Sigma$.

\end{enumerate}

\end{definition}

Identifying $\sigma$ with $|\sigma|$ we obtain the natural maps $i_{\tau\sigma}:=\phi_\sigma\phi_\tau^{-1} : \tau \to \sigma$ satisfying the conditions from Definition \ref{complex1}.

\subsection{Conical complexes associated with strict toroidal embeddings}

\subsubsection{Mumford's lemma}

The following useful results are essentially due to Mumford.
\begin{lemma} \label{Mum}\cite{KKMS}, \cite{Denef} Let $f: (X, D_X)\to (Y, D_Y)$ be an  \'etale morphism of normal varieties, 
mapping a point $x\in X$ to a point $y=f(x)\in Y$, and such that
\begin{enumerate}
\item The components of $D_Y$ are normal.
\item  $f^{-1}(D_Y)=D_X$.
\end{enumerate}
Then $f$ defines a bijective correspondence between
the components of $D_X$ through $x$ 
and the components of $D_Y$ through $y$. 
The above correspondence extends to a bijective correspondence  between the effective Cartier  divisors on $\Spec(\cO_{X,x})$ supported on $D_X$  and  
and the effective Cartier  divisors on $\Spec(\cO_{Y,y})$ supported on $D_Y$. 
\end{lemma}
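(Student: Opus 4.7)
The plan is to split the proof into two parts: first, establishing the bijection between components; second, extending it to Cartier divisors. Both parts rest on two key properties of the étale morphism $f$: (a) the induced local homomorphism $\mathcal O_{Y,y} \to \mathcal O_{X,x}$ is faithfully flat (flat since $f$ is étale, hence faithfully flat as a local homomorphism), and (b) étale pullbacks preserve normality.

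For the bijection of components, fix a component $D_{Y,i}$ of $D_Y$ through $y$. By hypothesis $D_{Y,i}$ is normal, hence locally irreducible at $y$. The preimage $f^{-1}(D_{Y,i})$ is étale over the normal scheme $D_{Y,i}$, hence normal; in particular it is locally irreducible at $x$, giving a unique irreducible component $D_{X,i}$ through $x$. Since étale morphisms preserve codimension, $D_{X,i}$ has codimension $1$ in $X$, and thus is an irreducible component of $D_X = f^{-1}(D_Y) = \bigcup_j f^{-1}(D_{Y,j})$. Conversely, any component $C$ of $D_X$ through $x$ lies in some $f^{-1}(D_{Y,j})$, and by local unibranchness of $f^{-1}(D_{Y,j})$ at $x$ we conclude $C = D_{X,j}$. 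This establishes the bijection $D_{Y,i}\leftrightarrow D_{X,i}$.

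For the Cartier divisor statement, let $\mathfrak{p}_i \subset \mathcal O_{Y,y}$ and $\mathfrak{q}_i \subset \mathcal O_{X,x}$ denote the prime ideals corresponding to $D_{Y,i}$ and $D_{X,i}$ respectively. The local irreducibility argument above says precisely that the radical ideal $\mathfrak{p}_i\mathcal O_{X,x}$ (radical because $\mathcal O_{Y,y}/\mathfrak p_i \to \mathcal O_{X,x}/\mathfrak p_i\mathcal O_{X,x}$ is étale, hence reduced over reduced) has a unique minimal prime at $x$, namely $\mathfrak{q}_i$; since $\mathcal O_{Y,y}\to\mathcal O_{X,x}$ is flat, heights are preserved and $\mathfrak p_i\mathcal O_{X,x}$ is itself a height-one prime, so in fact $\mathfrak{p}_i\mathcal O_{X,x} = \mathfrak{q}_i$. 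Similarly, symbolic powers commute with the flat base change, giving $\mathfrak p_i^{(n)}\mathcal O_{X,x} = \mathfrak q_i^{(n)}$. Since an effective Cartier divisor on a normal local scheme is determined by its associated Weil divisor together with the condition that the divisorial ideal $\bigcap \mathfrak p_i^{(n_i)}$ is principal, the pullback $f^\ast$ induces an injection from effective Cartier divisors on $\Spec\mathcal O_{Y,y}$ supported on $D_Y$ into those on $\Spec\mathcal O_{X,x}$ supported on $D_X$, via $\sum n_i D_{Y,i}\mapsto \sum n_i D_{X,i}$.

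For surjectivity, suppose $\sum n_i D_{X,i}$ is Cartier at $x$; that is, the divisorial ideal $I_X = \bigcap \mathfrak q_i^{(n_i)}\subset \mathcal O_{X,x}$ is principal. Setting $I_Y = \bigcap \mathfrak p_i^{(n_i)}\subset\mathcal O_{Y,y}$, flatness gives $I_Y\otimes_{\mathcal O_{Y,y}}\mathcal O_{X,x} = I_X$. Faithfully flat descent of invertibility then forces $I_Y$ to be an invertible ideal in $\mathcal O_{Y,y}$, and in a local ring invertible means principal; hence $\sum n_i D_{Y,i}$ is Cartier at $y$. The main obstacle is this last descent step, together with the identification $\mathfrak p_i\mathcal O_{X,x}=\mathfrak q_i$: these require care because they intertwine the geometric input (normality of the components of $D_Y$) with the commutative algebra of étale morphisms, and without the normality hypothesis the preimage $f^{-1}(D_{Y,i})$ could split locally into several components, destroying both the bijection and the Cartier correspondence.
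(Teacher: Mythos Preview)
Your proof is correct. The bijection of components is argued exactly as in the paper (\'etale pullback of normal is normal, hence unibranch). For the Cartier correspondence, however, you take a genuinely different route. The paper passes to completions, using the \'etale identification $\widehat{\cO_{X,x}}\simeq \widehat{\cO_{Y,y}}\otimes_{k(y)}k(x)$, and then works in the associated graded ring: it argues that the initial ideal of the principal ideal $\cI_{E_X}$ is principal, that a lowest-degree generator can be chosen as $\inn(f)$ for some $f\in\cI_{E_Y}$, and that this $f$ then generates $\cI_{E_Y}$ after descending through faithful flatness of completion. You instead identify the divisorial ideal on each side as a finite intersection of symbolic powers $\bigcap \mathfrak p_i^{(n_i)}$, check that these symbolic powers commute with the flat (unramified) base change, and then appeal to faithfully flat descent of invertibility to conclude that $I_Y$ is principal. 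Your argument is more conceptual and sidesteps any implicit domain assumption on the associated graded; the paper's argument, following \cite{KKMS} and \cite{Denef}, is more explicit in that it actually locates the principal generator inside $\cI_{E_Y}$.
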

\begin{proof} 
The correspondence between Weil or Cartier diviors can be deduced by  adapting the arguments of the proof of \cite[Lemma 1, p.60]{KKMS}, or more precisely the proof of \cite[Lemma 2.3]{Denef}. 

 First, the bijective correspondence between the irreducible components of Weil divisors through $x$ and $y$ follows from the fact that the inverse image of any irreducible component is normal so is locally an irreducible component.  In other words, any component of $D_Y$ through $y$ induces a unique component of $D_X$ through $x$, which is locally its inverse image.
  
 Moreover, since the morphism $\Spec(\cO_{X,x})\to \Spec(\cO_{Y,y})$ is  flat and even \'etale the schematic pull-back of a divisor is a divisor. Thus there is 
 a bijective correspondence between the Weil divisors of $\Spec(\cO_{X,x})$ supported on $D_X$ and those of $\Spec(\cO_{Y,y})$ supported on $D_Y$.


We need to show that the image $E_Y$ of an effective Cartier divisor $E_X$  on $\Spec(\cO_{X,x})$ supported on  $D_X$ is  Cartier on $\Spec(\cO_{Y,y})$. 
We will show that $\cI_{E_Y}$ is principal at a point $y$.



By the assumption, $\cI_{E_X}=\cI_{E_Y}\cdot \cO_X$  is principal so $\widehat{\cI_{E_X}}=\cI_{E_Y}\cdot \widehat{\cO_{X,x}}$  is  principal in  $\widehat{\cO_{X,x}}=\widehat{\cO_{Y,y}}\otimes_{k(y)}k(x)$ as well. This, in turn, implies that the ideal of the initial forms in the graded ring $$\inn(\cI_{E_X})=   
\inn(\widehat{\cI}_{E_X})
\subset \cO_{X}/m_x\oplus\ldots\oplus m_x^n/m_x^{n+1}\oplus \ldots$$ is principal and generated by any homogenous form of the lowest degree. Such a generating initial form can be chosen as $\inn(f)$, where $f\in \cI_{E_Y}$. Then  for any  $g\in \cI_{Y,y}$, with
$\inn(g)\in \inn(\cI_{Y,y})$ one can write as $\inn(g)=H\cdot \inn(f)$, with $H\in \inn(\cO_{X,x})=\inn(\cO_{Y,y})\otimes_{k(y)}k(x)$. This implies  that all the coefficients of $H$ are in $k(y)$, and $H\in \inn(\cO_{Y,y})$. So $\inn_y(\cI_{E_Y})\subset \inn(\cO_{Y,y})$ is principal and generated by $\inn(f)$. 
Thus  $\cI_{E_Y}\cdot \widehat{\cO_{Y,y}}$  is principal in $\widehat{\cO_{Y,y}}$ and generated by $f\in \cI_{E_Y}$. 
 Now by faithful flatness of the completion of local ring the ideal $\cI_{E_Y}=f\cdot \cO_{Y,y}$ is principal in $\cO_{Y,y}$.

\end{proof}

The following result shows that the Mumford  condition on the normality of the components can be used to detect strict toroidal embeddings.

\begin{corollary}\label{Mum2}\cite[page 195 footnote]{KKMS}

Let $(Y, D)$ be a toroidal embedding.  Then $(Y, D)$ is a strict toroidal embedding iff  the divisor $D$ has normal components. 
\end{corollary}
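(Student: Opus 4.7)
The forward direction is exactly Lemma \ref{normal0}. For the converse, assume $(Y,D)$ is a toroidal embedding and that every irreducible component of $D$ is normal; I would show that at each point $y\in Y$ the \'etale-local chart furnished by the toroidal structure descends to a Zariski-local chart in the sense of Definition \ref{strict}. Fix $y$ and choose an \'etale neighborhood $g\colon (U,\bar y)\to (Y,y)$ together with an \'etale morphism $\phi\colon U\to X_\sigma$ satisfying $\phi^{-1}(T)=U\setminus D_U$, where $D_U=g^{-1}(D)$. The plan is to descend $\phi$ to an \'etale morphism from a Zariski open neighborhood $V\subset Y$ of $y$ into $X_\sigma$ via Mumford's Lemma \ref{Mum}.

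Applying Lemma \ref{Mum} to $g$ (valid because $D$ has normal components and $g^{-1}(D)=D_U$) yields a bijection between effective Cartier divisors on $\Spec(\cO_{U,\bar y})$ supported on $D_U$ and those on $\Spec(\cO_{Y,y})$ supported on $D$. On the $U$-side, $\phi^*$ identifies this monoid with $P:=\sigma^\vee\cap M$, since the torus-invariant effective Cartier divisors on $X_\sigma$ are precisely the divisors of the monomials $x^m$ for $m\in P$. Hence, after passing to a small enough Zariski neighborhood $V$ of $y$, every generator $m$ of $P$ is represented by a regular function $f_m\in\Gamma(V,\cO_V)$, well-defined up to a unit, such that $g^*(f_m)$ and $\phi^*(x^m)$ define the same Cartier divisor on $g^{-1}(V)$; equivalently, $g^*(f_m)=u_m\,\phi^*(x^m)$ for some $u_m\in\cO_{g^{-1}(V)}^*$.

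The crux of the argument is to arrange the $f_m$'s so that $m\mapsto f_m$ is an actual monoid homomorphism $P\to\Gamma(V,\cO_V)$; the associated morphism to $\Spec K[P]=X_\sigma$ will then be the desired chart. The multiplicative identity $\phi^*(x^m)\phi^*(x^{m'})=\phi^*(x^{m+m'})$ combined with the formula $g^*(f_m)=u_m\phi^*(x^m)$ forces $g^*(f_mf_{m'}/f_{m+m'})=u_{m+m'}/(u_mu_{m'})$, so the ratio $f_mf_{m'}/f_{m+m'}$ is itself a unit on $V$ whose $g$-pullback lies in $\cO_{g^{-1}(V)}^*$. This yields a $2$-cocycle on the finitely generated monoid $P$ with values in $\cO_{V,y}^*$, and the plan is to trivialize it by rescaling each $f_{m_i}$ (for $m_1,\ldots,m_k$ a chosen set of generators) by suitable units $w_i\in\cO_{V,y}^*$; the existence of such $w_i$ reduces to solving finitely many multiplicative equations indexed by the syzygies of $P$ and can be arranged after further shrinking $V$. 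I expect this rigidification step to be the main technical obstacle, because the unit ambiguity is intrinsic to the Cartier-divisor descent provided by Lemma \ref{Mum}.

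Once a monoid homomorphism $P\to\Gamma(V,\cO_V)$ is in hand, it assembles into a morphism $\phi'\colon V\to X_\sigma$. By construction $\phi'\circ g$ differs from $\phi$ by composition with a $T$-valued automorphism of $X_\sigma$ (the multiplicative character $m\mapsto u_m$, which after trivialization extends to a character of $M^{\gp}$); such automorphisms preserve the open torus $T$, so $\phi'^{-1}(T)=V\setminus(D\cap V)$. Finally, since $\phi$ is \'etale at $\bar y$ and $g$ is faithfully flat on a neighborhood of $\bar y$, fpqc descent of \'etaleness shows that $\phi'$ is \'etale at $y$. This produces the Zariski \'etale chart required by Definition \ref{strict}, completing the proof that $(Y,D)$ is strict toroidal.
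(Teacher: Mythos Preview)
Your strategy---descend Cartier divisors via Lemma \ref{Mum} and assemble them into a Zariski chart---is precisely the paper's. You have correctly isolated the $2$-cocycle trivialization as the crux, but you leave it unresolved (``can be arranged after further shrinking $V$'' is not an argument, and solving multiplicative equations indexed by the syzygies of an arbitrary monoid is not obviously possible). The clean fix bypasses the monoid entirely: work with the free abelian group $M=P^{\gp}$. Choose a $\ZZ$-basis $e_1,\dots,e_n$ of $M$, pick rational functions $f_i\in K(V)^*$ whose divisors are the descended Cartier divisors of $x^{e_i}$ (Lemma \ref{Mum} extends from effective to arbitrary Cartier divisors by writing them as differences), and set $f_m:=\prod_i f_i^{a_i}$ for $m=\sum a_ie_i$. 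This is automatically a group homomorphism $M\to K(V)^*$; for $m\in P=\sigma^\vee\cap M$ the divisor of $f_m$ is effective, so $f_m\in\cO(V)$ by normality of $V$. No cocycle arises because free groups have no second cohomology. Your syzygy formulation can also be salvaged once you observe that the obstruction is a homomorphism from the relation lattice $R$ into $\cO_{V,y}^*$ and that $0\to R\to\ZZ^k\to M\to 0$ splits because $M$ is free---but the basis trick is more direct.

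The paper organizes the same argument slightly differently: it decomposes the target as $X_\sigma\times T$ with $\sigma$ of full dimension in $N_\sigma$, builds the map to the torus factor $T$ separately from local parameters on the smooth stratum through $y$, and verifies \'etaleness by comparing completed local rings rather than by your (valid and arguably cleaner) descent argument. The paper is equally terse at the very point you flagged---it writes only that ``such descent exists up to units, defining the smooth morphism $\phi_V$''---so your diagnosis that this is the main technical step is accurate; the lattice trick above fills the gap in both arguments.
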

\begin{proof} The "only if" part follows from Lemma \ref{normal0}.
We need to prove " if " part of the corollary.
The question is local.  
 Consider an \'etale strict toroidal neighborhood $(U, D_U)\to (Y, D_Y)$ of $y\in Y$ with a point $x\in X$ over $y$. Let $s_x$ be  the stratum through $x\in X$, and $s_y$ be the stratum through $y\in Y$.

  One can assume that there is an  \'etale morphism $$\phi: (U, D_U)\to (X_\sigma\times T, D_\sigma\times T)$$ with closed orbit $O_\sigma\times T$, where $\sigma$ is a cone of maximal dimension in $N_\sigma$. Note that the induced map $$s_x\to O_\sigma\times T \simeq T$$ is an \'etale morphism of smooth subvarieties.
  
  Moreover,  any  morphism $$\phi: (U, D_U)\to (X_\sigma\times T, D_\sigma\times T)$$  which is \'etale at $x$, is defined by
  a smooth morphism $(U, D_U)\to (X_\sigma, D_\sigma)$, and a morphism $U\to T$, such that its restriction $$s_x\to O_\sigma\times T \simeq T$$ to the stratum $s_x$   is \'etale.
  In such a case the completion of a local ring at $x$ is given by 
$$\widehat{\cO}_{x,X}\simeq \widehat{\cO}_{t,X_\sigma}\otimes_{K(t)}K(x)=K(x)[[u_1,\ldots,u_k,(\overline{\sigma}^\vee)^\integ]],$$
where $(\overline{\sigma}^\vee)^\integ$, generate  $\cI_{s_x}$, and $u_1,\ldots,u_k$  determine the \'etale morphism $s_x\to T$.

By the above, given an \'etale morphism $\phi$, a morphism $\psi: (U, D_U)\to (X_\sigma, D_\sigma)$ is \'etale if the pull-backs in $\psi^*(\overline{\sigma}^\vee)^\integ$  differ from  $\phi^*(\overline{\sigma}^\vee)^\integ$ by units, and the restriction  morphism $s_x\to T$ is \'etale, that is given by a set of local parameters $v_1,\ldots,v_k$.

   We need to show that there is such a morphism for a
  certain Zariski neighborhood  $V$ of $y$. 
  First, the existence of the \'etale morphism $s_y\to T$ is clear as $s_y$ is smooth \'etale isomorphic to $s_x$. The morphism $s_y\to T$ is given by some parameters and extends to a morphism $\psi_V: V\to T$ from a certain neighborhood $V$ of $y$. 
  
  On the other hand, the morphism $(U, D_U)\to (X_\sigma, D_\sigma)$ is given by 
  the pull-backs of the $T$-invariant generators of $K[X_\sigma]=K[(\sigma^\vee)^\integ]$.
  By the previous lemma, the Cartier divisors on $U$, which are also pull-backs of the principal  toric divisors  defined by characters in $M_\sigma$
on $X_{\sigma}$ descend to  Cartier divisors  
 in a neighborhood of $y\in Y$. This means that such descent exists up to units, defining the smooth morphism $\phi_V: (V, D_V)\to (X_\sigma, D_\sigma)$.
The pull-backs of these functions define an \'etale morphism $\psi: (U, D_U)\to (X_\sigma, D_\sigma)$,  which factors through 
 a morphism
$$\phi_U\times \psi_V: (V, D_V)\to (X_\sigma\times T, D_\sigma\times T).$$

It follows that $$\widehat{\cO}_{y,Y}\simeq \widehat{\cO}_{t,X_\sigma}\otimes_{K(t)}K(y)=K(y)[[v_1,\ldots,v_k,(\overline{\sigma}^\vee)^\integ]],$$ and the morphism $\phi_U\times \psi_V$ is \'etale. This shows that $(Y, D)$ is a strict toroidal embedding.

\end{proof}

\subsubsection{The monoids of Cartier divisors associated with strata}
Let  $(X, D)$ be a strict toroidal embedding with the stratification $S=S_D$. 
Following \cite{KKMS} we associate with a stratum $s\in S$ the canonical monoids (commutative semigroups)  and groups (lattices):
\begin{enumerate}
\item $M^+_s=\Cart(s,S)^+$ is the monoid of the Cartier divisors on $$\Star(s,S)=\bigcup_{s\leq s'} s'$$ supported on $D\cap \Star(s,S)$
\item $M_s=\Cart(s,S)$ is the free abelian group of the Cartier divisors on $\Star(s,S)$ supported on $D\cap \Star(s,S)$. So $M_s$ is a lattice which is the groupification of $M^+_s$
\item $N_s:=Hom(M_s,\ZZ)$ is the dual lattice with the vector space $N_s^{\bf Q}:=N_s\otimes_{\ZZ}\QQ$.
\item $\sigma_s=\{v\in N_s^{\bf Q} \mid F(v)\geq 0,\quad F\in M^+_s\}$ is the associated strictly convex cone of maximal dimension in
$N_s^{\bf Q}$.
\end{enumerate}

\begin{lemma} \cite{KKMS} \label{toroidal-cartier} Let $Y\subset \Star(s,S)$ be an open subset intersecting $s\in S$.
Let $\phi: (Y,U)\to (X_\sigma,T)$ be an \'etale map mapping $ s$ into a  closed orbit  $O_\sigma\subset X_\sigma$.
Then the group $\Cart(Y,U)$ (resp. $\Cart(Y,U)^+)$ of Cartier  divisors (resp. effective Cartier divisor) supported  on $Y\setminus U$ is the pull-back of the group of toric Cartier divisors supported on $X_\sigma\setminus T$. 
 
 In particular, $$\Cart(Y,U)\simeq M_s\simeq M_{\overline{\sigma}}=M_\sigma/{(\sigma^\perp)}^\integ
, $$ $$ \Cart(Y,U)^+)\simeq M^+_s\simeq {(\overline{\sigma}^\vee)}^\integ={(\sigma^\vee)}^\integ/{(\sigma^\perp)}^\integ,$$
  where $\overline{\sigma}:=(\sigma,N^{\bf Q}_\sigma)$ is the associated cone of maximal dimension in $N^{\bf Q}_\sigma\subset N^{\bf Q}$.
 
\end{lemma}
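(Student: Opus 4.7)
My plan is to combine the classical description of Cartier divisors on an affine toric variety with Mumford's pull-back correspondence (Lemma \ref{Mum}) and a geometric observation about the support of $D$ on $\Star(s, S)$. For the toric side: since $X_\sigma$ is affine its Picard group vanishes, so every Cartier divisor on $X_\sigma$ is principal, cut out by a single rational function $f$; if the divisor is supported on $X_\sigma \setminus T$ then $f$ restricts to a unit on $T$, hence $f = c\,\chi^m$ for some $c \in K^*$ and $m \in M_\sigma$. Two characters give the same divisor iff they differ by a unit on $X_\sigma$, i.e., by an element of $(\sigma^\perp)^\integ$, and the divisor is effective iff $\chi^m$ is regular, i.e., $m \in (\sigma^\vee)^\integ$. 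This yields $\Cart(X_\sigma, T) \simeq M_\sigma/(\sigma^\perp)^\integ = M_{\overline{\sigma}}$, with the effective submonoid $(\overline{\sigma}^\vee)^\integ$.

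Next I would show the pull-back $\phi^*$ identifies this group with $\Cart(Y, U)$. Pick $x \in s \cap Y$ (possible by hypothesis) and set $y := \phi(x) \in O_\sigma$. By Lemma \ref{normal0} the components of $D$ are normal, and by assumption $\phi^{-1}(X_\sigma \setminus T) = Y \setminus U$, so Lemma \ref{Mum} yields a bijection between Cartier divisors on $\Spec \cO_{Y, x}$ supported on $D$ and those on $\Spec \cO_{X_\sigma, y}$ supported on $X_\sigma \setminus T$. Injectivity of $\phi^*$ is immediate: the vanishing of $\phi^*(\chi^m)$ implies the germ of $(\chi^m)$ at $y$ is trivial, forcing $m \in (\sigma^\perp)^\integ$. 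For surjectivity, given $E \in \Cart(Y, U)$, Lemma \ref{Mum} matches its germ at $x$ with that of $\phi^*(\chi^m)$ for a unique class $m$, so $E - \phi^*(\chi^m)$ is a Weil divisor on $Y$, supported on $D \cap Y$, whose germ at $x$ is trivial. The crucial geometric observation is that every irreducible component of $D \cap \Star(s, S)$ is the closure of some codimension-one stratum $s' \geq s$, and by definition of the ordering $s \subseteq \overline{s'}$, so each such component contains all of $s$ and in particular $x$. Consequently, every irreducible component of $D \cap Y$ passes through $x$, and a Weil divisor on $Y$ supported on $D \cap Y$ with vanishing germ at $x$ must vanish identically; hence $E = \phi^*(\chi^m)$ on all of $Y$.

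Finally, the identification $\Cart(Y, U) \simeq M_s$ follows from canonicity: Mumford's correspondence is intrinsic, so the pull-back isomorphism $\phi^*$ is independent of the chart and base point; covering $\Star(s, S)$ by charts of the required form — together with the injectivity of restriction of Cartier divisors from the irreducible open $\Star(s, S)$ (open in the irreducible $X$) to the dense open $Y$ — identifies $M_s = \Cart(\Star(s, S), U \cap \Star(s, S))$ with $\Cart(Y, U)$. Combining gives $M_s \simeq M_{\overline{\sigma}}$ and the effective submonoid corresponds to $(\overline{\sigma}^\vee)^\integ$. The principal obstacle is the geometric observation of the previous paragraph — that every boundary component of $\Star(s, S)$ meets every point of $s$ — which is precisely what allows local germ data at a single point $x \in s$ to control the global Cartier divisor on $Y$; once this is in place, the remainder is a routine unwinding of Lemma \ref{Mum} together with the toric computation.
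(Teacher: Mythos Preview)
Your argument is correct and is precisely the detailed unwinding the paper has in mind: its own proof consists of the single sentence ``The Lemma is a consequence of Lemma \ref{Mum}.'' Two small corrections that do not affect the structure: the triviality of the Picard group of $X_\sigma$ is a fact about affine \emph{toric} varieties rather than arbitrary affine varieties, and the normality hypothesis required to invoke Lemma \ref{Mum} concerns the components of the target divisor $X_\sigma\setminus T$, which is Lemma \ref{normalt} rather than Lemma \ref{normal0}.
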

\begin{proof} The Lemma is a consequence  of Lemma \ref{Mum}. 
\end{proof}
\begin{corollary} \label{inclusion2} With the preceding notation,
the cone $\overline{\sigma}=(\sigma, N_{\sigma}^{Q})$ (defined by the chart)  is dual to $M_s^+$. 
In particular, the cone $\overline{\sigma}$ with the lattice $N_{\sigma}$  is independent of chart and uniquely defined for the stratum $s\in S$ : $\sigma_s=\overline{\sigma}$. \qed\end{corollary}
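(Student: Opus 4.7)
The plan is to derive the identification $\sigma_s = \overline{\sigma}$ as a formal double-dual consequence of Lemma~\ref{toroidal-cartier}, since the monoid $M_s^+$ is defined intrinsically from the stratum without reference to the chart.

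First I would record the identifications coming from Lemma~\ref{toroidal-cartier}: the pull-back of toric Cartier divisors gives an isomorphism of monoids
\[
M_s^+ \;\simeq\; (\overline{\sigma}^\vee)^{\integ} \;=\; (\sigma^\vee)^{\integ}/(\sigma^\perp)^{\integ}
\]
which, after groupification, yields a canonical isomorphism of lattices $M_s \simeq M_{\overline{\sigma}}$, and hence a dual isomorphism of lattices $N_s \simeq N_{\overline{\sigma}} = N_\sigma$. The point is that all these isomorphisms are induced by one chart $\phi$, but the left-hand sides are chart-independent.

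Next I would compare cones under the dual isomorphism $N_s^{\bf Q} \simeq N^{\bf Q}_\sigma$. By definition
\[
\sigma_s \;=\; \{v\in N_s^{\bf Q} \mid F(v)\geq 0\ \text{for all }F\in M_s^+\},
\]
and on the toric side
\[
\overline{\sigma} \;=\; \{v\in N^{\bf Q}_\sigma \mid F(v)\geq 0\ \text{for all }F\in (\overline{\sigma}^\vee)^{\integ}\},
\]
the latter identity being the standard double-duality for finitely generated rational strictly convex cones of maximal dimension in $N^{\bf Q}_\sigma$. Since the isomorphism $M_s^+ \simeq (\overline{\sigma}^\vee)^{\integ}$ is compatible with the evaluation pairing under $N_s \simeq N_\sigma$, these two cones correspond, giving $\sigma_s = \overline{\sigma}$.

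Finally, to extract chart-independence I would observe that the triple $(M_s^+, M_s, N_s)$ and the cone $\sigma_s$ are constructed purely from the open subset $\Star(s,S) \subset X$ and the divisor $D$, with no reference to $\phi$. Therefore the equality $\sigma_s = \overline{\sigma}$ just established transports this intrinsic nature onto $\overline{\sigma}$: any two étale charts $\phi_1, \phi_2$ sending $s$ into a closed orbit yield the same cone $\overline{\sigma}$ (with lattice $N_\sigma$) up to the canonical identification with $\sigma_s$. There is no real obstacle here beyond checking that the isomorphism of monoids of Lemma~\ref{toroidal-cartier} respects the pairing used in defining the dual cones; this is a direct consequence of how pull-back of principal toric Cartier divisors along an étale morphism interacts with the character lattice.
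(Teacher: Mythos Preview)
Your argument is correct and is precisely the intended one: the paper marks this corollary with \qed\ and gives no proof, treating it as an immediate double-dual consequence of Lemma~\ref{toroidal-cartier}, and you have simply written out that implicit reasoning in full.
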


\begin{corollary}\cite{KKMS} \label{inclusion} (see also \cite{Oda},\cite{Fulton}) 
With the preceding notation,
if $s\leq s'$ then $\Star(s,S)\supset \Star(s',S)$, is an open subset. 

Let  $Y\subset \Star(s,S)$ be an open subset with \'etale chart $\phi: (Y,U)\to (X_\sigma,T)$ and $Y':=Y\cap \Star(s',S)$  be its  open subset. The restriction $\phi_{|Y'}$ of $\phi$  defines an \'etale morphism $$\phi_{|Y'}: (Y',U)\to (X_{\sigma'},T)$$ into the open subset $X_{\sigma'}\subset X_\sigma$, with $\sigma'\preceq \sigma$ corresponding to $s'$.

There is a  natural  surjection $$\Cart^+(Y,U)\simeq M^+_s\simeq (\overline{\sigma}^\vee)^\integ \quad  \longrightarrow \quad   \Cart^+(Y',U) \simeq M_{s'}\simeq (\overline{\sigma'}^\vee)^\integ $$   induced by the restriction of the Cartier divisors. Its dual map  corresponds
to the face inclusion $\overline{\sigma'}\hookrightarrow \overline{\sigma}$.
\end{corollary}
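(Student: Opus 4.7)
The plan is to reduce everything to two applications of Lemma \ref{toroidal-cartier}, using the fact that under an \'etale chart the stratification on the toroidal embedding is pulled back from the orbit stratification of the toric model.

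First I would verify the topological statement that $\Star(s',S)\subseteq \Star(s,S)$ is open. By definition $\Star(s,S)$ is the union of strata $s''\geq s$, and since $s\leq s'$, any $s''\geq s'$ also satisfies $s''\geq s$; openness is then immediate from the fact that the star of any stratum is open (being the complement of the union of closures of strata not above it). Next, I would identify the cone $\sigma'\preceq \sigma$ corresponding to $s'$: the \'etale chart $\phi\colon (Y,U)\to (X_\sigma,T)$ sends the stratum $s\cap Y$ into the closed orbit $O_\sigma$, and since $\phi$ is strict (it preserves the divisor $D$), it sends each stratum of $S$ meeting $Y$ into a unique orbit of $X_\sigma$; let $\sigma'\preceq \sigma$ be the cone whose orbit $O_{\sigma'}$ receives $s'\cap Y$. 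Then $\phi^{-1}(X_{\sigma'})=\phi^{-1}(\bigcup_{\sigma'\preceq\tau\preceq \sigma}O_\tau)$ is precisely the union of strata $s''\geq s'$ meeting $Y$, which equals $Y'=Y\cap \Star(s',S)$. Restriction of an \'etale morphism to the preimage of an open subset is \'etale, so $\phi_{|Y'}\colon (Y',U)\to (X_{\sigma'},T)$ is an \'etale chart.

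Having the two \'etale charts, I would apply Lemma \ref{toroidal-cartier} twice, once to $(Y,U)\to (X_\sigma,T)$ to get $\Cart^+(Y,U)\cong M_s^+\cong (\overline{\sigma}^\vee)^{\integ}$, and once to $(Y',U)\to (X_{\sigma'},T)$ to get $\Cart^+(Y',U)\cong M_{s'}^+\cong (\overline{\sigma'}^\vee)^{\integ}$. The restriction of Cartier divisors from $Y$ to the open subset $Y'$ gives a well-defined monoid map $\Cart^+(Y,U)\to \Cart^+(Y',U)$, which under the identifications above is compatible with the toric restriction map $M_\sigma\to M_{\sigma'}$ induced by the inclusion $X_{\sigma'}\hookrightarrow X_\sigma$ (both sides being pull-backs of the corresponding toric Cartier groups).

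Finally I would identify this restriction map combinatorially. On the toric side, restricting a $T$-invariant Cartier divisor from $X_\sigma$ to $X_{\sigma'}$ corresponds, on the level of dual monoids, to viewing a character $m\in (\sigma^\vee)^{\integ}/(\sigma^\perp)^{\integ}$ as lying in $(\sigma'^\vee)^{\integ}/(\sigma'^\perp)^{\integ}$, which is surjective since $\sigma'^\vee\supseteq \sigma^\vee$ and $\sigma'^\perp\supseteq \sigma^\perp$. Dualizing this surjection of lattices, one obtains exactly the saturated face inclusion $\overline{\sigma'}\hookrightarrow \overline{\sigma}$ as in Definition \ref{complex1}. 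The only mildly delicate point to check is that the identifications of $M_s^+$ with $(\overline{\sigma}^\vee)^{\integ}$ from Lemma \ref{toroidal-cartier} are canonical enough that the square commutes — which follows because both identifications are given by pull-back along the respective \'etale charts, and these charts are compatible by construction via $\phi_{|Y'}$.
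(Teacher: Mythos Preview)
Your approach is essentially the paper's: reduce to the toric model via Lemma \ref{toroidal-cartier} applied to both charts, then read off the map on Cartier monoids combinatorially. The paper spends almost no words on the chart setup you carefully lay out, and instead devotes its proof to the one step you handwave: surjectivity of $(\overline{\sigma}^\vee)^\integ \to (\overline{\sigma'}^\vee)^\integ$.

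Your justification ``surjective since $\sigma'^\vee\supseteq \sigma^\vee$ and $\sigma'^\perp\supseteq \sigma^\perp$'' is not a valid argument as written---containments $A\subseteq A'$ and $B\subseteq B'$ do not force $A/B\to A'/B'$ to be onto; you need $A'+B = A' \cap (A + B') $ or rather $A + B' = A'$ at the integral level. The paper supplies exactly this: since $X_{\sigma'}\hookrightarrow X_\sigma$ is the localization at a monomial $m\in(\sigma^\vee)^\integ$ with $\sigma'=\sigma\cap m^\perp$, one has $(\sigma'^\vee)^\integ=(\sigma^\vee)^\integ+\ZZ\cdot m$, and since $m\in(\sigma'^\perp)^\integ$ this gives the needed $(\sigma'^\vee)^\integ=(\sigma^\vee)^\integ+(\sigma'^\perp)^\integ$. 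With that one line inserted, your proof is complete and matches the paper's.
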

\begin{proof}
This translates into a a well known fact of  toric varieties and cones  \cite{KKMS}, \cite{Oda}, \cite{Fulton}.
If $\sigma'\preceq \sigma$, then the open immersion $X_{\sigma'}\hookrightarrow X_{\sigma}$ correspond to the localization
$\cO(X_{\sigma'})=\cO(X_{\sigma})_m$ by a monomial $m\in (\sigma^\vee)^\integ \subset \cO(X_{\sigma})$. Hence 
$$((\sigma')^\vee)^\integ=(\sigma^\vee)^\integ+\ZZ\cdot m= (\sigma^\vee)^\integ+((\sigma')^\perp)^\integ.$$ Consequently, there is a surjection
$$(\overline{\sigma}^\vee )^\integ= ({\sigma}^\vee)^\integ/({\sigma}^\perp )^\integ \twoheadrightarrow  ({\sigma'}^\vee)^\integ/(\overline{\sigma}^\perp )^\integ=(\overline{\sigma'}^\vee )^\integ.$$
and the dual injective map of cones and lattices:
$i_{{\sigma'}{\sigma}}: \overline{\sigma'}\to \overline{\sigma},$  
where $i_{{\sigma'}{\sigma}}(\overline{\sigma'})$ is described as the face $\{m\}^\perp \subset \overline{\sigma}$.

\end{proof}
\subsubsection{Toroidal embeddings and logarithmic smoothness}
\begin{theorem} (Kato-Mumford) \label{KM}, \cite{Kato-log}
Let $(X, D)$ be a strict toroidal embedding, and $s$ be the stratum through $x\in X$. Then 
\begin{enumerate}
\item The stratum $s$ is locally the intersection of the components of the divisor $D$

\item $\widehat{\cO}_{x,X}\simeq K(x)[[u_1,\ldots,u_k,M^+_s]]$,
 where 
 $u_1,\ldots,u_k$ generate $\widehat{\cO}_{x,X}/\cI_s\simeq\widehat{\cO}_{x,s}$.
\item  $\dim(s)+rank(M_s)=\dim(X)$. 
\end{enumerate}
\end{theorem}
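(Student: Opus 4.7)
The plan is to reduce the three statements to the corresponding facts on a local toric model via the étale chart, using the normality of the components (which is equivalent to strict toroidality by Corollary \ref{Mum2}) and the Cartier divisor identification of Lemma \ref{toroidal-cartier}.

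First I would arrange the chart. By Definition \ref{strict} there is an étale chart $\phi\colon V \to X_\sigma$ defined in a Zariski neighborhood $V$ of $x$. Let $\tau\preceq \sigma$ be the cone with $\phi(x)\in O_\tau$. Shrinking $V$ to $\phi^{-1}(X_\tau)$ we may assume $\phi(x)$ lies in the closed stratum. Then, as in the proof of Corollary \ref{Mum2}, one replaces $X_\tau$ by a product $X_{\bar\tau}\times T'$ where $\bar\tau$ is of maximal dimension in its ambient lattice $N_{\bar\tau}$, so after relabeling we have an étale chart $\phi\colon V\to X_\sigma\times T'$ sending $x$ to a point $y=(y_1,y_2)$ with $y_1\in O_\sigma$ and $\sigma$ of maximal dimension in $N_\sigma$. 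The stratification on $X_\sigma\times T'$ induced by the divisor $D_\sigma\times T'$ has closed stratum $\{y_1\}\times T'$ through $y$, and this matches the rank/divisorial stratum $s$ through $x$ under $\phi$.

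For part (1) I would observe that since $\sigma$ has maximal dimension in $N_\sigma$, the closed orbit $O_\sigma$ of $X_\sigma$ is locally the intersection of all the toric components of $D_\sigma$ (the components correspond to rays of $\sigma$, and $O_\sigma$ is cut out by $(\sigma^\vee\setminus\sigma^\perp)\cap M_\sigma=(\sigma^\vee)^\integ\setminus\{1\}$). Hence $\{y_1\}\times T'$ is locally the intersection of the components of $D_\sigma\times T'$ through $y$. Lemma \ref{Mum} (applied to the étale chart, whose target has normal divisorial components) gives a bijection between components of $D$ through $x$ and components of $D_\sigma\times T'$ through $y$, compatible with intersections. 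Thus $s$ is, locally at $x$, the intersection of the components of $D$ through $x$.

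For parts (2) and (3) I would compute the completion on the toric side and transport it through the étale chart. At $y_1\in O_\sigma$, the maximal ideal of $\cO_{y_1,X_\sigma}$ is generated by the monomials in $(\sigma^\vee)^\integ\setminus\{1\}$, so
\[
\widehat{\cO}_{y_1,X_\sigma}\;\simeq\; K(y_1)\llbracket (\sigma^\vee)^\integ\rrbracket .
\]
At $y_2$ in the smooth torus $T'$ of dimension $d$, the completion is a power series ring $K(y_2)\llbracket u_1,\ldots,u_d\rrbracket$ in $d$ free parameters, which we may take to be restrictions of parameters that also generate $\widehat{\cO}_{x,s}$ via $\phi$. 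Combining and using that for an étale morphism $\widehat{\cO}_{x,X}\simeq\widehat{\cO}_{y,X_\sigma\times T'}\otimes_{K(y)}K(x)$, I obtain
\[
\widehat{\cO}_{x,X}\;\simeq\; K(x)\llbracket u_1,\ldots,u_d,\,(\sigma^\vee)^\integ\rrbracket .
\]
By Lemma \ref{toroidal-cartier} and Corollary \ref{inclusion2} the monoid $(\sigma^\vee)^\integ=M^+_{\bar\sigma}$ is canonically identified with $M^+_s$, yielding (2). Part (3) then follows from the Krull dimension count: $\dim X=\dim\widehat{\cO}_{x,X}=d+\rank(M_s)=\dim s+\rank(M_s)$, since $u_1,\ldots,u_d$ give a regular system of parameters on the smooth stratum.

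The only nontrivial point is verifying that the stratum $s$ through $x$ genuinely matches $\{y_1\}\times T'$ under $\phi$, i.e.\ compatibility of the divisorial (or rank) stratification with the étale chart. This is exactly what Lemma \ref{Mum} ensures in the strict toroidal setting, since components are normal and therefore pull back to well-defined irreducible components; no additional work is required beyond what Corollary \ref{Mum2} already provides.
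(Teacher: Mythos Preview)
Your proposal is correct and follows essentially the same route as the paper: reduce to the toric model via an \'etale chart, invoke the known toric facts, and identify $M_s^+$ with $(\overline{\sigma}^\vee)^\integ$ via Lemma~\ref{toroidal-cartier}/Corollary~\ref{inclusion2} to compute the completion. The paper's proof is terser---it absorbs your explicit torus factor $T'$ into the notation $\overline{\sigma}=(\sigma,N_\sigma^{\bf Q})$ and does not spell out parts (1) and (3) separately---but the substance is identical.
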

\begin{proof} Let $\phi: (X,U)\to (X_\sigma,T)$ be an \'etale map mapping  a certain $x\in s$ to a point $t$ in the closed orbit  $O_\sigma\subset X_\sigma$.

The statements are valid for the toric varieties. 
Moreover,  by corollary \ref{inclusion},
we have $M^+_s\simeq (\overline{\sigma}^\vee)^\integ $, so that
 $$\widehat{\cO}_{x,X}\simeq \widehat{\cO}_{t,X_\sigma}\otimes_{K(t)}K(x)=K(x)[[u_1,\ldots,u_k,(\overline{\sigma}^\vee)^\integ]],$$
where $(\overline{\sigma}^\vee)^\integ=(\sigma^\vee)^\integ/(\sigma^\perp)^\integ$ is 
isomorphic to the semigroup  $M_s^+$.

\end{proof}

\subsubsection{Conical complexes associated with strict toroidal embeddings} \label{associated complex}

\begin{theorem} (\cite{KKMS})\label{cc} A strict toroidal  embedding $(X,U)$ determines a unique  associated   conical complex $\Sigma$. 
Moreover, there is a bijective correspondence between the strata on a toroidal embedding $X$ and  the cones of the complex. 
  $$\tau\to s_{\tau}.$$
Furthermore $\tau\preceq \sigma$ iff $s_\sigma\preceq s_\tau$
(i.e $s_\sigma$ is contained in the closure $\overline{s_\tau}$ of $s_\tau$).

\end{theorem}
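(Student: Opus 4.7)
The plan is to build the complex $\Sigma$ stratum-by-stratum and then encode the face relations via restriction of Cartier divisors, relying heavily on Lemma \ref{toroidal-cartier} and Corollary \ref{inclusion}, which already do most of the work.

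First I would, for each stratum $s \in S$, define the cone $\sigma_s \subset N_s^{\QQ}$ as in item~(4) of the list preceding Lemma \ref{toroidal-cartier}, using the monoid $M_s^+ = \Cart(s,S)^+$ of effective Cartier divisors on $\Star(s,S)$ supported on the boundary. By Lemma \ref{toroidal-cartier}, pulling back along any \'etale chart $\phi\colon(Y,U)\to(X_\sigma,T)$ that meets $s$ identifies $M_s^+$ with $(\overline{\sigma}^\vee)^{\integ}$, and Corollary \ref{inclusion2} confirms that the resulting pair $(\sigma_s, N_s)$ is intrinsic: independent of the choice of chart. In particular $\sigma_s$ is a finitely generated, strictly convex rational cone of maximal dimension in $N_s^{\QQ}$, as required in Definition \ref{complex1}.

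Next, I would define the partial order on $\Sigma$ by declaring $\sigma_{s'} \preceq \sigma_s$ whenever $s \leq s'$ in the stratification order (i.e.\ $s \subseteq \overline{s'}$), and introduce the inclusion map $i_{\sigma_{s'}\sigma_s}\colon \sigma_{s'} \to \sigma_s$ as the dual of the restriction surjection $M_s^+ \twoheadrightarrow M_{s'}^+$ described in Corollary \ref{inclusion}. That corollary already supplies exactly what Definition \ref{complex1} demands: $i_{\sigma_{s'}\sigma_s}$ is linear injective, identifies $\sigma_{s'}$ with a face of $\sigma_s$, and the embedded lattice $N_{s'}$ is saturated in $N_s$ (this last point falls out of the toric computation $((\sigma')^{\vee})^{\integ} = (\sigma^{\vee})^{\integ} + ((\sigma')^\perp)^{\integ}$ in a chart). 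Transitivity $i_{\tau\delta} = i_{\sigma\delta} \circ i_{\tau\sigma}$ is then automatic, since restriction of Cartier divisors from $\Star(s,S)$ to $\Star(s'',S)$ factors through $\Star(s',S)$ whenever $s \leq s' \leq s''$.

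The remaining items needed for Definition \ref{complex1} are: every face of $\sigma_s$ must arise as some $i_{\sigma_{s'}\sigma_s}(\sigma_{s'})$, and the assignment $s \mapsto \sigma_s$ must be a bijection. For the first, I would fix an \'etale chart $\phi\colon(Y,U)\to(X_\sigma,T)$ at a point of $s$; each face $\tau \preceq \sigma$ corresponds to a torus orbit $O_\tau \subset \overline{O_\sigma}$ in $X_\sigma$, and its preimage $\phi^{-1}(O_\tau) \cap Y$ lies in (the open part of) some stratum $s' \geq s$. Lemma \ref{toroidal-cartier} applied to $s'$ identifies its associated cone with $\overline{\tau}$ inside $\overline{\sigma} = \sigma_s$, and the identification matches the face inclusion $i_{\sigma_{s'}\sigma_s}$ by construction. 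Bijectivity between strata and cones is then visible in a chart: locally on $Y$, the strata of $(Y, D\cap Y)$ are exactly the preimages of toric orbits in $X_\sigma$, so distinct strata meeting a common chart have distinct cones; globally, any two strata are compared by passing to charts around their common points in the closure order.

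The main obstacle I expect is the intrinsic compatibility of the local data when gluing across charts, i.e.\ checking that the face inclusion $i_{\sigma_{s'}\sigma_s}$ really matches the cone face produced in every chart meeting both $s$ and $s'$. This is where strictness of the embedding is essential: by Mumford's Lemma \ref{Mum}, Cartier divisors on $\Star(s,S)$ supported on $D$ are in bijection with toric Cartier divisors in any \'etale toric chart, so the restriction surjections $M_s^+ \twoheadrightarrow M_{s'}^+$ are chart-independent, and their duals glue the cones into one well-defined complex $\Sigma$ whose faces stand in the claimed order-reversing bijection with $S$.
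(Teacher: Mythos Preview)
Your proposal is correct and follows essentially the same approach as the paper: build $\Sigma=\{\sigma_s : s\in S\}$ from the monoids $M_s^+$, invoke Lemma~\ref{toroidal-cartier} and Corollaries~\ref{inclusion2},~\ref{inclusion} to produce the chart-independent face inclusions, and verify the complex axioms locally in toric charts. The paper's proof is terser (it simply says the verification is ``straightforward'' and ``essentially the same as for the cones in fans corresponding to toric varieties $X_\sigma$''), whereas you spell out the bijectivity and the surjectivity-onto-faces checks explicitly, but the underlying argument is the same.
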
 
\begin{proof}
The theorem was initially proven over algebraically closed field but it extends to non-closed field using the results above: Lemma \ref{toroidal-cartier}, Corollaries \ref{inclusion2}, \ref{inclusion}. The conical complex $\Sigma:=\{\sigma_s:  s\in S\}$ is  obtained glueing of $\sigma_s$ along the  natural inclusion maps $\sigma_s\hookrightarrow \sigma_{s'}$ for $s\leq s'$ as in Corollary \ref{inclusion}. The verification is straightforward. The gluing is essentially the same as  for the cones in fans corresponding to toric varieties $X_\sigma$.
\end{proof}
\begin{definition} By a  piecewise linear functions $f: |\Sigma|\to \ZZ$ we mean a function $f$, such that the restriction $f_{|\sigma\cap N_\sigma}$ is defined by the functional $m\in (\sigma^\vee)^\integ$.

\end{definition}

Another consequence of Corollary \ref{inclusion} is the following result.

\begin{corollary}(\cite{KKMS}) \label{PL} The piecewise linear functions $|\Sigma|\to \ZZ$ are in bijective correspondence with Cartier divisors on $X$ supported on $D=X\setminus U$.
\end{corollary}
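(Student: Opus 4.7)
The plan is to exhibit the bijection by unwinding both sides locally on each star and gluing over the face relations in $\Sigma$.

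First I would make precise what a piecewise linear function $f\colon |\Sigma|\to\ZZ$ means: the restriction $f_\sigma$ to each face $|\sigma|\subset|\Sigma|$ lies in $M_\sigma=\sigma^\vee\cap M_\sigma$ (i.e.\ is given by an integral linear functional on $N_\sigma$), and for every face inclusion $i_{\tau\sigma}\colon\tau\hookrightarrow\sigma$ we have $f_\tau=f_\sigma\circ i_{\tau\sigma}$. Thus a piecewise linear function is exactly a compatible family $\{f_\sigma\in M_\sigma\}_{\sigma\in\Sigma}$ with respect to the face restriction maps $M_\sigma\to M_\tau$ (cf.\ Corollary~\ref{inclusion}).

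Next I would use the stratum--cone bijection of Theorem~\ref{cc} together with the identification $M_{\sigma_s}\simeq\Cart(s,S)$ from Lemma~\ref{toroidal-cartier}. A compatible family $\{f_\sigma\}$ then translates into a family $\{E_s\in\Cart(s,S)\}_{s\in S}$ of Cartier divisors on the open sets $\Star(s,S)$, each supported on $D$, with the compatibility that whenever $s\leq s'$ (so $\sigma_{s'}\preceq\sigma_s$ and $\Star(s',S)\subset\Star(s,S)$) the restriction of $E_s$ to $\Star(s',S)$ equals $E_{s'}$. Since the open sets $\Star(s,S)$ for $s\in S$ cover $X$ and any two of them are, on their intersection, contained in the stars of the common specializations, these restriction compatibilities glue the $E_s$ into a single Cartier divisor $E$ on $X$ supported on $D$. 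Conversely, given a global Cartier divisor $E$ on $X$ supported on $D$, restricting to each $\Star(s,S)$ produces an element of $\Cart(s,S)=M_{\sigma_s}$, automatically compatible under face restrictions, hence a piecewise linear function on $|\Sigma|$.

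The two constructions are visibly inverse to one another, so what remains is essentially bookkeeping. The step I expect to be slightly delicate is verifying that the local data $\{E_s\}$ on the cover $\{\Star(s,S)\}$ genuinely glues to a global Cartier divisor: this requires checking agreement on arbitrary pairwise intersections $\Star(s,S)\cap\Star(s',S)$, not only on the stars of common specializations. The point is that this intersection is the union of $\Star(s'',S)$ taken over strata $s''$ with $s,s'\leq s''$, and on each such piece both $E_s$ and $E_{s'}$ restrict to $E_{s''}$ by the face compatibility. Hence the $E_s$ agree on overlaps and define a global Cartier divisor, completing the bijection.
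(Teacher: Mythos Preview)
Your proof is correct and follows essentially the same approach as the paper: identify $M_{\sigma_s}\simeq\Cart(s,S)$ via Lemma~\ref{toroidal-cartier}, then use the compatibility of restrictions from Corollary~\ref{inclusion} to pass between global Cartier divisors and compatible families of linear functionals. Your write-up is in fact more careful than the paper's sketch, particularly in verifying the gluing on arbitrary overlaps $\Star(s,S)\cap\Star(s',S)$; one small slip is the equation ``$M_\sigma=\sigma^\vee\cap M_\sigma$'', which should simply read $f_\sigma\in M_\sigma$ (your parenthetical makes the intended meaning clear).
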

 \begin{proof} The Cartier divisors on  $\Star(s,S)$ supported on $D$ correspond to the integral linear functions defined by $m_\sigma\in M_\sigma$ on $\sigma=\sigma_s$. Moreover, by Corollary \ref{inclusion}, if $s\leq s'$ and $s'$ corresponds to a face $\sigma'$ of $\sigma$ then $m_{\sigma'}$ is the restriction $m_{\sigma|\sigma'}$.
 
 \end{proof}

\subsubsection{Saturated subsets}\label{saturated}

Recall  that a subset of a strict toroidal embedding is called {\it saturated} if it is the union of  strata.
(see also Definition \ref{saturation}.)

Let $\Sigma$ be the  conical complex associated with a strict toroidal variety $X$.
Similarly with  any subset $\Sigma_0$ of $\Sigma$ one can associate the constructible saturated subset $$X(\Sigma_0):=\bigcup_{\tau \in \Sigma_0} s_{\tau}.$$ Then it follows immediately that 
\begin{lemma} \label{sat2}
$X(\Sigma_0)$ is open (closed under generization) iff $\Sigma_0$ is a subcomplex.
\end{lemma}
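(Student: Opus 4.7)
The plan is to translate the statement directly via the correspondence of Theorem \ref{cc}, which gives a bijection $\sigma \leftrightarrow s_\sigma$ between faces of $\Sigma$ and strata of $S$, satisfying $\tau \preceq \sigma$ iff $s_\sigma \leq s_\tau$ (equivalently $s_\sigma \subseteq \overline{s_\tau}$). The argument is then purely formal: the topological condition on $X(\Sigma_0)$ becomes a combinatorial condition on $\Sigma_0$.

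First I would reduce ``open'' to a statement about strata. Since $X(\Sigma_0)$ is a union of strata, it is open in $X$ iff its complement $X \setminus X(\Sigma_0) = \bigcup_{\tau \notin \Sigma_0} s_\tau$ is closed. By the stratification property (closures of strata are unions of strata), the closure of the complement is
\[
\overline{X \setminus X(\Sigma_0)} \;=\; \bigcup_{\tau \notin \Sigma_0}\overline{s_\tau} \;=\; \bigcup_{\tau \notin \Sigma_0}\;\bigcup_{s_{\tau'} \leq s_\tau} s_{\tau'}.
\]
Thus closedness of the complement is equivalent to the implication: for every $\tau \notin \Sigma_0$ and every $\tau'$ with $s_{\tau'} \leq s_\tau$, one has $\tau' \notin \Sigma_0$.

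Next I would rewrite this via the correspondence of Theorem \ref{cc}. The inequality $s_{\tau'} \leq s_\tau$ translates to $\tau \preceq \tau'$. Taking the contrapositive, the condition on $\Sigma_0$ becomes: for every $\tau' \in \Sigma_0$ and every face $\tau \preceq \tau'$, one has $\tau \in \Sigma_0$. This is exactly the defining property of a subcomplex of $\Sigma$ (a subset closed under taking faces), since the face-compatibility of Definition \ref{complex1} is inherited automatically from $\Sigma$.

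For the equivalence with ``closed under generization,'' I would note that for a union of strata this is nothing but the stratum-level implication $s_\tau \subseteq X(\Sigma_0) \Rightarrow s_{\tau''} \subseteq X(\Sigma_0)$ whenever $s_\tau \subseteq \overline{s_{\tau''}}$, i.e.\ whenever $\tau'' \preceq \tau$; this is the same combinatorial condition as above. Since $X(\Sigma_0)$ is a constructible union of locally closed strata, closedness under generization coincides with openness, completing the equivalence. There is no real obstacle here: the entire content is the dictionary of Theorem \ref{cc} together with the elementary fact that the closure of a stratum is the union of the smaller strata.
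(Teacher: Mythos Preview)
Your proof is correct and follows the same reasoning the paper has in mind: the paper states this lemma as an immediate consequence of the order-reversing bijection $\sigma \leftrightarrow s_\sigma$ from Theorem \ref{cc} and gives no further argument, so your explicit unpacking via closures of strata is exactly the intended verification, just written out in more detail.
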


In particular,  with a cone $\sigma$ one can associate the open subset $$X(\sigma)=\bigcup_{\tau \preceq \sigma} s_{\tau}=\Star(s,S),$$   
where $s$ is the stratum corresponding to $\sigma$.

\begin{lemma}
The following are equivalent for $\Sigma_0\subset \Sigma$:
\begin{enumerate}
\item $\Sigma_0$ is a subcomplex
\item $X(\Sigma_0)\subset X$ is open
\item $|\Sigma_0|$ is a closed subset of $|\Sigma|$.
\end{enumerate}
\end{lemma}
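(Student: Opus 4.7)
The plan is to reduce the three-way equivalence to two separate arguments: the equivalence (1)~$\Leftrightarrow$~(2) is exactly the content of Lemma \ref{sat2} that precedes the statement, so all that remains is to establish (1)~$\Leftrightarrow$~(3), which is a purely topological fact about the realization $|\Sigma|$.

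For (1)~$\Leftrightarrow$~(3) I would invoke the description of $|\Sigma|$ as the quotient of $\coprod_{\sigma\in\Sigma}\sigma$ by the identifications along the face inclusions $i_{\tau\sigma}$. Under this topology each cone $|\sigma|\subset |\Sigma|$ is closed, and, since $\inte(\sigma)$ is dense in $\sigma$, the closure of $\inte(\sigma)$ in $|\Sigma|$ equals $|\sigma|=\bigsqcup_{\tau\preceq \sigma}\inte(\tau)$. For the implication (1)~$\Rightarrow$~(3), if $\Sigma_0$ is a subcomplex then
\[
|\Sigma_0|\;=\;\bigcup_{\sigma\in\Sigma_0}\inte(\sigma)\;=\;\bigcup_{\sigma\in\Sigma_0}|\sigma|
\]
is a finite union of closed subsets of $|\Sigma|$ and hence closed.

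For the converse (3)~$\Rightarrow$~(1), suppose $|\Sigma_0|$ is closed. Take any $\sigma\in\Sigma_0$ and any face $\tau\preceq\sigma$. Then $\inte(\sigma)\subset |\Sigma_0|$, so by closedness
\[
\inte(\tau)\;\subset\;\overline{\inte(\sigma)}\;=\;|\sigma|\;\subset\;\overline{|\Sigma_0|}\;=\;|\Sigma_0|.
\]
Because the decomposition $|\Sigma|=\bigsqcup_{\tau'\in\Sigma}\inte(\tau')$ is disjoint, the only way for $\inte(\tau)$ to lie in $|\Sigma_0|$ is for $\tau$ itself to belong to $\Sigma_0$. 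Hence $\Sigma_0$ is closed under taking faces, i.e.\ a subcomplex.

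There is really no obstacle here: the statement is a bookkeeping lemma packaging two instances of the order-reversing duality between the stratification of $X$ and the face poset of $\Sigma$ (a subcomplex is an order-ideal in the face order, an open saturated subset of $X$ is a filter in the specialization order, and these two conditions coincide under $\tau\preceq\sigma\Leftrightarrow s_\sigma\leq s_\tau$). The only point requiring a bit of care is the quotient topology argument above, in particular the assertion that the closure of $\inte(\sigma)$ in $|\Sigma|$ is all of $|\sigma|$; this is immediate from the definition of the quotient topology on $|\Sigma|$ together with the fact that $\sigma$ itself is closed in the coproduct.
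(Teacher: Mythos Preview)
Your proof is correct and follows essentially the same approach as the paper. The paper's own proof is the one-line remark ``The property is local and can be verified for the closed cover $|\sigma|$ of $|\Sigma|$,'' which is precisely the observation underlying your (1)~$\Leftrightarrow$~(3) argument; you have simply spelled out the details (closure of $\inte(\sigma)$ equals $|\sigma|$, finite unions of closed cones are closed, disjointness of the interiors forces $\tau\in\Sigma_0$), and for (1)~$\Leftrightarrow$~(2) both you and the paper defer to Lemma~\ref{sat2}.
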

\begin{proof} The property is local and can be verified for the the closed cover $|\sigma|$ of $|\Sigma|$.

\end{proof}





\subsection{Maps of conical  complexes}

\begin{definition} (\cite{KKMS}) \label{map}
A map of conical complexes $f: \Sigma \to \Sigma'$, is a function which assigns to a cone $\sigma\in \Sigma$ a unique cone $\sigma'\in \Sigma'$, together with the linear map $f_{\sigma,\sigma'}: (\sigma,N^{\bf Q}_\sigma) \to (\sigma',N^{\bf Q}_{\sigma'})$, such that,
\begin{enumerate}
\item $f_{\sigma,\sigma'}(N_{\sigma})\subseteq (N_{\sigma'})$. \item $f_{\sigma,\sigma'}(\inte(\sigma))\subset \inte(\sigma')$.
\item If $\tau\preceq \sigma$ then $\tau'\preceq \sigma'$ and  $f_{\sigma,\sigma'}i_{\tau,\sigma}=i_{\tau',\sigma'}f_{\tau,\tau'}$.
\end{enumerate}

\end{definition}
Note  that the map $f$  induces a unique continuous map of topological spaces $|f|: |\Sigma| \to |\Sigma'|$. Equivalently
 
 \begin{definition} (\cite{KKMS}, \cite{Payne}) 
 A map of conical  complexes $f: \Sigma \to \Sigma'$ is a continuous map of topological spaces $|\Sigma | \to |\Sigma '|$ such that, for each cone $\sigma\in \Sigma$  there is some $\sigma' \in \Sigma$ with $f(\sigma)\subseteq \sigma'$ and $f^*M_{\sigma'} \subseteq M_\sigma$.
\end{definition}

\begin{definition} \label{locali}
A {\it subdivision} of a  complex
$\Sigma$ is a map $\Delta\to \Sigma$ such that $|f|$ is a homeomorphism identifying 
$|\Delta|=|\Sigma|$, so that
 any cone $\sigma\in
\Sigma $ is a union of cones $\delta\in
\Delta$.  A subdivision $\Delta$ of $\Sigma$ which is regular is called {\it desingularization} of $\Sigma$.

A map $f: \Sigma \to \Sigma'$ will be called  a {\it local isomorphism} 
if each $f_{\sigma,\sigma'}$ is an isomorphism.

If $f$ is bijective and is a local isomorphism then $f$   is called  an {\it isomorphism}

If  $f$ is injective and is a local isomorphism then $\Sigma$   is isomorphic to   a {\it subcomplex} of $\Sigma'$.

A map $f: \Sigma \to \Sigma'$ is called  {\it a regular local projection} if 
 for each $\sigma\in \Sigma$ there is a decomposition  $\sigma=\sigma'\times \tau$, where $\tau$ is regular and $f_{\sigma,\sigma'}: \sigma=\sigma'\times \tau \to \sigma'$ is the projection on the first component.
\end{definition}
\begin{lemma} Let $f: \Delta \to \Sigma$, be a subdivision, and $|f|:  |\Delta| \to |\Sigma|$ be the induced homeomorphism of the topological spaces. Then $$\Delta^\sigma:=\{\tau\in \Delta: |\tau|\subseteq|f|^{-1}(|\sigma|)\}$$  defines a fan in $N_\sigma^{\bf Q}$, which is a  subdivision  of the cone $\sigma$.\qed 
\end{lemma}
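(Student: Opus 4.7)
The plan is to realize $\Delta^\sigma$ concretely as a fan inside $N_\sigma^{\bf Q}$ by pushing each of its cones forward along $f$, and then check the fan axioms and the support condition using that $|f|$ is a homeomorphism.

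First I would construct, for each $\tau\in\Delta^\sigma$, a canonical linear embedding $\iota_\tau:\tau\hookrightarrow N_\sigma^{\bf Q}$. By the definition of a map of complexes, $f$ assigns to $\tau$ a cone $\tau'\in\Sigma$ with $f_{\tau,\tau'}:\tau\to\tau'$ sending $\inte(\tau)$ into $\inte(\tau')$. Since $|\tau|\subseteq|f|^{-1}(|\sigma|)$ we have $f_{\tau,\tau'}(\inte(\tau))\subset |\sigma|\cap\inte(\tau')$, and because $|\Sigma|$ is the disjoint union of the relative interiors of its cones, this forces $\tau'\preceq\sigma$. Hence there is a face inclusion $i_{\tau',\sigma}:\tau'\hookrightarrow\sigma\subset N_\sigma^{\bf Q}$, and I set $\iota_\tau:=i_{\tau',\sigma}\circ f_{\tau,\tau'}$. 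The compatibility condition (3) of Definition \ref{map}, together with the transitivity of the face inclusions $i_{\tau',\sigma}$, guarantees that for $\tau_1\preceq\tau_2$ in $\Delta$ the embedding $\iota_{\tau_1}$ factors through $\iota_{\tau_2}$ followed by the face inclusion of $\iota_{\tau_1}(\tau_1)$ into $\iota_{\tau_2}(\tau_2)$; in particular each face of a cone of $\Delta^\sigma$ is again in $\Delta^\sigma$.

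Next I would verify injectivity of each $\iota_\tau$ and the strict convexity of $\iota_\tau(\tau)$. The homeomorphism $|f|$ restricts to a continuous bijection $|\Delta^\sigma|\to|\sigma|$, and its restriction to $\inte(\tau)$ agrees with $\iota_\tau|_{\inte(\tau)}$; so $\iota_\tau$ is injective on the interior. Since $\tau$ itself is a strictly convex rational polyhedral cone and $\iota_\tau$ is linear, injectivity on $\inte(\tau)$ upgrades to injectivity on $\tau$, and $\iota_\tau(\tau)$ is a strictly convex rational cone in $N_\sigma^{\bf Q}$ (the lattice compatibility $\iota_\tau(N_\tau)\subset N_\sigma$ is immediate from condition (1)).

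The main step, which I expect to be the principal obstacle, is the face-intersection axiom: for $\tau_1,\tau_2\in\Delta^\sigma$, I must show $\iota_{\tau_1}(\tau_1)\cap\iota_{\tau_2}(\tau_2)$ is a common face of both. In $\Delta$ the cones $\tau_1$ and $\tau_2$ meet only along common faces, and the relative interiors of distinct cones of $\Delta$ are disjoint in $|\Delta|$. Because $|f|$ is a homeomorphism, this disjointness transfers to the embedded images, so any point in $\iota_{\tau_1}(\tau_1)\cap\iota_{\tau_2}(\tau_2)$ lies in the image of a common face $\tau_0\preceq\tau_1,\tau_2$ of $\Delta$; conversely the previous step shows each such common face embeds as a face of both $\iota_{\tau_i}(\tau_i)$. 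This gives the fan structure.

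Finally, for the subdivision property I would observe that $|f|^{-1}(|\sigma|)=\bigcup_{\tau\in\Delta^\sigma}|\tau|$ since $|\Delta|=\bigsqcup_{\tau\in\Delta}\inte(\tau)$ and a cone $\tau$ has $\inte(\tau)\subset|f|^{-1}(|\sigma|)$ exactly when $|\tau|\subset|f|^{-1}(|\sigma|)$ (any interior point forces $f(\tau')\preceq\sigma$ by the argument above, hence $|\tau|\subset|f|^{-1}(|\sigma|)$). Since $|f|$ is a homeomorphism this set maps onto $|\sigma|$, so the images $\iota_\tau(\tau)$ cover $\sigma$. Combined with the fan structure above, this says $\Delta^\sigma$ (realized via the $\iota_\tau$) is a subdivision of $\sigma$ in $N_\sigma^{\bf Q}$. $\square$
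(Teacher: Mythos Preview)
The paper states this lemma without proof (the statement ends with \qed). Your argument supplies all the details the paper omits, and the approach --- pushing each $\tau\in\Delta^\sigma$ into $N_\sigma^{\bf Q}$ via $\iota_\tau = i_{\tau',\sigma}\circ f_{\tau,\tau'}$ and then verifying the fan axioms by transporting the complex structure of $\Delta$ through the homeomorphism $|f|$ --- is the natural one and is correct.

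One small point deserves an extra sentence. In the face-intersection step you show that $\iota_{\tau_1}(\tau_1)\cap\iota_{\tau_2}(\tau_2)$ is a \emph{union} of images of common faces of $\tau_1,\tau_2$ in $\Delta$ (this is all the complex axioms on $\Delta$ give you, since in a general conical complex two cones may meet in several faces). The fan axiom, however, requires this intersection to be a \emph{single} common face. The missing observation is that once both cones sit in the same vector space $N_\sigma^{\bf Q}$, their intersection is convex; and a convex union of faces of a strictly convex polyhedral cone is a single face (if $p_1,p_2$ lie in the relative interiors of distinct maximal faces $F_1,F_2$ of the union, the midpoint lies in some face $F$, whence $p_1,p_2\in F$ by the face property, forcing $F_1=F=F_2$). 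With this added, the fan structure is established and the rest of your argument goes through.
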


\subsection{Toroidal morphisms of toroidal embeddings}

The following definitions are equivalent to the definitions of log smooth morphisms in characteristic zero.
\begin{definition} \label{toro mor} A morphism of strict toroidal embeddings $f:(X,U)\to (Y,V)$ is {\it strictly toroidal} if there exists the induced map of open neighborhoods $f':(X',U')\to (Y',V')$, and a commutative diagram of 

\[\begin{array}{rcc}
 (X',U') &  \rightarrow & (X_{\sigma'},T')\\

\downarrow {\scriptstyle f'} & & \downarrow \\
 (Y',V')
 & \rightarrow &
(X_{\sigma} , T)
\end{array},\] 
with horizontal  morphisms \'etale and
a vertical toric map  $(X_{\sigma'},T')\to (X_{\sigma} , T)
$.

\end{definition}
\begin{definition} A morphism of  toroidal embeddings $f:(X,U)\to (Y,V)$ is {\it  toroidal} if there exists the induced map of open \'etale neighborhoods $f':(X',U')\to (Y',V')$, and a commutative diagram as above.

\end{definition}

\subsection{Canonical birational toroidal maps}

The following definition is equivalent to  \cite[Definition 3 p.87, Definition 1 p.73]{KKMS} in view of Theorem \ref{sub}.
\begin{definition}
 A birational  morphism of strict toroidal
embeddings $f: (Y,U) \to (X, U)$ will be called {\it  canonical toroidal} if
for any $x\in s\subset X$ there exists an open neighborhood  $U_x$ of $x$,  an \'etale morphism  $ U_x\to
X_{\sigma}$  and a fan $\Delta^\sigma$ mapping  to $\sigma=\sigma_s$ and the  
 fiber square   of morphisms of  varieties 

\[\begin{array}{rcccccccc}
U_x \times_{X_{\sigma_s}}
X_{\Delta^\sigma} &&\simeq& (f^{-1}(U_x),f^{-1}(U_x)\cap U
 & \rightarrow &
(X_{\Delta^\sigma} , T)&&&\\
&&&\downarrow {\scriptstyle f} & & \downarrow &&& \\
&& & (U_x,U_x\cap U) &  \rightarrow & (X_{\sigma_s},T)&&&

\end{array}\] 

Here $f^{-1}(U_x):=U_x\times_{X}Y$.
\end{definition}
We will prove later that such a local description of 
canonical toroidal morphisms does  not depend upon
the  choice of  \'etale charts. 
This fact can be described nicely using
the following Hironaka condition:

For any geometric $\overline{K}$-points $ x, y$ which are in the same stratum    every
isomorphism $ \alpha : \widehat{X}^{\overline{K}}_x\to 
\widehat{X}^{\overline{K}}_y $ preserving stratification 
  can be lifted
to an isomorphism $\alpha':Y\times_X\widehat{X}^{\overline{K}}_x
\to Y\times_X\widehat{X}^{\overline{K}}_y$ preserving stratification.

(Here $\overline{K}$ is the algebraic closure of $K$, ${X}^{\overline{K}}:=X\times_{\Spec(K)}\Spec(\overline{K})$, and  $\widehat{X}^{\overline{K}}_x:=\Spec(\cO_{{X}^{\overline{K}},x}$)).

The Hironaka condition is extremely important when considering stratified toroidal varieties.  As we can see the condition is satisfied for the canonical birational maps of strict toroidal embeddings. 
(Lemma \ref{le: tembeddings}).



\begin{theorem} (\cite[Theorem 6 p.90]{KKMS}) \label{sub} Let $(X,U)$ be a strict toroidal embedding, and $\Sigma$ be the associated semicomplex.
Then there is a bijective correspondence between the subdivisions of $\Sigma$  and the canonical proper birational toroidal maps.

\end{theorem}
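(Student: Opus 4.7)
The plan is to establish the bijection by constructing maps in both directions and then checking mutual inverseness; the argument is essentially local on strata and reduces to toric geometry glued via Mumford's Lemma \ref{Mum}.

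Forward direction: given a subdivision $f\colon\Delta\to\Sigma$, I would build the canonical toroidal morphism $Y\to X$ by gluing local toric modifications. For each stratum $s$ with associated cone $\sigma=\sigma_s$, the cones of $\Delta$ mapping into $\sigma$ form a fan $\Delta^\sigma$ subdividing $\sigma$, producing a proper toric birational morphism $X_{\Delta^\sigma}\to X_\sigma$. For an \'etale chart $\phi\colon U_x\to X_\sigma$ centered at $x\in s$, set $Y_x:=U_x\times_{X_\sigma}X_{\Delta^\sigma}$. The first verification is that these local pieces glue: if $x\in s$ and $x'\in s'$ with $s\leq s'$, then by Corollary \ref{inclusion} an \'etale chart through $x$ restricts to one through $x'$ via the open immersion $X_{\sigma_{s'}}\hookrightarrow X_{\sigma_s}$, and the induced face inclusion $\sigma_{s'}\preceq\sigma_s$ carries $\Delta^{\sigma_{s'}}$ to the restriction of $\Delta^{\sigma_s}$, so the two local pullbacks agree over $U_x\cap U_{x'}$. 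For two charts $\phi,\phi'\colon U\to X_\sigma$ through the same stratum, independence follows because the toric pullback depends only on the system of Cartier divisors on $U$ supported in $D$, which by Lemma \ref{Mum} is intrinsic to $(X,D)$ and not to the chart.

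Reverse direction: given a canonical proper birational toroidal $f\colon Y\to X$, I would recover a subdivision stratum by stratum. By the very definition of canonical toroidal, near each $x\in s$ there exists an \'etale chart $U_x\to X_{\sigma_s}$ together with a fan $\Delta^{\sigma_s}$ such that $f^{-1}(U_x)\simeq U_x\times_{X_{\sigma_s}}X_{\Delta^{\sigma_s}}$. Uniqueness of $\Delta^{\sigma_s}$ follows from the classical correspondence between proper toric birational morphisms and fan subdivisions, together with Lemma \ref{Mum}, which pins down the lattice $M_{\sigma_s}$ of Cartier divisors supported on $D\cap \Star(s,S)$ independently of the chart. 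Varying $x$ inside $s$ gives a single fan $\Delta^{\sigma_s}$; varying across strata, the compatibility of restrictions forced by Corollary \ref{inclusion} (faces of larger cones correspond to smaller open stars) guarantees that the collection $\{\Delta^{\sigma_s}\}_{s\in S}$ assembles along the inclusions $i_{\sigma_{s'}\sigma_s}$ into a genuine subdivision $\Delta\to\Sigma$. Mutual inverseness is then a tautology: in both directions the construction is governed by the same toric fiber-product formula.

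The main obstacle will be the chart-independence in both directions, i.e.\ that two different \'etale embeddings $U\to X_{\sigma_s}$ produce the same local modification upstairs and, conversely, that the fan $\Delta^{\sigma_s}$ extracted from $f$ does not depend on which \'etale chart we trivialize against. This is precisely where \emph{strict} toroidality is essential: Lemma \ref{Mum} and Corollary \ref{Mum2} ensure that the divisorial monoids $M_s^+$, and hence the cones $\sigma_s$, are canonical invariants of $(X,D)$, and by Corollary \ref{PL} piecewise linear data on $|\Sigma|$ correspond bijectively to Cartier divisors on $X$ supported on $D$. Once chart independence is secured, the Hironaka-type lifting condition for isomorphisms of formal neighborhoods follows automatically, since the formal structure of $Y\to X$ over a stratum is determined by the toric pullback along any chart, and any stratum-preserving isomorphism of $\widehat{X}^{\overline K}_x$ identifies the associated cones and their subdivisions.
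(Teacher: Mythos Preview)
Your approach is essentially the same as the paper's: build the forward map by gluing fiber products $U\times_{X_\sigma}X_{\Delta^\sigma}$, and recover the subdivision in the reverse direction from the local toric models. Two points deserve sharpening.

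First, your chart-independence argument in the forward direction is the right intuition but stops short of a proof. Saying ``the toric pullback depends only on the system of Cartier divisors on $U$ supported in $D$'' is not yet a demonstration that two fiber products along different charts $\phi,\phi'\colon U\to X_\sigma$ are isomorphic as schemes over $U$. The paper closes this by writing each affine piece intrinsically as
\[
\widetilde{U}_\tau \;=\; \operatorname{Spec}\Bigl(\sum_{D\in(\tau^\vee)^{\mathrm{integ}}}\mathcal{O}(U)(-D)\Bigr),
\]
a subsheaf of the constant sheaf $\mathcal{K}(X)$, where the Cartier divisors $D$ live in $M_s\subset\mathcal{K}(X)/\mathcal{O}_X^*$. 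This makes the open pieces manifestly chart-free and gives the gluing on the nose. Your invocation of Lemma \ref{Mum} is what makes this formula meaningful, but you should actually write it.

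Second, in the reverse direction you skip the step that the map $\Delta\to\Sigma$ is a \emph{subdivision} rather than merely a map of complexes. The definition of canonical toroidal only gives a fan $\Delta^\sigma$ mapping to $\sigma$; the paper uses the valuative criterion of properness to conclude $|\Delta^\sigma|=\sigma$. You should insert this. The paper also organizes the reverse direction slightly differently: it first constructs the map of complexes $\Sigma_Y\to\Sigma_X$ directly from the induced maps $M^+_{s,X}\to M^+_{t,Y}$ on Cartier monoids (independent of charts by Lemma \ref{toroidal-cartier} and Corollaries \ref{inclusion2}, \ref{inclusion}), and only then checks it is a subdivision. Your stratum-by-stratum extraction is equivalent, but the paper's route makes uniqueness of $\Delta^\sigma$ automatic rather than something to argue separately.
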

\begin{proof} Again,  the theorem was originally proven over an algebraically closed field but it can be extended to arbitrary fields (with our definition). (It also can be further generalized to the case of the stratified toroidal varieties in Theorem \ref{th: modifications}).

If $Y\to X$ is any morphism of strict toroidal embeddings then
the strata are mapped into strata. Moreover, if $t\in T$ maps to $s\in S$ then its face $t'\preceq t$ maps to $s'\preceq s$.
These and other properties of Definition \ref{map} can be verified locally in the charts where they follow from the properties of the toric maps. Moreover, we get the map of cones between the cones associated with the strata. Because of  Lemma \ref{toroidal-cartier}, and Corollaries \ref{inclusion2}, \ref{inclusion2}, the maps between the cones are independent of charts.
The restrictions of $f$ to the open stars $\Star(s,S_Y) \to \Star(t,S_X)$, induce the maps $M^+_{s,X}\to  M_{t,Y}^+$ and the dual maps of cones $\sigma_s\to \sigma_t$ commuting with face inclusions (by Lemma \ref{inclusion}) and defining the maps of the conical complexes $f:\Sigma_Y\to \Sigma_X$.

Moreover, if $Y\to X$ is a canonical birational toroidal morphism then 
the correspondence between cones defined by the charts shows that the set $\Delta^\sigma:=\{\tau\in \Delta_Y: |\tau|\subseteq|f|^{-1}(|\sigma|)\}$ is a fan inside of $\sigma$. Since the map is proper by using valuative criterion of properness we see that $|\Delta^\sigma|=\sigma$, so $f$ is, in fact a subdivision.

Conversely, a subdivision $\Delta$ of the conical complex $\Sigma$ defines locally for any chart $U\to X_\sigma$ the variety $\widetilde{U}:=U\times_{X_{\sigma}}{X_{\Delta^\sigma}}$ over $U$. The subset $\widetilde{U}$ is the union of the canonical open subsets $\widetilde{U}=\bigcup_{\tau\in \Delta} \widetilde{U}_\tau$, where 

$$\widetilde{U}_\tau:= U\times_{X_{\sigma}}{X_{\tau}}=\Spec \cO(U)(\tau^\vee)^\integ=
\Spec(\sum_{D\in (\tau^\vee)^\integ} \cO(U)(-D))$$
(see also \cite{KKMS}, page 74)
Note that the Cartier divisors in $(\tau^\vee)^\integ\subset M_s$ are naturally contained in $M_s\subset \cK(X)/\cO_X^*$, where $K(X)$ is a constant sheaf of the rational functions on $X$, so $\sum_{D\in (\tau^\vee)^\integ} \cO(U)(-D))$ is a subsheaf of $\cK(X)$ over $U$.

Consequently, the subsets $\widetilde{U}_\tau$
are independent of charts, and the canonically determined morphisms $\widetilde{U}_\tau\to U$ are birational.
This allows to represent $Y$ canonically  by glueing the open subsets $\widetilde{U}_\tau$ over $U$ along the subsets corresponding to their faces so that 
$$Y=\bigcup_{\sigma\in \Sigma}\bigcup_{\tau \in \Delta^{\sigma}} \Spec(\sum_{D\in (\tau^\vee)^\integ} \cO(U_{s_\sigma}(-D))$$
The natural projection $Y\to X$  is proper and separated as it is locally represented  by the morphism $\widetilde{U}_\tau:= U\times_{X_{\sigma}}{X_{\tau}}\to U$.
\end{proof}

As a corollary from the proof  we get: 

\begin{corollary} The strictly  toroidal maps determine  the maps of the associated  conical complexes.\end{corollary}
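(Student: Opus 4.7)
The plan is to mimic the argument given in the proof of Theorem \ref{sub} but without the birationality and subdivision hypotheses, using only the local toric chart structure of a strictly toroidal morphism as in Definition \ref{toro mor}.

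First I would show that a strictly toroidal morphism $f: (X, U) \to (Y, V)$ carries strata into strata. This is local on $X$, so I may work in a commutative square as in Definition \ref{toro mor}, where the horizontal \'etale maps land in toric models $X_{\sigma'} \to X_\sigma$, and the vertical map is toric. For toric morphisms it is standard that orbits are mapped into orbits; since the stratifications on strict toroidal embeddings are pulled back from the orbit stratifications under \'etale charts (Section \ref{conical}), and \'etale maps are open with respect to the stratification, this descends to show that for each stratum $s \in S_X$ there is a unique stratum $t = t(s) \in S_Y$ such that $f(s) \subseteq t$. For a point $x \in s$ mapped to $y \in t$, one has an induced open inclusion $\Star(t, S_Y) \supseteq f(\Star(s, S_X))$ after shrinking if necessary.

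Next I would build the map on monoids. Pull-back of Cartier divisors along $f$ restricted to $\Star(s, S_X) \to \Star(t, S_Y)$ sends $\Cart(t, S_Y)^+$ into $\Cart(s, S_X)^+$, because every component of the boundary divisor on the target pulls back to an effective divisor supported on the boundary of the source (this is forced by the toric chart square, since toric boundary pulls back to toric boundary along toric maps and the horizontal \'etale maps preserve the divisorial stratification). Dualizing gives a $\ZZ$-linear map $N_s \to N_t$ sending $\sigma_s$ into $\sigma_t$, and Theorem \ref{KM} combined with the description of $\inte(\sigma_s)$ as the locus where all nontrivial monomials vanish simultaneously (i.e.\ the stratum $s$) shows that the interior of $\sigma_s$ is mapped into the interior of $\sigma_t$. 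Compatibility with face inclusions follows from Corollary \ref{inclusion}: if $s \leq s'$ in $X$, then $t(s) \leq t(s')$ in $Y$, and the restriction of Cartier divisors commutes with the pull-back under $f$, so the square of monoid maps commutes, and therefore so does its dual square of cone inclusions.

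The main obstacle will be verifying that the construction is independent of the chosen chart square, which is where the strictness hypothesis is used. Corollary \ref{inclusion2} already tells us that $\sigma_s$ and its lattice $N_s$ are intrinsically determined by $s$, and that the isomorphism with the chart cone $\overline{\sigma}$ does not depend on the chart; the same holds on the target. Thus the map of monoids $\Cart(t, S_Y)^+ \to \Cart(s, S_X)^+$ is defined purely in terms of the divisorial restriction of $f$, without reference to any chart, so its dual lifts canonically to the intrinsic cones $\sigma_s, \sigma_t$. Assembling these pieces over all strata yields the required morphism $\Sigma_X \to \Sigma_Y$ of conical complexes in the sense of Definition \ref{map}, completing the proof.
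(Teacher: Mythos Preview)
Your proposal is correct and follows essentially the same approach as the paper: the corollary is stated as an immediate consequence of the proof of Theorem \ref{sub}, whose first two paragraphs already carry out exactly the argument you describe (strata map to strata via the toric chart squares, pull-back of Cartier divisors on stars gives monoid maps $M^+_{t,Y}\to M^+_{s,X}$ whose duals are the cone maps, and chart-independence comes from Lemma \ref{toroidal-cartier} and Corollaries \ref{inclusion2}, \ref{inclusion}). You have simply made explicit what the paper leaves implicit in the phrase ``as a corollary from the proof.''
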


\subsubsection{Smooth maps}

\begin{definition} \label{toro mor2} A  morphism of strictly toroidal varieties $f: (Y, D_Y)\to (X, D_X)$ will be called 
{\it smooth } (respectively {\it \'etale }) if it is a smooth ( resp. \'etale)  and $f^{-1}(D_X)=D_Y$
\end{definition}




\begin{lemma} 

If $f: (Y, D_Y)\to (X, D_X)$ is a strictly toroidal morphism which is  smooth  then the corresponding  map of the associated conical complexes $\Sigma_X\to \Sigma_Y$ is  a local isomorphism.

\end{lemma}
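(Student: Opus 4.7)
The plan is to show that for every stratum $t \in S_Y$ with $f(t)$ contained in a stratum $s \in S_X$, the induced linear face map $\sigma_t \to \sigma_s$ is an isomorphism; by Definition \ref{locali} this is exactly what it means for the complex map to be a local isomorphism.

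The first step will establish a local bijection of Weil divisorial components. Fix $y \in t$ and set $x := f(y) \in s$. Because $f$ is smooth, each preimage $f^{-1}(D_{X,i})$ of an irreducible component of $D_X$ is a regular codimension-one subscheme of $Y$, and hence locally irreducible above $x$. Combined with the hypothesis $f^{-1}(D_X) = D_Y$ and the normality of the components of $D_Y$ from Lemma \ref{normal0}, this produces a canonical bijection between the Weil components of $D_X$ through $x$ and those of $D_Y$ through $y$ via $D_{X,i} \longleftrightarrow D_{Y,i} := f^{-1}(D_{X,i})$. In particular $t$ locally coincides with $f^{-1}(s)$, so $\dim t = \dim s + n$ for $n$ the relative dimension of $f$, and Theorem \ref{KM}(3) forces $\rank M_t = \rank M_s$.

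The second step will upgrade this bijection to an isomorphism of monoids $f^*\colon M_s^+ \to M_t^+$. Pullback is injective by flatness. For surjectivity, given an effective Cartier divisor $E' = \sum a_i D_{Y,i}$ on $\Star(t, S_Y)$, take the matching Weil combination $E := \sum a_i D_{X,i}$ on $\Star(s, S_X)$; I will argue $E$ is Cartier by faithfully flat descent. Since $\cO_{X,x} \to \cO_{Y,y}$ is faithfully flat,
\[
\dim_{k(x)}\bigl(\cO_X(-E)_x \otimes_{\cO_{X,x}} k(x)\bigr) = \dim_{k(y)}\bigl(\cO_Y(-E')_y \otimes_{\cO_{Y,y}} k(y)\bigr) = 1,
\]
and Nakayama's lemma forces $\cO_X(-E)_x$ to be principal, making $E$ Cartier at $x$. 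Dualizing the resulting monoid isomorphism yields the cone isomorphism $\sigma_t \xrightarrow{\sim} \sigma_s$, which finishes the proof.

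The main obstacle will be the Cartier descent step, since one must transport a local principality condition from $Y$ back to $X$ without direct combinatorial control of the chart cone $\sigma'$ (which may fail to be strictly convex of maximal dimension once $f$ has positive relative dimension). Framing the descent as a residue-field rank computation across the faithfully flat extension reduces the problem to Nakayama and sidesteps any need to dissect the chart.
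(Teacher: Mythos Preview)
Your proof is correct and takes a genuinely different route from the paper's.

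The paper argues via charts: given a chart $\alpha\colon U\to X_\sigma$ on $X$, the composite $\beta=\alpha\circ f\colon f^{-1}(U)\to X_\sigma$ is smooth and strata-preserving; choosing coordinates $u_1,\dots,u_s$ on the stratum $\beta^{-1}(O_\sigma)$ that are nowhere vanishing produces an \'etale chart $f^{-1}(U)\to X_\sigma\times T^s$, so that $f$ is locally the toric projection $X_\sigma\times T^s\to X_\sigma$. Since $T^s$ contributes the zero cone, the associated cone map is visibly an isomorphism. This is a concrete, chart-level argument.

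Your approach is intrinsic: you bypass charts entirely and work directly with the monoids $M_s^+$ and $M_t^+$, using faithfully flat descent to push Cartier-ness down from $Y$ to $X$. This is in the spirit of Mumford's Lemma~\ref{Mum}, extended from \'etale to smooth morphisms. The Nakayama-plus-fiber-rank argument is clean and avoids constructing any auxiliary coordinates. What the paper's approach buys is that it simultaneously exhibits the local model of $f$ (useful elsewhere); what yours buys is a more self-contained argument that doesn't rely on extending a smooth morphism to an \'etale chart.

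One small correction: you claim $f^{-1}(D_{X,i})$ is a \emph{regular} codimension-one subscheme. This is false in general---the components $D_{X,i}$ are normal (Lemma~\ref{normal0}) but may be singular, and smooth pullback preserves normality, not regularity. However, normal already implies locally irreducible, so your conclusion and the rest of the argument are unaffected. You should also make explicit that $f^*\cO_X(-E)=\cO_Y(-E')$ holds because smooth pullback preserves reflexivity and the two sheaves agree in codimension one; this is what justifies the fiber-rank comparison.
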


\begin{proof}   Consider a local chart $\alpha: U\to X_\sigma$, and  the induced  smooth morphism $$\beta:=\alpha\circ f: V:=f^{-1}(U)\to X_\sigma$$ preserving strata. Then one can locally find a coordinate system $u_1,\ldots,u_s$ on the stratum $\beta^{-1}(O_\sigma)\subset V$ defining an \'etale chart $V\to X_\sigma\times \AA^s$.
We can additional assume that $u_1,\ldots,u_s$ do not vanish on $V$, and they induce an \'etale chart
 of the form  $V\to X_\sigma\times T^s$, by shrinking $V$ if necessary. Here $T^s\subseteq \AA^s$ is an $s$-dimensional torus.
  Thus the morphism $f$ in the \'etale charts $\alpha$, $\beta$ is represented by the toric map $X_\sigma\times T^s\to X_\sigma$, defining an isomorphism of the associated cones.

\section{Functorial desingularization of complexes}
\label{Desi}

\subsection{Regular and singular subcomplexes}
In the sequel, we shall say that  cones of a conical complex are {\it disjoint} if their intersection is the zero cone.

\begin{definition}
We say that any nonzero  integral vector $v\in N_\sigma$ is {\it primitive} if it generates the monoid $\QQ_{\geq 0}\cdot v\cap N_\sigma$.

Any strongly convex, finitely generated cone can be written \underline{uniquely} as $$\sigma =\langle v_1,\ldots,v_k\rangle:=\QQ_{\geq 0}\cdot v_1+\ldots+ \QQ_{\geq 0}\cdot v_k,$$ such that $v_i$ are primitive vectors, and $k$ is minimal. 
We shall call vectors $v_i$ {\it vertices} of $\sigma$.
\end{definition}
\begin{definition}\label{singul} (\cite{KKMS})
We say that a cone
$\sigma$ in $N^{\QQ}$ is {\it regular} or {\it nonsingular} if it is generated by a part of a basis of the lattice
$e_1,\ldots,e_k\in N$, written $\sigma=\langle e_1,\ldots,e_k\rangle$. If the cone is not regular it will be called {\it singular}. A complex $\Sigma$ is {\it regular} or {\it nonsingular} if all cones $\sigma\in \Sigma$ are regular.
\end{definition}
\begin{lemma} (\cite{KKMS})
Let $(X,U)$ be a strict toroidal embedding, and $\Sigma$ be the associated semicomplex. 
 Then a cone $\sigma\in \Sigma$ is regular iff the corresponding open subset $X(\sigma)$ is nonsingular. In particular, $\Sigma$ is regular if $X=X(\Sigma)$ is nonsingular.

\end{lemma}

\begin{definition} \label{irreducible} A cone $\sigma$ is called {\it irreducible singular } or simply {\it irreducible} if it cannot be written as $\sigma=\tau\times \sigma_1$, with $\sigma_1$ being regular.
\end{definition}
Any singular cone $\sigma$ contains a unique maximal irreducible singular face denoted by $\sing(\sigma)$, so we can write
$$\sigma=\sing(\sigma)\times \reg(\sigma),$$
where $\reg(\sigma)$ is the maximal regular face of $\sigma$ disjoint with $\sing(\sigma)$.
This follows from a simple observation that  an irreducible face of $\tau\times \reg(\sigma)$ is contained in $\tau$.

Denote by $\sing(\Sigma)$ the subset of all irreducible singular faces of $\Sigma$, and let $\Sing(\Sigma)$ denote the minimal subcomplex of $\Sigma$ containing $\sing(\Sigma)$. As we will see later the subset $\sing(\Sigma)$ describes the maximal components of the singular set on $X$.

On the other hand, let   $\Reg(\Sigma)$ the set 
 of all the regular cones in $\Sigma$. Then $\Reg(\Sigma)$ corresponds to the open subset of nonsingular points on $X$.
We see immediately from the definition that

\begin{lemma}\label{local} The  restriction of a regular local projection $f:\Sigma\to \Sigma'$ 
is  a local  isomorphism  $$\Sing(f): \Sing(\Sigma)\to \Sing(\Sigma')$$ on the subcomplexes.

\end{lemma}
\begin{proof} By definition \ref{locali}, 
$\sigma\simeq (f(\sigma))\times \tau$, where $\tau$ is regular. So if $\sigma\in \sing(\Sigma)$ then $f(\sigma)\simeq (\sigma)$. In particular, the cone $f(\sigma)$ is irreducible, and  thus it is in $\Sing(\Sigma')$. Consequently $f$ defines  isomorphisms between the corresponding cones in $\sing(\Sigma)$ and in $\sing(\Sigma')$, as well as  between their faces in $\Sing(\Sigma)$ and in $\Sing(\Sigma')$.

\end{proof}

\subsubsection{Toric divisors} \label{s: toric divisors}

\begin{definition} \label{toric divisors} Any $T$-stable divisor on a toric variety $(X,T)$ will be called a {\it toric divisor}.

\end{definition}
\begin{remark} The maximal toric divisor on on a toric variety $(X,T)$ id given by $D:=X\setminus T$.
	
\end{remark}
\begin{remark} Note that any toric divisor $D$ on a smooth toric variety $X$, induces the  structure of a strict toroidal embedding $(X, D)$.
\end{remark}

\begin{lemma} \label{reg1} Let $D=\bigcup D_i$ be a toric divisor on $X_\sigma$  with the components $D_i$ such that the closed orbit $O_\sigma$ is the  the intersection of all $D_i$.
Then 
$(X_\sigma, D)$ is a strict toroidal embedding if and only if 
$$D=D_\sigma:=X_\sigma\setminus T$$
(So $D$ contains all the irreducible toric divisors on $X_\sigma$ and is the maximal toric divisor on $X_\sigma$.)
\end{lemma}

\begin{proof}
Let  $x\in O_{\sigma}$ be a closed point. By Theorem \ref{KM}, we get that  the group  of the Cartier divisors supported on $D_\sigma$ on $X_\sigma$ is given by $$\Cart(X_\sigma, D_\sigma)=M/(\sigma^\perp)^\integ=M_\sigma.$$ 
Moreover, $$\rank(\Cart(X_\sigma, D_\sigma))+\dim(O_\sigma)=\dim(X)$$ 
Since $(X, D)$ is strict toroidal and $O_\sigma$ is a  stratum  defined by $D$ we have the same equalities for our $D$ at the point $x$.
Hence $$\rank(\Cart(X_\sigma, D_\sigma))=\rank(\Cart(X_\sigma, D)),$$ as $D$ and $D_\sigma$.
The  group  $\Cart(X_\sigma, D_\sigma)$ can be interpreted as a group of  integral functionals on $N_\sigma\subset N$, so $$\rank(\Cart(D_\sigma))=\rank (M_\sigma)=\dim(\sigma).$$

If $D\neq D_\sigma$, say there exists a component $E_i \in D_\sigma\setminus D$ corresponding to the one dimensional faces (rays)  $\rho_i$ of $\sigma$. Then $\Cart(X_\sigma, D)$ is a proper subgroup of $\Cart(X_\sigma, D_\sigma)$ which corresponds to a subgroup of the integral functionals vanishing on $\rho_i$. This implies that
$\rank(\Cart(s_\sigma, D))<\rank(\Cart(s_\sigma, D_\sigma))$, which is a cotradiction. 
So  $D=D_\sigma$ on $X_\sigma$.
\end{proof}

\begin{lemma} \label{reg2}
Let $D$ be a toric divisor on $X_\sigma$. Then 
$(X_\sigma, D)$ is a strict toroidal embedding if and only if  
$\sigma=\tau\times \sigma_1$, where $\sigma_1$ is regular, and $D=D_{\tau}\times X_{\sigma_1}$.
\end{lemma}
\begin{proof}

Let $O_{\tau}$ be  the generic orbit in the intersection of the divisor components $D_i$ of $D$ corresponding to the rays $\rho_i$. Passing to $X_{\tau}$ we see, by  Lemma \ref{reg1}, that $\rho_i$ generate   $\tau=\langle \rho_1,\ldots, \rho_k \rangle$, and $D_{X_\tau}={D_\tau}$.

Now let $x\in O_\sigma$. If $(X_\sigma, D)$ is a strict toroidal embedding at $x$ then $s_\tau=\overline{O_\tau}=\bigcap D_i$ is the unique smooth toroidal stratum through $x$.  Then, by the characterization of the group of Cartier divisors around a stratum in Lemma \ref{toroidal-cartier}, we have that 
$\Cart(X_\sigma, D)\simeq \Cart(X_\tau,{D_\tau})$ is 
the subgroup of $\Cart(X_\sigma, D_\sigma)$ consisting of  the toric Cartier divisors on $X_\sigma$ supported on $D$.
In other words, the natural map defined by the restriction of Cartier divisors
$$\Cart(X_\sigma, D)\to \Cart(X_\tau, D_\tau)$$ 
is an isomorphism.

But this map is the restriction to the subgroup $\Cart(X_\sigma, D)\subset \Cart(X_\sigma, D_\sigma)$, of the another natural surjection map $$\Cart(X_\sigma, D_\sigma)\to \Cart(X_\tau, D_\tau).$$ 

 Consequently, any nonnegative integral functional  $F$ on $\tau$ extends uniquely to an integral  functional $\overline{F}$ on $\sigma\supset \tau$, such that $\overline{F}_{\rho}=0$ for all one dimensional rays $\rho$ in $\sigma\setminus \tau$. In particular, those rays form  a face $\sigma_1$. We have the exact sequence of the maps of monoids 
$$0\to (\tau^\vee)^\integ \to (\sigma^\vee)^\integ \to  ((\sigma_1)^\vee)^\integ\to 0,$$
where $ (\sigma^\vee)^\integ \to  ((\sigma_1)^\vee)^\integ$ is defined  by the restrictions, and the exact sequence of the corresponding lattices:
$$0\to M_\tau\to M_\sigma\to M_{\sigma_1}\to 0.$$

Both exact sequences split as we have the natural restriction map $(\sigma^\vee)^\integ\to ((\tau)^\vee)^\integ$, and $M_\sigma\to M_{\tau}$. So 
$$M_\sigma\simeq M_\tau\times M_{\sigma_1},$$
$$(\sigma^\vee)^\integ \simeq  (\tau^\vee)^\integ \times  (\sigma_1^\vee)^\integ$$  
Dualizing
$$\sigma\simeq  \tau \times \sigma_1.$$
Moreover, the orbit $O_\tau$ on $X_\sigma=X_{\tau \times \sigma_1}$ corresponds to $T_{\sigma_1}\times O_\tau$, and its closure is equal to $\overline{O_\tau}= O_\tau \times X_{\sigma_1}$ and since it is a smooth stratum we conclude that $X_{\sigma_1}$ is smooth, and $\sigma_1$ is regular.

\end{proof}

\subsection{The determinants of simplicial cones}

\begin{definition} \label{simplicial}
A cone $\sigma$ is {\it simplicial} if it 
is generated over $\QQ$ by linearly
independent primitive vectors $v_1,\ldots,v_k$, written $\sigma =\langle v_1,\ldots,v_k\rangle $.
\end{definition}

To control the singularities of the simplicial cones  $\sigma=\langle v_1,\ldots, v_k\rangle$ one introduces the {\it multiplicity} or {\it determinant} $\det(\sigma)$ of the cone $\sigma$ to be the absolute value of $\det(v_1,\ldots, v_k)$, where the determinant is computed with respect to any basis of the lattice $N_\sigma$.
Set $\Ver(\sigma): =\{v_1,\ldots, v_k\}$ for the set of vertices of $\sigma$.

Let $N_{\Ver(\sigma)}:=\quad \bigoplus\,\, \ZZ v_i\subset N_\sigma$ be the sublattice generated by $v_i$. 
\begin{lemma} Let $\sigma=\langle v_1,\ldots, v_k\rangle$ be a simplicial cone. Then

\begin{enumerate}

\item The order of the quotient group $\frac{N_\sigma} {N_{\Ver(\sigma)}}$ is equal to $n:=\det(\sigma)$, and the cosets in the quotient group  $\frac{N_\sigma}{N_{\Ver(\sigma)}}$ have representatives which are integral vectors of $N_\sigma$ of the form $\sum a_iv_i$, where $0\leq a_i<1$, $a_i\in \frac{1}{n}\cdot \ZZ_{\geq 0}$.
  \item $\sigma$ is regular iff $\det(\sigma)=1$.

\end{enumerate}

\end{lemma}
\begin{proof} It is a well known fact.

(1) By a triangular linear modification (which does not change determinant) one can transform $\Ver(\sigma)$ into a set $\{n_1e_1,\ldots,n_ke_k\}$ where
 $\{e_1,\ldots,e_k\}$  is a basis of $N_\sigma$ so that $$\det(\sigma)=\det(n_1e_1,\ldots,n_ke_k)=n_1\cdot\ldots\cdot n_k=[N_\sigma :N_{\Ver(\sigma)}].$$

If $\sum a_iv_i \in N_\sigma$ and $n=[N_\sigma :N_{\Ver(\sigma)}]$ then $n\sum a_iv_i\in N_{\Ver(\sigma)}$, so $na_i\in \ZZ$.

(2) The condition $\det(\sigma)=1$ means that the set $\Ver(\sigma)$ is a basis of $N_\sigma$. 

\end{proof}
\subsubsection{Minimal vectors}

\begin{definition} \label{minimall} By  a {\it minimal point} or a {\it minimal  vector} of a  cone  $\sigma$ we shall mean  an integral vector $v\in \sigma$, which is not a vertex, and which cannot be decomposed as $v=x+y$, where $x,y\in \sigma^{\integ}\setminus \{0\}$.

\end{definition}

\begin{definition} \label{small} 
More generally a {\it small vector} $v$ is a nonzero integral vector of  a  cone $\sigma=\langle v_1,\ldots, v_k\rangle$  which cannot be written as $v=v_i+y$, where $y\in \sigma$. 
\end{definition}
By definition any minimal  vector is small. Moreover,
\begin{lemma} 
A  {\it small  vector} of a simplicial cone  $\sigma=\langle v_1,\ldots, v_k\rangle$ is  a nonzero integral vector of the form
$v=a_1v_1+\ldots+a_kv_k\in N_\sigma$ with $0\leq a_i<1$. \qed
\end{lemma}

Now, let $v\in \sigma$ be a small vector, which is not minimal. Then  there is a  decomposition $v=x+y$, where both $x$ and $y$ are small vectors again. By continuing this decomposition process we show the following:
\begin{lemma} 
\label{minimal5}
\begin{enumerate}
\item Any small vector in a  cone is a nonnegative integral combination of the minimal vectors.

\item Any integral vector of a cone is  a nonnegative integral combination of  minimal vectors and  vertices. 
\end{enumerate}
\qed
\end{lemma}



 \begin{lemma} 
\label{minimal4} All the small and the minimal points in $\sigma$ are necessarily in $\sing(\sigma)$, and conversely,  $\sing(\sigma)$ is the smallest face of $\sigma$ containing all the minimal or small points.

\end{lemma}
\begin{proof} Any nonzero vector of $\sigma=\sing(\sigma)+\reg(\sigma)$  which is not in $\sing(\sigma)$ is the sum of  a nonzero vector in $\reg(\sigma)$ and a vector in $\sing(\sigma)$, and 
thus cannot be minimal. So the minimal vectors are in $\sing(\sigma)$, and thus their combinations are as well.

\end{proof}


\begin{example}(D. Abramovich) \label{Abramovich} Let $\sigma
= \QQ_{\geq 0}^3$, with $N^{\QQ}_\sigma=\QQ^3$, and the lattice 
$$ N_\sigma=\{(a_1,a_2,a_3) :a_i\in \ZZ, a_1+a_2+a_3 \in 2\ZZ\}\subset N^{\QQ}_\sigma$$
Then $\sigma$ is generated by $\{(1,1,0),(1,0,1),(0,1,1),(2,0,0),(0,2,0),(0,0,2)\}$, with the vertices $\{(2,0,0),(0,2,0),(0,0,2)\}$. Thus $\sigma$ contains no minimal points in its relative interior.
\end{example}
Note that the semigroup (or monoid)  of a simplicial cone $\tau$ containing no minimal vectors is   necessarily generated by  $v_i$, and thus $\tau$ is regular. 
Summarizing 
\begin{lemma} \begin{enumerate} \label{min}

\item A simplicial cone $\sigma$ is regular if it contains no minimal  points.
\item If $\det(\sigma)=n>1$ then there exists a minimal or a small point of the form $\sigma$, $v=a_1v_1+\ldots+a_kv_k\in N_\sigma$ with $0\leq a_i<1$, where $a_i\in \frac{1}{n}\cdot N$.

\end{enumerate}

\end{lemma}

\end{proof}

\subsection{Star subdivisions}
\begin{definition}\label{de: star} Let $\Sigma$ be a conical complex and $\tau \in \Sigma$. The {\it star} of the
cone $\tau$, the {\it closed star}, and the {\it link} of $\tau$ are
defined as follows:
 $${\rm Star}(\tau ,\Sigma):=\{\sigma \in \Sigma\mid 
\tau\preceq \sigma\},$$ 
$$\overline{{\rm Star}}(\tau ,\Sigma):=\overline{{\rm Star}(\tau ,\Sigma)}$$ 
$${\rm Link}(\tau ,\Sigma)=\overline{{\rm Star}}(\tau ,\Sigma)\setminus {\rm Star}(\tau ,\Sigma)$$



\end{definition} 







For any $\sigma \in \Link(\tau
,\Sigma),$  there is a unique minimal face of $\Star(\tau,\Sigma)$, which contains $\sigma$. We denote it, by the abuse of notations by $\tau+\sigma$.

\begin{definition}\label{de: star subdivision} Let $\Sigma$ be a conical complex and $v$ be a primitive vector
 in the
relative interior of  $\tau\in\Sigma$. Then the {\it star
subdivision}
 $v\cdot\Sigma$ of $\Sigma$ at
$v$ is defined to be
$$v\cdot\Sigma=(\Sigma\setminus {\rm Star}(\tau ,\Sigma) )\cup
\{\langle \sigma_v\mid   \sigma\in \Link(\tau
,\Sigma)\},$$ 
where $$\sigma_v:=\langle v\rangle +\sigma\subset \tau+\sigma,$$ 
with  $i_{\sigma,\sigma_v}$ is the restriction of the unique map $i_{\sigma, \tau+\sigma}$. 
The vector $v$ will be called the {\it center} of the star subdivision.
\end{definition} 

One can extend the definition of the center for the purpose of functoriality: 
\begin{definition}\label{de: star subdivision22} Let $\Sigma$ be a complex and $V=\{v_1,\ldots,v_k\}$ be a set of the primitive vectors $v_i\in \inte(\tau_i)$
 in the
relative interiors of  the cones $\tau_i\in\Sigma$ for $i=1,\ldots,k$ defining  disjoint stars $\Star(\tau_i ,\Sigma)$.

The {\it star
subdivision}
 $V\cdot\Sigma$ of $\Sigma$ at
$V$ is defined to be
$$V\cdot\Sigma=v_1\cdot\ldots\cdot v_k\cdot\Sigma=
(\Sigma\setminus \bigcup_{i=1,\ldots,k} {\rm Star}(\tau_i ,\Sigma) )\cup
\bigcup_{\sigma\in \Link(\tau_i
,\Sigma)}\langle v_i \rangle+\sigma    .$$ 

The set of vectors $V$ is the {\it center} of the star subdivision.
\end{definition} 

\begin{remark} Let $V=\{v_1,\ldots,v_k\}$ be the center of the star subdivision of $\Sigma$. Then $V\cdot \Sigma=
	v_1\cdot\ldots\cdot v_k\cdot\Sigma$ is independent of the order of the vectors $v_i$ in $V$.
\end{remark}


\begin{definition}
A {\it multiple  star subdivision} of $\Sigma$ is  a subdivision obtained as a sequence of the star subdivisions at the consecutive centers $V_1,\ldots,V_k$. A regular  subdivision is called a {\it  desingularization}.


\end{definition}

The star subdivisions at minimal points allow to resolve singularities of simplicial faces.
\begin{lemma} (\cite{KKMS})\label{des} Let $v\in \inte(\tau)$ be  a minimal point of $\tau\in \Sigma$.  Then for any cone $\sigma\in \Star(\tau,\Sigma)$ the resulting cones in $v\cdot \sigma$ in the the star subdivision $v\cdot \Sigma$ of the complex $\Sigma$ have smaller determinants then  $\det(\sigma)$.
\end{lemma}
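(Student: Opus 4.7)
The plan is to reduce to the simplicial case and compute the new determinants directly by multilinearity. Since $\det(\sigma)$ is only defined when $\sigma$ is simplicial, I assume $\sigma=\langle v_1,\ldots,v_k\rangle$ with the $v_i$ primitive and linearly independent, and let $\tau=\langle v_{i_1},\ldots,v_{i_m}\rangle$ be the face of $\sigma$ containing $v$ in its relative interior. Writing $v=\sum_{l=1}^{m} a_{i_l} v_{i_l}$ in $N_\sigma^{\QQ}$, the condition $v\in\inte(\tau)$ forces $a_{i_l}>0$ for every $l$.

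The first substantive step is to show $a_{i_l}<1$ for each $l$, exploiting the minimality of $v$ in the sense of Definition \ref{minimall}. If $a_{i_l}\geq 1$ for some $l$, then $w:=v-v_{i_l}$ lies in $\tau\cap N_\tau$ (all coefficients remain nonnegative and $w$ is integral because both $v$ and $v_{i_l}$ are) and is nonzero since $v$ is not a vertex. This exhibits $v=v_{i_l}+w$ as a sum of two nonzero integral vectors of $\sigma$, contradicting minimality.

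Next I would identify the new top-dimensional cones produced by the star subdivision of $\sigma$. By Definition \ref{de: star subdivision}, these are exactly $\sigma_l:=\langle v, v_1,\ldots,\hat v_{i_l},\ldots,v_k\rangle$ for $l=1,\ldots,m$, each obtained by replacing one vertex $v_{i_l}$ of $\tau$ by $v$. Each $\sigma_l$ is simplicial because $v$ contributes the nonzero component $a_{i_l} v_{i_l}$, which preserves the linear independence of the generating set, and the ambient lattice $N_\sigma$ is unchanged by the subdivision.

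Finally, the core computation is a direct application of multilinearity of the determinant. Substituting $v=\sum_l a_{i_l} v_{i_l}$ into $\det(v_1,\ldots,v_{i_l-1},v,v_{i_l+1},\ldots,v_k)$, every summand involving $v_{i_j}$ with $j\neq l$ contains a repeated vector and vanishes, leaving
$$\det(\sigma_l) \;=\; a_{i_l}\cdot\det(\sigma).$$
Since $0<a_{i_l}<1$, this gives $\det(\sigma_l)<\det(\sigma)$. Lower-dimensional cones of the subdivision are faces of some $\sigma_l$ (or faces of $\sigma$ not containing $\tau$) and inherit the bound. The main subtle point in this plan is the minimality argument that yields $a_{i_l}<1$; once that is in place, the determinant calculation is immediate from multilinearity.
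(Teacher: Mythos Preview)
Your proof is correct and follows essentially the same approach as the paper: write $v=\sum a_i v_i$ with $0<a_i<1$ (the paper invokes the fact that minimal vectors are small, while you derive $a_i<1$ directly from Definition~\ref{minimall}), and then compute $\det(\sigma_l)=a_{i_l}\det(\sigma)$ by multilinearity. The only difference is that your write-up is more explicit about the minimality argument and the identification of the new cones, whereas the paper's proof is a terse three-line computation.
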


\begin{proof}
Let  $\tau=\langle v_1,\ldots,v_k\rangle$, and $\sigma =\langle  v_1,\ldots,v_s\rangle$, and write $v=a_1v_1+\ldots+a_kv_k$ with $0\leq a_i<1$.
Then  for the cone $$\sigma_i=\langle v_1,\ldots,\check{v_i},\ldots,v_s \rangle $$ we have
$$\det(\sigma_i)=|\det(v,v_1,\ldots,\check{v_i},\ldots, v_s\rangle)|=a_i|\det(v_1,\ldots,v_s)|=a_i\det(\sigma).$$

\end{proof}

\subsection{Barycenters and irreducible barycentric subdivisions}
In our  desingularization algorithm, one  considers the canonical centers of the star subdivisions  in the relative interiors of the irreducible cones. There are several ways of doing this. One could associate with any cone $\sigma=\langle v_1,\ldots,v_k\rangle $ the canonical center $v_1+\ldots+v_k$.  However, for the applications in Section \ref{stratified}, we need another choice of the centers in the desingularization of semicomplexes.

\begin{definition} \label{mi} \label{bar} By a {\it minimal internal vector} of $\sigma$ we mean an integral vector $v\in\inte(\sigma)$ which  cannot be represented as the sum of two nonzero  integral vectors in $\sigma$, such that at least one of them is in the relative interior $\inte(\sigma)$. Then the sum of all its minimal internal vectors  $v_\sigma$ will be called the {\it canonical barycenter} of $\sigma$.

\end{definition}


	

\begin{definition} \label{barycenter} Let $\Sigma$ be a conical complex. By the {\it canonical irreducible barycentric subdivision} of $\Sigma$, we mean  a sequence of the star subdivisions at the sets of all barycenters  of all the irreducible faces in $\Sigma\setminus \Omega$ of the same  dimension in order of decreasing dimension.
\end{definition}

\begin{lemma} \label{bar2} If  $\Delta$ is a canonical irreducible barycentric subdivision of $\Sigma$ then $\Delta$ is simplicial.

\end{lemma}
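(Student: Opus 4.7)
The plan is to proceed by descending induction on the dimension $d$ of the irreducible cones being subdivided, in the order prescribed by Definition \ref{barycenter}. The essential local fact I would first isolate is the following description of star subdivision. For any primitive $v \in \inte(\tau)$, each $\sigma \in \Star(\tau,\Sigma)$ is replaced by the cones $\langle v \rangle + \rho$ with $\rho \in \Link(\tau,\Sigma)$. Since $v \in \inte(\tau)$ while $\rho$ does not contain $\tau$, we have $v \notin \spa(\rho)$: if $v$ lay in $\spa(\rho)$, it would lie in the proper face $\tau \cap \spa(\rho) \subsetneq \tau$, contradicting $v \in \inte(\tau)$. Consequently the vertex set $\{v\}\cup \Ver(\rho)$ of $\langle v \rangle + \rho$ is linearly independent iff $\Ver(\rho)$ is, so $\langle v \rangle + \rho$ is simplicial whenever $\rho$ is simplicial.

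From this I would deduce the corollary that star subdivision at an interior point preserves simpliciality of the ambient complex: untouched cones remain simplicial, link cones are untouched and so still simplicial, and the new cones $\langle v \rangle + \rho$ are simplicial by the local fact. The main induction then runs on the maximum dimension $d$ of irreducible cones in the current intermediate complex, with invariant: after processing all irreducible cones of the original $\Sigma$ of dimension $\geq d+1$, every cone in the current complex is simplicial. At the inductive step, when processing an irreducible $\tau$ of dimension $d$, the cones $\rho \in \Link(\tau,\Sigma)$ are simplicial by the invariant, so each $\langle v_\tau \rangle + \rho$ is simplicial and the invariant is preserved. Once the algorithm terminates the complex is simplicial.

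The main obstacle is anchoring the induction at its top end. Initially, before any subdivision, the complex $\Sigma$ need not be simplicial, so the naive invariant fails at $d=n$. To start the induction I would strengthen the invariant to track the \emph{singular part}: using Lemma \ref{min} and the decomposition $\sigma = \sing(\sigma) \times \reg(\sigma)$, I would argue that a cone is simplicial iff its irreducible face is, and that after subdividing at the barycenter of the top-dimensional irreducible faces, the newly created cones $\langle v_\tau \rangle + \rho$ have their irreducible part strictly smaller in dimension than $\tau$. This requires showing that $\langle v_\tau \rangle$ splits off as a regular factor at the lattice level — which is the delicate point where the specific definition of $v_\tau$ as a sum of minimal internal vectors enters crucially — so that after one round of top-dimensional subdivisions, the maximum irreducible dimension strictly drops, allowing the induction to start one level lower and continue until no irreducible cones of positive dimension remain in obstruction to simpliciality.
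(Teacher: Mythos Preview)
Your local fact is correct and useful: each new barycenter $v$ is linearly independent of the face $\rho$ it is coned over, and this does iterate. The gap is in your proposed fix for the base case. The claim that after subdividing the top-dimensional irreducible faces the new cones $\langle v_\tau\rangle + \rho$ have strictly smaller irreducible part is false, and the ``delicate point'' you flag --- that $\langle v_\tau\rangle$ splits off as a regular lattice factor --- is both unnecessary for simpliciality (which is a $\QQ$-linear condition) and not true in general. Concretely, take $\sigma = \langle (1,0),(1,3)\rangle$ in $\ZZ^2$: it is irreducible of dimension $2$, the minimal internal vectors are $(1,1)$ and $(1,2)$, so $v_\sigma=(2,3)$, and the star subdivision produces $\langle(2,3),(1,0)\rangle$ and $\langle(2,3),(1,3)\rangle$, each of determinant $3$ and hence still irreducible of dimension $2$. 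So your strengthened invariant does not decrease; indeed it would prove the final complex is regular, which it is not (that is what the subsequent step in Lemma~\ref{can mar} is for).

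The paper avoids the inductive bookkeeping entirely by arguing directly at the end. Each $\delta\in\Delta$ has vertex set $\Ver(\delta)=\{\text{new barycenters}\}\cup\Ver(\tau)$ where $\tau$ is the unique maximal face of $\delta$ that lies in $\Sigma$ (this is exactly the iteration of your local fact: the new vertices are independent of the rest, so the old vertices span a face). Now the key point is not about the irreducible part of $\delta$, but about $\tau$: since $\tau$ survived unsubdivided, it cannot contain any irreducible face of $\Sigma$ (each such face got star-subdivided and hence destroyed every cone of $\Star$ containing it). Thus $\sing(\tau)=\{0\}$, so $\tau$ is regular, hence simplicial. Combined with the linear independence of the new vertices from $\Ver(\tau)$, $\delta$ is simplicial. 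Your argument can be repaired by replacing ``the irreducible dimension of the new cones drops'' with ``the maximal surviving $\Sigma$-face $\tau$ contains no irreducible subface of dimension $>d$,'' which is what actually decreases and yields the paper's conclusion when $d=0$.
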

\begin{proof} If  $\delta$ is a face of $\Delta$ then all its new rays ( or vertices) are linearly independent of the other rays. So $\delta$ has a unique maximal face $\tau$ which is in $\Sigma$, and this face is regular, as it contains no irreducible face and  thus $\sing(\tau)=\{0\}$. Consequently, $\Ver(\delta)\setminus\Ver(\tau)$ are linearly independent from $\Ver(\tau)$, and $\Ver(\tau)$ are linearly independent.

\end{proof}

\subsection{Marked complexes}
\label{marking}
The procedure described in Lemma \ref{des} allows to resolve singularities of simplicial complexes by applying the star subdivisions at the minimal points.
Unfortunately, the choice of such minimal points is highly noncanonical. To eliminate choices, we introduce here the concept of \emph{marking}.

\begin{definition} \label{marki} A {\it marking} on a complex $\Sigma$ is a partially ordered subset $V$ of the set of all vertices $\Ver(\Sigma)$ of $\Sigma$ such that the following conditions are satisfied.
\begin{enumerate}
\item For any cone $\sigma$ in $\Sigma$ the set $V(\sigma):=V\cap \Ver(\sigma)$ is linearly independent of the remaining vertices in $\Ver(\sigma)\setminus V$. It means $\sum_{v_i\in \Ver(\sigma)} c_iv_i=0$ implies that $c_i=0$ for each $v_i\in V(\sigma)$. 
\item The order on $V$ is total on each subset $V(\sigma)$
\end{enumerate}

A complex with a marking will be called {\it marked}.
 We say that a face $\tau$ of a complex $\Sigma$,  is {\it completely marked} if $V(\sigma)=\Ver(\sigma)$. A subcomplex $\Sigma_0$  is {\it completely marked} if all its faces are completely marked. 
 A face is {\it unmarked} if $V(\sigma)=\emptyset$. 
 The set of all unmarked faces of $\Sigma$ forms the {\it maximal  unmarked subcomplex} $U(\Sigma)$.  

\end{definition}
\begin{definition} A complex with marking will be called {\it regularly marked} if all the unmarked faces are regular.
	
\end{definition}

\begin{remark}
	
The empty set $V=\emptyset$ defines the trivial marking on a complex.
\end{remark}


\subsubsection{Natural order on marked complexes}
Let $\Sigma$ be a  complex with a marking $V\subset Ver(\Sigma)$. One can introduce the natural partial order on vectors in $|\Sigma|$.
Each such vector $v$ is in the relative interior of a single cone $\sigma\in \Sigma$. Then it can be written
as $$v =\sum_{v_i\in V(\sigma)} c_iv_i+\sum_{v_i\in \Ver(\sigma)\setminus V}c_iv_i$$ 
By Condition (1) of Definition \ref{marki}, we shall associate with it uniquely the linear combination $$\pi(v):=\sum_{v_i\in V(\sigma)} c_iv_i$$
Then  we say that   $\pi(v)> \pi(v')$  if in the difference
$$\pi(v)-\pi(v')=\sum (c_i-c_i')v_i$$ for any vertex $v_i$ with negative coefficient there exists at least one vertex $v_j>v_i$ with a positive coefficient.

Then we set $v> v'$ if $\pi(v)> \pi(v')$. 
 
 This defines a partial order which is total for the points in any completely marked cone. The order in such a face is lexicographic, when the coefficients form a sequence determined by the descending order on the vertices.

The following simple observation is critical for the algorithm.
\begin{lemma} \label{uni} In any regularly marked face there exists a unique small vector which is minimal with respect to
the marking order.
\end{lemma}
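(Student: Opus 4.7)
The plan is to reduce everything to standard linear algebra on a simplicial cone, leveraging the ``regularly marked'' hypothesis. First, I would observe that a regularly marked face $\sigma\in\Sigma$ must be simplicial: writing $\Ver(\sigma)=V(\sigma)\sqcup U$ with $U:=\Ver(\sigma)\setminus V$, the subcone spanned by $U$ is an unmarked face of $\Sigma$, hence regular by assumption, so in particular $U$ is $\QQ$-linearly independent. Condition~(1) of Definition~\ref{marki} then forces all of $\Ver(\sigma)$ to be linearly independent, making $\sigma=\langle v_1,\dots,v_k\rangle$ simplicial and giving small vectors the standard normal form $v=\sum a_iv_i$ with $0\le a_i<1$ as in Definition~\ref{small}.

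The key step will be to prove that $\pi$ is injective on small vectors of $\sigma$. Suppose $v=\sum a_iv_i$ and $w=\sum b_iv_i$ are small with $\pi(v)=\pi(w)$. Linear independence of $V(\sigma)$ from $U$ forces $a_i=b_i$ for every $v_i\in V(\sigma)$. The difference $v-w$ is then an integral vector in $N_\sigma$ lying in the $\QQ$-span of $U$, i.e., in $N_\tau\otimes\QQ$, where $\tau\preceq\sigma$ is the unmarked subface. The saturation clause of Definition~\ref{complex1} then gives $v-w\in N_\tau$, and the regularity of $\tau$ makes $U$ a $\ZZ$-basis of $N_\tau$; combining with $|a_i-b_i|<1$, I obtain $a_i=b_i$ for the remaining indices as well, so $v=w$.

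It remains to check totality and conclude uniqueness of the minimum. The set of small vectors of $\sigma$ is finite (it injects into the set of nontrivial cosets of $N_{\Ver(\sigma)}$ in $N_\sigma$, which has order $\det(\sigma)$), and by the previous paragraph distinct small vectors have distinct $\pi$-images. I would then unpack the order from Section~\ref{marking}: $\pi(v)>\pi(v')$ holds iff the most senior vertex of $V(\sigma)$ with a nonzero coefficient in $\pi(v)-\pi(v')$ has positive coefficient---if it were negative, no strictly higher vertex could have positive coefficient, so the defining condition would fail; if positive, every negative-coefficient vertex is strictly lower, so the condition holds. This is precisely the lexicographic order with respect to the total order on $V(\sigma)$, hence total on vectors with distinct $\pi$-images, and a unique minimum of the finite set of small vectors follows at once.

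The main technical point, and essentially the content of the lemma, is the injectivity of $\pi$ on small vectors; the ``regularly marked'' hypothesis is tailored precisely to make the saturation-and-regularity argument run, and the rest reduces to bookkeeping on the lexicographic order.
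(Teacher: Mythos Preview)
Your proof is correct and follows essentially the same approach as the paper: both establish simpliciality, then show that two small vectors with the same $\pi$-image must coincide by exploiting that their difference is an integral vector in the regular unmarked subface with coefficients in $(-1,1)$. You are more explicit than the paper in spelling out the saturation step, the lexicographic characterization of the marking order, and the finiteness of the set of small vectors, but the mathematical content is the same.
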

\begin{proof} Note that any regularly marked face $\sigma=\langle v_1,\ldots,v_k\rangle$ is simplicial, and its maximal unmarked face $\sigma_0$ is regular and spanned by $v_{i}\in \Ver(\sigma)\setminus V$.

For $j=0,1$ we can write two minimal vectors $w_1, w_2$ of $\sigma$ as $$w_j =\sum_{v_{i}\in V\cap \Ver(\sigma)} a_{ji}v_{i}+\sum_{v_{i}\in \Ver(\sigma_0)}b_{ji}v_{ji}.$$ 
Since both vectors are minimal for the marking order,  the coefficients $a_{ji}$ are the same for $j=0,1$. Since the vectors $w_j$ are small and the cone $\sigma_0$ is regular we have that $0\leq b_{ji}<1$.
The difference  $$w_0-w_1=\sum_{v_{ji}\in \Ver(\sigma_0)}(b_{0i}-b_{1i})v_{i}$$
is thus an integral vector with the coefficients satisfying inequalities 
 $$-1<(b_{0i}-b_{1i})<1$$ So $(b_{0i}-b_{1i})=0$, since the cone $\sigma_0$ is regular, and  the coefficients $(b_{0i}-b_{1i})$ are integral.
\end{proof}


\subsubsection{Marking defined by star subdivisions}

Marking naturally occurs when applying  star subdivisions of complexes.

\begin{lemma} \label{mark} Let $\Sigma$ be a complex with marking $V\subset \Ver(\Sigma)$.
Let $\Sigma'$ be obtained by a sequence of star subdivisions of a complex $\Sigma$ at the consecutive centers $V_1,\ldots, V_k$. (see Definition \ref{de: star subdivision22}). Then there exists a natural  marking $V':=V  \cup V_1\cup\ldots\cup V_k$, which extends the order on $V\setminus (V_1\cup\ldots\cup V_k)$ and such that
\begin{enumerate}
 \item All the vectors in $V_1\cup\ldots\cup V_k$ are greater than vertices in $V\setminus (V_1\cup\ldots\cup V_k)$ 
\item $v_i<v_j$ if $i<j$, and $v_i\in V_i$, $v_j\in V_j$ are in a common face of $\Sigma$.
\end{enumerate}
\end{lemma}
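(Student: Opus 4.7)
My plan is to proceed by induction on $k$, reducing the problem to a single star subdivision step, and then verifying the two conditions of Definition \ref{marki} for the extended vertex set. The base case $k=0$ is trivial: take $V' = V$. For the inductive step, I assume a marking $V^{(k-1)} = V \cup V_1 \cup \cdots \cup V_{k-1}$ satisfying the lemma's conclusions has been constructed on $\Sigma^{(k-1)} := V_{k-1} \cdot \cdots \cdot V_1 \cdot \Sigma$, and extend it across the last star subdivision at $V_k = \{v_{k,1},\ldots,v_{k,m}\}$ with $v_{k,j} \in \inte(\tau_{k,j})$ and pairwise-disjoint stars $\Star(\tau_{k,j}, \Sigma^{(k-1)})$. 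I would set $V^{(k)} := V^{(k-1)} \cup V_k$ and extend the order by declaring every element of $V_k$ to exceed every element of $V^{(k-1)}$, with an arbitrary fixed total order within $V_k$. With this layered placement, properties (1) and (2) of the lemma statement are immediate.

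Next I would verify the marking conditions of Definition \ref{marki}. Every cone $\sigma'\in \Sigma^{(k)}$ is either an untouched cone of $\Sigma^{(k-1)}\setminus \bigcup_j \Star(\tau_{k,j}, \Sigma^{(k-1)})$, in which case $V^{(k)}(\sigma') = V^{(k-1)}(\sigma')$ and both conditions follow from the inductive hypothesis, or it is a new cone of the form $\langle v_{k,j}\rangle + \sigma$ with $\sigma\in \Link(\tau_{k,j}, \Sigma^{(k-1)})$ (or a face thereof). The key observation is that such a $\sigma'$ contains at most one element of $V_k$: if $v_{k,i}, v_{k,j}$ with $i\neq j$ were both vertices of $\sigma'$, then $\tau_{k,i}$ and $\tau_{k,j}$ would lie in a common cone of $\Sigma^{(k-1)}$, forcing their stars to intersect. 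The total-order condition on $V^{(k)}(\sigma') = \{v_{k,j}\}\cup (V^{(k-1)}\cap \Ver(\sigma))$ then follows because the inductive hypothesis provides a total order on $V^{(k-1)}\cap \Ver(\sigma)$, and $v_{k,j}$ is placed above all of it.

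The main technical step -- and the main obstacle -- is verifying the linear independence condition. Suppose a dependence
\[
\alpha v_{k,j} + \sum_{v\in V^{(k-1)}\cap \Ver(\sigma)} \alpha_v v + \sum_{w\in \Ver(\sigma)\setminus V^{(k-1)}} \beta_w w = 0
\]
holds on $\Ver(\sigma') = \{v_{k,j}\}\cup \Ver(\sigma)$. If $\alpha \neq 0$, then $v_{k,j}\in \spa(\sigma)$, hence $v_{k,j}\in \spa(\sigma)\cap \spa(\tau_{k,j})$. Since $\sigma$ and $\tau_{k,j}$ are both faces of a common cone in $\Sigma^{(k-1)}$ (because $\sigma\in \overline{\Star}(\tau_{k,j}, \Sigma^{(k-1)})$), the standard polyhedral identity $\spa(\sigma)\cap \spa(\tau_{k,j}) = \spa(\sigma\cap \tau_{k,j})$ applies; moreover $\sigma\in \Link(\tau_{k,j}, \Sigma^{(k-1)})$ forces $\tau_{k,j}\not\preceq \sigma$, so $F := \sigma\cap \tau_{k,j}$ is a proper face of $\tau_{k,j}$. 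Choosing a supporting linear functional $\ell\geq 0$ on $\tau_{k,j}$ with $F = \tau_{k,j}\cap \{\ell = 0\}$ shows $\spa(F)\cap \inte(\tau_{k,j}) = \emptyset$ (any $x\in \spa(F)\cap \tau_{k,j}$ satisfies $\ell(x) = 0$, hence lies in $F$, which is disjoint from $\inte(\tau_{k,j})$), contradicting $v_{k,j}\in \inte(\tau_{k,j})$. Therefore $\alpha = 0$, and applying the inductive hypothesis to the cone $\sigma\in \Sigma^{(k-1)}$ forces $\alpha_v = 0$ for each $v\in V^{(k-1)}\cap \Ver(\sigma)$, completing the verification.
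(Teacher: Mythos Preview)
Your overall approach mirrors the paper's: both argue that each star-subdivision center is linearly independent of the remaining vertices in every newly created cone, and that this persists under successive subdivisions. Your inductive presentation is considerably more detailed than the paper's brief sketch, and the structure is sound.

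There is, however, a gap in your linear-independence step. The claimed ``standard polyhedral identity'' $\spa(\sigma)\cap \spa(\tau_{k,j}) = \spa(\sigma\cap \tau_{k,j})$ for two faces of a common cone is \emph{false} in general. Take the cone over a square, $\rho = \langle (1,0,0),(0,1,0),(1,0,1),(0,1,1)\rangle\subset\QQ^3$, with opposite facets $\sigma_1 = \langle (1,0,0),(0,1,0)\rangle$ and $\sigma_2 = \langle (1,0,1),(0,1,1)\rangle$; then $\sigma_1\cap\sigma_2 = \{0\}$ but $\spa(\sigma_1)\cap\spa(\sigma_2)$ is the line $\QQ\cdot(1,-1,0)$. (The identity does hold for simplicial $\rho$, but that is not assumed here.) The fix is simple and uses what you already have: for any face $\sigma$ of a cone $\rho$ one has $\spa(\sigma)\cap\rho = \sigma$ (if $x = a - b\in\rho$ with $a,b\in\sigma$, then $x + b = a\in\sigma$ forces $x\in\sigma$ by the face property). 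Since $v_{k,j}\in\inte(\tau_{k,j})\subset\tau_{k,j}\subseteq\rho$, the hypothesis $v_{k,j}\in\spa(\sigma)$ yields $v_{k,j}\in\sigma$, hence $v_{k,j}\in\sigma\cap\tau_{k,j}$, a proper face of $\tau_{k,j}$, contradicting $v_{k,j}\in\inte(\tau_{k,j})$. With this correction your argument goes through and matches the paper's intended reasoning.
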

\begin{proof} Note that all the new vertices of $\Sigma'$ are  the centers of the star subdivisions. On the other hand, each center of a star subdivision is linearly independent from other vertices in the newly constructed faces. Moreover, this property is preserved under the consecutive star subdivisions. 
Thus, for any face $\sigma'$ of $\Sigma'$, the new vertices   (the centers of the star subdivisions) are linearly independent and marked. So Condition (1) of Definition \ref{marki} is satisfied.

Any cone has vertices that are either some vertices of a cone in $\Sigma$ or are elements of some $V_i$. By the construction, each cone may have at most one vertex in any set $V_i$.  This implies Condition (2) of Definition \ref{marki}.

\end{proof}

The following is the auxiliary result used in the proof of Theorem \ref{can des}
\begin{lemma}\label{can mar} Let $\Sigma$ be a \underline{regularly marked complex}. 
There exists a  canonical multiple  star desingularization $V_1\cdot\ldots\cdot V_k\cdot\Sigma$ of $\Sigma$ such that
\begin{enumerate}
\item The centers $V_i$ are sets of points which lie 
 in $|\sing(\Sigma)|$, and no regular faces are affected.

\item The centers of the consecutive  star subdivisions are  sets of  minimal points  in the interior of  singular irreducible faces.

The algorithm is functorial for regular local projections and local isomorphisms of complexes,
in the sense that the centers transform functorially with the trivial subdivisions removed. 
\end{enumerate}

\end{lemma}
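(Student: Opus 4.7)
The plan has two main phases.

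First, reduce to the simplicial case. If $\Sigma$ contains non-simplicial irreducible singular faces, apply the canonical irreducible barycentric subdivision of Definition~\ref{barycenter}. Lemma~\ref{bar2} ensures the result is simplicial. Every canonical barycenter $v_\sigma$ lies in $\inte(\sigma) \subset |\Sing(\Sigma)|$, so no regular face is affected. Lemma~\ref{mark} extends the marking canonically, and the resulting complex is again regularly marked: any unmarked face of the new complex is contained in a face of the original $\Sigma$ which meets neither the new barycenters nor any pre-existing marked vertex, and is therefore a subface of an originally unmarked (hence regular) face.

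Second, assume $\Sigma$ is simplicial and regularly marked. I induct on $d_{\max}(\Sigma) := \max_{\sigma \in \Sigma} \det(\sigma)$. Let $\mathcal{M} \subseteq \sing(\Sigma)$ be the collection of irreducible singular faces which are maximal under face inclusion. For each $\tau \in \mathcal{M}$, Lemma~\ref{uni} produces the unique small vector $w_\tau \in \tau$ that is minimal with respect to the marking order, and $w_\tau \in \inte(\tau)$ by Lemma~\ref{minimal4}. The stars $\{\Star(\tau, \Sigma)\}_{\tau \in \mathcal{M}}$ are pairwise disjoint: a cone $\sigma$ containing two distinct $\tau_1, \tau_2 \in \mathcal{M}$ would force $\sing(\sigma)$ to be an irreducible singular face properly containing both, contradicting maximality of $\tau_1$ and $\tau_2$ in $\sing(\Sigma)$. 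Thus the multiple star subdivision $V \cdot \Sigma$ with $V := \{w_\tau : \tau \in \mathcal{M}\}$ is well-defined (Definition~\ref{de: star subdivision22}); by Lemma~\ref{des}, the determinants of cones in each subdivided star strictly drop, and Lemma~\ref{mark} extends the marking canonically, preserving regular-markedness. Iterating on the lexicographic invariant (maximum determinant, number of irreducible singular faces of that determinant) yields a regular complex in finitely many steps.

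For functoriality, every choice — the non-simplicial irreducible faces and their canonical barycenters, the collection $\mathcal{M}$, and the minimal small vectors $w_\tau$ — is intrinsic to the marked complex, depending only on the lattice structure of the cones and the marking order. Local isomorphisms transport this data tautologically. Under a regular local projection $f: \Sigma' \to \Sigma$, Lemma~\ref{local} restricts $f$ to a local isomorphism on singular subcomplexes $\Sing(\Sigma') \to \Sing(\Sigma)$, so irreducible singular faces, $\mathcal{M}$, and the selected centers transport faithfully; the extra regular product factor contributes only trivial star subdivisions, removed as stated. The main obstacle is the combinatorial bookkeeping that the chosen centers have pairwise disjoint stars at each iteration and that the regular-marking condition persists after each star subdivision — both of which are secured precisely by the maximality defining $\mathcal{M}$ and by the canonical extension of marking from Lemma~\ref{mark}.
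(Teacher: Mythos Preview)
Your Phase 1 is superfluous: a regularly marked complex is automatically simplicial (the marked vertices are linearly independent of the unmarked ones by Definition~\ref{marki}, and the unmarked vertices span a regular face, hence are themselves linearly independent). The paper's proof opens with exactly this observation. Had Phase~1 actually been executed, the barycenters used would not be minimal points, violating condition~(2) of the lemma; so it is fortunate the phase is vacuous.

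The substantive gap is in Phase~2. You assert that for a maximal irreducible singular face $\tau\in\mathcal M$, the marking-minimal small vector $w_\tau$ lies in $\inte(\tau)$, citing Lemma~\ref{minimal4}. But that lemma only places $w_\tau$ in $\sing(\tau)=\tau$, not in its relative interior. In fact the interior may contain no small vector at all: in the Abramovich cone (Example~\ref{Abramovich}), $\sigma$ is irreducible singular yet every minimal point sits in a proper $2$-face. Once $w_\tau\in\inte(\tau_0)$ for a proper face $\tau_0\prec\tau$, the star you must subdivide is $\Star(\tau_0,\Sigma)$, not $\Star(\tau,\Sigma)$, and your disjointness argument (which is about the stars of the \emph{maximal} faces) no longer applies. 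Two distinct elements of $\mathcal M$ sharing such a lower face can then produce centers with overlapping stars.

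The paper avoids this by taking as center the set of small vectors that are minimal for the marking order \emph{on all of $|\Sigma|$}, not merely within each maximal $\tau$. The point is that if two such globally minimal small vectors $w_1\in\inte(\tau_1)$ and $w_2\in\inte(\tau_2)$ have intersecting stars, they lie in a common cone $\sigma$; there the marked vertices are totally ordered, so $w_1,w_2$ are comparable, and global minimality together with the uniqueness in Lemma~\ref{uni} forces $w_1=w_2$. This is the mechanism that guarantees disjoint stars and that your restriction to $\mathcal M$ bypasses. Termination is then controlled by the polynomial invariant $P_{\det}(\Sigma)(t)=\sum a_i t^i$ (with $a_i$ the number of maximal faces of determinant $i$), ordered lexicographically, which strictly drops under star subdivision at minimal points by Lemma~\ref{des}.
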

\begin{proof} 
By definition, a regularly marked  complex is simplicial.
By Lemma \ref{des}, any star subdivision at minimal points improves singularities. To
 control this process we shall consider here a polynomial invariant $$P_{\det}(\Sigma)(t):=\sum a_it^i$$ whose coefficients  $a_i$ of  $t^i$ are  the numbers of the maximal faces with  determinant equal to $i$. We  can order the polynomials lexicographically. Equivalently:  $$P_{\det}> P_{\det}'\quad\quad \rm{ if}\quad\quad  \lim_{t\to \infty}(P_{\det}-P_{\det}')>0.$$

Then, by Lemma \ref{des}, after any star subdivision $\Sigma'$ of $\Sigma$
at minimal points the polynomial invariant drops $P_{\det}(\Sigma')<P_{\det}(\Sigma)$.
 We  choose canonically the center of the star subdivision to be the set of the  integral vectors of $|\Sigma|$ which are minimal for the canonical partial order defined by the marking. Then, by Lemma \ref{uni},
  \begin{enumerate}
 \item Any face $\sigma$ of the complex contains at most one point $P_\sigma\in \inte(\sigma)$ which is in  the center of a star subdivision.
 	\item The point $P_\sigma$ is a minimal  vector of a cone $\sigma$ (in the sense of Definition \ref{minimall}).
 	\item The choice of the center point $P_\sigma$ is independent of the other faces of the complex.
 \end{enumerate}

 


The procedure of the star subdivisions at such centers clearly terminates since  the set of the polynomials with  nonnegative integral coefficients has a dcc property so any descending sequence stabilizes.
\end{proof}

\subsection{Canonical desingularization of conical complexes}

\begin{theorem} \label{can des} There exists a canonical desingularization $\Sigma'$ of a  conical complex $\Sigma$, that is a 
sequence of star subdivisions $(\Sigma_i)_{i=0}^n$ of $\Sigma$ such that $\Sigma_0=\Sigma$ and $\Sigma_n=\Sigma'$ is regular. Moreover,
\begin{enumerate}
\item  All the centers  are in $|\sing(\Sigma)|$, where  $\sing(\Sigma)$ is the subset of all the irreducible singular faces of $\Sigma$ \footnote{Definition \ref{irreducible}}. The centers are in the relative interiors of irreducible faces of the intermediate subdivisions.

\item The centers of the consecutive star subdivisions are either  sets of
  minimal points, 
or sets of the canonical barycenters of irreducible faces. 
\item  The subdivision does  not affect the set  $\Reg(\Sigma)$ of  all the regular cones. 

\item  The algorithm is functorial for regular local projections and local isomorphisms \footnote{Definition \ref{locali}} of complexes, in the sense that the centers transform functorially with the trivial subdivisions removed.  In the case of surjective local isomorphisms, the centers of the star subdivisions transform functorially.

\end{enumerate}

\end{theorem}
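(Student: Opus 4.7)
The plan is to prove Theorem~\ref{can des} by reducing it in two canonical stages to Lemma~\ref{can mar}. The complex $\Sigma$ is arbitrary and possibly nonsimplicial, whereas Lemma~\ref{can mar} requires a regularly marked (hence simplicial) complex. So the first stage is to apply the canonical irreducible barycentric subdivision of Definition~\ref{barycenter} to produce a regularly marked simplicial complex, and the second stage is to invoke Lemma~\ref{can mar} to finish.

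\smallskip

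\textbf{Stage 1.} First I would apply the canonical irreducible barycentric subdivision $\Sigma \leadsto \Sigma^{\mathrm{bar}}$, which star-subdivides at the canonical barycenters of all irreducible singular faces in order of decreasing dimension. By Lemma~\ref{bar2} the result $\Sigma^{\mathrm{bar}}$ is simplicial, and by construction the centers lie only in interiors of irreducible singular faces so that no face in $\Reg(\Sigma)$ is touched. Now apply Lemma~\ref{mark}: the sequence of star subdivisions equips $\Sigma^{\mathrm{bar}}$ with a canonical marking $V^{\mathrm{bar}}$ consisting exactly of the barycenters introduced, totally ordered on each face by the order in which they were blown up. I would then check that this marking is \emph{regular} in the sense of Definition~\ref{marki}: a face of $\Sigma^{\mathrm{bar}}$ is unmarked iff its vertices are all original vertices of $\Sigma$, in which case it is a face of some cone of $\Sigma$ containing no irreducible singular subface — hence a regular face of $\Sigma$. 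Thus every unmarked face is regular, and $\Sigma^{\mathrm{bar}}$ is regularly marked.

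\smallskip

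\textbf{Stage 2.} Now $\Sigma^{\mathrm{bar}}$ satisfies the hypothesis of Lemma~\ref{can mar}. Apply the canonical sequence of star subdivisions at sets of marking-minimal small vectors (unique on each singular face by Lemma~\ref{uni}) supplied by that lemma. By Lemma~\ref{des} the polynomial invariant $P_{\det}$ strictly drops, so the process terminates at a regular complex $\Sigma'$. All centers produced in this stage lie inside irreducible singular faces of the intermediate subdivisions, and no regular cone is touched. Combined with Stage 1, conditions (1), (2), (3) of the theorem follow immediately: every center is either a canonical barycenter of an irreducible face (Stage 1) or a set of minimal points of an irreducible face (Stage 2), and $\Reg(\Sigma)$ is never modified.

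\smallskip

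\textbf{Functoriality — the main obstacle.} The delicate point is condition (4). For a local isomorphism $f\:\Sigma\to\Sigma'$, each $f_{\sigma,\sigma'}$ preserves the lattice and sends irreducible singular faces to irreducible singular faces, so barycenters and minimal points transform functorially and Stages 1 and 2 commute with $f$. The harder case is a regular local projection $f\:\Sigma\to\Sigma'$ with $\sigma\simeq\sigma'\times\tau$, $\tau$ regular: here I would invoke Lemma~\ref{local}, which says $\Sing(f)\:\Sing(\Sigma)\to\Sing(\Sigma')$ is a local isomorphism, so $\sing(\sigma)\simeq\sing(\sigma')$ via $f$. Consequently the canonical barycenter $v_\sigma\in\inte(\sing(\sigma))$ maps to $v_{\sigma'}\in\inte(\sing(\sigma'))$, the induced marking $V^{\mathrm{bar}}$ is $f$-equivariant, and the marking order on $\sigma\simeq\sigma'\times\tau$ coincides with the pullback of the order on $\sigma'$. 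Since the minimal small vector for the marking order depends only on the singular irreducible part (regular factors contribute trivially), the canonical centers of Stage 2 also transform functorially; star subdivisions involving only the regular factor $\tau$ are trivial and should be discarded, matching the statement. With this, conditions (1)--(4) all hold and the theorem follows.
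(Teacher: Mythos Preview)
Your proposal is correct and follows essentially the same route as the paper: start with the trivial marking, apply the canonical irreducible barycentric subdivision (Definition~\ref{barycenter}, Lemma~\ref{bar2}, Lemma~\ref{mark}) to obtain a regularly marked simplicial complex, then invoke Lemma~\ref{can mar} to finish, with functoriality handled via Lemma~\ref{local}. Your write-up is in fact more explicit than the paper's on the functoriality check for regular local projections, but the underlying argument is identical.
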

\begin{proof} 

 

We take the trivial ( the empty) marking on  $\Sigma$. 
Consider  the  canonical irreducible barycentric subdivision  $B(\Sigma)$ of $\Sigma$ (as in Definition \ref{barycenter}). By Lemma \ref{mark}, we create some  marking on the set of new vertices, defined by the dimensions of the  subdivided cones.  The subdivision will not affect any regular cones.
It induces a simplicial subdivision of $\Sigma$, with the induced marking $V$ which is regularly marked, as
all untouched faces must be regular. Then the resulting regularly marked complex  can be resolved
by Lemma \ref{can mar}.


Commutativity with regular local projections follows immediately from Lemmas \ref{local}, and \ref{can mar}.

\end{proof}
\section{Desingularization of relative complexes} \label{relative complexes}
In this Chapter we extend the combinatorial desingularization algorithm to a relative situation.
\subsection{Relative complexes} \label{reli}
\begin{definition} By a {\it relative conical complex} $(\Sigma,\Omega)$  we mean a pair of a conical complex $\Sigma$ and its subcomplex $\Omega\subseteq \Sigma$.

We say that a cone $\sigma\in \Sigma$ is {\it balanced} if it contains a unique maximal face $\omega$ which is in $\Omega$. In such a case we shall call $(\sigma,\omega)$ a {\it pair} in $(\Sigma,\Omega)$, and write 
$(\sigma,\omega)\in (\Sigma,\Omega)$. The relative complex $(\Sigma,\Omega)$ is balanced if any $\sigma\in \Sigma$ is balanced.

Let  $(\sigma,\omega)\in (\Sigma,\Omega)$, where $\sigma=\langle v_1,\ldots,v_k,w_1,\ldots,w_s\rangle\in \Sigma$ be a (balanced) cone with the maximal face $\omega=\langle w_1,\ldots,w_s\rangle\in \Omega$. 
We say that a pair $(\sigma,\omega)$ is {\it regular} if $\sing(\sigma)\preceq \omega$.  We say that $(\sigma,\omega)$   
is {\it simplicial} if $v_1,\ldots,v_k$  are linearly independent from $w_1,\ldots,w_s$. Equivalently we say  that $\sigma$ is {\it relatively regular} (respectively  {\it relatively simplicial}) if $\sigma$ is a   a balanced cone with a maximal face $\omega \in \Omega$, and $(\sigma,\omega)$ is { regular} (resp. simplicial). If $\sigma$ is not { relatively regular} (In particular,  if it is not balanced or simplicial) it will be called {\it relatively singular}.

The relative complex $(\Sigma,\Omega)$ is balanced  (respectively regular or simplicial) if any cone $\sigma\in \Sigma$ is  is balanced (resp. relatively regular or relatively simplicial).
\end{definition}
\subsection{Relative irreducibility}\label{reli2}

As in the previous case of $\Omega=\{0\}$ we can introduce a relative version of irreducibility.
Let  $(\Sigma,\Omega)$  be a  relative complex. 
We say that $\sigma\in \Sigma$  is {\it relatively irreducible} if
it contains no proper face $\tau$ which contains both $\sing(\sigma)$ and $\sigma\cap |\Omega|$. 
In particular, any balanced relatively irreducible face $\sigma$ which is not in $\Omega$ is singular since $\sing(\sigma)\neq \{0\}$.

 Any  cone $\sigma\in \Sigma$ contains a unique maximal relatively irreducible  face denoted by $\sing_\Omega(\sigma)$. Equivalently $\sing_\Omega(\sigma)$
 is a unique minimal face containing $\sing(\sigma)$ and $\sigma\cap |\Omega|$.
 
 By definition $\sing(\sigma)\subseteq \sing_\Omega(\sigma)$, and thus as in Definition \ref{irreducible}, we can write
$$\sigma=\sing_\Omega(\sigma)\times \reg_\Omega(\sigma),$$
where $\reg_\Omega(\sigma)$ is the maximal regular face of $\sigma$ disjoint from $\sing_\Omega(\sigma)$ (inersecting it at the zero cone).
We shall need the following
\begin{lemma} \label{relative regular} $\sing_\Omega(\sigma)\in \Omega$ iff
$\sigma$  is relatively regular. 
\end{lemma}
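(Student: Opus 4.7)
The plan is to unwind the definitions in both directions; no geometric content beyond what is packaged into $\sing_\Omega(\sigma)$ is needed.

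For the forward direction, suppose $\sigma$ is relatively regular, so it is balanced with unique maximal $\Omega$-face $\omega$, and $\sing(\sigma)\preceq \omega$. First I would observe that $\sigma\cap|\Omega|=\omega$: indeed, any point of $\sigma$ lying in $|\Omega|$ must lie in the relative interior of some face $\tau\preceq \sigma$ with $\tau\in\Omega$, hence by maximality $\tau\preceq\omega$. Then $\sing_\Omega(\sigma)$, being the unique minimal face containing both $\sing(\sigma)\preceq\omega$ and $\sigma\cap|\Omega|=\omega$, is $\omega$ itself, which lies in $\Omega$.

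For the reverse direction, assume $\sing_\Omega(\sigma)\in\Omega$. I would first show that $\sigma$ is balanced with maximal $\Omega$-face equal to $\sing_\Omega(\sigma)$: any face $\tau\preceq\sigma$ with $\tau\in\Omega$ satisfies $\tau\subseteq \sigma\cap|\Omega|\subseteq \sing_\Omega(\sigma)$ by the definition of $\sing_\Omega(\sigma)$, so $\sing_\Omega(\sigma)$ is such a unique maximal face. Second, the very definition of $\sing_\Omega(\sigma)$ gives $\sing(\sigma)\preceq\sing_\Omega(\sigma)$. Setting $\omega:=\sing_\Omega(\sigma)$, the pair $(\sigma,\omega)$ is therefore a balanced pair with $\sing(\sigma)\preceq\omega$, which is exactly the definition of relative regularity.

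The only step that needs a moment of care is checking that $\sigma\cap|\Omega|$ is actually a face of $\sigma$ (and equals $\omega$) in the forward direction; this follows from the fact that a subcomplex of a conical complex, intersected with a balanced cone, is carried by the maximal face of that cone lying in the subcomplex. No calculation is required beyond this — the lemma is a direct consequence of the definitions of $\sing_\Omega$, balancedness, and relative regularity given in Sections~\ref{reli} and~\ref{reli2}.
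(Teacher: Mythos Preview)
Your proof is correct and follows essentially the same approach as the paper's own argument: both directions amount to unwinding the definitions of balancedness, relative regularity, and $\sing_\Omega(\sigma)$ as the minimal face containing $\sing(\sigma)$ and $\sigma\cap|\Omega|$. The paper's version is terser but the logical content is identical; your added remark about $\sigma\cap|\Omega|$ being carried by the maximal $\Omega$-face in the balanced case is a helpful clarification that the paper leaves implicit.
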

\begin{proof} If $\omega:=\sing_\Omega(\sigma)\in \Omega$ then $\omega=\sigma\cap |\Omega|$ is a unique maximal face of $\sigma$ in $\Omega$. Moreover, $\sing(\sigma)\subset \omega$.

Conversely, suppose  that $\sigma$ is relatively regular. Then it is balanced and $\sing(\sigma)\subset \omega$, where $\omega$ is a unique maximal face of $\sigma$ in $\Omega$. Then  $\sing_\Omega(\sigma)=\omega\in \Omega$.

\end{proof}


	

\subsection{Singular faces}
Denote by $\sing(\Sigma,\Omega)$ the subset of all relatively irreducible  faces of $\Sigma$, and let $\Sing(\Sigma,\Omega)$ be its the smallest subcomplex of $\Sigma$ containing $\sing(\Sigma,\Omega)$. 
Denote by    $\Reg(\Sigma,\Omega)$ the set 
 of all the relatively regular cones in $\Sigma$. 
 
 \begin{lemma} \begin{enumerate}
 \item $\sing(\Sigma,\Omega)$ contains $\Omega$.
 \item $\Reg(\Sigma,\Omega)$ is a subcomplex of $\Sigma$ containing $\Omega$.
 \item $\sing(\Sigma,\Omega)\cap \Reg(\Sigma,\Omega)=\Omega.$
 \end{enumerate}
 
 \end{lemma}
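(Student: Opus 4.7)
The plan is to verify each of the three assertions by unwinding definitions, using the canonical decomposition $\sigma = \sing(\sigma) \times \reg(\sigma)$ from Definition \ref{irreducible} as the main structural input when needed.

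For (1), the argument will be immediate from the definition: if $\omega \in \Omega$, then $\omega \cap |\Omega| = \omega$, and no proper face of $\omega$ can contain $\omega$ itself, so the defining condition for relative irreducibility is vacuously satisfied; hence $\omega \in \sing(\Sigma,\Omega)$.

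For (2), I would first dispatch the inclusion $\Omega \subseteq \Reg(\Sigma,\Omega)$: any $\omega \in \Omega$ is balanced with maximal $\Omega$-face equal to $\omega$ itself, and $\sing(\omega) \preceq \omega$ trivially. The substantive part is closure of $\Reg(\Sigma,\Omega)$ under taking faces. Given $\sigma$ relatively regular with maximal $\Omega$-face $\omega$ (so $\sing(\sigma) \preceq \omega$), and a face $\tau \preceq \sigma$, I would first show that $\tau \cap \omega$ is the unique maximal $\Omega$-face of $\tau$: any $\omega' \preceq \tau$ in $\Omega$ is also a face of $\sigma$ in $\Omega$, hence $\omega' \preceq \omega$ and therefore $\omega' \preceq \tau \cap \omega$, while $\tau \cap \omega$ itself lies in $\Omega$ because $\Omega$ is a subcomplex. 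Then, writing $\sigma = \sing(\sigma) \times \reg(\sigma)$ and correspondingly $\tau = \tau_1 \times \tau_2$ with $\tau_1 \preceq \sing(\sigma)$ and $\tau_2 \preceq \reg(\sigma)$, the regularity of $\tau_2$ forces $\sing(\tau) \preceq \tau_1 \preceq \sing(\sigma) \preceq \omega$; combined with $\sing(\tau) \preceq \tau$, this gives $\sing(\tau) \preceq \tau \cap \omega$, so $\tau$ is relatively regular.

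For (3), the inclusion $\Omega \subseteq \sing(\Sigma,\Omega) \cap \Reg(\Sigma,\Omega)$ follows from (1) and (2). Conversely, if $\sigma$ lies in both sets, then relative regularity combined with Lemma \ref{relative regular} gives $\sing_\Omega(\sigma) \in \Omega$; but $\sing_\Omega(\sigma)$ is by definition the unique minimal face of $\sigma$ containing both $\sing(\sigma)$ and $\sigma \cap |\Omega|$, while relative irreducibility of $\sigma$ states that no proper face of $\sigma$ has this property, forcing $\sing_\Omega(\sigma) = \sigma$. Hence $\sigma \in \Omega$. The only step that requires any real work is the face-decomposition argument in (2) that produces $\sing(\tau) \preceq \tau \cap \omega$; the remaining items are essentially definition-chasing.
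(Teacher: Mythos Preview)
Your proposal is correct and follows essentially the same approach as the paper's proof. The paper is terser---for (2) it simply asserts $\sing(\tau)\subseteq\sing(\sigma)\subseteq\omega$ without justification, whereas you supply the product decomposition $\tau=\tau_1\times\tau_2$ to explain why $\sing(\tau)\preceq\sing(\sigma)$; and for (3) the paper writes the one-line conclusion $\sigma=\sing_\Omega(\sigma)=\omega$ while you spell out the appeal to Lemma~\ref{relative regular} and the definition of irreducibility---but the logical content is identical.
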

 
 \begin{proof} (1) follows from definition.
 
 (2) if $\sigma$ is relatively regular with maximal cone $\omega\preceq \sigma$ which is in $\Omega$.
 and $\tau$ is its face then $\omega\cap \tau\in \Omega$ is maximal in $\tau$. Moreover, $\sing(\tau)\subseteq \sing(\sigma)\subseteq \omega$, hence  $\sing(\tau)\subseteq  \omega\cap \tau.$ So  $\Reg(\Sigma,\Omega)$ is a subcomplex of $\Sigma$.
 
 (3) If $\sigma$ is relatively regular  and irreducible then $\sigma=\sing_{\Omega}(\sigma)=\omega$.
 \end{proof}

\begin{remark} These  results are interpreted geometrically in Lemma \ref{strat2} in the toric situation. In the toroidal case, they have  the same meaning.
The set $\sing(\Sigma,\Omega)$ describes the strata of the singular locus defined by both: the singularity type and the divisor  $D=\overline{D_\Omega}$.
On the other hand, the open subset $X_{\Reg(\Sigma,\Omega)}$ is the saturation of the open set $(X_\Omega, D_\Omega)$ defined by $\Omega$ in $(X_\sigma, D)$.
(See Lemmas \ref{cr}, and \ref{strat2} (in the toric situation)).  \end{remark}

\subsection{Regular relative complexes and SNC divisors}\label{reli4}

\begin{definition} \label{toroidd} Let $(X, D_X)$ be a strict toroidal embedding. By a {\it toroidal divisor} on $X$ we mean a  divisor $D\subset D_X$.
Usually $(X, D)$  is not a strict toroidal embedding.
 By the {\it closed strata} of $D=\bigcup_{i\in J} D_i$ with irreducible Weil components $D_i$ we  mean  the irreducible components of the intersections  $\bigcap_{i\in I} D_i$ of $D_i$. Denote by $S_D$ the induced stratification with strata
the components of  $$\bigcap_{i\in I} D_i\setminus (\bigcup_{i\in (J\setminus I)} D_i),$$ and by $\overline{S}_D$ the set of the induced closed strata.

\end{definition}
 Note that the strata of $S_D$ are the unions of those on the toroidal embedding $(X, D_X)$ (defined by $D_X$). Thus $S_D$ is coarser than the canonical stratification on $(X, D_X)$. Moreover,  the strata in $S_D$ are not necessarily smooth.

\begin{lemma} \label{rr} Let $(X, D_X)$ be a strict toroidal embedding with the associated complex $\Sigma$. There exists a bijective correspondence between the saturated open toroidal subsets $V=X(\Omega)$ of $(X, D_X)$ and the subcomplexes $\Omega\subset \Sigma$. (see notation in Section \ref{saturated}).
\end{lemma}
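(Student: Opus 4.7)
The plan is to use the bijection between cones $\tau \in \Sigma$ and strata $s_\tau$ of $(X,D_X)$ from Theorem \ref{cc}, together with the characterization of saturated subsets in Remark \ref{locus2} and Lemma \ref{sat2}, to set up the correspondence and check it is bijective in both directions.

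First I would construct the map from subcomplexes to saturated toroidal subsets. Given a subcomplex $\Omega \subseteq \Sigma$, set $V := X(\Omega) = \bigcup_{\tau \in \Omega} s_\tau$. By Lemma \ref{sat2}, $V$ is an open subset of $X$. Endow $V$ with $D_V := D_X \cap V$. Since $V$ is open in the strict toroidal embedding $(X,D_X)$, the pair $(V,D_V)$ is itself a strict toroidal embedding (the \'etale charts restrict), and $D_V$ defines the restricted log-structure $\cM_X|_V$. Hence $V$ is a toroidal subset of $(X,D_X)$ in the sense of Definition \ref{saturation}. Every stratum of $(V,D_V)$ is of the form $s_\tau$ for some $\tau \in \Omega$ and is by construction contained in (and therefore intersects) $V$. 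By the maximality characterization of the saturation, $V$ equals its own toroidal saturation in $(X,D_X)$, so $V$ is a saturated open toroidal subset.

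Next I would construct the reverse map. Let $V \subset X$ be any saturated open toroidal subset. Since $(X,D_X)$ is already a strict toroidal embedding, the toroidal locus $(X,D_X)^{\tor}$ equals $X$ itself. Therefore, by Remark \ref{locus2}, $V$ is a union of stars $\Star(s,S)$ of strata of $(X,D_X)$; in particular, $V$ is a union of strata. Define
\[
\Omega := \{\tau \in \Sigma : s_\tau \subseteq V\}.
\]
Using Theorem \ref{cc}, which reverses the order (so $\tau \preceq \sigma$ iff $s_\sigma \subseteq \overline{s_\tau}$) and the fact that $V$ open means $V$ is closed under generization of strata, $\Omega$ is closed under taking faces, hence is a subcomplex of $\Sigma$.

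Finally, I would verify that the two assignments $\Omega \mapsto X(\Omega)$ and $V \mapsto \Omega_V := \{\tau : s_\tau \subseteq V\}$ are mutually inverse. One direction is tautological: if $\Omega$ is a subcomplex and $V = X(\Omega)$, then $s_\tau \subseteq V$ iff $\tau \in \Omega$, so $\Omega_V = \Omega$. For the other direction, if $V$ is a saturated open toroidal subset, the preceding paragraph shows $V$ is a union of strata of $(X,D_X)$, namely $V = \bigcup_{\tau \in \Omega_V} s_\tau = X(\Omega_V)$. The only genuinely nontrivial point, which is the place I expect the main subtlety to lie, is showing that an abstract saturated toroidal subset $V$ must be a union of strata of $(X,D_X)$; this is exactly guaranteed by Remark \ref{locus2} applied to the strict toroidal embedding $(X,D_X)$, together with the uniqueness of the toroidal structure (the divisor $D_V$ must coincide with $D_X \cap V$ since it defines the same restricted log-structure $\cM_X|_V$).
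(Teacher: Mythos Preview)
Your proof is correct and follows essentially the same approach as the paper. The paper's own proof is the single line ``The Lemma is a rephrasing of Lemma \ref{sat2}''; you have simply unpacked that rephrasing explicitly, using the strata--cone bijection of Theorem \ref{cc} and the characterization in Remark \ref{locus2} (or equivalently the Example following Definition \ref{saturation}, which already states that in a strict toroidal embedding the saturated subsets are exactly the unions of stars).
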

 \begin{proof} The Lemma is a rephrasing of Lemma \ref{sat2}.
 
 \end{proof}

\begin{definition}\label{omega2} A subcomplex $\Omega\subset \Sigma$ will be called {\it saturated} if 
any cone of $\sigma$ with rays (one dimensional faces) in $\Omega$ is in $\Omega$.
\end{definition}

Immediately from the definition we obtain the following characterization of saturated subcomplexes:
\begin{lemma} If $(\Sigma,\Omega)$ is balanced then the subcomplex $\Omega\subset \Sigma$ is saturated.\qed
	\end{lemma}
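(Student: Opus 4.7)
The plan is to fix an arbitrary cone $\sigma\in\Sigma$ all of whose rays (that is, $1$-dimensional faces) lie in $\Omega$, and then deduce that $\sigma\in\Omega$. Since $(\Sigma,\Omega)$ is balanced, $\sigma$ admits a unique maximal face $\omega\in\Omega$. The first step is the order-theoretic observation that, because $\omega$ is the \emph{only} maximal face of $\sigma$ lying in $\Omega$, every face $\tau\preceq\sigma$ with $\tau\in\Omega$ must satisfy $\tau\preceq\omega$: the set of faces of $\sigma$ that lie in $\Omega$ is finite and nonempty (it contains $\tau$), hence each of its elements is dominated by some maximal element, and uniqueness of that maximal element forces $\tau\preceq\omega$.

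Next I would apply this observation to the rays of $\sigma$. By hypothesis every ray $\rho\preceq\sigma$ lies in $\Omega$, so by the previous step $\rho\preceq\omega$, i.e.\ $\rho$ is a face of $\omega$. Recalling Definition \ref{complex1}, every cone in a conical complex is a finitely generated strictly convex rational cone in $N^{\QQ}_\sigma$, and such a cone is spanned over $\QQ_{\geq 0}$ by its $1$-dimensional faces. Therefore $\sigma$ is generated as a cone by its rays, each of which lies in $\omega$ under the inclusion $i_{\omega,\sigma}\colon\omega\to\sigma$, giving the containment $\sigma\subseteq\omega$ inside the ambient space of $\sigma$.

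The last step is to combine $\omega\preceq\sigma$ (so $\omega\subseteq\sigma$) with $\sigma\subseteq\omega$ to conclude $\sigma=\omega$, whence $\sigma\in\Omega$ as desired. I do not foresee any serious obstacle: the only point that needs to be stated carefully is the first, purely order-theoretic observation that a unique maximal element of a finite poset dominates every element of the poset; the remainder is the standard fact that a strictly convex finitely generated rational cone is the conical hull of its extremal rays.
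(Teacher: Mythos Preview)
Your argument is correct and is exactly the natural unpacking of the definitions that the paper has in mind; the paper gives no explicit proof (the statement carries a \qed\ immediately after it and is prefaced by ``Immediately from the definition\ldots''), so your write-up simply makes explicit the evident reasoning.
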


The saturated subcomplexes have the following geometric description:
\begin{lemma}\label{comp}  The subcomplex $\Omega\subset \Sigma$ is saturated in $\Sigma$ iff
the complement $$X\setminus X(\Omega)=X(\Sigma\setminus\Omega))$$ is a divisor.

\end{lemma}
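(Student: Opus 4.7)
The plan is to translate the statement into a purely combinatorial claim about minimal cones of $\Sigma\setminus\Omega$ and then verify both equivalences. The key geometric input is the bijection between cones of $\Sigma$ and strata of $X$ (Theorem \ref{cc}) together with the dimension formula of Theorem \ref{KM}(3), which gives $\codim \overline{s_\sigma}=\dim\sigma$.

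First I would unpack the right-hand side. Since $\Omega\subseteq\Sigma$ is a subcomplex, any $\tau\in\Sigma$ with $\sigma\preceq\tau$ for some $\sigma\in\Sigma\setminus\Omega$ must itself lie in $\Sigma\setminus\Omega$ (otherwise $\sigma$, as a face of $\tau\in\Omega$, would be in $\Omega$). Using that $\overline{s_\sigma}=\bigcup_{\sigma\preceq\tau}s_\tau$, this shows $\overline{s_\sigma}\subseteq X\setminus X(\Omega)$ for every $\sigma\in\Sigma\setminus\Omega$, so
$$X\setminus X(\Omega)=\bigcup_{\sigma\in\Sigma\setminus\Omega}s_\sigma=\bigcup_{\sigma\ \text{minimal in}\ \Sigma\setminus\Omega}\overline{s_\sigma}.$$
Each $\overline{s_\sigma}$ is irreducible (being a closure of a stratum on a toroidal embedding; compare Lemma \ref{normalt}), and the sets obtained from distinct minimal $\sigma$ are incomparable, so these are precisely the irreducible components of $X\setminus X(\Omega)$. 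By the dimension formula, such a component is codimension $1$ iff the corresponding minimal cone is a ray. Therefore $X\setminus X(\Omega)$ is a divisor iff every minimal cone of $\Sigma\setminus\Omega$ is $1$-dimensional.

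Next I would show that this combinatorial condition is equivalent to saturatedness of $\Omega$ as in Definition \ref{omega2}. For the forward implication: suppose $\Omega$ is saturated and let $\sigma$ be minimal in $\Sigma\setminus\Omega$. If every ray of $\sigma$ lay in $\Omega$, saturatedness would force $\sigma\in\Omega$, a contradiction; hence $\sigma$ has a ray $\rho\in\Sigma\setminus\Omega$, and minimality of $\sigma$ forces $\rho=\sigma$. Conversely, suppose every minimal cone of $\Sigma\setminus\Omega$ is a ray, and let $\sigma\in\Sigma$ have all its rays in $\Omega$. If $\sigma\notin\Omega$, pick (using finiteness of $\Sigma$) a minimal $\tau\in\Sigma\setminus\Omega$ with $\tau\preceq\sigma$; by hypothesis $\tau$ is a ray of $\sigma$, contradicting that all rays of $\sigma$ lie in $\Omega$. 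Thus $\sigma\in\Omega$, proving saturatedness.

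The only mildly subtle step is justifying that the $\overline{s_\sigma}$ for minimal $\sigma\in\Sigma\setminus\Omega$ are exactly the irreducible components of $X\setminus X(\Omega)$; this uses both the subcomplex property of $\Omega$ (to guarantee the closure stays inside the complement) and the bijective, order-reversing correspondence between cones and closed strata. Everything else is a direct bookkeeping argument using definitions.
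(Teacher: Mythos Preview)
Your proof is correct. Both you and the paper reduce the question to the rays of cones in $\Sigma\setminus\Omega$, but the arguments are organized differently. You first identify the irreducible components of $X\setminus X(\Omega)$ as the closures $\overline{s_\sigma}$ for $\sigma$ minimal in $\Sigma\setminus\Omega$, invoke the codimension formula $\codim\overline{s_\sigma}=\dim\sigma$, and then prove the purely combinatorial equivalence ``$\Omega$ saturated $\iff$ every minimal cone of $\Sigma\setminus\Omega$ is a ray.'' The paper argues more directly from the toroidal geometry: for the implication ``divisor $\Rightarrow$ saturated'' it uses that a closed stratum $\overline{s_\sigma}$ is exactly the intersection of the irreducible divisors $D_i$ corresponding to the rays of $\sigma$, so if those $D_i$ all meet $X(\Omega)$ they cannot be components of $E$, forcing $s_\sigma\subset X(\Omega)$. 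Your route is cleaner combinatorially and makes the intermediate characterization explicit; the paper's route avoids the detour through minimal cones but leans on the specific fact about strata as divisor intersections.
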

\begin{proof} Assume $E:=X\setminus X(\Omega)$ is a divisor.
If the vertices of $\sigma=\langle v_1,\ldots,v_k\rangle \in \Sigma$ are in $\Omega$ then
the closure of the corresponding stratum $\overline{s_\sigma}$ is the intersection of the irreducible components $D_i$ corresponding to $v_i$. But the stratum $s_\sigma$ is not in $E$, since the only divisors which contain it are $D_i$, and they are all intersecting $X(\Omega)$ and thus are not contained in $E$. So $s_\sigma\subset X\setminus E=X(\Omega)$, and $\sigma \in \Omega$. We use the property that any closed stratum is the intersection of the irreducible divisors containing it.

Conversely, if $\Omega$ is saturated, then any stratum $s$ in $E=X\setminus X(\Omega)$, corresponds to a cone $\sigma\in \Sigma\setminus \Omega$. Since $\Omega$ is saturated,  at least one of  the vertices of $\sigma$ is not in $\Omega$ and corresponds to a divisor $D\subset E$. Thus $s$ is contained in an irreducible Weil divisor $D\subset E$. This implies that $E$ is a divisor.

\end{proof}


\begin{lemma} \label{cr} 
Let $(X, D_X)$ be a strict toroidal embedding,  with the associated complex $\Sigma$.  
There is a bijective correspondence between the toroidal divisors $D\subset D_X$ and the  saturated subcomplexes $\Omega$ of $\Sigma$.
Given a  toroidal divisor $D$  on $X$, 
 there is a unique saturated subcomplex $\Omega\subset \Sigma$ which is defined by the set of vertices of $\Sigma$ corresponding to the components of $D$. Conversely, $D=\overline{D_\Omega}$, where $$D_\Omega:=X(\Omega)\setminus U=X(\Omega)\cap D_X.$$ 
 \qed


\end{lemma}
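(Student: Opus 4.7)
The plan is to construct the two maps explicitly and check they are mutually inverse, relying on Lemmas \ref{rr}, \ref{comp} and Theorem \ref{cc}.

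First I would fix the bijection between vertices (rays) of $\Sigma$ and the irreducible components of $D_X$: a vertex $v\in\Sigma$ corresponds to the component $D_v=\overline{s_v}$, since $s_v$ is the unique open stratum whose closure is a divisor (by Theorem \ref{cc} the codimension equals the dimension of the cone). Now, given a toroidal divisor $D=\bigcup_{i\in I}D_i\subset D_X$, let $V_I:=\{v_i:i\in I\}$ be the corresponding set of vertices and define
\[
\Omega:=\{\sigma\in\Sigma : \text{every ray of } \sigma \text{ lies in } V_I\}.
\]
This is a subcomplex (closed under taking faces), and it is saturated in the sense of Definition \ref{omega2} by construction: its set of rays is exactly $V_I$, so any cone all of whose rays lie in $\Omega$ has all its rays in $V_I$, hence lies in $\Omega$.

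Next I would identify $\overline{D_\Omega}$ with $D$. By Theorem \ref{cc}, a stratum $s_\tau$ intersects $D_j$ iff $v_j$ is a ray of $\tau$; consequently, the component $D_j$ meets $X(\Omega)$ iff some $\tau\in\Omega$ has $v_j$ as a ray, iff $v_j\in\Omega$ (since $\Omega$ is a subcomplex and thus closed under taking faces). Therefore the components of $D_\Omega=X(\Omega)\cap D_X$ are precisely the nonempty traces $D_i\cap X(\Omega)$ for $i\in I$. Each $D_i$ is irreducible and $D_i\cap X(\Omega)$ is open and nonempty in $D_i$, so $\overline{D_i\cap X(\Omega)}=D_i$, giving $\overline{D_\Omega}=\bigcup_{i\in I}D_i=D$.

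For the reverse direction, start with a saturated subcomplex $\Omega\subset\Sigma$. Its set of rays $V(\Omega)$ determines it (as any $\sigma\in\Sigma$ with rays in $V(\Omega)$ lies in $\Omega$ by saturation, and conversely the rays of $\sigma\in\Omega$ lie in $V(\Omega)$ since $\Omega$ is a subcomplex). Set $D:=\overline{D_\Omega}$; by the computation above, its components are exactly $\{D_v:v\in V(\Omega)\}$, so $D\subset D_X$ is a toroidal divisor (and in particular a genuine divisor, matching the statement of Lemma \ref{comp}). Applying the first construction to this $D$ returns the same $\Omega$, since the saturated subcomplex with specified rays is unique. The only step demanding care is the identification of components of $D_\Omega$ with rays of $\Omega$, which reduces to the standard fact that on a strict toroidal embedding the closure of a stratum is the intersection of the components of $D_X$ corresponding to its rays; this follows directly from Theorem \ref{cc}.
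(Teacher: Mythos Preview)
Your proof is correct; the paper itself gives no argument at all (the lemma is stated and immediately marked with \qed), so your explicit verification of the two mutually inverse maps via the ray--component bijection is more detailed than anything the paper provides. The only minor remark is that your appeal to Lemma~\ref{comp} is not strictly needed here, since you never use that $X\setminus X(\Omega)$ is a divisor; everything follows from the elementary observation that a saturated subcomplex is determined by its set of rays together with the standard stratum--cone correspondence of Theorem~\ref{cc}.
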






 \begin{lemma} \label{regular}Let $(X, D_X)$ be  a strict toroidal embedding with the associated complex $\Sigma$. Let
 $\Omega$  be a saturated subcomplex in $\Sigma$, and let $D=\overline{D_\Omega}\subseteq D_X$.
 Then
 \begin{enumerate}

   \item $(X, D)$ is a strict toroidal embedding  if  $(\Sigma,\Omega)$ is   a regular relative complex. In such a case $(X, D)$ is the saturation of $(X(\Omega), D_{\Omega})\subset (X, D)$, and   $E:=\overline{D_X\setminus D}=X\setminus X(\Omega))$ is a relative SNC divisor on $(X, D)$.
   \item Conversely, if $(X, D)$ is a strict toroidal embedding, where $D\subseteq D_X$ then $D=\overline{D_\Omega},$ where $(\Sigma,\Omega)$ is regular relative complex.
 \item The toroidal locus $(X, D)^{\tor}$ of $(X, D)$ is defined by the saturated  subcomplex $\Reg(\Sigma,\Omega)$ of $\Sigma$. It is the toroidal saturation 
of  $(X(\Omega), D_\Omega)$ in $(X, D)$.


  \end{enumerate}
 \end{lemma}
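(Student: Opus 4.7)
The plan is to reduce everything to a local analysis at each stratum $s_\sigma$ via an \'etale chart $U \to X_\sigma$, then apply Lemma \ref{reg2} (characterizing when a toric divisor on $X_\sigma$ makes it strict toroidal) together with Corollary \ref{Mum2} (which upgrades \'etale-local toroidality to strict toroidality via normality of components). The geometric content in all three parts is the same: the regularity of a pair $(\sigma,\omega)$ is equivalent to a product decomposition $\sigma = \omega \times \sigma_1$ with $\sigma_1$ regular, and Lemma \ref{reg2} says this is exactly what is needed on the toric side.

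For part (1), fix $\sigma \in \Sigma$ with maximal face $\omega := \sigma \cap |\Omega|$ in $\Omega$. Regularity $\sing(\sigma) \preceq \omega$ combined with the canonical splitting $\sigma = \sing(\sigma) \times \reg(\sigma)$ lets one write $\omega = \sing(\sigma) \times \omega'$ with $\omega' \preceq \reg(\sigma)$; since $\reg(\sigma)$ is regular, it further factors as $\omega' \times \sigma_1$ with $\sigma_1$ regular, giving $\sigma = \omega \times \sigma_1$. Because $D$ consists of the components of $D_X$ indexed by the rays in $\Omega$, which are exactly the rays of $\omega$, the local toric model is $(X_\sigma, D) \cong (X_\omega \times X_{\sigma_1},\, D_\omega \times X_{\sigma_1})$, which is strict toroidal by Lemma \ref{reg2}. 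The components of $D_\omega$ on $X_\omega$ are normal by Lemma \ref{normal0}, so those of $D_\omega \times X_{\sigma_1}$ are normal; via Lemma \ref{Mum} the components of $D$ on $X$ are normal globally, and Corollary \ref{Mum2} then ensures $(X, D)$ is strict toroidal. The saturation assertion follows because the local strata $O_\tau \times X_{\sigma_1}$ (for $\tau \preceq \omega$) each meet $X(\Omega) \cap X_\sigma = X_\omega \times T_{\sigma_1}$ in the nonempty subset $O_\tau \times T_{\sigma_1}$. The relative SNC claim for $E = X \setminus X(\Omega)$ reduces locally to $E \cap X_\sigma = X_\omega \times D_{\sigma_1}$, whose components are coordinate hyperplanes on the smooth $X_{\sigma_1}$, hence a part of a free coordinate system in the sense of Definition \ref{free}; this is precisely Definition \ref{nc}.

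For part (2), the same analysis runs in reverse: Lemma \ref{reg2} applied to the strict toroidal pair $(X_\sigma, D_{|X_\sigma})$ forces a product $\sigma = \tau \times \sigma_1$ with $\sigma_1$ regular and $D_{|X_\sigma} = D_\tau \times X_{\sigma_1}$; since the components of $D$ correspond to rays of $\sigma$ lying in $\Omega$, the factor $\tau$ is generated by precisely those rays, whence $\tau = \sigma \cap |\Omega|$. Thus $\sigma$ is balanced with maximal face $\omega = \tau \in \Omega$, and regularity of $\sigma_1$ yields $\sing(\sigma) \preceq \omega$, so $(\sigma,\omega)$ is a regular pair.

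For part (3), apply part (1) to the relative complex $(\Reg(\Sigma, \Omega), \Omega)$, which is regular by construction and in which $\Reg(\Sigma, \Omega)$ is a subcomplex of $\Sigma$ containing $\Omega$ (by the preceding lemma). This exhibits $X(\Reg(\Sigma, \Omega))$ as an open strict toroidal subset of $(X, D)$ containing $X(\Omega)$, every stratum of which meets $X(\Omega)$. Conversely, by part (2), at any point outside $X(\Reg(\Sigma, \Omega))$ the variety $(X, D)$ fails to be strict toroidal, so $X(\Reg(\Sigma, \Omega))$ coincides with $(X, D)^{\tor}$, which in turn equals the toroidal saturation of $(X(\Omega), D_\Omega)$. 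The main technical obstacle is the passage between \'etale-local and Zariski-local (strict) toroidality: this is where Corollary \ref{Mum2}, via the normality-of-components criterion, does the essential work, while the product structure $\sigma = \omega \times \sigma_1$ supplied by relative regularity ensures that both normality and the SNC condition are verified uniformly across strata.
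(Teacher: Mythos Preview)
Your proof is correct and follows essentially the same route as the paper: reduce via the Zariski charts of the strict toroidal embedding $(X,D_X)$ to the affine toric model and apply Lemma~\ref{reg2}. One small redundancy: since those charts are already Zariski-open, the strict toroidality of $(X_\sigma, D_\omega\times X_{\sigma_1})$ pulls back directly to a Zariski chart for $(X,D)$, so your detour through normality of components and Corollary~\ref{Mum2} is not needed (though it is harmless).
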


\begin{proof} 
(1) Using a chart we can reduce the situation to the toric case.
Let $(\sigma,\omega)\in (\Sigma,\Omega)$ be a regular pair. 
 Then  $X_\sigma=X_\omega\times X_\tau=X_\omega\times \AA^k$. By  construction, the divisor  $$D\cap X_\sigma=D_\omega\times \AA^k,$$ corresponds to the vertices of $\omega=\Omega\cap \sigma$.
  The complement  $E\cap X_\sigma=X_\omega\times D_\tau$ corresponds to the remaining vertices of $\sigma$ and has SNC with $D_\omega\times \AA^k$. Moreover the set $(X(\Omega),D_\Omega)$ corresponds in a local chart to  $(X_\omega\times T, D_\omega\times T)$ and $(X_\sigma,D_\omega\times \AA^k)$ is its saturation.

(2) Conversely, assume that  $(X, D)$ is a strict toroidal embedding. Then, by Lemma \ref{reg2}, for any $\sigma\in \Sigma$, we have that $\sigma=\omega\times \tau$, where $\tau$ is regular, and $D\cap X_\sigma=D_\omega\times X_\tau$, whence $(\Sigma,\Omega)$ is regular.

(3) Follows from (2).





\end{proof}


\subsection{Subdivisions of relative complexes}
\begin{definition}
A map $f: (\Sigma,\Omega) \to (\Sigma',\Omega')$ is a map of complexes $\Sigma\to \Sigma'$ which induces a map
of the subcomplexes $\Omega\to \Omega'$.

A {\it subdivision} of a  relative complex
$(\Sigma,\Omega)$ is a subdivision $\Delta\to \Sigma$ which is identical on $\Omega\subset \Delta$, and thus defines a  relative complex $(\Delta,\Omega)$.  A subdivision $(\Delta,\Omega)$ of $(\Sigma,\Omega)$, which is  a regular relative complex is called {\it desingularization} of $(\Sigma,\Omega)$.

A map $f: (\Sigma,\Omega) \to (\Sigma',\Omega')$ is a {\it local isomorphism} if each $f_{\sigma,\sigma'}$ is an isomorphism  mapping $\Omega\cap \sigma$ to $\Omega'\cap \sigma'$.  

A map $f: (\Sigma,\Omega) \to (\Sigma',\Omega')$ is an isomorphism if  $f$ is a bijection of the sets  and  a local isomorphism. 

A map $f: (\Sigma,\Omega) \to (\Sigma',\Omega')$  is called  {\it a regular local projection} if 
 for each $\sigma\in \Sigma$ there is an isomorphism  $\sigma\simeq \sigma'\times \tau$, where $\tau$ is regular, which takes $\Omega\cap \sigma$ to $(\Omega'\cap \sigma')\times \tau$, and   
 such that $$f_{\sigma,\sigma'}: \sigma\simeq\sigma'\times \tau \to \sigma'$$ is the projection on the first component. \end{definition}

\begin{lemma}\label{local2} A regular local projection $f: (\Sigma,\Omega) \to (\Sigma',\Omega')$
induces  a local  isomorphism  of the relative subcomplexes: $$\Sing(f): \Sing(\Sigma,\Omega)\to \Sing(\Sigma',\Omega').$$

\qed
\end{lemma}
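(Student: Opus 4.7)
The plan is to mimic the proof of Lemma~\ref{local}, with $\sing$ and irreducibility replaced by $\sing_\Omega$ and relative irreducibility. First I would show that for each relatively irreducible cone $\sigma\in\sing(\Sigma,\Omega)$ the induced map $f_{\sigma,f(\sigma)}$ is an isomorphism and $f(\sigma)\in\sing(\Sigma',\Omega')$; the statement for the whole subcomplex $\Sing(\Sigma,\Omega)$ then follows by restricting the resulting isomorphisms to faces of relatively irreducible cones.

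For $\sigma\in\sing(\Sigma,\Omega)$, the definition of a regular local projection yields an isomorphism $\sigma\simeq\sigma'\times\tau$ with $\tau$ regular, $\sigma'=f(\sigma)$, and the decomposition compatible with the $\Omega$-structure via the identification of $\Omega\cap\sigma$ with $(\Omega'\cap\sigma')\times\tau$. Since $\tau$ is regular, any irreducible singular face of $\sigma'\times\tau$ lies in the first factor, so $\sing(\sigma)\subseteq\sigma'\times\{0\}$; the $\Omega$-compatibility likewise places the cones of $\sigma\cap|\Omega|$ in correspondence with the cones of $\sigma'\cap|\Omega'|$ via the projection. Consequently the minimal face of $\sigma$ containing both $\sing(\sigma)$ and $\sigma\cap|\Omega|$ sits in $\sigma'\times\{0\}$ and is identified with $\sing_{\Omega'}(\sigma')$; that is, $\sing_\Omega(\sigma)$ coincides with $\sing_{\Omega'}(\sigma')$ under the inclusion $\sigma'\hookrightarrow\sigma'\times\tau$. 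Relative irreducibility $\sigma=\sing_\Omega(\sigma)$ therefore forces $\tau=\{0\}$ and $\sigma'=\sing_{\Omega'}(\sigma')$, so $f_{\sigma,\sigma'}$ is an isomorphism onto a cone in $\sing(\Sigma',\Omega')$. For a general $\delta\in\Sing(\Sigma,\Omega)$, $\delta$ is a face of some relatively irreducible $\sigma$, and the isomorphism $\sigma\to\sigma'$ restricts to an isomorphism of $\delta$ onto a face of $\sigma'$ lying in $\Sing(\Sigma',\Omega')$, which gives the desired local isomorphism.

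The main obstacle is the structural computation of $\sing_\Omega$ on a product cone $\sigma'\times\tau$ with $\tau$ regular, together with the correct use of the $\Omega$-compatibility in the definition of a regular local projection: one must confirm that this compatibility forces every $\Omega$-cone of $\sigma$ into the first factor of the decomposition, so that relative irreducibility rules out a nontrivial regular complement $\tau$. This is the precise relative analogue of the absolute observation used in Lemma~\ref{local}, namely that an irreducible singular face of $\sigma'\times\tau$ with $\tau$ regular sits entirely in $\sigma'$. Once this structural step is in place, the rest of the argument is formal and parallels Lemma~\ref{local}.
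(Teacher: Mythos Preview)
Your proposal is correct and follows precisely the approach the paper intends: the lemma is stated with a bare \qed and no proof, so the paper treats it as the evident relative analogue of Lemma~\ref{local}, which is exactly what you carry out. Your identification of the one nontrivial step---that the $\Omega$-compatibility in the definition of a regular local projection forces $\sing_\Omega(\sigma)$ into the first factor of $\sigma'\times\tau$, so that relative irreducibility kills $\tau$---is the right place to focus, and once that is checked the rest is formal.
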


\subsection{Minimal vectors in relative complexes} 

\begin{definition} \label{minrel0}
Let $\sigma \in \Sigma$ be a  cone with a unique  maximal face $\omega\in \Omega$. 

A {\it minimal vector} (respectively {\it small vector}) of the  pair $(\sigma,\omega)$ is a minimal  (respectively {\it small}) vector of the cone $\sigma$ which is not in $\omega$.

\end{definition}

\begin{lemma} \label{minrel}
Let $\sigma=\langle v_1,\ldots,v_k,w_1,\ldots,w_s\rangle \in \Sigma$ be a relatively simplicial cone with a unique  maximal face $\omega=\langle w_1,\ldots,w_s\rangle\in \Omega$. 
Then a {\it minimal vector} of  $(\sigma,\omega)$ can be  written down in the form $\sum c_iv_i+\sum d_jw_j$, where 
 $0\leq c_i< 1$, and 
 at least one $c_i\neq 0$.

\qed

\end{lemma}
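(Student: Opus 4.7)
The plan is to reduce the statement directly to the characterization of small vectors in a simplicial cone given earlier (the lemma immediately following Definition \ref{small}), together with the definition of a minimal vector of a pair.

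First, unwinding the definitions: a minimal vector of the pair $(\sigma,\omega)$ is, by Definition \ref{minrel0}, a minimal vector of $\sigma$ that fails to lie in $\omega$, and every minimal vector of $\sigma$ is in particular small in the sense of Definition \ref{small}. Next, I would use the relatively simplicial hypothesis: since $v_1,\dots,v_k$ are linearly independent from $w_1,\dots,w_s$, and the $w_j$ are the vertices of the face $\omega\preceq\sigma$, the full set of vertices $v_1,\dots,v_k,w_1,\dots,w_s$ generates $\sigma$ as a simplicial cone. Thus $\sigma$ itself is simplicial and the lemma immediately after Definition \ref{small} applies.

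Applying that characterization to the small vector $v$ yields a (unique) integral expansion
\[
v \;=\; \sum_{i=1}^{k} c_i v_i \;+\; \sum_{j=1}^{s} d_j w_j, \qquad 0\le c_i<1,\ \ 0\le d_j<1,
\]
which in particular has $0\le c_i<1$ as required. It remains to rule out that all the $c_i$ vanish: if $c_i=0$ for every $i$, then $v=\sum_j d_j w_j$ lies in the face $\omega=\langle w_1,\dots,w_s\rangle$, contradicting the hypothesis that $v$ is a minimal vector of the pair $(\sigma,\omega)$. Hence at least one $c_i\neq 0$, completing the proof.

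There is essentially no obstacle: the lemma is a direct translation of the simplicial small-vector characterization to the relative setting. The only thing worth being careful about is the convention that "$(\sigma,\omega)$ simplicial" really forces $\sigma$ to be a simplicial cone (vertices $v_i$ together with vertices $w_j$ linearly independent), which is what makes the integral expansion unique and the bounds $0\le c_i<1$ meaningful.
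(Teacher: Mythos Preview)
Your argument has a gap precisely at the point you flagged yourself: the hypothesis ``$(\sigma,\omega)$ simplicial'' does \emph{not} force $\sigma$ to be a simplicial cone. By the paper's convention (see the definition of marking in Section~\ref{marking}, and the definition in Section~\ref{reli}), ``$v_1,\ldots,v_k$ linearly independent from $w_1,\ldots,w_s$'' means only that in any vanishing combination $\sum a_i v_i + \sum b_j w_j = 0$ the coefficients $a_i$ must all be zero. This says nothing about linear independence of the $w_j$ among themselves; the face $\omega$ can be an arbitrary (non-simplicial) cone, and indeed in the application (Theorem~\ref{can des2}) the subcomplex $\Omega$ is left untouched by the barycentric step, so its cones need not be simplicial. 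Hence the lemma after Definition~\ref{small} does not apply, and your bonus conclusion $0\le d_j<1$ is false in general --- which is exactly why the statement of Lemma~\ref{minrel} makes no claim about the $d_j$.

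The fix is easy and bypasses the simplicial characterization entirely. Write $v=\sum c_i v_i + w$ with $c_i\ge 0$ and $w\in\omega$; the relative-simplicial hypothesis makes the $c_i$ uniquely determined even though the expression of $w$ in terms of the $w_j$ may not be. If some $c_i\ge 1$ then $v-v_i\in\sigma$, contradicting smallness of $v$ with respect to the vertex $v_i$; hence $0\le c_i<1$. If all $c_i=0$ then $v\in\omega$, contradicting Definition~\ref{minrel0}. This is presumably what the paper's bare \qed\ is indicating.
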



 

\begin{lemma} \label{minrel2} Let $\sigma=\langle v_1,\ldots,v_k,w_1,\ldots,w_s\rangle \in \Sigma$ be a relatively simplicial cone with a unique  maximal face $\omega=\langle w_1,\ldots,w_s\rangle\in \Omega$. Then  $(\sigma,\omega)$  is regular if there is no minimal vector of $(\sigma,\omega)$.
\end{lemma}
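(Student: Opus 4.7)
The plan is to prove this by contrapositive, unwinding the definitions and appealing directly to Lemma \ref{minimal4}, which characterizes $\sing(\sigma)$ as the smallest face of $\sigma$ containing all minimal (and all small) integral vectors of $\sigma$.

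First I would translate the hypothesis. By Definition \ref{minrel0}, a minimal vector of the pair $(\sigma,\omega)$ is precisely a minimal vector of the cone $\sigma$ that does not lie in $\omega$. Hence the assumption ``there is no minimal vector of $(\sigma,\omega)$'' is equivalent to the statement that every minimal vector of $\sigma$ is contained in the face $\omega$.

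Next I would conclude regularity of the pair. If $\sigma$ is itself a regular cone, then $\sing(\sigma)=\{0\}\preceq\omega$ trivially, so there is nothing to prove. Otherwise $\sigma$ is singular and Lemma \ref{minimal4} applies: $\sing(\sigma)$ is the smallest face of $\sigma$ containing all minimal vectors of $\sigma$. Since by the previous paragraph $\omega$ is a face of $\sigma$ that contains every minimal vector of $\sigma$, the minimality of $\sing(\sigma)$ gives $\sing(\sigma)\subseteq\omega$, and since both are faces of the common cone $\sigma$, this set inclusion is a face relation $\sing(\sigma)\preceq\omega$. By the definition of regularity of a pair in Section \ref{reli}, this is exactly the statement that $(\sigma,\omega)$ is regular.

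There is no genuine obstacle here; the whole content is a two-line deduction from Lemma \ref{minimal4} and the definitions. The relatively simplicial hypothesis on $\sigma$ is not actually needed for this half of the equivalence, but it is natural to include in the statement because it makes the companion Lemma \ref{minrel} applicable, providing the concrete normal form $\sum c_iv_i+\sum d_jw_j$ with $0\le c_i<1$ and some $c_i\neq 0$ for the minimal vectors of $(\sigma,\omega)$ that would have to exist were $(\sigma,\omega)$ not regular.
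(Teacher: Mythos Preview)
Your proof is correct, but it takes a different route from the paper. The paper argues via Lemma \ref{minimal5}(2): since every integral vector of $\sigma$ is a nonnegative integral combination of minimal vectors and vertices, and all minimal vectors lie in $\omega$, the lattice $N_\sigma$ is generated by $v_1,\ldots,v_k$ together with $N_\omega$; the relatively simplicial hypothesis (linear independence of the $v_i$ from $\omega$) then forces $v_1,\ldots,v_k$ to be part of a basis of $N_\sigma$, yielding the explicit product splitting $\sigma^{\integ}=\omega^{\integ}\times\reg_\Omega(\sigma)^{\integ}$.

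Your argument via Lemma \ref{minimal4} is shorter and, as you observe, does not use the relatively simplicial hypothesis at all. The paper's approach has the advantage of producing the explicit decomposition $\sigma=\omega\times\reg_\Omega(\sigma)$ with $\reg_\Omega(\sigma)$ regular, which is the form needed in the subsequent determinant computations (Section \ref{determ}) and in Lemma \ref{des2}; your argument only gives $\sing(\sigma)\preceq\omega$, from which the splitting follows by the general decomposition $\sigma=\sing_\Omega(\sigma)\times\reg_\Omega(\sigma)$ together with Lemma \ref{relative regular}, so the end result is the same.
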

\begin{proof}  If  $(\sigma,\omega)$ is regular. Then $\sigma=\omega\times \reg_{\Omega}(\sigma)$, and all the minimal vectors of $\sigma$ are in $\omega$.
Conversely, suppose all the minimal vectors of $\sigma$ are in $\omega$. Then, by Lemma \ref{minimal5}(2), 
 any integral vector $v\in \sigma$ can be written  as a nonnegative combination of the vertices $v_1,\ldots,v_k$, and vectors in $\omega^\integ$.
Since $v_1,\ldots,v_k$ are linearly independent from vectors in $\omega$ we infer that $N_\sigma$ is generated by $v_1,\ldots,v_k$, and a basis of $N_\omega$. Hence $v_1,\ldots,v_k$ is a part of a basis of $N_\sigma$ and generate  a regular face $\reg_{\Omega}(\sigma)$ of $\sigma$, whence $\sigma^\integ=\omega^{\integ}\times \reg_{\Omega}(\sigma)^{\integ}$.

 \end{proof}
 
 \subsection{Star subdivisions}
 Let $(\Sigma,\Omega)$ be a relative simplicial complex.  \begin{definition}\label{de: star subdivision2} Let $(\Sigma,\Omega)$ be a relative conical complex and $v$  be a primitive vector
 in the
relative interior of  $\tau\in\Sigma\setminus \Omega$. Then the {\it star
subdivision}
 $v\cdot(\Sigma,\Omega)$ of $(\Sigma,\Omega)$ at
$v$ is defined to be
$$v\cdot(\Sigma,\Omega):=(v\cdot\Sigma,\Omega)$$
Analogously if  $V=\{v_1,\ldots,v_k\}$ is  a set of the primitive vectors $v_i$
 in the
relative interior of  the cones $\tau_i\in \Sigma\setminus \Omega$ for $i=1,\ldots,k$ defining the disjoint stars $\Star(\tau_i ,\Sigma)$ then 
the {\it star
subdivision}
 $V\cdot(\Sigma,\Omega)$ of $(\Sigma,\Omega)$ at
$V$ is defined to be
$$V\cdot(\Sigma,\Omega)=(V\cdot\Sigma,\Omega)$$ 

A {\it multiple  star subdivision} of $(\Sigma,\Omega)$ is  a subdivision obtained as a sequence of star subdivisions at the consecutive centers $V_1,\ldots,V_k$. A {\it subdivision} of $(\Sigma,\Omega)$ which is a regular relative complex is called  {\it  desingularization}.

\end{definition}
 




 \subsection{Determinants of  relative  subdivision}
\label{determ} 
 We introduce the measure for the singularity of the relative simplicial cones:
 
  If $(\sigma,\omega)$ is simplicial with $\omega=\langle w_1,\ldots,w_s \rangle$, and $\sigma=\langle v_1,\ldots,v_k,w_1,\ldots,w_s\rangle$ then we put  $$\det(\sigma,\omega): =N_\sigma/(N_{\Ver(\sigma)}+N_\omega)=\det(\overline{v}_1,\ldots,\overline{v}_k),$$
 where $\overline{v}_1,\ldots,\overline{v}_k$ are the images of $v_1,\ldots,v_k$ are in $N_\sigma/N_\omega$.
The following lemma is a consequence of  the definition:
 \begin{lemma} \label{regular2}  If $(\sigma,\omega)$ is simplicial  then 
$\det(\sigma,\omega)=1$ iff $(\sigma,\omega)$ is regular.
\end{lemma}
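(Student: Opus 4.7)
The plan is to prove both directions, with the reverse direction being the substantive one that leverages Lemmas \ref{minrel} and \ref{minrel2} already established.

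For the forward direction ($(\sigma,\omega)$ regular $\Rightarrow \det(\sigma,\omega)=1$), I would simply unwind the definition. Regularity gives a decomposition $\sigma = \omega \times \tau$ with $\tau$ regular in the induced lattice. Since the pair is simplicial, we may take $\tau = \langle v_1,\ldots,v_k\rangle$, and regularity of $\tau$ means $v_1,\ldots,v_k$ form part of a basis of $N_\sigma$ complementing a basis of $N_\omega$. Hence $N_\sigma = N_{\Ver(\sigma)} + N_\omega$, so the index defining $\det(\sigma,\omega)$ is $1$.

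For the reverse direction ($\det(\sigma,\omega)=1 \Rightarrow (\sigma,\omega)$ regular), I would pass through Lemma \ref{minrel2}: it suffices to show that $(\sigma,\omega)$ has no minimal vector. Suppose toward contradiction that $v$ is a minimal vector of $(\sigma,\omega)$. By Lemma \ref{minrel}, we may write
\[
v \;=\; \sum_{i=1}^{k} c_i v_i + \sum_{j=1}^{s} d_j w_j,\qquad 0\le c_i<1,
\]
with at least one $c_i\neq 0$. Projecting $v\in N_\sigma$ to the quotient lattice $\overline{N}:=N_\sigma/N_\omega$ yields the element $\overline{v}=\sum c_i\overline{v}_i\in \overline{N}$. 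The hypothesis $\det(\sigma,\omega)=1$ means exactly that $\overline{v}_1,\ldots,\overline{v}_k$ form a $\mathbb{Z}$-basis of $\overline{N}$. Since $\overline{v}$ is integral in this basis while $0\le c_i<1$, we are forced to have $c_i=0$ for all $i$, contradicting the existence of a nonzero coefficient. Thus no minimal vector exists, and Lemma \ref{minrel2} concludes that $(\sigma,\omega)$ is regular.

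I do not anticipate any real obstacle here: the main content is a direct bookkeeping argument using the saturation of $N_\omega$ in $N_\sigma$ (guaranteed by Definition \ref{complex1}(1)) so that $\overline{N}$ is indeed a lattice of rank $k$, and the already-proved Lemmas \ref{minrel} and \ref{minrel2} that characterize regularity of relative simplicial cones via minimal vectors.
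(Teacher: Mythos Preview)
Your proof is correct. Both directions are handled soundly, and your invocation of the saturation of $N_\omega$ in $N_\sigma$ to make $N_\sigma/N_\omega$ a genuine lattice is exactly the point that needs to be checked.

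However, for the reverse direction you take a detour through Lemmas \ref{minrel} and \ref{minrel2} that the paper avoids. The paper argues directly: $\det(\sigma,\omega)=1$ means precisely that $\overline{v}_1,\ldots,\overline{v}_k$ form a $\ZZ$-basis of $N_\sigma/N_\omega$, and since $N_\omega$ is saturated this splits $N_\sigma = N_\omega \oplus \bigoplus_i \ZZ v_i$; hence $\sigma \simeq \omega \times \langle v_1,\ldots,v_k\rangle$ with the second factor regular, which is exactly relative regularity. Your route reaches the same conclusion by contradiction through the nonexistence of minimal vectors, which is perfectly valid but less economical here---you are essentially re-deriving the splitting by ruling out obstructions one vector at a time, when the determinant hypothesis hands you the splitting outright. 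The paper also omits the forward direction as obvious, whereas you spell it out; that is a minor stylistic difference.
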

 \begin{proof} If $\det(\sigma,\omega)=1$ then $v_1,\ldots,v_k$ generate that lattice $N_\sigma/N_\omega$. So $\sigma\simeq\langle v_1,\ldots,v_k \rangle \times \omega$ is relatively regular.

 \end{proof}







\subsection{The invariant of relative cones}

We introduce a somewhat richer invariant measuring a progress under the star subdivisions at  minimal vectors.
Set $$\mu(\sigma,\omega):=(\dim(\omega),\det(\sigma,\omega)) \in \NN^2.$$ We  order the set of values $\NN^2$ of $\mu(\sigma,\omega)$ lexicographically.

The following extends Lemma \ref{des}(\cite{KKMS}):

\begin{lemma} \label{des2} Assume that $(\Sigma,\Omega)$ is simplicial and 
let $(\tau,\omega_\tau)\in (\Sigma,\Omega)$ 
 Let $v\in \inte(\tau)$ be  a minimal point of $(\tau,\omega)$.  Then for any pair $(\sigma,\omega)$, where $\sigma\in \Star(\tau,\Sigma)$ the resulting pairs $(\sigma',\omega')$ in $v\cdot (\sigma,\omega)\subseteq v \cdot \Sigma$ of the relative complex $(\Sigma,\Omega)$ have a strictly smaller invariant $\mu(\sigma',\omega')$  than  $\mu(\sigma,\omega)$.
\end{lemma}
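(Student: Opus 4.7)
The plan is to fix a pair $(\sigma,\omega)\in (\Sigma,\Omega)$ with $\sigma\in \Star(\tau,\Sigma)$ and compute directly the invariant of each new cone in $v\cdot \sigma$. Since $(\Sigma,\Omega)$ is simplicial, so is $(v\cdot \Sigma, \Omega)$, and I can write
$$\sigma = \langle v_1,\ldots, v_k, w_1,\ldots, w_s, u_1,\ldots, u_p, z_1,\ldots, z_q\rangle,$$
where $\tau = \langle v_1,\ldots,v_k, w_1,\ldots,w_s\rangle$, $\omega_\tau = \langle w_1,\ldots, w_s\rangle$, the remaining $\Omega$-vertices of $\sigma$ are $z_1,\ldots,z_q$, so $\omega = \langle w_1,\ldots, w_s, z_1,\ldots, z_q\rangle$, and $u_1,\ldots, u_p$ are the non-$\Omega$ vertices of $\sigma$ outside $\tau$. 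By Lemma \ref{minrel}, the minimal vector $v \in \inte(\tau)$ admits a presentation
$$v = \sum_{i=1}^{k} c_i v_i + \sum_{j=1}^{s} d_j w_j,$$
with $0<c_i<1$ and $d_j>0$; positivity of all coefficients follows from $v$ being in the relative interior of $\tau$.

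Under the star subdivision, $\sigma$ is replaced by cones $\sigma'$ each obtained by substituting $v$ for exactly one vertex of $\tau$. I split into two cases. If the replaced vertex is some $v_i$, the $\Omega$-vertices of $\sigma'$ coincide with those of $\sigma$, hence $\omega'=\omega$ and $\dim\omega'=\dim\omega$. Passing to $N_\sigma/N_\omega$, where $\overline{w_j}=0$, I get $\overline{v} = \sum_\ell c_\ell\,\overline{v_\ell}$, and expanding the determinant along the column corresponding to $v$ yields
$$\det(\sigma',\omega) = |\det(\overline{v},\overline{v_1},\ldots,\widehat{\overline{v_i}},\ldots,\overline{v_k},\overline{u_1},\ldots,\overline{u_p})| = c_i\,\det(\sigma,\omega),$$
which is strictly smaller than $\det(\sigma,\omega)$ since $0<c_i<1$. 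Thus the second coordinate of $\mu$ drops while the first is preserved.

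If instead the replaced vertex is some $w_j$, the remaining $\Omega$-vertices of $\sigma'$ are $\{w_1,\ldots,\widehat{w_j},\ldots,w_s, z_1,\ldots,z_q\}$, and they span a proper face $\omega'\subsetneq \omega$. Since $\Omega$ is a subcomplex, $\omega'\in \Omega$; since the star subdivision leaves $\Omega$ unchanged, $v$ is not an $\Omega$-vertex; and since $\sigma'$ is simplicial, $\omega'$ is the unique maximal $\Omega$-face of $\sigma'$. Hence $\dim\omega'=\dim\omega-1$, and the first coordinate of $\mu$ drops. Combining the two cases gives $\mu(\sigma',\omega')<\mu(\sigma,\omega)$ lexicographically for every $\sigma'$ in $v\cdot\sigma$. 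The only real bookkeeping point is verifying balancedness of each $\sigma'$ (so that the pair $(\sigma',\omega')$ is even defined), which reduces to the fact that within a simplicial cone the $\Omega$-vertices automatically span a single face lying in $\Omega$; I expect no further obstacle beyond this routine check.
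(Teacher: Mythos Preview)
Your proof is correct and follows essentially the same approach as the paper's: both split the maximal cones of $v\cdot\sigma$ into those obtained by dropping a non-$\Omega$ vertex $v_i$ (where the determinant scales by $c_i<1$, so the second coordinate of $\mu$ drops) and those obtained by passing to a facet of $\omega$ (where $\dim\omega'$ drops by one, so the first coordinate of $\mu$ drops). Your bookkeeping on balancedness and the explicit splitting of vertices into $v_i,w_j,u_\ell,z_m$ is slightly more careful than the paper's somewhat typo-ridden notation, but the argument is the same.
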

\begin{proof}
Let  $\omega_\tau=\langle w_1,\ldots,w_r\rangle$,\quad 
$\tau=\langle v_1,\ldots,v_l, w_1,\ldots,w_l \rangle$,\\ 
$\sigma=\langle v_1,\ldots,v_s, w_1,\ldots,w_k \rangle$, 
and write $$v=a_1v_1+\ldots+a_kv_l+c_1w_1+\ldots+c_rw_r$$ with $0\leq a_i<1$.

There are two types of  the faces of maximal dimension of $v\cdot \sigma$ denoted by $\sigma_i$, and $\delta_j$ , where
$$\sigma_i:=\langle v, v_1,\ldots,\check{v_i},\ldots,v_s,w_1,\ldots,w_k\rangle,$$ for $i\leq l$  contain $\omega$, and $$\delta_j:=\langle v_1,\ldots,v_r\rangle+\omega_j,$$
 containing $\omega_j\in \Omega$, which 
  is a facet (codimension one face) of $\omega$. 
 
 Then $(\delta_j,\omega_j)\in (v\cdot\Sigma,\Omega)$ and  $\mu(\delta_j,\omega_j)<\mu(\sigma,\omega)$, since $\dim(\omega_j)<\dim(\omega)$.
On the other hand,  for each  cone $\sigma_i$ we have
$$\det(\sigma_i,\omega)=|\det(\overline{v},\overline{v}_1,\ldots,\check{\overline{v}_i},\ldots, \overline{v}_k)|=a_i|\det(\overline{v}_1,\ldots,\overline{v}_k)|=a_i\det(\sigma,\omega)<\det(\sigma,\omega), $$
where $\overline{v_i}$ are the images of $v_i$ in $N_\sigma/N_\omega$. This implies that $\mu(\sigma_i,\omega)<\mu(\sigma,\omega)$.
\end{proof}

\subsection{Barycentric subdivision}
 \begin{definition} Let $(\Sigma,\Omega)$
 be a relative conical complex.   Let $\sigma\in \Sigma$ be a relatively  irreducible cone.   Denote by  $w_\sigma$  the sum of all vertices of $\Omega\cap\sigma$, and by $z_\sigma$ the sum of all the minimal internal vectors in $\sigma$.  
 Then we shall call the sum $v_\sigma:=w_\sigma+z_\sigma\in \inte(\sigma)$
 the {\it canonical barycenter} of $\sigma$.  
\end{definition}


 \begin{definition} \label{barycenter2} Let $(\Sigma,\Omega)$ be a relative conical complex. By the {\it canonical irreducible barycentric subdivision} of $(\Sigma, \Omega)$ we mean the sequence of the  star subdivisions at the sets of all barycenters $v_\sigma$ of all the irreducible faces in $\Sigma\setminus \Omega$ of the same  dimension in order of decreasing dimension.
\end{definition}

\begin{lemma} \label{bar4} If  $(\Delta,\Omega)$ is the canonical irreducible barycentric subdivision of $(\Sigma,\Omega)$ then $(\Delta,\Omega)$ is simplicial. Moreover, all faces of $\Sigma$ which are
not subdivided are relatively regular.
\end{lemma}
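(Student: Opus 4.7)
The plan is to establish the two assertions separately, beginning with the second, since it will feed into the proof of the first.

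For the claim about unsubdivided faces, suppose $\sigma \in \Sigma$ is not affected in the process producing $\Delta$. By Definition \ref{barycenter2} the centers of the star subdivisions are exactly the barycenters $v_\tau$ of relatively irreducible faces $\tau \in \Sigma\setminus \Omega$, and any such $v_\tau$ with $\tau \preceq \sigma$ would lie in the relative interior of $\tau$ and hence would force a star subdivision of $\sigma$. Therefore no relatively irreducible face $\tau \preceq \sigma$ can lie in $\Sigma\setminus \Omega$. In particular the maximal relatively irreducible face $\sing_\Omega(\sigma)$ (defined in Section \ref{reli2}) must lie in $\Omega$. By Lemma \ref{relative regular}, this forces $\sigma$ to be relatively regular.

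For the simplicial property, I would analyze an arbitrary cone $\delta \in \Delta$. Because the star subdivisions are performed in decreasing order of dimension, the ``new'' vertices of $\delta$ (the barycenters introduced during the process) form a strictly decreasing chain of relatively irreducible cones in $\Sigma\setminus \Omega$, say $\tau_1 \succ \tau_2 \succ \dots \succ \tau_k$, with new vertices $v_{\tau_1}, \dots, v_{\tau_k}$. The ``old'' vertices of $\delta$ are then vertices of some face $\rho$ of $\Sigma$ which is a face of the smallest $\tau_k$ (or of a cone containing $\tau_k$) and which survives untouched in $\Delta$. By the first part of the proof, $\rho$ is relatively regular; in particular it is relatively simplicial with a unique maximal face $\omega \preceq \rho$ in $\Omega$, and its remaining vertices are linearly independent from $\omega$.

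The key linear independence step is as follows: since $v_{\tau_i}$ lies in the relative interior of $\tau_i$, while all later $v_{\tau_{i+1}}, \dots, v_{\tau_k}$ and all vertices of $\rho$ lie inside the proper face $\tau_{i+1}$ of $\tau_i$, the vector $v_{\tau_i}$ is linearly independent of the remaining vertices of $\delta$. Iterating downward in $i$ shows that $v_{\tau_1}, \dots, v_{\tau_k}$ are linearly independent modulo the span of $\rho$. Moreover, each $\tau_i$ lies in $\Sigma\setminus \Omega$ so $v_{\tau_i}\in \inte(\tau_i)$ does not lie in $|\Omega|$; hence $\omega$ remains the unique maximal face of $\delta$ contained in $\Omega$. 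Combining this with the regularity of $\rho$ yields that $\delta$ is relatively simplicial, as required.

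The main obstacle in turning this sketch into a rigorous argument is the bookkeeping required to establish the chain structure of the ``new'' vertices of $\delta$ and to identify the residual ``old'' face $\rho$. One has to check that when a star subdivision at $v_{\tau_i}$ creates a face of $\delta$ containing $v_{\tau_{i+1}}$, the relevant face $\tau_{i+1}$ is still a face of $\Sigma$ (not of some intermediate complex), and that the relatively irreducible faces of smaller dimension which are nested inside $\tau_i$ are precisely those responsible for further refinements of $\delta$. Once this combinatorial organisation is in place, the linear algebra verifying simpliciality is straightforward, and together with Claim~2 it completes the lemma.
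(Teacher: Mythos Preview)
Your proof of the second assertion is correct and coincides with the paper's: if $\sigma\in\Sigma$ is untouched then $\sing_\Omega(\sigma)\in\Omega$, and Lemma~\ref{relative regular} gives relative regularity.

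For the simpliciality assertion, your overall plan (identify the ``old'' face $\rho\in\Sigma$, show it is relatively regular, and combine with linear independence of the new vertices) is the same as the paper's. However, the specific linear-independence argument you propose via the chain $\tau_1\succ\cdots\succ\tau_k$ has a genuine gap. You assert that ``all later $v_{\tau_{i+1}},\dots,v_{\tau_k}$ and all vertices of $\rho$ lie inside the proper face $\tau_{i+1}$ of $\tau_i$''. This fails already in simple examples: take a simplicial $4$-dimensional relatively irreducible cone $\gamma=\langle v_1,v_2,v_3,v_4\rangle$ with a single $2$-dimensional relatively irreducible face $\sigma=\langle v_1,v_2\rangle$ (and no $3$-dimensional relatively irreducible faces). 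After the two star subdivisions one obtains the cone $\delta=\langle v_\gamma,v_\sigma,v_1,v_3\rangle$ with chain $\gamma\succ\sigma$ and old part $\rho=\langle v_1,v_3\rangle$. Here $v_3\in\Ver(\rho)$ does \emph{not} lie in $\sigma$, so your containment breaks down for $i=1$; and for $i=2$ there is no proper face of $\sigma$ containing $\rho$ at all. The linear independence of $v_\gamma,v_\sigma$ from $\rho$ still holds, but not for the reason you give.

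The paper bypasses the chain description entirely. It invokes the general fact (this is exactly the content of Lemma~\ref{mark2}, or ``by definition of the star subdivision'') that \emph{each} center introduced by a star subdivision is linearly independent of all remaining vertices of the cones it creates, and that this persists under further star subdivisions. Hence for any $\delta\in\Delta$ the old vertices span a face $\sigma\in\Sigma$ that has survived every subdivision; by your own second claim $\sigma$ is relatively regular with maximal $\Omega$-face $\omega$. Since the new rays lie outside $|\Omega|$, $\omega$ is still the unique maximal face of $\delta$ in $\Omega$, and the two independence statements (new vertices from $\Ver(\sigma)$, and $\Ver(\sigma)\setminus\Ver(\omega)$ from $\Ver(\omega)$) combine to give that $(\delta,\omega)$ is simplicial. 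This is both shorter and avoids the bookkeeping you flag as the main obstacle.
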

\begin{proof} 
Let $\delta$ be a face of $\Delta$. Then, by definition of the star subdivision,  all its new rays (vertices) are linearly independent of the other "old" rays, and these old vertices form a face which is in $\Sigma$. Hence $\delta$ has a unique maximal face $\sigma\in \Sigma$. 
Then $\sigma$ and its irreducible face  $\omega:=\sing_\Omega(\sigma)$ are intact, and thus $\omega$ is in $\Omega$, since otherwise it would have been decomposed. This implies, by Lemma \ref{relative regular}, that $\sigma$ is relatively  regular, and $(\sigma,\omega)$ is regular.
Moreover, the rays defined by the vertices $\Ver(\delta)\setminus \Ver(\sigma)$  are not in $\Omega$, so  $\omega$ is a unique maximal face of $\delta$ which is in $\Omega$, so $\delta$ is balanced.
Since  $\Ver(\delta)\setminus\Ver(\sigma)$ are linearly independent from $\Ver(\sigma)$, and $\Ver(\sigma)\setminus \Ver(\omega)$ are linearly independent from $\Ver(\omega)$ we conclude that  $\Ver(\delta)\setminus\Ver(\omega)$ are linearly independent from $\Ver(\omega)$.

\end{proof}

\subsection{Marked relative complexes}
\begin{definition} A {\it marking} on a relative complex $(\Sigma,\Omega)$ is a partially ordered subset $V$ of the set of all vertices $\Ver(\Sigma)$ of $\Sigma$ such that the following conditions are satisfied.
\begin{enumerate}

\item $\Ver(\Omega)\subseteq V$
\item Set $V(\sigma):=V\cap \Ver(\sigma)$ for any $\sigma\in \Sigma$. The set $V(\sigma)\setminus \Omega$ is linearly independent of the remaining vertices in $(\Ver(\sigma)\setminus V(\sigma))\cup \Omega$. 
\item For any $\sigma\in \Sigma$ the order on $V$ is total on each subset $V(\sigma)$.

\end{enumerate}

A face $\sigma\in \Sigma$ is {\it unmarked} if  $V(\sigma) =\emptyset$. 
A face $\sigma\in \Sigma$  with  $(\sigma,\omega)\in (\Sigma,\Omega)$ is {\it relatively unmarked} if $(\sigma,\omega)$ is relatively simplicial, and all the vertices in $\Ver(\sigma)\setminus \Ver(\omega)$ are unmarked.
 The set of all unmarked faces of $(\Sigma,\Omega)$ forms the {\it maximal  unmarked subcomplex} $U(\Sigma,\Omega)$. Note that $|U(\Sigma,\Omega)|\cap |\Omega|=\{0\}$.
 A marked relative complex $(\Sigma,\Omega)$  is {\it regularly marked}  if it is relatively simplicial and all the relatively unmarked faces are relatively regular. In particular all the unmarked faces in $\Sigma$ are 
 regular.  

\end{definition}
One can extend Lemma \ref{mark} to the relative case.

\begin{lemma} \label{mark2} Let $(\Sigma,\Omega)$ be a relative complex with marking $V\subset \Ver(\Sigma)$.
Let $(\Sigma',\Omega)$ be obtained by a sequence of star subdivisions of a complex $(\Sigma,\Omega)$ at the consecutive centers $V_1,\ldots, V_k$. (disjoint from $|\Omega|$). Then there exists a natural  marking $V':=V \cup V_1\cup\ldots\cup V_k$, which extends the order on $V\setminus (V_1\cup\ldots\cup V_k)\supseteq \Ver(\Omega)$ and such that
\begin{enumerate}
 \item All the vectors in $V_1\cup\ldots\cup V_k$ are greater than vertices in $V\setminus (V_1\cup\ldots\cup V_k)$ 
\item $v_i<v_j$ if $i<j$, and $v_i\in V_i$, $v_j\in V_j$, and
 $v_i, v_j$ are in a face of $\Sigma$.
\end{enumerate}
\end{lemma}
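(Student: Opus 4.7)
The plan is to mirror the proof of Lemma \ref{mark} verbatim, adding only the observation that every center $V_i$ is required to lie in the relative interior of a cone $\tau_i\in\Sigma\setminus\Omega$, so the subcomplex $\Omega$ is untouched by the entire sequence of star subdivisions. Consequently $\Ver(\Omega)\subseteq V\subseteq V'$, and Condition (1) of the marking definition for relative complexes is automatic for $V'$ on $(\Sigma',\Omega)$.

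First I would set $V':=V\cup V_1\cup\ldots\cup V_k$ and equip it with the partial order specified in the statement: it restricts to the original order on $V\setminus(V_1\cup\ldots\cup V_k)$; every element of $V_1\cup\ldots\cup V_k$ is declared strictly greater than every element of $V\setminus(V_1\cup\ldots\cup V_k)$; and for $v_i\in V_i$, $v_j\in V_j$ with $i<j$ lying in a common face of $\Sigma'$ we declare $v_i<v_j$. Ordering properties (1) and (2) in the lemma then hold by construction.

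Next, to verify that $V'$ is a marking of $(\Sigma',\Omega)$, the substantive content is the linear independence condition. I would argue by induction on $k$: a single star subdivision at $v\in\inte(\tau)$ with $\tau\in\Sigma\setminus\Omega$ replaces $\Star(\tau,\Sigma)$ by cones of the form $\langle v\rangle+\rho$ for $\rho\in\Link(\tau,\Sigma)$, and since $\rho\cap\tau$ is a proper face of $\tau$, the center $v\in\inte(\tau)$ is linearly independent from $\Ver(\rho)$. Iterating, in any face $\sigma'\in\Sigma'$ the vertices drawn from $V_1\cup\ldots\cup V_k$ are linearly independent from each other and from the "old" vertices inherited from an ambient face $\sigma\in\Sigma$ containing them. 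Combined with the hypothesis that $V$ is already a marking on $(\Sigma,\Omega)$, this shows that $V'(\sigma')\setminus\Ver(\Omega)$ is linearly independent from $(\Ver(\sigma')\setminus V'(\sigma'))\cup\Ver(\Omega\cap\sigma')$, which is Condition (2) of the marking definition. Totality of the order on each $V'(\sigma')$ (Condition (3)) is automatic: since each $V_i$ consists of centers of star subdivisions at disjoint stars, any face of $\Sigma'$ contains at most one vertex from each $V_i$; these centers thus form a chain under the prescribed order, the old marked vertices in $V(\sigma)$ form a chain by the hypothesis on $V$, and the former dominate the latter by construction.

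The main obstacle is the inductive bookkeeping in the linear independence step: tracking, across successive star subdivisions, precisely which "old" vertices of $\Sigma$ survive in a given face $\sigma'\in\Sigma'$ and verifying that each center $v_i\in V_i\cap\Ver(\sigma')$ sits in the relative interior of a unique face $\tau_i$ of the intermediate complex $\Sigma_{i-1}$ whose link contributes the remaining non-center vertices of $\sigma'$. This is exactly the argument of Lemma \ref{mark}, and the relative requirement that the centers avoid $|\Omega|$ only simplifies matters, since the $\Omega$-vertices — already forced into $V$ by the assumption — are never touched and cause no interaction with the newly introduced markings.
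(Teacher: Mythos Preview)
Your proposal is correct and takes essentially the same approach as the paper: the paper's own proof of this lemma consists of the single sentence ``The reasoning is identical as in the proof of Lemma \ref{mark},'' and you have faithfully reproduced that reasoning with the added (and correct) remark that the centers avoid $|\Omega|$, so $\Ver(\Omega)$ remains in $V'$ and Condition (1) of the relative marking definition is immediate. Your inductive bookkeeping for linear independence and the observation that each face meets at most one vertex from any $V_i$ are exactly the points made in the proof of Lemma \ref{mark}.
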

\begin{proof} The reasoning is identical as in the 
proof of Lemma \ref{mark}.

\end{proof}

\begin{lemma} \label{mark3} Given  any marked relative complex $(\Sigma,\Omega)$, one can define the canonical partial order $\leq_V$ on the set of all the vectors in  $ |\Sigma|$.

\end{lemma}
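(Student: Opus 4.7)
The plan is to mimic the construction of the natural order given for the absolute case in Section \ref{marking} just before Lemma \ref{uni}, with the single adjustment dictated by condition (2) of the relative marking. For $v\in|\Sigma|$, let $\sigma\in\Sigma$ be the unique cone with $v\in\inte(\sigma)$, and expand $v$ as a nonnegative combination of $\Ver(\sigma)$. In contrast to the absolute case, the coefficients attached to vertices in $V(\sigma)\cap\Ver(\Omega)$ are in general \emph{not} individually determined by $v$ (they can be perturbed against coefficients of unmarked vertices that happen to be linearly dependent on $\Omega$-vertices), but condition (2) of the marking definition guarantees that the formal combination
$$\pi_V(v):=\sum_{v_i\in V(\sigma)\setminus \Ver(\Omega)}c_i v_i,$$
viewed as an element of the free $\QQ$-vector space on the set $V\setminus\Ver(\Omega)$, is uniquely determined by $v$. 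Well-definedness of $\pi_V$ is thus exactly the content of condition (2).

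Next, I would define the order by the same lexicographic rule used in the absolute case: declare $\pi_V(v)>_V\pi_V(v')$ if, writing $\pi_V(v)-\pi_V(v')=\sum e_i v_i$, every $v_i$ appearing with $e_i<0$ is strictly dominated in the partial order on $V$ by some $v_j$ with $e_j>0$. Then set $v\leq_V v'$ iff $\pi_V(v)\leq_V\pi_V(v')$. Reflexivity is immediate, and antisymmetry follows because if both $\pi_V(v)>_V\pi_V(v')$ and $\pi_V(v')>_V\pi_V(v)$ hold then the defining condition applied to the difference and to its negation simultaneously forces every coefficient to vanish.

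The step I expect to be the main obstacle is transitivity. Given $A:=\pi_V(v)-\pi_V(v')$ and $B:=\pi_V(v')-\pi_V(v'')$ both satisfying the defining condition, one must verify the same for $A+B$. If some $v_i$ has $(a_i+b_i)<0$, then at least one of $a_i,b_i$ is negative, say $a_i<0$, which yields some $v_j>v_i$ with $a_j>0$. If $a_j+b_j>0$ we are done for this $v_i$; otherwise $b_j<0$ and the condition on $B$ supplies some $v_k>v_j$ with $b_k>0$. Iterating produces a strictly ascending chain in the finite poset $V$, which must terminate at some $v_m$ with $a_m+b_m>0$, and $v_m>v_i$ follows by transitivity of the partial order on $V$. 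This chain argument, already implicit in the absolute construction of Section \ref{marking}, is the only nontrivial ingredient of the proof; everything else is a direct re-reading of the absolute case through the projection $\pi_V$ suitably restricted to coordinates outside $\Ver(\Omega)$.
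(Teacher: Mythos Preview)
Your construction is internally consistent and does yield a partial order, but it is not the order the paper defines, and the difference is not cosmetic. You project onto $V(\sigma)\setminus\Ver(\Omega)$, discarding the $\Omega$-vertex coordinates entirely; the paper instead projects onto \emph{all} of $V(\sigma)$, and handles the possible non-uniqueness of the presentation of $\pi(v)=\sum_{w_j\in V(\sigma)}d_jw_j$ by selecting the lexicographically smallest one. Your order is therefore strictly coarser than the paper's, and this is fatal for the intended application in Lemma~\ref{uni2}, which requires a \emph{unique} small vector minimal for the order in every regularly marked face.

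Here is a concrete failure. Take $N=\ZZ^3$, let $\omega=\langle(1,0,0),(1,2,0)\rangle\in\Omega$, and set $\sigma=\langle(1,0,0),(1,2,0),(0,1,2)\rangle$ with all three vertices marked (so $A=\{(0,1,2)\}$, $B=\Ver(\omega)$, $C=\emptyset$). One checks directly that this is a regularly marked face. The small vectors of $(\sigma,\omega)$ are
\[
(1,1,1)=\tfrac34(1,0,0)+\tfrac14(1,2,0)+\tfrac12(0,1,2),\qquad
(1,2,1)=\tfrac14(1,0,0)+\tfrac34(1,2,0)+\tfrac12(0,1,2).
\]
Both have the same coefficient $\tfrac12$ at the unique vertex of $A$, so your $\pi_V$ cannot distinguish them and there is no unique minimum. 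The paper's order, which also records the $B$-coefficients, separates them. The fix is exactly what the paper does: keep the $\Omega$-coordinates in $\pi(v)$ and resolve their ambiguity by a smallest-presentation convention, rather than throwing them away.
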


\begin{proof}
Let  $\sigma=\langle v_1,\ldots, v_k,w_1,\ldots, w_s\rangle$  be a face of $\sigma$, with $V(\sigma)=
\{w_1,\ldots, w_s\}$.
We order the set of vertices $V(\sigma)=\{w_1,\ldots, w_s\}$   according to the order on $V$, which is total on each face of $\Sigma$. For any $v\in \sigma$ we
write $$v=\sum c_iv_i+\sum d_jw_j.$$ 

Then we define the lexicographic order on the  the coefficients  $d_j$ in the presentation $$\pi(v):=\sum d_jw_j.$$

Since $v_1,\ldots, v_k$ are linearly independent of
$w_1,\ldots, w_s$ the vector $\pi(v)$ is unique. Although the presentation of $\pi(v)$ is, in general, not unique there is   a unique smallest presentation of $\pi(v)$ with respect to the lexicographic order determined by the total order on $w_1,\ldots, w_s$.
We put $v>v'$ if $\pi(v)>\pi(v')$.
\end{proof}

\begin{lemma} \label{uni2} In any regularly marked face, there exists a unique small vector which is minimal for the marking order.
\end{lemma}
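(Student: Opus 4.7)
The plan is to mirror the argument in the proof of Lemma \ref{uni}, replacing the absolute regular unmarked face by its relative analogue and handling the fact that $\omega$ need not be simplicial. I would first fix a regularly marked face $\sigma$ of $(\Sigma,\Omega)$, set $\omega=\sigma\cap|\Omega|$, and partition $\Ver(\sigma)=U\sqcup W_\ast\sqcup W_\omega$ into unmarked vertices $U$, marked vertices $W_\ast$ outside $\omega$, and vertices $W_\omega=\Ver(\omega)$ of $\omega$. The marking axiom together with the relatively simplicial hypothesis immediately give that $W_\ast$ is linearly independent from $U\cup W_\omega$ and $U$ is linearly independent from $W_\omega$, so in any presentation of a vector of $\sigma$ the coefficients on $U$ and on $W_\ast$ are intrinsic; only the $W_\omega$-coefficients can vary, and only by $\omega$-relations. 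A standard separation argument for strongly convex cones shows that $\sigma_0:=\langle U\rangle$ is a face of $\sigma$, hence of $\Sigma$; since vertices of $\Omega$ are marked, $\sigma_0\cap|\Omega|=\{0\}$, so $\sigma_0$ is a relatively unmarked face. The regular marking hypothesis then forces it to be (relatively) regular, so $U$ is a basis of $N_{\sigma_0}$, which by Definition \ref{complex1} is saturated in $N_\sigma$.

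Next I would take two small vectors $v_0,v_1$ of $(\sigma,\omega)$ that are both minimal for the marking order and write each in the canonical smallest presentation provided by Lemma \ref{mark3},
\[
v_j=\sum_{u\in U}c_{u,j}\,u+\sum_{w\in W_\ast}e_{w,j}\,w+\sum_{z\in W_\omega}d_{z,j}\,z.
\]
The key observation is that on such canonical presentations the marking order on $\pi(v_0),\pi(v_1)$ reduces to lexicographic comparison of the tuples $(e_{w,j},d_{z,j})$ read from the largest marked vertex downwards: if these tuples disagreed, I would pick the largest marked vertex $v^\ast$ where they differ, note that all strictly larger marked vertices contribute zero to $\pi(v_0)-\pi(v_1)$, and read off a strict inequality in the marking order from the sign at $v^\ast$. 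This contradicts minimality of one of the $v_j$, so the two canonical presentations coincide and in particular $\pi(v_0)=\pi(v_1)$ as vectors.

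Subtracting the equal marked parts leaves $v_0-v_1=\sum_{u\in U}(c_{u,0}-c_{u,1})u\in\operatorname{span}_{\QQ}(U)\cap N_\sigma=N_{\sigma_0}$, and regularity of $\sigma_0$ makes $U$ an integral basis, so each $c_{u,0}-c_{u,1}$ is an integer. Smallness of $v_0$ and $v_1$ forces $c_{u,j}\in[0,1)$ for $u\in U$ (because the $u$-coefficient of $v_j$ is intrinsic and $v_j-u\notin\sigma$ if it were $\geq 1$), hence $c_{u,0}-c_{u,1}\in(-1,1)\cap\ZZ=\{0\}$ and $v_0=v_1$. The main obstacle I anticipate is precisely the interaction between the non-uniqueness of the $W_\omega$-coefficients, the smallest-presentation convention of Lemma \ref{mark3}, and the merely partial nature of the marking order; once this bookkeeping is done the rest is the same lattice calculation as in Lemma \ref{uni}.
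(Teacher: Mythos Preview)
Your proof is correct and follows essentially the same approach as the paper's own proof, which simply asserts that any regularly marked face has a unique maximal unmarked face which is regular and then defers to the argument of Lemma~\ref{uni}. You have carefully supplied the details the paper omits: identifying this maximal unmarked face as $\sigma_0=\langle U\rangle$, verifying it is a face and is regular, handling the non-simplicial $\omega$ through the canonical smallest presentations of Lemma~\ref{mark3}, and carrying out the integrality argument on the $U$-coefficients.
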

\begin{proof} Note  that in any regularly marked face there exists a unique maximal unmarked face which is regular. Then we apply the same argument as in the proof of Lemma \ref{uni}.

\end{proof}

\begin{lemma}\label{can mar2} Let $(\Sigma,\Omega)$ be a regularly marked  simplicial relative  complex. 
There exists a  canonical multiple  star desingularization $V_1\cdot\ldots\cdot V_k\cdot\Sigma$ of $\Sigma$ such that
\begin{enumerate}
\item The centers lie 
 in $|\sing_\Omega(\Sigma)|$, and no  faces in $\Reg_\Omega(\Sigma)$ are affected.

\item The centers of the consecutive  star subdivisions $V_i$ are the sets of   minimal vectors (definition \ref{minrel}) in the interior of nonempty relatively irreducible faces of $\Sigma$ which are not in $\Omega$.

\item  
The algorithm is functorial for regular local projections and local isomorphisms of complexes, preserving the order.
\end{enumerate}

\end{lemma}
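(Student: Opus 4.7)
The plan is to mirror the strategy of Lemma~\ref{can mar}, but driven by the refined invariant $\mu(\sigma,\omega)=(\dim(\omega),\det(\sigma,\omega))$ from Section~\ref{determ} together with the marking order of Lemma~\ref{mark3}. Define a lex-ordered bivariate polynomial
\[
P_\mu(\Sigma,\Omega)(s,t):=\sum_{(\sigma,\omega)} s^{\dim(\omega)}\,t^{\det(\sigma,\omega)},
\]
where the sum runs over maximal relatively singular pairs in $(\Sigma,\Omega)$. By Lemma~\ref{regular2}, vanishing of $P_\mu$ coincides with $(\Sigma,\Omega)$ being relatively regular, so $P_\mu$ is a faithful measure of the remaining obstruction.

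Next I would specify the centers canonically. For the current regularly marked complex, let $V$ be the set of vectors $P_\sigma$, one for each relatively irreducible face $\sigma\in\sing(\Sigma,\Omega)\setminus\Omega$, where $P_\sigma\in\inte(\sigma)$ is the unique small vector minimal with respect to the marking order on $|\Sigma|$. Such a unique vector exists by Lemma~\ref{uni2}, and it lies outside $|\Omega|$ because $\sigma\notin\Omega$; in particular $P_\sigma$ is a minimal vector of the pair $(\sigma,\omega_\sigma)$ in the sense of Definition~\ref{minrel0}. Distinct $P_\sigma$ lie in relative interiors of distinct relatively irreducible faces, so their stars $\Star(\sigma,\Sigma)$ are disjoint and the multiple star subdivision $V\cdot(\Sigma,\Omega)$ is well defined.

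The termination step is then an application of Lemma~\ref{des2}: each star subdivision at $P_\sigma$ replaces every pair $(\sigma',\omega')$ with $\sigma'\in\Star(\sigma,\Sigma)$ by pairs of strictly smaller $\mu$, either because $\dim$ of the $\Omega$-face drops (faces of type $\delta_j$) or because the relative determinant drops (faces of type $\sigma_i$), while pairs disjoint from the stars are untouched. Consequently $P_\mu$ strictly decreases. By Lemma~\ref{mark2} the subdivided complex inherits a canonical extension of the marking, with the new centers placed above the old vertices; checking that it remains regularly marked is routine, since the new vertices are linearly independent of the old ones and any previously unmarked face either stays intact (hence regular) or is absorbed into a newly marked face. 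The dcc on lex-ordered polynomials with nonnegative integer coefficients forces termination at a relatively regular complex.

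Finally, property (1) follows because centers are chosen in interiors of faces in $\sing(\Sigma,\Omega)\setminus\Omega$, while for any $\sigma\in\Reg(\Sigma,\Omega)$ one has $\sing_\Omega(\sigma)\in\Omega$ by Lemma~\ref{relative regular}, so no center ever enters $|\sigma|\setminus|\Omega|$; property (2) is built in by construction. For (3), the algorithm depends only on intrinsic data: the relative simplicial structure, the ordered marking, and the relatively irreducible faces. Under a regular local projection or an order-preserving local isomorphism, Lemma~\ref{local2} produces a local isomorphism on $\Sing(\Sigma,\Omega)$, and the uniqueness of the minimal vector in Lemma~\ref{uni2} then forces the centers $V_i$ to transform functorially, with the trivial star subdivisions (those whose center lies in the regular factor of a product decomposition) omitted. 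The main delicacy will be purely bookkeeping: ensuring that the marking is propagated canonically through every iteration so that Lemma~\ref{uni2} applies at each stage, which is precisely what Lemma~\ref{mark2} delivers.
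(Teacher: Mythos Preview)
Your approach is essentially identical to the paper's: both use the lexicographically ordered polynomial built from $\mu(\sigma,\omega)=(\dim\omega,\det(\sigma,\omega))$, pick centers as the small vectors minimal for the marking order, invoke Lemma~\ref{des2} to drop the invariant, propagate the marking via Lemma~\ref{mark2}, and terminate by dcc together with Lemma~\ref{minrel2}.

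One point to tighten: your description of the center set $V$ as ``one $P_\sigma$ for each relatively irreducible face $\sigma\in\sing(\Sigma,\Omega)\setminus\Omega$'' is not quite right, and the ensuing claim that distinct $P_\sigma$ lying in interiors of distinct faces forces disjoint stars is a non sequitur (two nested relatively irreducible faces would have overlapping stars). The correct formulation, which is what the paper uses and what Lemma~\ref{uni2} actually delivers, is to take $V$ to be the set of \emph{all} small vectors minimal for the marking order; then Lemma~\ref{uni2} says every face of $\Sigma$ contains at most one element of $V$, and \emph{that} is what forces the stars $\Star(\tau_i,\Sigma)$ to be pairwise disjoint (if some $\sigma$ lay in two such stars it would contain two distinct elements of $V$). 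With this adjustment your argument goes through verbatim.
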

\begin{proof} The proof is nearly identical to the proof  of Lemma 
\ref{can mar2}. By the assumption, $(\Sigma,\Omega)$ is regularly marked, and , in particular, it is   relatively simplicial.
We consider the polynomial invariant associated with
any subdivision $(\Delta,\Omega)$ of the relative complex $(\Sigma,\Omega)$:
$$P\mu(\Delta,\Omega)=\sum c_{ij}x^iy^j,$$ 
 where $c_{ij}$ is the number of the maximal faces $\sigma$ of $\Delta$ with the invariant $\mu(\sigma.\omega)=(i,j)$ from Lemma \ref{des2}. We consider the lexicographic order on the monomials $x^iy^j$. This extends to the lexicographic order on the polynomials.

We apply the star subdivisions at the sets of the minimal points for  the partial order defined by the marking.  
By Lemma \ref{uni}, these sets of minimal vectors are uniquely determined, and there is at most one in each face.

 Note also that, by Lemma \ref{mark2}, the intermediate subdivisions of  $(\Sigma,\Omega)$ are marked.


By Lemma \ref{des2}, after each star subdivision the invariant $P\mu(\Delta,\Omega)$ drops. So the procedure terminates. It terminates when there is no minimal vector in $(\Sigma,\Omega)$. By Lemma \ref{minrel2}, the relative complex $(\Delta,\Omega)$ without minimal vectors is  relatively regular.

\end{proof}

\subsection{Canonical desingularization of  relative conical complexes}
\begin{theorem} \label{can des2} Let $(\Sigma,\Omega)$ be a relative conical complex. Assume that there is a partial order on the set of vertices $\Ver(\Omega)$, which is total on each set $\Ver(\omega)$, with $\omega\in\Omega$.

There exists a canonical  desingularization $(\Delta,\Omega)$ of a  relative conical complex $(\Sigma,\Omega)$, that is a 
sequence of star subdivisions $(\Sigma_i,\Omega))_{i=0}^n$ of $(\Sigma,\Omega)$ such that $(\Sigma_0,\Omega)=(\Sigma,\Omega)$ and $(\Sigma_n,\Omega)=(\Delta,\Omega)$ is {\it  regular}, that is $\sing(\Delta,\Omega)= \Omega$. \footnote{Section \ref{reli2}
}

Moreover,
\begin{enumerate}
\item  All the centers of the star subdivisions are in $|\sing(\Sigma,\Omega)|$. The centers are in the relative interiors of relatively irreducible faces $\sigma$\footnote{Section \ref{reli}} of the intermediate subdivisions.

\item The centers of the consecutive star subdivisions are  either the barycenters of the relatively irreducible faces 
 or  the minimal vectors  of the relatively simplicial irreducible cones.\footnote{Definition \ref{minrel}} 
\item  The subdivision does  not affect the set  $\Reg(\Sigma,\Omega)$.

\item  The algorithm is functorial for regular local projections and local isomorphisms of complexes preserving the order on $\Omega$, in the sense that the centers transform functorially with the trivial subdivisions removed. 

\end{enumerate}

\end{theorem}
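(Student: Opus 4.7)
The plan is to mirror the structure of Theorem \ref{can des}, replacing each ingredient with its relative counterpart already developed in Chapter \ref{relative complexes}. Begin by equipping $(\Sigma,\Omega)$ with the initial marking $V_0:=\Ver(\Omega)$, endowed with the partial order that is given by hypothesis. Condition (1) of the marking definition is immediate, condition (3) is built into the hypothesis, and condition (2) is vacuous since $V_0(\sigma)\setminus\Omega=\emptyset$ for every $\sigma\in\Sigma$. Thus $(\Sigma,\Omega)$ is naturally a marked relative complex, although in general not relatively simplicial, let alone regularly marked.

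The first stage of the algorithm is the \emph{canonical irreducible barycentric subdivision} of $(\Sigma,\Omega)$ (Definition \ref{barycenter2}): iterate star subdivisions at the sets of canonical barycenters $v_\sigma$ of all relatively irreducible faces of a given dimension, proceeding in order of decreasing dimension. By Lemma \ref{bar4}, the resulting relative complex $(\Sigma',\Omega)$ is relatively simplicial, and every face of $\Sigma$ that was not touched by any of these subdivisions is already relatively regular. By Lemma \ref{mark2}, the new vertices (the barycenters) extend the marking $V_0$ to a marking $V'$ on $(\Sigma',\Omega)$, with the new vertices placed above $V_0$ and ordered among themselves according to the order of the subdivisions. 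A relatively unmarked face of $(\Sigma',\Omega)$ must then be one whose non-$\omega$ vertices contain no barycenter, so it is (the relative regular face of) an untouched face of $\Sigma$; hence it is relatively regular. This shows that $(\Sigma',V')$ is \emph{regularly marked}.

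At the second stage, apply Lemma \ref{can mar2} to the regularly marked relative complex $(\Sigma',\Omega)$. This produces a canonical sequence of star subdivisions, whose centers are the sets of small vectors minimal with respect to the canonical partial order on $|\Sigma'|$ coming from the marking (Lemma \ref{mark3}), uniquely determined on each face by Lemma \ref{uni2}. After each such star subdivision, the polynomial invariant $P\mu$ strictly drops by Lemma \ref{des2}, so the procedure terminates in finitely many steps. It terminates precisely when there are no further minimal vectors, and then by Lemma \ref{minrel2} every maximal face of the resulting $(\Delta,\Omega)$ is relatively regular; that is $\sing(\Delta,\Omega)=\Omega$, as required. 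Throughout, by construction all centers lie in $|\sing(\Sigma,\Omega)|$ in the sense of property (1), and the subcomplex $\Reg(\Sigma,\Omega)$ is untouched, giving property (3). Property (2) records the dichotomy between the two stages.

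For functoriality (property (4)), observe that at every stage the centers are chosen by \emph{canonical} recipes: barycenters are intrinsic sums of minimal internal vectors and $\Omega$-vertices, and minimal vectors with respect to the marking order are uniquely determined by Lemma \ref{uni2}. Regular local projections and local isomorphisms preserve relative irreducibility and the relative pair structure (Lemma \ref{local2}), and, by hypothesis, preserve the order on $\Ver(\Omega)$ which bootstraps the marking order. Hence the two stages commute with these morphisms, with trivial subdivisions omitted. The main obstacle is bookkeeping at the transition between the two stages: one must verify that the marking propagates correctly through the barycentric subdivision so that, at the start of the minimal-vector phase, the relatively unmarked faces are genuinely relatively regular; this is exactly what the combination of Lemma \ref{bar4} and Lemma \ref{mark2} delivers, and the rest of the argument is then a direct transcription of the proof of Theorem \ref{can des} to the relative setting.
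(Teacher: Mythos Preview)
Your proposal is correct and follows essentially the same approach as the paper: start from the marking $V=\Ver(\Omega)$, apply the canonical irreducible barycentric subdivision (Definition \ref{barycenter2}), use Lemmas \ref{bar4} and \ref{mark2} to obtain a regularly marked relatively simplicial complex, and then finish with Lemma \ref{can mar2}. Your write-up is in fact more detailed than the paper's own proof, which simply points to these lemmas and declares the result ``a slight modification of the proof of Theorem \ref{can des}''.
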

\begin{proof} The proof is a slight modification of the proof of Theorem \ref{can des}. The order on $V:=\Ver(\Omega)$ defines a marking on $(\Sigma,\Omega)$. 


Consider  the  canonical relative barycentric subdivision  $(\Delta,\Omega)$ of $(\Sigma,\Omega)$ as in Lemma \ref{bar2}. 
This induces, by Lemmas \ref{bar4}, \ref{mark2}, a relatively simplicial complex $(\Delta,\Omega)$, with the induced marking $V$ such that all
unchanged cones are relatively regular. 
Consequently, 
the  relative complex $(\Delta,\Omega)$ is regularly marked and can be canonically resolved by Lemma \ref{can mar2}.


\end{proof}
\subsection{Finite group actions and obstructions to an equivariant  relative desingularization}


\begin{example} \label{obstruction} Consider the monoid $$\sigma\cap N_\sigma=\{(a_1,a_2,a_3) : \sum a_i \in 2\ZZ\}\quad \cap  \quad  \QQ_{\geq 0}^3,$$
generated by $(1,1,0)$, $(1,0,1)$, $(0,1,1)$, and $(2,0,0)$, $(0,2,0)$, $(0,0,2)$ as in the Abramovich Example
\ref{Abramovich}.
Let $G=\ZZ_2$ acts  on $N_\sigma$ by permuting first two coordinates. Let 
$$\omega=\langle (2,0,0), (0,2,0) \rangle.$$
be the face defining the submonoid $\omega\cap N_\sigma$ generated by $(2,0,0),(1,1,0), (0,2,0)$.

Suppose that there is a $G$-equivariant desingularization $(\Delta,\omega)$
of $(\sigma,\omega)$. Then there is a unique  $3$-dimensional relatively regular simplicial cone $\sigma_0\in \Delta$ containing the face $\omega$. But such a cone  is $G$-stable. So it can be written as  $$\sigma_0=\langle (2,0,0), (0,2,0), w \rangle,$$, where
$w$ is $G$-invariant and has a form $$w=(a,a,2b),$$ where $a, b\in\ZZ_{>0}$.
But then  the pair $(\sigma_0,\omega)$ is not regular.

Thus  to have  an equivariant desingularization  one has to  restrict permutations of the vertices of $\Omega$. To this end we  introduce an order of the vertices of $\Omega$, and add the condition that the automorphisms should respect this order. 
\end{example}

\section{Functorial desingularization of strict toroidal and toroidal embeddings} \label{desing-toroid}
\subsection{Canonical stratification defined by an NC divisor}

Let  $E$ be a divisor that  has SNC on a toroidal embedding $(X, D)$  (as in Definition \ref{nc}).  Then $E$ induces the  divisorial stratification $S_E$.  Its  closed strata are defined by the intersections of the divisorial components of $E$. This stratification is preserved by  \'etale morphisms. This allows to extend the construction of the stratification to  NC divisors
by taking locally the images of the strata on \'etale  neighborhoods.

Let $\phi: (U,E_U)\to (X,E_X)$ be an \'etale  neighborhood with $V=\phi(U)$ for which  the inverse image $\phi^{-1}(E_X)=E_U$ of the NC divisor $E_X$ is an SNC divisor $E_U$. We define the strata of $E_X\cap V$ to be the images $\phi(s)$  of  $s\in S_{E_U}$.
This is well defined, and is independent of the choice of \'etale neighborhoods. 

 Let $\phi': (U',E'_U)\to (X,E_X)$ be another \'etale neighborhood of $x$. 
Let $x\in s$ be the image of  points $y\in U $, and $y'\in U'$, so we can write $x=\phi(y)=\phi'(y')$. Then the points $y,y'$  define a point $y''$ in the component $U''$ of the \'etale morphism of fiber product $$U\times_XU'\to X.$$ The strata $s_y$ and $s_{y'}$ through $y$, and $y'$ are the images of the stratum $s_{y''}$ on $U''$ under the relevant projection. Hence the images of $s_y$ and $s_{y'}$ and $s_{y''}$ coincide in a neighborhood of $x$ defining the same stratum. 

Moreover, the closure of the stratum is the union of strata. If $s$ is a stratum through $x$, and assume that $x\in \overline{s_1}$. Then for a point $y$ over $x$, there is a stratum $s_U$ containing $y$ and dominating $s$. Moreover, since $\phi$ is open and thus closed under generization there is a stratum $s_{U1}$ dominating  $s_1$, and such that $\overline{s_{U1}}$ contains $y$. So the stratum $s_U$ intersects $\overline{s_{U1}}$, and thus $s_U\subset \overline{s_{U1}}$. Consequently,  the image  $\phi(s_U)\subset \phi(\overline{s_{U1}})\subset\overline{s_1}$.  But $\phi(s_U)\subset s$ is open and dense. So locally we have $$s\subset \overline{\phi(s_U)}\subset \phi(\overline{s_{U1}})\subset\overline{s_1}.$$


\subsection{Transforming a relative NC divisor into a relative SNC divisor}


\begin{definition} We say that a locally closed subscheme  $C$ on a toroidal embedding $(X, D)$ is {\it relatively nonsingular} if its ideal is locally generated by a  set of free coordinates on $(X, D)$ (see Definition \ref{free}).  

\end{definition}

\begin{lemma} Let $E$ be a relative $NC$ divisor on a toroidal embedding $(X, D)$. Then all the locally closed strata of $E$ are  relatively nonsingular. 

\end{lemma}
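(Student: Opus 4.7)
The plan is to reduce, via an étale chart realizing the SNC structure of $E$, to a direct calculation with free coordinates. Fix a point $x \in t$ in the given locally closed stratum. By Definition \ref{nc}, choose an étale neighborhood $\phi\colon U \to X$ of $x$ with $D_U := \phi^{-1}(D)$ and $E_U := \phi^{-1}(E)$ such that $E_U$ has SNC on $(U, D_U)$ and is cut out by a product $u_1 \cdots u_r$ coming from a free coordinate system $u_1,\ldots,u_k$ of $(U,D_U)$, with each $\{u_i = 0\}$, $i \leq r$, an irreducible component of $E_U$. Pick $y \in \phi^{-1}(x)$. By the étale-descent construction of the stratification $S_E$ recalled in the preceding subsection, the stratum $t$ corresponds, in a neighborhood of $y$, to the unique stratum $t_U$ of $E_U$ through $y$.

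Because $E_U$ is SNC, in a Zariski neighborhood of $y$ the closure $\overline{t_U}$ is the scheme-theoretic intersection of precisely those components of $E_U$ that pass through $y$, so its defining ideal is $(u_{i_1},\ldots,u_{i_m})$ for some subset $\{i_1,\ldots,i_m\} \subseteq \{1,\ldots,r\}$. The locally closed stratum $t_U$ is the open subscheme of $\overline{t_U}$ obtained by removing the remaining components: on the Zariski open
$$W \;:=\; U \setminus \bigcup_{j \leq r,\; j \notin \{i_1,\ldots,i_m\}} \{u_j = 0\}$$
the defining ideal of $t_U$ is again $(u_{i_1},\ldots,u_{i_m})$. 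These generators are a sub-collection of the free coordinate system $u_1,\ldots,u_k$ of $(U,D_U)$, and in particular their restrictions to $t_U$ cut out a smooth subvariety transversally.

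Composing $\phi$ with the étale chart $U \to \Spec K[x_1,\ldots,x_k,P]$ witnessing the free coordinate system, and shrinking to $V := \phi(W) \subseteq X$, exhibits $u_{i_1},\ldots,u_{i_m}$ as a part of a free coordinate system on $(X, D)$ at $x$ in the sense of Definition \ref{free}. Since the ideal of $t$ on $V$ is intrinsic to the stratum and pulls back under the étale, hence faithfully flat, morphism $\phi$ to the ideal of $t_U$, it is itself generated by $u_{i_1},\ldots,u_{i_m}$ on $V$. Hence $t$ is relatively nonsingular at $x$, and since $x$ was arbitrary, on all of $t$. The main point requiring care is the descent of the generators from the étale cover $U$ to a Zariski open in $X$; this is the step where the NC hypothesis (rather than merely that $E$ is some divisor on $X$) is essential, via the functorial compatibility of free coordinate systems with étale morphisms built into Definition \ref{free}.
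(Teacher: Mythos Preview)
Your overall strategy matches the paper's: pass to an \'etale chart where $E$ becomes SNC, identify the stratum there as cut out by a subset of the free coordinates, and descend. The gap is in the descent step. You write that composing $\phi$ with the \'etale chart $U \to \Spec K[x_1,\ldots,x_k,P]$ ``exhibits $u_{i_1},\ldots,u_{i_m}$ as a part of a free coordinate system on $(X,D)$ at $x$,'' and then that the ideal of $t$ on $V = \phi(W) \subseteq X$ ``is itself generated by $u_{i_1},\ldots,u_{i_m}$ on $V$.'' But the functions $u_{i_j}$ live on the \'etale cover $U$, not on $X$. Definition~\ref{free} explicitly requires the parameters to be regular functions on a \emph{Zariski} open subscheme of $X$; the \'etale neighbourhood $U^{\mathrm{\acute et}} \to U$ appearing there is auxiliary data witnessing that these Zariski-local functions are free, not a substitute for them. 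Your appeal to ``functorial compatibility of free coordinate systems with \'etale morphisms built into Definition~\ref{free}'' does not produce such a descent: there is no mechanism in that definition which pushes functions from an \'etale cover down to the base.

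The paper closes exactly this gap. After observing that the pulled-back ideal $I_s \cdot \cO_{\overline{x},U}$ is generated by free parameters $u_1,\ldots,u_k$ on $U$, it chooses \emph{new} elements $v_1,\ldots,v_k \in \cO_{x,X}$ (so genuinely on $X$) lying in $I_s$ with linearly independent linear parts at $x$. Using the isomorphism $\widehat{\cO_{\overline{x},U}} \simeq \widehat{\cO_{x,X}} \otimes_{K(x)} K(\overline{x})$, one sees that the images of the $v_i$ span $(I_s + \mathfrak m_x^2)/\mathfrak m_x^2$, hence by Nakayama they generate the completed ideal, and by faithful flatness of $\cO_{X,x} \to \widehat{\cO_{X,x}}$ they generate $I_s$ itself. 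These $v_i$ then form the required free coordinates on $(X,D)$. Your argument would be complete once you insert this step (or an equivalent one) in place of the unjustified claim that the $u_{i_j}$ themselves serve on $V$.
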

\begin{proof} Let $s$ be a  stratum of $E$, and $x\in s$. We can assume that $s$ is minimal and thus closed and irreducible in a certain Zariski neighborhood $V$ of point $x$. Consider an \'etale neighborhood $\pi: U\to V\subset X$ of $x$, such that $\pi^{-1}(E)$ is a relatively SNC divisor on $U$.  
 Let $\overline{x}\in U$ be a point over $x$. Then, by shrinking $U$, if necessary, we can assume that the inverse image $s_U:=\pi^{-1}(s)$ is irreducible, contains $s$,  and thus is a minimal stratum on $U$. 
  Consider the ideal $I_s\subset\cO_{x,X}$.
  By definition of a relative NC divisor, $s_U$ is  relatively nonsingular on $(U,\pi^{-1}(D))$ and the induced ideal
 $I_{s_U,\overline{x}}=I_s\cdot\cO_{\overline{x},U}\subset \cO_{\overline{x},U}$ is generated by  free parameters $(u_1,\ldots,u_k)$ at $\overline{x}$.
  Since  $\cO_{x,s}\to\cO_{\overline{x},\overline{s}}$ is  a \'etale  there is a natural isomorphism $$\widehat{\cO_{\overline{x},U}}=\widehat{\cO_{{x},X}}\otimes_{K(x)}K(\overline{x})$$
  
   Consider a set of  generators of $I_{s_U,\overline{x}}\subset\cO_{x,X}$, and choose a subset  $v_1,\ldots,v_k\in \cO_{x,X}$ of $k$ functions with linearly independents linear parts at $x$. Then $(v_1,\ldots,v_k)$ generates $\widehat{I_{s_U,\overline{x}}}=(u_1,\ldots,u_k)=(v_1,\ldots,v_k)$ since it generates $$(I_{s_U,\overline{x}}+m_{\overline{x},{s_U}}^2)/m_{\overline{x},{s_U}}^2=((I_{{x},{s}}+m_{{x},{s}}^2)/m_{{x},{s}}^2)\otimes_{K(x)}K(\overline{x})$$ so by Nakayama, it generates ${I}_{\overline{s}}$. Since $\cO_{X,x}\to \widehat{\cO_{X,x}}$ is faithfully flat it generates  $I_s$ as well.
\end{proof}

\begin{definition} Let $E$ be 
an NC divisor on a toroidal embedding $(X, D)$.
We say that a relatively nonsingular locally closed subscheme  $C$ on $(X,D)$
has {\it SNC} 
with $E$, if there is 
an \'etale  neighborhood $(U,D_U)$ such that the center and the components of $E$ are described by a part of a free coordinate system on $(U, D_U)$.
\end{definition}
\begin{remark} Let  $E$ be an SNC divisor on $(X,D)$  and a $C$ has SNC on $(X,D)$. Then, by 
Lemma \ref{2}, the subscheme $C$, and $E$ are described by a part of a free coordinate system locally on Zariski open neighborhoods $(U, D_U)$

\end{remark}

\begin{lemma} Let $E$ be an  an NC divisor on a toroidal embedding $(X, D)$).
Let $C$ be a closed relatively nonsingular center having SNC 
 with $E$. 
Consider the blow-up $X'\to X$ of $C$. Denote by $E_C$ is the exceptional divisor and by $E'$, and $D'$  the strict transforms of $E$ and $D$. Then the divisor $E_C$ has SNC on $(X', D')$.

\end{lemma}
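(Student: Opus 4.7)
My plan is to reduce to a joint étale toric model for the triple $(D,E,C)$ via Lemmas~\ref{1} and~\ref{2}, carry out the blow-up there where it is essentially a toric star-subdivision, and then read off both the NC property and the normality of the components of $E_C$ from the model.

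First I would fix a point $x\in X$ and combine the NC hypothesis on $E$ with the hypothesis that $C$ is relatively nonsingular having SNC with $E$ to produce a single étale neighborhood $\alpha\colon U\to X$ and an étale morphism
$$\phi\colon U\longrightarrow X_\sigma\times\mathbb{A}^n$$
such that $\alpha^{-1}(D)=\phi^{-1}(D_\sigma\times\mathbb{A}^n)$, $\alpha^{-1}(E)=\phi^{-1}(X_\sigma\times E_1)$ with $E_1$ an SNC toric divisor in $\mathbb{A}^n$, and $\alpha^{-1}(C)=\phi^{-1}(X_\sigma\times L)$ with $L\subset\mathbb{A}^n$ a coordinate linear subspace having SNC with $E_1$. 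Existence of such a joint chart is exactly Lemma~\ref{1} adapted to $E$, together with the observation that the defining free parameters of $C$, by the SNC-with-$E$ hypothesis, can be chosen inside the same free coordinate system that trivializes $E$.

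Second I would use that blow-ups commute with flat base change to obtain
$$\mathrm{Bl}_{\alpha^{-1}(C)}U\;\cong\;U\times_{X_\sigma\times\mathbb{A}^n}\bigl(X_\sigma\times\mathrm{Bl}_L\mathbb{A}^n\bigr),$$
with the right-hand projection étale. Since $L\subset\mathbb{A}^n$ is a coordinate linear subspace, $\mathrm{Bl}_L\mathbb{A}^n$ is a smooth toric variety obtained by a star-subdivision of the orthant fan; its exceptional divisor $E_L$ is a single irreducible toric divisor, smooth as a projective bundle over $L$, and together with the strict transforms of the components of $E_1$ it forms an SNC toric divisor on $\mathrm{Bl}_L\mathbb{A}^n$. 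In particular, the strict transform $D'$ gives $X'$ the structure of a toroidal embedding étale-locally modelled on $(X_\sigma\times\mathrm{Bl}_L\mathbb{A}^n,\,D_\sigma\times\mathrm{Bl}_L\mathbb{A}^n)$.

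Third I would extract the conclusion. In the model, $E_L$ is cut out by a single toric coordinate on the smooth toric factor; pulling this coordinate back through the étale map yields a free coordinate system on $(X',D')$ in which $E_C$ is locally cut out by a single parameter. This gives NC of $E_C$ with $D'$. To upgrade to SNC via Lemma~\ref{2}, I need normality of the components of $E_C$: étale-locally they pull back from the irreducible divisor $X_\sigma\times E_L$, which is normal since $X_\sigma$ is normal (toric) and $E_L$ is smooth, and normality is preserved by étale pullback. Hence $E_C$ has SNC with $D'$ on $(X',D')$.

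The main obstacle is the first step, assembling a \emph{simultaneous} free coordinate system in which $E$ and $C$ are both described by coordinate directions. Once this is available, the behavior of the blow-up is purely toric-times-identity and the remaining verifications are mechanical.
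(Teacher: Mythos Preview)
Your argument is correct and follows essentially the same strategy as the paper: pass to a local model where $C$, $E$, and $D$ are all described by free coordinates, compute the blow-up there, and read off that the exceptional divisor is cut by a single free parameter. The paper does this by writing $I_C=(u_1,\ldots,u_k)$ and $E=V(u_{j_1}\cdots u_{j_l})$ in explicit free parameters, then checking chart-by-chart that in the blow-up the new coordinates $u'_1=u_1,\ u'_i=u_i/u_1$ are again free and $E_C=V(u'_1)$; you do the equivalent computation via the \'etale toric model $X_\sigma\times\mathbb{A}^n$ and flat base change.

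The one place you take a slightly longer route is the final step. Since $C$ is \emph{relatively nonsingular}, its ideal is generated by free parameters in the \emph{Zariski} topology, so the blow-up charts and the defining equation $u'_1$ of $E_C$ already live Zariski-locally on $X'$; this gives SNC directly, which is how the paper concludes. Your detour through NC plus normality of the components via Lemma~\ref{2} is unnecessary, though perfectly valid.
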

\begin{proof} Assume that $(X, D)$ is a strict toroidal embedding and  $E$ has SNC on $(X, D)$.
Then there is a  system  of free parameters $u_1,\ldots,u_r$   on a neighborhood $U$ of $X$ such that the center $C$ is described by the ideal $I_C=(u_1,\ldots,u_k)$, and $E$ by the equation $u_{j_1}\cdot \ldots\cdot u_{j_l}=0$. Then for  any  $x\in U$ we have $$\widehat{\cO_{{x},U}}=K({x})[[u_1,\ldots,u_r,P_\sigma]].$$

The blow-up of $C$ will transform the center $C$ into the exceptional divisor $E_C$ defined locally, without loss of generality, by
$x:=u'_1:=u_1$, with $u'_i
=u_i/u_1$, for $i=1,\ldots,k$, and  $u'_j=u_j$ for $j=k+1,\ldots,r$. The local rings and their completions will be transformed accordingly
$$\widehat{\cO_{\overline{y},U'}}=K(\overline{y})[[u'_1,\ldots,u'_r,P_\sigma]],$$
where $u'_1,\ldots,u'_r$ are free parameters on $X$.
The divisor $E'\cup E$ is, up to mutiplicities described as $\sigma^*(u_{j_1}\cdot \ldots\cdot u_{j_l})=u'_{j_1}\cdot \ldots\cdot u'_{j_l}\cdot (u'_1)^d$ so has SNC on $X$.

 The proof for NC divisor $E$ is the same, except we need to consider the effect of the blow-up on an \'etale neighborhood. Note  that $E_C$ is locally (on $X'$) defined by a parameter and thus it is SNC on $(X', D')$.
\end{proof}

\begin{proposition} \label{normal} Let $E$ be a relative $NC$ divisor on a toroidal embedding $(X, D)$ 
Then there exists a canonical sequence of blow-ups of strata of $E$ transforming the divisor $E$ into  a relative SNC divisor on the resulting toroidal embedding $(X', D')$,
where $D'$ is the closure of $D\setminus E$ on $X'$. 
The procedure is functorial for smooth 
morphisms.

\end{proposition}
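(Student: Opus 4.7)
The plan is to construct the sequence of blow-ups by induction on a canonical numerical invariant that detects failure of the SNC property. Since $E$ has NC on $(X,D)$, I will first use the fact that it induces a canonical stratification $S_E$ of $X$ (as discussed just before the statement), whose closed strata are relatively nonsingular on $(X,D)$ by the preceding lemma; in a strict-toroidal étale neighborhood where $E$ becomes SNC, these strata correspond to intersections of subsets of components of $E$ and are cut out by parts of a free coordinate system, so in particular they have SNC with $E$. For each point $x \in X$ I define $c(x)$ to be the number of local analytic branches of $E$ through $x$ (equivalently, the number of irreducible components of $E$ in any strict-toroidal étale neighborhood of $x$) and $b(x)$ to be the number of Zariski-irreducible components of $E$ passing through $x$; both functions are constant on strata of $S_E$, one has $b(x) \le c(x)$, and the defect $\delta(x) := c(x) - b(x)$ vanishes identically precisely when every component of $E$ is normal, i.e., when $E$ is SNC.

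With this in hand, I would set $\delta_{\max} := \max_{x \in X} \delta(x)$ and take the center $Z \subset X$ to be the union of those closed strata of $S_E$ which realize $\delta_{\max}$ and are minimal (deepest) among such strata; minimality forces the constituent strata to be pairwise disjoint, so $Z$ is a disjoint union of relatively nonsingular closed subschemes having SNC with $E$. The preceding lemma then applies, and the blow-up $X' \to X$ of $Z$ yields a toroidal embedding $(X', D')$ on which the exceptional divisor $E_Z$ has SNC with the strict transforms. The crucial computation is to verify, in an étale toric local model, that blowing up a deepest locus where several local branches of a non-normal component $E_i$ meet causes the strict transform of $E_i$ to separate into pieces whose local branch counts are strictly smaller, and that no fresh non-normality is introduced in the star of the center. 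This will yield a strict decrease of the lexicographic invariant $(\delta_{\max}, \#\{\text{minimal strata of defect } \delta_{\max}\}) \in \NN^2$, which well-orders the induction and guarantees termination in finitely many steps; the process stops precisely when $\delta \equiv 0$, at which point $(X', D')$ carries the desired relative SNC divisor.

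Functoriality for smooth morphisms will then be automatic, since $S_E$, the branch counts $c$ and $b$, and therefore the center $Z$ are all defined intrinsically from $(X, D, E)$ and are preserved under smooth base change (smooth morphisms send strata to strata and preserve both étale-local and Zariski branch counts). The main obstacle I expect is the toric local computation in the induction step: one needs a combinatorial description of what the blow-up of the chosen deepest stratum does to the rays and branches of $E$ in a strict-toroidal étale chart, in order to confirm that the combined exceptional-and-strict-transform divisor remains NC while having strictly smaller defect. Once this local analysis is settled, the global procedure and its functoriality follow from the canonicity of the strata and the preceding lemma on blow-ups of relatively nonsingular centers with SNC structure.
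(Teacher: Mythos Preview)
Your approach has a genuine gap: the invariant you propose need not strictly decrease, so the procedure can fail to terminate. Take $(X,D)=(\AA^3,\emptyset)$ and $E=E_1\cup E_2$ with $E_1=V(y^2-x^2(x+1))$ a cylinder over a nodal cubic and $E_2=V(z)$ a coordinate plane. This $E$ is relatively NC (\'etale-locally at the origin it is three transverse coordinate hyperplanes). The self-intersection locus of $E_1$ is the line $L=V(x,y)$, along which $\delta=1$; the origin, where $E_2$ also passes, is the unique minimal stratum with $\delta=\delta_{\max}=1$, so your center is $\{0\}$. But blowing up the origin does nothing for the two branches of $E_1$ along the rest of $L$: in the chart $x=zx',\,y=zy'$ the strict transform of $E_1$ is $V\bigl((y')^2-(x')^2(zx'+1)\bigr)$, still nodal along $x'=y'=0$, and the exceptional divisor $V(z)$ now plays exactly the role that $E_2$ did. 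The new minimal stratum with $\delta=1$ is again a single point, your invariant $(\delta_{\max},\#\{\text{minimal strata}\})=(1,1)$ is unchanged, and you loop forever. The underlying problem is that $\delta$ can be maximal at a deepest stratum $s$ not because the non-normality is concentrated there, but merely because an unrelated normal component happens to pass through $s$; blowing up $s$ then does not touch the self-intersection along the larger stratum containing it.

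The paper's proof avoids this by abandoning the defect altogether and inducting on dimension. One blows up \emph{all} strata of the current strict transform $E^k$ of minimal dimension, while accumulating the exceptional divisors in a separate package $E_k$ which one verifies remains relatively SNC. The essential bookkeeping point is that the strata one blows up next are strata of $E^{k+1}$ alone, not of $E^{k+1}\cup E_{k+1}$; this prevents the freshly created exceptional components from reintroducing low-dimensional strata, so the minimal stratum dimension of $E^k$ strictly increases at each step. Termination is then immediate, and when $E^k$ has no strata below top dimension its components are normal, whence $E^k\cup E_k$ is relatively SNC by Lemma~\ref{2}.
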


\begin{proof} Consider the blow-ups of all the strata of $E$ of the minimal dimension $r$. By minimality, the strata are disjoint closed, and relatively nonsingular. 
The blow-up will create a new NC divisor of the form $E^1\cup E_1$, where $E_1$ is the exceptional divisor, and $E^1$ is the strict transform of $E$. The divisor $E_1$ is relatively SNC, and both $E^1$ and $E^1\cup E_1$ are a relatively NC divisor, with $E^1$ having no strata in dimension $\leq r$. (All such strata were blowned up). Then we blow-up the strata of $E^1$ of the minimal dimension $r+1$. They have SNC with $E_1$. We create $E^2\cup E_2$, where $E_2$ is the union of the exceptional divisors and the strict transforms of $E_1$, and $E^2$ are the strict transforms of $E^1$. As before $E_2$ is a relatively SNC, and  $E^2$ and $E^2\cup E_2$ are a relatively NC divisors, with $E^2$ having no strata  in dimension $\leq r+1$. We continue the process until we eliminate all the components in $E^k$.
Functoriality follows from the construction of the centers.

\end{proof}

\subsection{Blow-ups of toroidal valuations}
\subsubsection{ Toric valuations }
We interpret the  star subdivisions in the desingularization theorem \ref{can des} as the blow-ups at some functorial centers associated with the sets of valuations.

Each vector $v\in N$ defines a linear integral function on $M$ which
determines a discrete 
valuation ${\rm val}(v)$ on
$X_{\Sigma}$. For any regular function $f=\sum_{w\in M} a_wx^w\in K[T]$ we set
$$\val(v)(f):=\min\{(v,w)\mid a_w\neq 0\}.$$ 
Thus $N$
can be perceived as the lattice of all $T$-invariant
integral valuations of the function field of $X_{\Sigma}$.
 
An integral  vector $v\in |\Sigma|$, and the  valuation $\val(v)$ define the coherent ideal sheaves on $X_\Sigma$: $$\cI_{\val(v),a}:=\{f\in {\cO}_{X_\Sigma}\mid 
\val(v)(f)\geq a\}$$
 for all natural $a\in \NN$.

\subsubsection{Toric valuations}

Let $\Sigma$ be a fan, and $X_\Sigma$ be the corresponding toric variety.

Recall that any nonnegative function $F:|\Sigma|\to \QQ$ which is convex and piecewise linear on each cone $\sigma\in \Sigma$ defines a collection  monoids $${\bf I}_{\sigma}=\{m\in \sigma^\vee\cap M\mid m_{|\sigma}\geq F_{|\sigma}\}.$$ These monoids generate ideals $ I_{F\sigma}$ of $K[X_\sigma]$ which glue to form a coherent ideal sheaf of ideals $\cI_F$.

 Conversely, a coherent $T$-stable ideal sheaf $\cI$ is determined by the monoids 
$${\bf I}_\sigma=\{ m\in (\sigma^\vee)^\integ\cap \cI_{|X_\sigma}\}\subset \sigma^\vee$$ which define a piecewise linear function $$F_\cI=\min \{m(w): m\in {\bf I}_\sigma \}.$$
If $F$ is integral, i.e $F(\sigma^{\integ})\subset \NN\cup\{0\}$ for any $\sigma\in \Sigma$ then $F_{I_F}=F$.

\begin{lemma} \label{primo}
 Any primitive vector $v$ in $|\Sigma|$ defines a piecewise  linear function $F_{v,a}: |\Sigma|\to Q$, such that for any vector $w\in \sigma \subset |\Sigma|$, we have $$F_{v,a}(w):=\min \{L(w): L\in \sigma^\vee, \quad L(v)=a\}.$$

Moreover, 

\begin{enumerate}
\item $F_{v,a}$ is a piecewise  linear function which is  convex on each face.

\item It is integral if $a\in \NN$ is sufficiently  divisible.

\item $I_{F_{v,a}}=I_{\val(v),a}$, and for sufficiently divisible $a\in \NN$, $F_{I_{\val(v),a}}=F_{v,a}$.

\item The star subdivision at $v$ creates an exceptional $Q$-Cartier divisor $D$ on the birational modification $X_{v\cdot \Sigma}$ corresponding to $v$ such that the multiple $E=aD$ is a Cartier divisor. 
\end{enumerate}
\end{lemma}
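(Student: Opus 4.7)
The plan is to analyse the formula for $F_{v,a}(w)$ as a parametric linear program and then translate the output into the language of piecewise linear functions and ideals.

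First I would fix a cone $\sigma\in\Sigma$ and consider the feasible polytope $P_\sigma:=\{L\in\sigma^\vee\mid L(v)=a\}$. When $v$ lies in the face $\tau\preceq\sigma$, so that $\sigma\in\Star(\tau,\Sigma)$, the polytope $P_\sigma$ is a nonempty rational polyhedron, and for any $w\in\sigma$ the linear functional $L\mapsto L(w)$ is bounded below by $0$ on $P_\sigma$; hence the minimum defining $F_{v,a}(w)$ is attained at a vertex of $P_\sigma$. If $\tau\not\preceq\sigma$, then $v\notin\sigma$ and one checks that $F_{v,a}\equiv 0$ on $\sigma$ because there exists $L\in\sigma^\vee$ with $L(v)>0$ and $L$ vanishing on $\sigma$, allowing the minimum to reach $0$. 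Standard LP duality / compatibility on intersections $\tau\preceq\sigma$ shows that the resulting function on $|\Sigma|$ is well defined and continuous.

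Next, (1) follows because $F_{v,a}|_\sigma$ is the pointwise minimum of the family of linear functions $\{w\mapsto L(w)\}_{L\in P_\sigma}$, so it is upper convex (concave in the analytic sense, in the toric/KKMS convention this is the usual notion of convexity for piecewise linear functions). To get piecewise linearity and to identify the pieces, I would argue that the optimal $L$ is constant on each cone of the star subdivision $v\cdot\sigma$: on a cone $\langle v\rangle+\delta$ of $v\cdot\sigma$ (with $\delta$ a face of $\sigma$ in $\Link(\tau,\sigma)$) every $w=tv+\delta_0$ satisfies $L(w)=ta+L(\delta_0)\geq ta$ for any $L\in P_\sigma$, and equality is attained by the (unique rational) $L$ which takes the value $a$ at $v$ and $0$ on $\delta$. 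Hence $F_{v,a}|_{\langle v\rangle+\delta}(tv+\delta_0)=ta$, which is linear and integer valued on the lattice provided the extending functional $L$ has integer coordinates; clearing the denominators appearing in the vertices of the (finitely many) polytopes $P_\sigma$ gives (2). Simultaneously this proves (4): the function $F_{v,a}$ becomes piecewise linear with respect to $v\cdot\Sigma$, so it defines a $\QQ$-Cartier divisor on $X_{v\cdot\Sigma}$ whose support is the toric divisor associated with the new ray $\QQ_{\geq 0}\cdot v$, and for sufficiently divisible $a$ this divisor is honestly Cartier.

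Finally for (3), I would show the inclusion $I_{F_{v,a}}\subseteq I_{\val(v),a}$ by evaluating at $w=v$: if $m\in\sigma^\vee\cap M$ satisfies $m(w)\geq F_{v,a}(w)$ for every $w\in\sigma$, then in particular $m(v)\geq F_{v,a}(v)=a$, i.e. $\val(v)(x^m)\geq a$. For the reverse inclusion, given $m\in\sigma^\vee\cap M$ with $m(v)\geq a$, the rescaled functional $m':=\tfrac{a}{m(v)}\,m$ lies in $P_\sigma$ (hence $F_{v,a}(w)\leq m'(w)$), and because $m\geq 0$ on $\sigma$ and $\tfrac{a}{m(v)}\leq 1$ we conclude $F_{v,a}(w)\leq m'(w)\leq m(w)$. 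This gives $x^m\in I_{F_{v,a}}$ and proves the ideals agree; the equality $F_{I_{\val(v),a}}=F_{v,a}$ for sufficiently divisible $a$ is then immediate from $F=\min\{m(w):m\in{\bf I}_\sigma\}$ once integrality is guaranteed by (2). The only genuinely delicate point is the identification of $F_{v,a}$ as piecewise linear on the star subdivision, and that is handled by the LP argument above.
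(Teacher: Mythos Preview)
Your argument is essentially correct and reaches the same conclusions as the paper, but by a genuinely different route. The paper does not analyse the minimum as a parametric linear program at all: it simply \emph{constructs} $F_{v,a}$ on the star subdivision $v\cdot\Sigma$ by declaring $F_{v,a}(v)=a$, $F_{v,a}(u)=0$ for every other vertex $u\in\Ver(\Sigma)$, and extending linearly over each cone of $v\cdot\Sigma$. Items (1), (2), (4) are then immediate from this description, and (3) is read off from the equality $I_{\val(v),a,\sigma}=\{m\in(\sigma^\vee)^\integ\mid m\geq F_{v,a}\}$. Your LP argument in effect \emph{verifies} that the formula $\min\{L(w):L\in\sigma^\vee,\ L(v)=a\}$ agrees with this construction, which the paper takes for granted; so you do more work but also make explicit why the defining formula and the piecewise-linear description coincide. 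The direct inclusions you give for (3) (evaluating at $w=v$ for one direction, and rescaling $m'=(a/m(v))m$ for the other) are cleaner than appealing to the general $F\leftrightarrow\cI_F$ correspondence.

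One small correction: in the case $\tau\not\preceq\sigma$ your justification that $F_{v,a}\equiv 0$ on $\sigma$ requires an $L\in\sigma^\perp$ with $L(v)=a$, which exists only when $v\notin\operatorname{span}(\sigma)$, not merely $v\notin\sigma$. When $v\in\operatorname{span}(\sigma)\setminus\sigma$ the feasible set $P_\sigma$ can be empty (e.g.\ the fan of $\mathbb{P}^1$ with $v=e_1$ and $\sigma=\langle -e_1\rangle$), so the min formula is vacuous there. This is a wrinkle in the statement itself; the paper's construction sidesteps it by defining $F_{v,a}$ directly on $v\cdot\Sigma$ and setting it to $0$ on cones outside $\Star(\tau,\Sigma)$, and you can do the same.
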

\begin{proof} It is a well known fact. To construct $F_{v,a}$ in more explicit terms, we consider the star subdivision $\langle
v\rangle\cdot\Sigma$ of $\Sigma$, and define $F_{v,a}$ on all vertices of $1$-dimensional rays of $\langle
v\rangle\cdot\Sigma$. We put 
$F_{v,a}(u)=0$ for $u\in \Ver(\Sigma)\setminus \{v\}$, and $F_{v,a}(v)=a$. This canonically extends to
all faces of $\langle
v\rangle\cdot\Sigma$ by linearity. If $a$ is sufficiently divisible then $F_{v,a}$ is integral function.

The function $F_{v,a}$ defines monoids $$I_{\val(v),a,\sigma}=\{m\in M \mid m\geq F_{v,a}= \{m\in (\sigma^\vee)^\integ \mid m(v)\geq 0\}$$ which give rise to the sheaf $\cI_{\val(v,a)}$. 

Finally the integral function $F_{v,a}$ on the star subdivision 
$\langle
v\rangle\cdot\Sigma$, is linear on each cone. Thus it defines a Cartier divisor $E$. 
For any vertex $w$ in $\Sigma$ let $D_w$ be the corresponding divisor. Then the Cartier divisor $E$ can be written  as $$E=\sum_{w\in \Ver(\Sigma)} F_{v,a}(w)D_w=m D_v.$$
\end{proof}

\subsubsection{Toroidal valuations}
One can easily extend the result to strict toroidal embeddings 
\begin{lemma}  Let $(X, D)$ be a strict toroidal embedding, and $\Sigma$ be the associated conical complex.
Any vector $v$ in $|\Sigma|$ defines a piecewise  linear function $F_{v,a}: |\Sigma_X|\to Q$, as above which   is a convex on each face and piecewise  linear function.
It is integral if $a$ is sufficiently  divisible.
For such $a$, there is a unique coherent ideal $I_{F_{v,a}}$ and a toroidal valuation $\val(v)$ on $X$, such that 
$I_{F_{v,a}}=I_{\val(v),X,a}$.
\end{lemma}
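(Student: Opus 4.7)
The plan is to reduce everything to the toric case of Lemma \ref{primo} via \'etale charts on stars, using Lemma \ref{toroidal-cartier} (canonical identification of the monoid of boundary Cartier divisors around a stratum) and Mumford's Lemma \ref{Mum} (independence of chart).

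First I would note that the construction of $F_{v,a}$ is purely combinatorial and depends only on the cone data of $\Sigma$: the vector $v$ lies in the relative interior of some $\tau\in\Sigma$, and for any $\sigma\in\Star(\tau,\Sigma)$ the restriction $F_{v,a}|_\sigma$ is the unique piecewise linear function on the star subdivision $\langle v\rangle\cdot\sigma$ that is linear on each face of it, vanishes on the vertices of $\sigma$ different from~$v$, and takes value $a$ at $v$. Convexity on $\sigma$, integrality of $F_{v,a}$ for $a$ sufficiently divisible, and compatibility of these local formulas under face inclusions $i_{\tau\sigma}:\tau\to\sigma$ all follow verbatim from the fan case, since the complex $\Sigma$ is glued out of cones with the same structure as in a fan.

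Next I would build $\cI_{F_{v,a}}$ on $X$ by working on the open cover $\{\Star(s,S)\}_{s\in S}$. Fix $s\in S$ with cone $\sigma_s$. By Lemma \ref{toroidal-cartier} there is a canonical isomorphism $\Cart^+(s,S)\cong(\overline{\sigma_s}{}^\vee)^\integ$ that does not depend on any chart. The function $F_{v,a}|_{\sigma_s}$ cuts out the canonical submonoid
$$\mathbf{I}_{\sigma_s}=\{m\in(\sigma_s^\vee)^\integ\mid m(v)\geq a\}\subset(\sigma_s^\vee)^\integ,$$
and the corresponding effective Cartier divisors generate a coherent sheaf of ideals $\cI_{F_{v,a}}|_{\Star(s,S)}$. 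Via any \'etale chart $\phi\colon U\to X_{\sigma_s}$ with $U\subset\Star(s,S)$ this sheaf pulls back from the toric $\cI_{F_{v,a},X_{\sigma_s}}$ of Lemma \ref{primo}, so it is coherent; independence of the chart is exactly Mumford's Lemma \ref{Mum}. For $s\leq s'$, Corollary \ref{inclusion} identifies the restriction $M_{\sigma_s}^+\twoheadrightarrow M_{\sigma_{s'}}^+$ with the face surjection $(\sigma_s^\vee)^\integ\to(\sigma_{s'}^\vee)^\integ$, and this surjection respects the conditions $m(v)\geq a$, so the local ideals agree on overlaps and glue to a global coherent ideal $\cI_{F_{v,a}}$ on $X$.

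Finally, I would define $\val(v)$ as the pullback along any \'etale chart $\phi\colon U\to X_{\sigma_s}$ of the toric valuation $\val(v)$ on $X_{\sigma_s}$, regarded as a discrete valuation on $K(U)=K(X)$. Chart independence follows from Mumford's Lemma applied to the Cartier divisors supported on $D$: the valuation is determined by its values on $\Cart^+(s,S)\cong(\overline{\sigma_s}{}^\vee)^\integ$, where it coincides with the evaluation at $v$, a canonical datum. Equivalently $\val(v)$ is the order of vanishing along the exceptional prime divisor produced by the star subdivision $\langle v\rangle\cdot\Sigma$ of Theorem \ref{sub}. Uniqueness of both $\cI_{F_{v,a}}$ and $\val(v)$ is clear from this description, and the identity $\cI_{F_{v,a}}=\cI_{\val(v),X,a}$ can be checked on an \'etale cover by charts to toric models, where it reduces to the toric identity of Lemma \ref{primo}. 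The only non-formal point in all of this is the chart-independence/gluing, which is handled uniformly by Lemmas \ref{Mum} and \ref{toroidal-cartier}.
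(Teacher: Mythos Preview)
Your proposal is correct and ultimately lands on the same argument as the paper, but the emphasis is reversed. The paper's proof goes straight to the global object: it first invokes Theorem~\ref{sub} to build the canonical birational modification $Y\to X$ associated with the star subdivision $\langle v\rangle\cdot\Sigma$, defines $\val(v)$ as the valuation of the (global, irreducible) exceptional divisor $D$ on $Y$, and then transfers it to $X$ via $K(Y)=K(X)$. The ideal sheaf and the identity $\cI_{F_{v,a}}=\cI_{\val(v),X,a}$ are then inherited from Lemma~\ref{primo} in charts. You instead construct $\cI_{F_{v,a}}$ first by explicit gluing over the stars using Lemma~\ref{toroidal-cartier} and Corollary~\ref{inclusion}, and only afterwards introduce $\val(v)$; your ``equivalent'' description of $\val(v)$ via the exceptional divisor of Theorem~\ref{sub} is precisely the paper's definition.

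One small caution: your first description of $\val(v)$ as ``the pullback along any \'etale chart $\phi\colon U\to X_{\sigma_s}$ of the toric valuation'' is not quite self-contained. An \'etale chart gives a field extension $K(X_{\sigma_s})\hookrightarrow K(U)$, and extending a divisorial valuation upward requires choosing a component of the preimage of its center (compare Lemma~\ref{le: ind2} later in the paper, where an irreducibility hypothesis is needed). Your argument is saved because you immediately pass to the exceptional-divisor description, which is globally well-defined; but as written the chart-based definition alone does not give a valuation on $K(X)$ without that extra step. The paper sidesteps this by starting from the global $Y$.
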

\begin{proof} To define the valuation  $\val(v)$  on $X$ we consider the canonical birational transformation $Y\to X$ associated with the star subdivision $\langle
v\rangle\cdot\Sigma$. One can describe the valuation $\val_Y(v)$ by using \'etale charts. By the previous lemma it follows that $\val_Y(v)$  is
  the valuation of  the exceptional divisor $D$ of $Y\to X$ as in the toric case. If $a$ is sufficiently divisible, the multiple $aD$ as in   Lemma \ref{primo},  is a Cartier divisor on $Y$. It is locally a pull-back of a toric Cartier divisor. Since $Y\to X$ is birational $\val_Y(v)$ determines a unique valuation on $\val_X(v)$ on $X$, with $K(X)=K(Y)$.

\end{proof}

\subsubsection{Blow-ups at toroidal  valuations}
 Denote by $\bl_{\cJ}(X)\to X$ the
blow-up of  any coherent sheaf of
ideals ${\cJ}$.

\begin{definition} \label{de: blow} Let $X$  be a toric variety (respectively strict toroidal embedding), with the associated fan (respectively the associated complex) $\Sigma$. Let $v\in |\Sigma|$ be an integral vector. 

By the
{\it blow-up} $\bl_{\val(v)}(X)$ of $X$ at a toric
(respectively toroidal) valuation $\val(v)$ we mean the normalization of 
$$\Proj({\cO\oplus \cI}_{\val(v),1}\oplus {\cI}_{\val(v),2}\oplus\ldots).$$
\end{definition}


\noindent \begin{lemma}(5.2.8 \cite{Wlodarczyk}) \label{le: blow-up valuation} Let $X_\Sigma$ be a toric variety (resp. strict toroidal embedding)
associated with a fan (resp. with a conical complex) $\Sigma$  
and $v\in |\Sigma|$ be an integral vector. Then $\bl_{\val(v)}(X_\Sigma)$ is the toric 
variety (toroidal embedding) associated with the star subdivision $\langle
v\rangle\cdot\Sigma$ of $\Sigma$. Moreover, for any sufficiently
divisible integer $d$, $\bl_{\val(v)}(X_\Sigma)$ is the normalization of
the blow-up of  ${\cI}_{\val(v), d}$. The   valuation $\nu$ is induced by an irreducible ${\bf Q}$-Cartier
divisor on the variety $\bl_{\val(v)}(X)$.
\end{lemma}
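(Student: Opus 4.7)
The plan is to reduce everything to the toric affine case and then use the standard correspondence between toric Proj constructions and fan subdivisions. First I would localize: since both sides of the claimed identity are defined \'etale locally (the blow-up of a coherent sheaf commutes with flat base change, and the star subdivision is read off locally on charts by Theorem \ref{sub}), and since by Lemma \ref{Mum} and Corollary \ref{Mum2} a strict toroidal embedding admits \'etale charts to affine toric varieties preserving both the conical complex and the valuation $\val(v)$, it suffices to prove the lemma when $X_\Sigma=X_\sigma$ is affine toric with a single maximal cone $\sigma$, $v\in \sigma$ integral, and the vector $v$ lies in the relative interior of some face $\tau\preceq\sigma$.

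Next I would unpack the graded algebra defining the Proj. Working on $X_\sigma=\Spec K[(\sigma^\vee)^\integ]$, the ideal $\cI_{\val(v),a}$ is the monomial ideal spanned by $\{x^m : m\in(\sigma^\vee)^\integ,\ (m,v)\ge a\}$. Hence the Rees algebra
\[
R\;=\;\bigoplus_{a\ge 0}\cI_{\val(v),a}\;\subset\;K[(\sigma^\vee)^\integ][t]
\]
is $K$-spanned by the monomials $x^m t^a$ with $m\in(\sigma^\vee)^\integ$ and $(m,v)\ge a\ge 0$. In other words $R=K[C^\integ]$ where $C\subset M^\QQ\oplus \QQ$ is the cone cut out by $(m,v)\ge a\ge 0$ and $m\in\sigma^\vee$; this is a rational polyhedral cone, so $R$ is already integrally closed and finitely generated. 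Since $\Proj(R)=\Proj(\bigoplus_a (\cI_{\val(v),d})^{a})$ after the $d$-Veronese (for sufficiently divisible $d$, using Lemma \ref{primo}(3) and that $\cI_{\val(v),nd}$ is the integral closure of $\cI_{\val(v),d}^n$), the normalized blow-up of $\cI_{\val(v),d}$ coincides with $\Proj(R)$. It then remains to identify $\Proj(R)$ with the toric variety of $\langle v\rangle\cdot\Sigma$. One does this chart by chart: for each maximal cone $\delta=\langle v,\rho_{i_1},\ldots,\rho_{i_k}\rangle$ of $\langle v\rangle\cdot\sigma$, pick a monomial $m_\delta\in\sigma^\vee$ with $(m_\delta,v)=d$ and $(m_\delta,\rho_{i_j})=0$ (such $m_\delta$ exists by the definition of the star subdivision and sufficient divisibility of $d$); the principal affine open $D_+(x^{m_\delta}t^d)\subset\Proj(R)$ is then directly computed to be $\Spec K[(\delta^\vee)^\integ]=X_\delta$, and these glue correctly to $X_{\langle v\rangle\cdot\sigma}$.

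Finally, the exceptional locus and the valuation statement fall out of the construction. The new ray $\QQ_{\ge 0}v$ in $\langle v\rangle\cdot\sigma$ gives a $T$-invariant prime Weil divisor $D_v$ on $X_{\langle v\rangle\cdot\sigma}$, and by Lemma \ref{primo}(4) the multiple $dD_v$ is Cartier and has local equation generating $\cI_{\val(v),d}\cdot\cO_{X_{\langle v\rangle\cdot\sigma}}$; this shows that $D_v$ is irreducible, $\bQ$-Cartier, and that the order of vanishing along $D_v$ of a monomial $x^m$ is exactly $(m,v)=\val(v)(x^m)$, identifying the valuation induced by $D_v$ with $\val(v)$. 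The toroidal case then follows from the toric one via \'etale charts, because Corollary \ref{inclusion} and Lemma \ref{toroidal-cartier} ensure that the ideal sheaf $\cI_{\val(v),d}$, the graded algebra $R$, and the star subdivision all behave functorially under \'etale morphisms of strict toroidal embeddings; the main technical point is precisely to check that integral closedness of $R$ and the compatibility of the Proj with the gluing pattern of $\langle v\rangle\cdot\Sigma$ descend correctly from charts.
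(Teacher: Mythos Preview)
Your proof plan is correct. The paper's own argument is considerably terser: it simply invokes \cite{KKMS}, Ch.~II, Theorem~10, which says that the normalized blow-up of a $T$-invariant ideal sheaf $\cI$ on a toric variety is the toric variety associated with the coarsest subdivision of $\Sigma$ on which the support function $F_{\cI}$ becomes linear. Combined with Lemma~\ref{primo} (which identifies $F_{\cI_{\val(v),d}}$ with $F_{v,d}$ for sufficiently divisible $d$, a function linear precisely on the cones of $\langle v\rangle\cdot\Sigma$), and the observation that $\cI_{\val(v),nd}=\cI_{\val(v),d}^n$ for such $d$, the result follows at once.

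Your approach instead unwinds the content of that KKMS theorem in this particular case: you compute the Rees-type algebra $R=\bigoplus_a\cI_{\val(v),a}$ explicitly as a normal semigroup algebra, pass to a Veronese, and identify $\Proj(R)$ chart by chart with $X_{\langle v\rangle\cdot\sigma}$. This is more self-contained and makes the toric Proj computation transparent, at the cost of length; the paper trades that for a citation. One small imprecision in your write-up: describing the maximal cones of the star subdivision as $\langle v,\rho_{i_1},\ldots,\rho_{i_k}\rangle$ is only literally correct when $\sigma$ is simplicial---in general they have the form $\langle v\rangle+F$ for $F$ a maximal face in $\Link(\tau,\overline{\sigma})$---but the chart computation and the choice of $m_\delta$ (which is just the linear function $F_{v,d}\vert_\delta$) go through unchanged with that adjustment.
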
 
\begin{proof}
By \cite{KKMS}, Ch. II,  Th. 10, the normalized blow-up of ${\cI}_{\val(v), D}$ corresponds to
the minimal subdivision $\Sigma'$ of $\Sigma$ such that $F_{{\cI}_{\val(v), D}}=F_{v, d}$ is 
linear on each cone in $\Sigma'$.  
On the other hand, if $d$ is sufficiently divisible then ${\cI}_{\val(v),nd}={\cI}_{\val(v), d}^n$ for any $n\in \NN$.
See also Lemma 5.2.8 \cite{Wlodarczyk-toroidal}.

\end{proof}
With any irreducible Weil divisor $D$, which is  Cartier at its generic point $\nu=\nu_D$ we associate a valuation $\val_D$, such that for any $f$ which is regular at $\nu$, $\val_D(f)=n$, if  we have the equality of ideals $(f)=\cI_D^n$  in  the local ring $\cO_{X,\nu}$.

 $$\pi: Y=\bl_\nu(X)\to X$$ be the associated blow-up with the exceptional Weil, $\QQ$-Cartier divisor $D$.

\begin{corollary} \label{divisors} Let $\nu$ be  a toric (respectively toroidal) 
valuation  on a  toric variety (resp. strict toroidal embedding) $X$, and    $$\pi: Y=\bl_\nu(X)\to X$$ be the associated blow-up with the exceptional Weil, $\QQ$-Cartier divisor $D$.

Then for the ideals  $\cI_{D,n}:=\cI_{\val_{D},n}=\{f\in {\cO}_{X}\mid 
\nu_D(f)\geq a\}$ on $Y$ we have 
$$\pi_*(\cI_{D,n})=\{f\in \pi_*{\cO}_{Y}={\cO}_{X}\mid 
\nu_D(f)\geq a\}=
{\cI}_{\nu, a}.$$
Thus the   valuation $\nu$ is induced by an irreducible  exceptional Weil (${\bf Q}$-Cartier)
divisor $D$  on the variety $Y=\bl_\nu(X)$.

\end{corollary}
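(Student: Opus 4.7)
The plan is to reduce the statement to Lemma \ref{le: blow-up valuation} and then compute the pushforward using normality of $X$ and $Y$. By Lemma \ref{le: blow-up valuation}, if $v\in |\Sigma|$ is the primitive integral vector with $\nu=\val(v)$, then $Y=\bl_\nu(X)$ is the normal toric variety (respectively strict toroidal embedding) associated with the star subdivision $\langle v\rangle\cdot \Sigma$. The ray $\langle v\rangle$ is a new vertex of $\langle v\rangle\cdot \Sigma$, and corresponds to an irreducible torus-invariant (toroidal) Weil divisor $D$ on $Y$; by Lemma \ref{primo}(4) (applied locally in charts), a sufficiently divisible multiple of $D$ is Cartier, so $D$ is $\QQ$-Cartier, and $D$ is exceptional since $v$ lies in the relative interior of a cone of $\Sigma$ of positive dimension. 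The identification of the valuation of the exceptional divisor with $\val(v)$ is built into the toric dictionary: the order of vanishing of a $T$-eigenfunction $x^w$ along $D$ is $(v,w)$, which by definition equals $\val(v)(x^w)$, and both valuations are $T$-invariant discrete valuations of the function field, hence agree.

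Next I would prove the formula $\pi_*(\cI_{D,n})=\cI_{\nu,a}$ (with $a=n$). Both $X$ and $Y$ are normal, and $\pi$ is proper and birational, so $\pi_*\cO_Y=\cO_X$; thus a section of $\pi_*\cI_{D,n}$ is the same data as a regular function $f\in\cO_X$ such that the pullback $\pi^*f\in\cO_Y$ lies in $\cI_{D,n}$, i.e.\ such that $\val_D(\pi^*f)\geq n$. Since $\pi$ is birational, $\val_D$ on $Y$ corresponds to a unique valuation of the function field $K(X)=K(Y)$; by the previous paragraph this is exactly $\nu=\val(v)$. Hence the condition $\val_D(\pi^*f)\geq n$ is $\nu(f)\geq n$, giving $\pi_*(\cI_{D,n})=\cI_{\nu,a}$.

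In the toroidal case I would verify these statements \'etale-locally on a chart $(U,U\cap \mathrm{Star}(s,S))\to (X_\sigma,T)$ whose associated cone contains $v$; the construction of $\bl_\nu$ and of the exceptional divisor is compatible with \'etale base change, and the toric case handled above then gives the claim on the chart. Gluing the \'etale charts is routine because, as in the proof of Theorem \ref{sub}, the local subschemes $\widetilde U_\tau$ assemble canonically into $Y$, and the coherent ideal $\cI_{D,n}$ is defined by valuative conditions that are preserved under \'etale localization.

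The only delicate point is ensuring that the identification $\val_D=\val(v)$ really does descend from \'etale charts to the toroidal $Y$ and that no multiplicities are lost when passing to the normalization in Definition \ref{de: blow}; both are handled by using a sufficiently divisible $d$ so that $\cI_{\nu,d}^{\,n}=\cI_{\nu,nd}$ and by invoking Lemma \ref{primo}(3), so that the Rees algebra defining the blow-up coincides with $\bigoplus_n\cI_{\nu,n}$ up to integral closure, making $Y$ literally the normalized blow-up of $\cI_{\nu,d}$.
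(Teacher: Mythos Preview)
Your proof is correct and follows the natural route: identify $Y$ with the toric/toroidal variety of the star subdivision via Lemma~\ref{le: blow-up valuation}, recognize the exceptional divisor as the one attached to the new ray $\langle v\rangle$, use the toric dictionary to see $\val_D=\val(v)=\nu$, and then read off the pushforward from $\pi_*\cO_Y=\cO_X$ (normality plus properness). The paper itself gives no separate proof for this corollary; it is stated as an immediate consequence of Lemma~\ref{le: blow-up valuation} and the preceding sentence defining $\val_D$. Your write-up simply makes explicit the steps the paper leaves to the reader, and does so correctly.
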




\subsubsection{Filtered centers}

The centers of the blow-ups of the valuations $\nu$ are  represented by the sets of the ideals  $\cI_{\nu,a}$. It is possible to associate with the center  one of these ideals for a sufficiently divisible $a$, but this correspondence won't be functorial. 

\begin{definition}\label{filtered} By a {\it filtered center} of a blow-up we mean a filtration $\{\cI_n\}_{n\in \NN}$ of ideals $\cI_1\supseteq \cI_2\supseteq \ldots$ on a variety $X$ defining the graded Rees algebra $${\cO\oplus \cI}_{1}\oplus {\cI}_{2}\oplus\ldots.$$ By the blow-up of the filtered center $\{\cI_n\}_{n\in \NN}$ we mean
$$\Proj({\cO\oplus \cI}_{1}\oplus {\cI}_{2}\oplus\ldots).$$
\end{definition}

\subsection{Canonical desingularization of strict toroidal embeddings}

\begin{theorem} \label{th: resolution}
 For any strict toroidal embedding $(X, D)$ over a  field $K$ there exists  a canonical  resolution of singularities i.e., a birational projective morphism $f: (Y, D_Y)\to (X, D_X)$ such that
 \begin{enumerate}
 \item $(Y, D_Y)$ is a nonsingular strict toroidal embedding.  In particular, $Y$ is nonsingular, and $D_Y$ is an SNC divisor.
 \item $f$ is an isomorphism over the open set of nonsingular points. In particular, it is an isomorphism over $U:=X\setminus D$.
 \item The inverse image $f^{-1}(\Sing(X))\subset Y\setminus U=D_Y$ of the singular locus $ \Sing(X)$ is a simple normal crossing  divisor on $Y$.
 \item $f$ is a composition of the normalized blow-ups of the locally monomial filtered centers $\{\cJ_{in}\}_{n\in \NN}$ defined by the sets of valuations.  \item 
 $f$ commutes with smooth morphisms  and field extensions, in the sense that the centers are transformed functorially, and the trivial blow-ups are omitted.
 \end{enumerate}

\end{theorem}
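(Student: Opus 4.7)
The plan is to translate the resolution problem on $(X,D)$ into the combinatorial resolution of its associated conical complex and then read the outcome back as a sequence of normalized blow-ups at toroidal valuations. By Theorem~\ref{cc}, to the strict toroidal embedding $(X,D)$ there is canonically associated a conical complex $\Sigma$, with its faces in order-reversing bijection with the strata of the canonical stratification $S$ on $(X,D)$. The regularity of a cone $\sigma\in\Sigma$ is equivalent to the nonsingularity of the open subset $X(\sigma)$, and the support $|\mathrm{Sing}(\Sigma)|$ corresponds to the singular locus of $X$.

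I then apply Theorem~\ref{can des} to $\Sigma$: it produces a canonical sequence of star subdivisions $\Sigma=\Sigma_0\to\Sigma_1\to\cdots\to\Sigma_n=\Sigma'$ at centers $V_i$ consisting of either minimal integral vectors in relative interiors of irreducible singular faces, or canonical barycenters of such faces; the final $\Sigma'$ is regular, and all regular cones of $\Sigma$ are left untouched. Each star subdivision $V_i\cdot\Sigma_{i-1}$ is the union of star subdivisions at individual primitive vectors $v\in V_i$ lying in $|\mathrm{Sing}(\Sigma_{i-1})|$. By Theorem~\ref{sub}, the subdivision $\Sigma'\to\Sigma$ corresponds to a unique proper birational canonical toroidal morphism $f\colon(Y,D_Y)\to(X,D_X)$, where $D_Y$ is the toroidal boundary of the strict toroidal embedding $Y:=X_{\Sigma'}$, and this $f$ factors step-by-step through the intermediate toroidal embeddings $X_{\Sigma_i}$.

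Next I interpret each step geometrically. By Lemma~\ref{le: blow-up valuation} and Corollary~\ref{divisors}, the star subdivision at a primitive vector $v$ is precisely the normalized blow-up of the associated toroidal valuation $\mathrm{val}(v)$, which by Definition~\ref{filtered} is the blow-up of the filtered center $\{\mathcal{I}_{\mathrm{val}(v),n}\}_{n\in\NN}$. Taking $V_i=\{v_{i,1},\dots,v_{i,k_i}\}$ with disjoint stars yields a composition (in fact, a simultaneous blow-up) of normalized blow-ups at disjoint toroidal valuations, and the filtered center is locally monomial since the ideals $\mathcal{I}_{\mathrm{val}(v),n}$ are generated by characters in every toric chart. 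Thus $f$ is a composition of normalized blow-ups of locally monomial filtered centers defined by sets of valuations, establishing (4).

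Properties (1)--(3) then follow: regularity of $\Sigma'$ together with Lemma~\ref{reg1} (or its strict toroidal analogue coming from Theorem~\ref{cc}) gives that $(Y,D_Y)$ is a nonsingular strict toroidal embedding, so $Y$ is smooth and $D_Y$ is SNC; the fact that no star subdivision touches a regular cone implies that $f$ is an isomorphism over the nonsingular locus of $X$ and in particular over $U=X\setminus D$; and since every exceptional divisor lies in $D_Y$, the inverse image $f^{-1}(\mathrm{Sing}(X))$ sits inside the SNC divisor $D_Y$ and is a union of some of its components, hence SNC. Finally, (5) follows from the functoriality part of Theorem~\ref{can des}: smooth morphisms and field extensions of strict toroidal embeddings induce maps of the associated complexes that are regular local projections or local isomorphisms (this is exactly what is encoded by Definition~\ref{toro mor} together with the discussion after it), so the combinatorial centers transform functorially with trivial subdivisions omitted, and this compatibility transfers through Theorem~\ref{sub} to the geometric centers. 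The only subtle point, and the one deserving care in the write-up, is the functorial translation between the combinatorial morphisms in Definition~\ref{locali} and the smooth toroidal morphisms of Definition~\ref{toro mor2}; everything else is a direct application of the already established dictionary.
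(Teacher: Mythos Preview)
Your proposal is correct and follows essentially the same approach as the paper: associate the conical complex $\Sigma$ to $(X,D)$ via Theorem~\ref{cc}, apply the combinatorial desingularization of Theorem~\ref{can des}, and translate back to geometry via Theorem~\ref{sub} and Lemma~\ref{le: blow-up valuation}. The paper's own proof is a near one-liner that simply points to Theorem~\ref{can des}; you have spelled out in detail the dictionary (subdivisions $\leftrightarrow$ toroidal modifications, star subdivisions $\leftrightarrow$ normalized blow-ups at valuations, regular cones untouched $\Rightarrow$ isomorphism over the smooth locus, functoriality of the combinatorial algorithm $\Rightarrow$ functoriality of $f$) that the paper leaves implicit.
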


\begin{proof} Consider the  conical complex $\Sigma$ associated with $(X, D)$. Its combinatorial desingularization $\Delta$ from  Theorem \ref{can des} determines a canonical  desingularization $(Y, D_Y)$ of $(X, D)$, with 
$D_Y$ being SNC. Note that any toroidal divisor on a smooth strict toroidal embedding is SNC.

\end{proof}

\bigskip
\subsection{Canonical  desingularization of toroidal embeddings} 

\begin{theorem} \label{des toroidal} Let $(X, D)$ be a toroidal embedding over a  field of any characteristic.

There exists a canonical desingularization $(Y, D_Y)$ of $(X, D)$, that is a projective birational toroidal morphism $f: (Y, D_Y)\to (X, D)$  such that 
\begin{enumerate}
\item $Y$ is a smooth variety, and $D_{Y}$ is  an SNC -divisor (respectively NC- divisor). 

\item If $\Sing(X)$ is the set of the singular points on $X$ then $f^{-1}(\Sing(X))$ is an SNC divisor.
 
\item $f$ is an isomorphism for all points where $X$ is smooth, and $D_X$ is and an SNC divisor (respectively NC divisor).
 
 \item $f$ is a composition of the normalized blow-ups of the locally monomial filtered centers $\{\cJ_{in}\}_{n\in \NN}$ defined by the sets of valuations.

\item $f$ commutes with  smooth  morphisms.

\item $f$ commutes with the field extensions

\end{enumerate}

\end{theorem}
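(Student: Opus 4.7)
The plan is to deduce this from Theorem~\ref{th: resolution} (the strict case) by \'etale descent, using the canonicity of the algorithm in Theorem~\ref{can des} as the essential glueing mechanism. First I would choose an \'etale cover $\{\pi_i: (U_i, D_{U_i}) \to (X, D)\}$ by strict toroidal embeddings; such a cover exists by the very definition of a toroidal embedding. On each $(U_i, D_{U_i})$ I would apply Theorem~\ref{th: resolution} to obtain a canonical desingularization $f_i: (Y_i, D_{Y_i}) \to (U_i, D_{U_i})$, realized as a finite composition of normalized blow-ups at filtered centers $\{\cJ_{i,n}\}_{n\in\NN}$ determined by the toroidal valuations prescribed by the combinatorial algorithm of Theorem~\ref{can des} on the associated conical complex.

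Next I would use functoriality for \'etale morphisms (property (6) of Theorem~\ref{th: resolution}, combined with Lemma~\ref{local} for the underlying combinatorial algorithm) to verify that on the double fibre product $U_{ij} = U_i \times_X U_j$ the pullbacks of $\{\cJ_{i,n}\}$ from the two sides coincide after trivial blow-ups are removed. This cocycle condition, applied stepwise along the sequence of blow-ups, lets the filtered ideal sheaves descend by standard \'etale descent for quasi-coherent sheaves to a canonical collection of filtered centers on $X$ itself. The corresponding sequence of normalized blow-ups on $X$ produces the desired morphism $f: (Y, D_Y) \to (X, D)$, with $Y \times_X U_i \simeq Y_i$ by construction.

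The properties (1)--(6) are then checked on the \'etale cover. Smoothness of $Y$, the fact that $f$ is an isomorphism wherever $(X, D)$ is already smooth with NC divisor, the SNC nature of $f^{-1}(\Sing(X))$ locally on each $Y_i$, and the presentation as normalized blow-ups at filtered centers all pass from $Y_i$ to $Y$ because the \'etale projections $Y_i \to Y$ are smooth and $\pi_i$-compatible. Since $D_{Y_i}$ is SNC on each $Y_i$ but the components of $D_Y$ may have self-intersections globally (inherited from $D$), we obtain in general only NC (rather than SNC) for $D_Y$, matching the statement. Functoriality for smooth morphisms and field extensions of $(X, D)$ follows because any such morphism $g: (X', D') \to (X, D)$ can be pulled back to give an \'etale cover of $X'$ by strict toroidal embeddings, and the construction on $X'$ agrees with the pullback of the construction on $X$ by the functoriality of Theorem~\ref{th: resolution}.

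The main obstacle is the descent step: one has to verify that the filtered centers are really \'etale-locally intrinsic, not just canonical on each chart. This reduces to the observation that the combinatorial algorithm of Theorem~\ref{can des} produces centers defined purely in terms of the associated conical complex (and its canonical marking given by Lemma~\ref{mark} and Lemma~\ref{uni}), which is itself an \'etale-local invariant of the toroidal structure by Lemma~\ref{toroidal-cartier} and Corollary~\ref{inclusion2}. Once this intrinsicality is in hand, the remaining verifications are routine propagations from the strict case.
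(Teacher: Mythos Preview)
Your approach is essentially the same as the paper's: take an \'etale cover by strict toroidal embeddings, apply Theorem~\ref{th: resolution} on the cover, use functoriality on the double fibre product to match the filtered centers under the two projections, descend the ideals by flat descent, and blow up on $X$. The one point you miss is the upgrade from NC to SNC: after descent you only know that $D_Y$ and $f^{-1}(\Sing(X))$ are NC (SNC \'etale-locally), whereas the statement also asserts SNC. The paper handles this with one additional functorial step, Proposition~\ref{normal}, which blows up the strata of the NC divisor in increasing dimension to convert it to SNC.
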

\begin{proof}

Let  $X_0$ be an \'etale cover  of $X$ consisting of strict toroidal embeddings, and let 
$\pi_i: X_1:=X_0\times_XX_0$, $i=1,2$ be two natural projections. 
Consider the functorial resolution of $X_0$ from Theorem \ref{th: resolution}.
 By functoriality, the centers lift to $\pi^*_{1}(\cI_{jn})=\pi^*_{2}(\cI_{jn})$, and thus by the flat descent the ideals $\cI_{jn}$ on $X_0$ generate the ideals $\cI_{jnX}$ on $X$. The blow-ups at $\cI_{jnX}$ determine the functorial birational modification $Y$ of $X$, which is 
smooth in \'etale topology, and hence it is smooth.  The inverse image of the $f^{-1}(\Sing(X)$ of the singular locus $ (\Sing(X)$ is SNC in \'etale topology, and thus it is NC in Zariski topology. 
By Proposition \ref{normal},  the exceptional locus $D_Y$ can be further modified functorially, if needed, to an  SNC divisor $D_Y'$   by a sequence of blow-ups at  smooth strata.

\end{proof}
\subsection{Canonical partial desingularization of toroidal and strict toroidal embeddings}

\begin{theorem} \label{des toroidal2} Let $(X, D)$
 be a toroidal embedding  over a  field of any characteristic with $U=X\setminus D$. Let  $V\subset X$ be an open  saturated toroidal 
 subset of  $X$. Denote by $D_V=V\cap D$ the induced divisor on $V$, and assume that $D_V$ has locally ordered components. \footnote{Definitions \ref{order}}
 
   There exists a canonical  resolution of singularities of $(X, D)$ except for $V$ i.e. a projective  birational toroidal morphism  $f: Y\to X$ such that
 \begin{enumerate}
 \item $f$ is an isomorphism over the open set $V$ .
 \item The variety $(Y, D_Y)$ is a  strict toroidal embedding, where $D_Y:=\overline{D_V}$ is the closure of the divisor $D_V$ in $Y$.
 
 \item The variety $(Y, D_Y)$ is the  saturation
 of $(V, D_V)$.\footnote{Definition \ref{saturation}}
 \item The   complement   $E_{V,Y}:=Y\setminus V$  is a divisor which  has  simple normal crossings  with $D_Y$, and so does the exceptional divisor  $E_{\exc}\subset E_{V,Y}$.  
\footnote{Definition \ref{nc}} 
In particular, $(Y, D_Y)$ is a strict toroidal embedding  .
 
 \item
 $f$ commutes with field extensions  and smooth morphisms 
  preserving the order of the components $D_V$, in the sense that the centers are transformed functorially, and the trivial blow-ups are omitted.

 \end{enumerate}

\end{theorem}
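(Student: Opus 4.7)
The plan is to translate the problem into combinatorics via the associated conical complex of the toroidal embedding, apply the canonical relative desingularization of Theorem \ref{can des2}, and then reinterpret the resulting subdivision geometrically through Theorem \ref{sub} and Lemma \ref{regular}.

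First I would reduce to the strict toroidal case by \'etale descent, exactly as in the proof of Theorem \ref{des toroidal}. Let $\pi : X_0 \to X$ be an \'etale cover by strict toroidal embeddings and set $V_0 := \pi^{-1}(V)$; by Corollary \ref{extension2}, $V_0$ is a saturated toroidal subset of $(X_0, \pi^{-1}(D))$, and by Lemma \ref{Mum} the irreducible components of $\pi^{-1}(D_V)$ are locally in canonical bijection with those of $D_V$, so the local order on the components of $D_V$ pulls back to a local order on $D_{V_0}$. Applying the strict-toroidal construction functorially to $X_0$ and to the two projections $X_0 \times_X X_0 \rightrightarrows X_0$ produces filtered centers whose pullbacks agree, so they descend by flat descent.

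Assume now $(X, D)$ is strict toroidal with associated conical complex $\Sigma$ (Theorem \ref{cc}). By Lemma \ref{rr}, $V = X(\Omega)$ for a unique subcomplex $\Omega \subseteq \Sigma$, and the local order on the components of $D_V$ gives a partial order on $\Ver(\Omega)$ that is total on each $\Ver(\omega)$, $\omega \in \Omega$. This is precisely the input required by Theorem \ref{can des2}, which produces a canonical subdivision $(\Delta, \Omega)$ of $(\Sigma, \Omega)$, obtained by a functorial sequence of star subdivisions at centers in $|\sing(\Sigma, \Omega)| \setminus |\Omega|$ (barycenters or minimal vectors of relatively irreducible cones), with $(\Delta, \Omega)$ relatively regular: $\sing(\Delta, \Omega) = \Omega$. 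By Theorem \ref{sub} the subdivision $\Delta$ determines a canonical proper birational toroidal morphism $f : Y \to X$, and since no cone of $\Omega$ is subdivided, $f$ is an isomorphism over $V$.

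The crucial point is that $\Omega$ becomes combinatorially saturated in $\Delta$ in the sense of Definition \ref{omega2}: if $\sigma \in \Delta$ has every ray in $\Omega$, then by relative regularity $\sigma$ is balanced with unique maximal face $\omega \in \Omega$ containing every ray of $\sigma$, forcing $\omega = \sigma$ and hence $\sigma \in \Omega$. By Lemma \ref{comp} the complement $E_{V,Y} = Y \setminus V$ is then a Weil divisor, and Lemma \ref{regular}(1) supplies the remaining structural assertions: $(Y, D_Y)$ with $D_Y := \overline{D_V}$ is a strict toroidal embedding, is the toroidal saturation of $(V, D_V)$, and $E_{V,Y}$ has SNC with $D_Y$, whence the exceptional divisor $E_{\exc} \subseteq E_{V,Y}$ has the same property. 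Functoriality for field extensions and smooth morphisms preserving $V$ and the component order of $D_V$ is inherited from the functoriality clause of Theorem \ref{can des2}, because such morphisms induce regular local projections of the marked relative complexes. The main obstacle I anticipate is the bookkeeping in step one: propagating the locally ordered structure on $D_V$ coherently to the \'etale cover and to $X_0 \times_X X_0$ compatibly with both projections, so that the combinatorial algorithm produces invariant centers which actually descend; once this is in place, the engine of Theorem \ref{can des2} does the work and the geometric translation via Lemma \ref{regular} is essentially formal.
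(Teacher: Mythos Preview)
Your proposal follows the same route as the paper: pass to the associated relative complex $(\Sigma,\Omega)$, apply the canonical relative desingularization of Theorem~\ref{can des2}, and read the geometric conclusions from Lemma~\ref{regular}. The strict case is handled correctly, and your verification that $\Omega$ is saturated in $\Delta$ is in fact more explicit than what the paper writes.

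There is, however, a real gap in the non-strict case which you do not close. \'Etale descent of the filtered centers gives you a proper birational $Y\to X$, but it only shows that $(Y,D_Y)$ is toroidal in the \'etale topology and that $E_{V,Y}$ has \emph{normal crossings} on $(Y,D_Y)$; it does not directly give strictness or SNC. The paper fills this in with two steps you omit. First, since $(Y,D_Y)$ is the saturation of the strict toroidal subset $(V,D_V)$, every stratum of $(Y,D_Y)$ meets $V$, and Lemma~\ref{extension1} then forces $(Y,D_Y)$ to be a \emph{strict} toroidal embedding. Second, $E_{V,Y}$ still only has NC on $(Y,D_Y)$, and the paper applies the additional canonical blow-ups of Proposition~\ref{normal} to upgrade NC to SNC; these extra blow-ups are functorial for smooth morphisms and do not touch $V$. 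Your identification of the ``main obstacle'' as the bookkeeping of the local order under descent is reasonable, but the actual missing ingredient is this NC-to-SNC post-processing step.
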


\begin{proof} Assume first  that $(X, D)$ is a strict toroidal embedding with the associated complex $\Sigma$. 
Thus the open saturated subset $V$ has a form $V=X(\Omega)$, where $(\Sigma,\Omega)$ is a relative complex. Consider its canonical relative desingularization  $(\Delta,\Omega)$ from Theorem \ref{can des2}. This, by Lemma \ref{regular}, defines the associated strict toroidal variety $(Y,U)$, such that 
$(Y,\overline{D_V})$  is also a  strict toroidal embedding, and 
the complement $E_{V,Y}=Y\setminus V$ is a divisor which  has SNC with $\overline{D_V}$.

Now if $(X, D)$ is a toroidal embedding, then we repeat the reasoning from the proof of Theorem \ref{des toroidal2}. We obtain  a  toroidal embedding $(Y,\overline{D_V})$, which is
strictly toroidal in the \'etale charts. Since $(Y,\overline{D_V})$ contains an open subset $(V, D_V)$ intersecting all strata and which is strictly toroidal then
$(Y,\overline{D_V})$ is strictly toroidal by Lemma \ref{extension1}.  Consequently, $E_{V,Y}:=X\setminus V$ has NC on $(Y,\overline{D_V})$. To  obtain the condition that $E_{V,Y}$ has SNC with $\overline{D_V}$ it suffices to apply the additional blow-ups from  Proposition \ref{normal}. The latter process commutes with
smooth morphisms and field extensions. The previous steps commute 
 with field extensions  and smooth morphisms  preserving the order of the components $D_V$ 

\end{proof}

\section{Canonical desingularization of locally toric varieties} \label{stratified}

\subsection{Locally toric and locally binomial varieties}

\begin{definition} \label{locally} A variety $X$ will be  called  {\it  locally toric} (respectively {\it \'etale locally toric}) if for any point $x\in X$ there
is an \'etale morphism called a {\it chart} 
$\varphi_x:U_x\to X_{\sigma}$ 
from an open neighborhood (respectively an \'etale neighborhood)  of $x$ to a  toric variety $X_{\sigma}$. Similarly,  $X$  will be called {\it locally binomial} (respectively {\it \'etale locally binomial}) if it locally (respectively { \'etale locally }) admits an \'etale morphism to a variety $Z\subset \AA^n=\Spec(K[x_1\ldots,x_n])$ with the (reduced) ideal $\cI_Z=(F_1,\ldots,F_s)$ defined by the binomomials $$F_i=x^{\alpha_i}-x^{\beta_i},$$
with $\alpha_i,\beta_i\in \ZZ^k$.
\end{definition}

Consider the homomorphism of tori $\phi: T^k\to T^s$, where $$ T^k=\Spec(K[x_1,x_1^{-1}\ldots,x_n,x_n^{-1}])\subset \AA^n,\quad T^s=\Spec(K[x_1,x_1^{-1}\ldots,x_s,x_s^{-1}])\subset \AA^s$$ given by $$\phi(x)=(x^{\alpha_1-\beta_1},\ldots,x^{\alpha_s-\beta_s}).$$ Its kernel defines a variety $T_Z:=\ker(\phi)=T^k\cap Z$, 
which is an open subvariety of $Z$. It is also a closed subtorus of $T^k$. Thus  $Z\supset T_Z$ is a   variety with a torus action with a big open orbit on which torus $T_Z$ acts transitively. It can be  described as  $Z=\Spec(K[G_1,\ldots,G_s])$ , where $G_i=y^{\gamma_i}$ are Laurent monomials in $K[T_Z]=K[y_1,y_1^{-1},\ldots,y_r,y_r^{-1}]$. The normalization  transforms
$Z$ into a normal toric variety.

Consequently,  the normalization of \'etale locally binomial variety transforms it into a \'etale locally toric  variety.

\subsection{Stratified toric varieties}
\label{sse: stratified toric}

Locally toric  varieties can be equipped with various natural stratifications making them into a more general class   of {\it stratified toroidal} varieties.
\subsubsection{Stratified toric varieties}
\begin{definition} \cite[Definition 3.1.3]{Wlodarczyk-toroidal} 
\label{sss} A {\it stratified toric variety} is a toric variety $(X,T)$ over a field $K$ with a 
$T$-stable equisingular stratification, such that for any stratum $s$ and two
geometric $\overline{K}$-points $x,x' \to s$ the completions of local rings $\widehat{{\cO}}_{X^{\overline{K}},x}$ and
$\widehat{{\cO}_{X^{\overline{K}},x'}}$ are isomorphic over $\Spec{\overline{K}}$, where $\overline{K}$ is the algebraic closure of $K$, and $$X^{\overline{K}}:=X\times_{\Spec{K}}{\Spec{\overline{K}}}$$
\end{definition}

 The important
difference between toric varieties and 
stratified toric varieties is that the latter come with 
stratifications which may be coarser than the one given by
orbits.   As a consequence, the
combinatorial object associated with a stratified toric
variety, called a {\it semifan}, consists of those faces of
the fan of the toric variety 
corresponding to strata. The faces  which do not
correspond to the strata  are
ignored (see Definition \ref{de: embedded}). 


\subsubsection{Demushkin's theorem and equisingularity}

Recall that $\sing(\sigma)$ is a unique maximal irreducible singular face of $\sigma$. (Definition \ref{irreducible})

The equisingularity condition  can be well understood by applying the following criterion:

\begin{theorem} (\cite{Demushkin}, \cite[Theorem 2.5.1]{Wlodarczyk-toroidal}) \label{th: Dem} Let $\sigma$ and
$\tau$ be
two cones of maximal dimension in isomorphic lattices
$N_\sigma\simeq N_\tau$. Set  $\widehat{X}^{K}_{\sigma}:=\Spec(K[[(\sigma^\vee)^\integ]]$, $\widehat{X}^{K}_{\tau}:=\Spec(K[[(\sigma^\vee)^\integ]]$  for any field $K$. 
Then the following conditions are equivalent: 
\begin{enumerate}

\item $\sigma \simeq \tau$.

\item $\widehat{X}^{{K}}_{\sigma} \simeq \widehat{X}^{{K}}_{\tau}$. 
\item $\widehat{X}^{\overline{K}}_{\sigma} \simeq \widehat{X}^{\overline{K}}_{\tau}$. 
\item $\sing(\sigma)\simeq \sing(\tau)$.
\end{enumerate}
\end{theorem}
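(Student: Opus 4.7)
The plan is to establish the chain $(1) \Leftrightarrow (4)$, the obvious $(1) \Rightarrow (2) \Rightarrow (3)$, and the hard $(3) \Rightarrow (1)$, which closes the circle. The implications $(1) \Rightarrow (2) \Rightarrow (3)$ are formal: an isomorphism of cones with lattices induces an isomorphism of dual monoids $\sigma^\vee \cap M_\sigma \simeq \tau^\vee \cap M_\tau$, hence of the monoid algebras and their $\mathfrak{m}$-adic completions, and base change along $K \hookrightarrow \overline{K}$ preserves such isomorphisms.

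For $(1) \Leftrightarrow (4)$: the direction $(1) \Rightarrow (4)$ is immediate since $\sing(\sigma)$ is intrinsically defined as the unique maximal irreducible singular face, so any cone isomorphism carries $\sing(\sigma)$ onto $\sing(\tau)$. For $(4) \Rightarrow (1)$ I would use the canonical product decomposition $\sigma = \sing(\sigma) \times \reg(\sigma)$ in the lattice $N_\sigma$ (with each summand saturated, so $N_\sigma = N_{\sing(\sigma)} \oplus N_{\reg(\sigma)}$). Since $\sigma,\tau$ have maximal dimension in isomorphic lattices, $\dim\sigma = \dim\tau$, and the hypothesis forces $\dim\reg(\sigma) = \dim\reg(\tau)$. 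Regular cones of equal dimension in lattices of equal rank are always isomorphic (each is freely generated by part of a lattice basis), so combining the factorwise isomorphisms yields $\sigma \simeq \tau$.

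The main obstacle is $(3) \Rightarrow (1)$, which must extract combinatorial data from an abstract ring isomorphism. My plan proceeds in two steps. \emph{First}, reduce to the irreducible case by identifying the regular factor intrinsically: the largest integer $r$ such that $\widehat{X}^{\overline{K}}_\sigma$ admits a presentation as a formal power series ring in $r$ variables over a local $\overline{K}$-algebra is a ring invariant, and it equals $\dim\reg(\sigma)$ (one direction is clear from the decomposition; the other follows because, with $\sing(\sigma)$ irreducible singular, no nonzero element of the maximal ideal of $\overline{K}[[\sing(\sigma)^\vee \cap M_{\sing(\sigma)}]]$ lies outside $\mathfrak{m}^2$ in a way that would split off additional free variables). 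Splitting off these power series variables reduces the problem to the case where both $\sigma$ and $\tau$ are irreducible singular, i.e.\ $\sigma = \sing(\sigma)$ and $\tau = \sing(\tau)$. \emph{Second}, in the irreducible case, promote the abstract isomorphism to a torus-equivariant one. The argument follows Demushkin: the natural action of the torus on $\widehat{X}^{\overline{K}}_\sigma$ comes from a maximal torus in the automorphism group of the completion, and irreducibility of the singularity rigidifies the situation so that all such maximal tori are conjugate (a local version of Luna's slice theorem, or a direct averaging/linearization argument over the pro-reductive part of the automorphism group). Once the isomorphism is torus-equivariant, comparing weight decompositions produces a monoid isomorphism $\sigma^\vee \cap M_\sigma \simeq \tau^\vee \cap M_\tau$, whose dual is the required cone isomorphism.

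The hardest step is the torus-equivariant promotion in the irreducible case. I would carry it out by showing that two maximal tori in $\mathrm{Aut}(\widehat{X}^{\overline{K}}_\sigma)$ of the correct rank are conjugate by an element of $\mathrm{Aut}(\widehat{X}^{\overline{K}}_\sigma)$, using that an irreducible toric singularity has no continuous family of nonequivalent torus actions (any two are related by a formal coordinate change). If a direct argument proves troublesome, the alternative is to invoke Demushkin's original theorem or the KKMS Chapter~I treatment and simply extend it from $\CC$ to arbitrary algebraically closed $\overline{K}$ by noting that the entire setup and conjugation argument are purely algebraic and independent of characteristic. The passage $(3) \Rightarrow (2)$ (descent from $\overline{K}$ to $K$) is not needed here because $(3)$ already implies $(1)$, from which $(2)$ follows.
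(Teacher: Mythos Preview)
Your proposal is essentially correct and aligns with the paper's approach. The paper does not give a self-contained proof of this theorem---it cites Demushkin and \cite[Theorem~2.5.1]{Wlodarczyk-toroidal} and only remarks that $(1)\Rightarrow(2)\Rightarrow(3)$ are obvious (which handles the passage from algebraically closed to arbitrary $K$). However, the sketch provided for the generalized Theorem~\ref{th: Dem2}, which the paper says is ``very similar,'' confirms that the core of $(3)\Rightarrow(1)$ is exactly your second step: transport the isomorphism to an automorphism of $\widetilde{X}_\sigma$, observe that the two torus actions give two maximal tori in the proalgebraic group ${\rm Aut}(\widetilde{X}_\sigma)$, use that maximal tori are conjugate (via the map to ${\rm Gl}({\rm Tan})$ with pro-unipotent kernel), and read off the cone isomorphism from the resulting torus-equivariant map.

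One organizational difference: your first step, reducing to the irreducible case by intrinsically identifying $\dim\reg(\sigma)$ as the maximal number of formal power-series variables that split off, is not used in the paper's route. The conjugacy argument is applied directly to the full cone, without any preliminary splitting. Your justification for that reduction (``no nonzero element \ldots would split off additional free variables'') is also not quite a proof---showing that an irreducible toric formal singularity admits no formal smooth factor is itself close in strength to what you are proving. Since your main argument does not actually depend on this reduction, the point is harmless, but you could simply drop the first step and run the maximal-tori argument on $\sigma$ itself; the equivalence $(1)\Leftrightarrow(4)$ then follows \emph{a posteriori} from $(3)\Rightarrow(1)$ combined with the elementary product decomposition you already noted.
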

\begin{remark} Theorem \ref{th: Dem} was originally proven over algebraically closed field. But we have obvious implications $(1)\Rightarrow (2)\Rightarrow (3)$.

\end{remark}

\subsubsection{Semifans}

For any subset $\Sigma_0$ of  a fan $\Sigma$  by the {\it closure of $\Sigma_0$} we shall mean the smallest subcomplex $\overline{\Sigma_0}$ of $\Sigma$ containing $\Sigma_0$.
\begin{definition} \cite[Definition 3.1.5]{Wlodarczyk-toroidal} \label{de: embedded} \enspace
An {\it embedded semifan\/} is a subset $\Omega \subset \Sigma$ of a
fan $\Sigma$ in a $N_{\bf Q}$ such that for every $\sigma
\in \Sigma$ there is a unique maximal face $\omega(\sigma) \in \Omega$ satisfying
\begin{enumerate}

\item $\omega(\sigma) \preceq \sigma$ and any other 
$\omega \in \Omega$ with $\omega \preceq \sigma$ is a face of
$\omega(\sigma)$,

\item $\sigma = \omega(\sigma) \oplus {\rm r}(\sigma)$ for some
regular cone ${\rm r}(\sigma) \in \Sigma$.
\end{enumerate}

An {\it embedded fan \/} is an   embedded semifan  $\Omega \subset \Sigma$, where $\Omega$ is a subfan of $\Sigma$. 

A {\it semifan\/} in a lattice $N$ is a set $\Omega$ of cones in $N^\QQ$
such that the set $\Sigma=\overline{\Omega}$ of all faces of the cones of $\Omega$ is a
fan in $N^\QQ$ and $\Omega \subset \Sigma$ is an embedded semifan.

By the {\it support} of the semifan $\Omega$ we mean the union of all
its faces, 
$|\Omega |=\bigcup_{\sigma\in \Omega}\sigma$.

\end{definition}
As in Definition \ref{irreducible}, we can consider
the unique decomposition $$\sigma=\sing(\sigma)\oplus\reg(\sigma)$$ in the context of fans.

\begin{example}\cite[Example 3.1.2]{Wlodarczyk-toroidal} \label{sing} (see Definition \ref{irreducible}) Consider any fan $\Sigma$, and the subset $$\sing(\Sigma):=\{\sing(\sigma)\mid \sigma\in \Sigma\}.$$
Then $\sing(\Sigma) \subset \Sigma$ is an embedded semifan.
\end{example}

\begin{proposition}[Proposition 3.1.7]\cite{Wlodarczyk-toroidal} \label{le: semifans correspondence} \enspace
 Let $\Sigma$ be a fan with a lattice $N$, and let $X$ denote the
associated toric variety. There is a canonical bijective  correspondence
between the toric equisingular stratifications of $X$ and the embedded
 semifans $\Omega \subset \Sigma$:

\begin{enumerate}
\item If $S$ is a toric stratification of $X$,  then the
corresponding embedded semifan $\Omega \subset \Sigma$ consists of
all those cones $\omega \in \Sigma$ that describe the big orbit of
some stratum $s \in S$.  

\item If $\Omega \subset \Sigma$ is an embedded semifan, then
the strata of the associated toric stratification $S_\Omega$ of $X$ arise
from the cones of $\Omega$ via 
$$ \omega \mapsto {\rm strat}(\omega) 
:= \bigcup_{\omega(\sigma) = \omega} O_\sigma. $$
\end{enumerate}

\end{proposition}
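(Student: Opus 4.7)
The plan is to construct the two maps explicitly and verify they are mutually inverse, with Demushkin's theorem (Theorem \ref{th: Dem}) providing the bridge between equisingularity and cone combinatorics.

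For the forward direction, starting from an equisingular $T$-stable stratification $S$, let $\Omega$ be the set of cones $\omega \in \Sigma$ such that $O_\omega$ is the big orbit of some stratum, and for each $\sigma \in \Sigma$ let $\omega(\sigma) \in \Omega$ correspond to the big orbit of the stratum containing $O_\sigma$. The relation $\omega(\sigma) \preceq \sigma$ is forced by $O_\sigma \subset \overline{O_{\omega(\sigma)}}$. For the maximality in condition (i) of Definition \ref{de: embedded}: if $\omega \in \Omega$ and $\omega \preceq \sigma$, then $O_\sigma \subset \overline{O_\omega}$, and since stratum closures are unions of strata the entire stratum containing $O_\sigma$ lies in $\overline{O_\omega}$, so $O_{\omega(\sigma)} \subset \overline{O_\omega}$ and therefore $\omega \preceq \omega(\sigma)$.

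The delicate step is condition (ii), the splitting $\sigma = \omega(\sigma) \oplus r(\sigma)$ with $r(\sigma)$ regular. I will pick geometric $\overline{K}$-points on $O_\sigma$ and $O_{\omega(\sigma)}$ and write the standard toric descriptions of their completions,
\[
\widehat{\cO}_{X^{\overline{K}},x} \simeq \overline{K}[[t_1, \ldots, t_{n-\dim\sigma}, (\overline{\sigma}^\vee)^\integ]],
\]
\[
\widehat{\cO}_{X^{\overline{K}},y} \simeq \overline{K}[[u_1, \ldots, u_{n-\dim\omega(\sigma)}, (\overline{\omega(\sigma)}^\vee)^\integ]],
\]
where $n=\dim N$ and the bars indicate cones of maximal dimension in $N_\sigma$ and $N_{\omega(\sigma)}$. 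Equisingularity gives an isomorphism between these completions, so Theorem \ref{th: Dem} (after absorbing the free parameters on each side into regular direct summands) yields $\sing(\sigma) \simeq \sing(\omega(\sigma))$ as abstract cones. Since $\sing(\omega(\sigma)) \preceq \omega(\sigma) \preceq \sigma$ is an irreducible singular face of $\sigma$, maximality of $\sing(\sigma)$ among such faces (Definition \ref{irreducible}) forces $\sing(\sigma) = \sing(\omega(\sigma))$. Decomposing $\sigma = \sing(\sigma) \oplus \reg(\sigma)$ and noting $\omega(\sigma) \supseteq \sing(\sigma)$, one writes $\omega(\sigma) = \sing(\sigma) + \omega'$ with $\omega' \preceq \reg(\sigma)$; regularity of $\reg(\sigma)$ then supplies a complementary regular face $r(\sigma) \preceq \reg(\sigma)$, giving the desired splitting.

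For the reverse direction, define $\strat(\omega) := \bigsqcup_{\omega(\sigma) = \omega} O_\sigma$. Disjointness and coverage are immediate, and the computation $\overline{\strat(\omega)} = \overline{O_\omega} = \bigsqcup_{\omega \preceq \omega'} \strat(\omega')$ shows each stratum is locally closed with closure a union of strata. For equisingularity, condition (ii) of the semifan gives $(\overline{\sigma}^\vee)^\integ \simeq (\overline{\omega}^\vee)^\integ \oplus \NN^{\dim r(\sigma)}$ whenever $\omega(\sigma) = \omega$, so absorbing the regular summand into free parameters makes the completion at every $\overline{K}$-point of $\strat(\omega)$ isomorphic to $\overline{K}[[v_1, \ldots, v_{n-\dim\omega}, (\overline{\omega}^\vee)^\integ]]$. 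Mutual inverseness is then routine: the big orbit of $\strat(\omega)$ is $O_\omega$, which recovers $\Omega$; conversely, the stratification produced in (2) from the semifan of (1) groups orbits exactly by $\omega(\sigma)$, agreeing with the original by condition (i).

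The main obstacle is condition (ii) in the forward direction: translating formal-local equisingularity into the combinatorial statement that $\sigma$ admits a regular direct summand complementary to $\omega(\sigma)$. This is precisely where Demushkin's theorem is indispensable, converting the formal isomorphism into the identification $\sing(\sigma) = \sing(\omega(\sigma))$ that then forces the decomposition through the canonical $\sing$-$\reg$ splitting.
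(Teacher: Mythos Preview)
Your proof is correct and follows essentially the same route as the paper: both directions hinge on Demushkin's theorem to pass from the formal-local equisingularity to the equality $\sing(\sigma)=\sing(\omega(\sigma))$, and then extract the regular complement from the canonical $\sing\oplus\reg$ decomposition. The paper phrases the inclusion $\sing(\omega(\sigma))\preceq\sing(\sigma)$ as ``$\sing(\tau)\supset\sing(\omega)$'' and invokes Theorem~\ref{th: Dem} again for equisingularity in the reverse direction, but these are cosmetic differences.
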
 
\begin{proof} 
(1) Let $S$ be  a $T$-invariant equisingular startification on $X$.
 Since the strata of $S$ are $T$-invariant and
disjoint, each
orbit $O_\tau$ belongs to a
unique stratum $s$. Let $\omega \in \Omega$ describe the
big open orbit of $s$. Write $s=\strat(\omega)$.
If  the orbit $O_\tau$ is contained in $s$, then it is contained in 
the
closure of $O_{\omega}$. Hence $\omega=\omega(\tau)$ is a face of $\tau$. 
If any other $\omega'\in \Omega$ is a face of $\tau$ with $O_\tau\subset s$ then $O_\tau$ is in the closure of
$\overline{O_{\omega'}}=\overline{\strat(\omega')}$.
Consequently, the stratum $s=\strat(\omega)$ is contained in  $\overline{\strat(\omega')}$. This implies that $\omega'$ is a face of $\omega$ and  shows Condition (1) of Definition \ref{de: embedded}. Moreover,
by Definition \ref{sss},  there exists
an isomorphism of the completions of the local
rings   
$\overline{{\cO}_{X_\tau,x}}$ and $\overline{{\cO}_{X_\omega,y}}$ of two 
points $x\in O_{\tau}$ and $
y\in O_{\omega} $.
By Theorem \ref{th: Dem}, and since 
$\sing(\tau)\supset\sing(\omega)$, we infer that
$\sing(\tau)=\sing(\omega)$. Hence
$\omega=\sing(\omega)\oplus \reg(\omega)$ and 
$$\tau=\sing(\tau)\oplus \reg(\tau)=
\omega\oplus {\rm r'}(\tau),$$
 where ${\rm r}(\tau)={\rm r}(\omega)\oplus {\rm r'}(\tau)$. This proves Condition (2) of Definition \ref{de: embedded}.
  
(2) By definition, the strata ${\rm strat}(\omega) 
$ in $S_\Omega$  are the unions of orbits that correspond to the collections of cones:
$$\{\sigma\in\Sigma\mid \omega(\sigma)=\omega)\}=
{\rm Star}(\omega,{\Sigma})\setminus \bigcup_{\omega\prec
\omega' \in \Omega} {\rm Star}(\omega',{\Sigma})$$
Under this correspondence  $$\overline{\strat(\omega)}=\overline{O_\omega}= \{O_\tau: \tau \in{\rm Star}(\omega,{\Sigma})\}   $$  In particular, 
$$\strat(\omega)=\overline{\strat(\omega)}\setminus \bigcup_{\omega \prec \omega'} \overline{O}_{\omega'}$$
 are
locally closed, and
$$\overline{\strat(\omega)}= \bigcup_{\omega \preceq \omega'} \overline{O}_{\omega'}.$$
Moreover, by definition, $\strat(\omega)$ are disjoint, and each orbit $O_\sigma$ is contained in a unique stratum $\strat(\omega)$, where $\omega=\omega(\sigma)$.
 Thus the sets $\strat(\omega)$, where $\omega\in \Omega$ form a stratification on $X$.
By Section  \ref{orbits}, each $\strat(\omega)\subset \overline{O_\omega}$ is a toric
variety with a fan
$$\Sigma':=\{\tau/\omega \quad | \quad \omega(\tau)=\omega\}$$
in $(N^{\bf Q})':=N^{\bf Q}/\spa(\omega)$.
Since $\sing(\tau)=\sing(\omega)$, then $\tau/\omega$ is regular and the strata $\strat(\omega)$ are smooth. Moreover, by Theorem \ref{th: Dem}, the  points in $\strat(\omega)$  have isomorphic 
local rings (and their completions). 
\end{proof}

\begin{corollary} The strata on a stratified toric variety are smooth.

\end{corollary}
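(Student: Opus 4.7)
The plan is to leverage the structural result already contained in part (2) of the proof of Proposition \ref{le: semifans correspondence}. By that correspondence, every stratum on a stratified toric variety $(X,T)$ has the form $\strat(\omega)$ for a unique cone $\omega$ in the associated embedded semifan $\Omega \subset \Sigma$. So it suffices to verify that each such $\strat(\omega)$ is smooth.

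First I would recall that $\strat(\omega)$ is a locally closed subset of the orbit closure $\overline{O_\omega}$; more precisely,
$$\strat(\omega) \;=\; \overline{O_\omega} \setminus \bigcup_{\omega \prec \omega' \in \Omega} \overline{O_{\omega'}}.$$
By the description in Section \ref{orbits}, $\overline{O_\omega}$ is itself a toric variety, with fan
$$\Sigma'(\omega) \;:=\; \{\, \pi_\omega(\tau) \mid \tau \in \Star(\omega,\Sigma),\ \omega(\tau)=\omega \,\}$$
in the quotient lattice $N/N_\omega$, where $\pi_\omega$ is the projection.

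Next I would apply Condition~(2) of Definition \ref{de: embedded}: for each $\tau \in \Sigma$ with $\omega(\tau)=\omega$, we have a decomposition $\tau = \omega \oplus r(\tau)$ for some regular cone $r(\tau) \in \Sigma$. Because $\omega$ is a face of $\tau$, its lattice $N_\omega$ is saturated in $N_\tau$, and passing to the quotient gives $\pi_\omega(\tau) \cong r(\tau)$ together with an isomorphism of the associated lattices. Hence $\pi_\omega(\tau)$ is regular. Consequently every cone of $\Sigma'(\omega)$ is regular, so $\overline{O_\omega}$ is a smooth toric variety, and its open subset $\strat(\omega)$ is smooth as well.

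There is no real obstacle beyond isolating this from the proof of the proposition: the only point worth emphasizing is the saturation of $N_\omega$ in $N_\tau$, which is automatic since $\omega$ is a face of $\tau$ in a fan. In sum, the corollary is a direct consequence of the defining axioms of an embedded semifan, combined with the orbit-closure description of toric strata.
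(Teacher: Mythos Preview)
Your overall strategy matches the paper's: the corollary is extracted from the proof of Proposition~\ref{le: semifans correspondence}, where one identifies $\strat(\omega)$ as the toric variety with fan $\{\tau/\omega \mid \omega(\tau)=\omega\}$ and checks these cones are regular. The paper deduces regularity from $\sing(\tau)=\sing(\omega)$ (established via equisingularity and Demushkin's theorem in part~(1)); you instead invoke the decomposition $\tau=\omega\oplus r(\tau)$ from Definition~\ref{de: embedded}(2) directly, which is an equally valid and arguably more direct route.

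There is, however, a genuine misstatement that makes an intermediate claim false. The fan you call $\Sigma'(\omega)=\{\pi_\omega(\tau)\mid \tau\in\Star(\omega,\Sigma),\ \omega(\tau)=\omega\}$ is the fan of the open subvariety $\strat(\omega)$, \emph{not} of $\overline{O_\omega}$. By Section~\ref{orbits}, the fan of $\overline{O_\omega}$ is the full quotient $\Star(\omega,\Sigma)/\omega$, with no restriction on $\omega(\tau)$, and in general $\overline{O_\omega}$ is not smooth. For a concrete counterexample, take $\Sigma=\overline{\sigma}$ for a singular two-dimensional cone $\sigma$ and set $\Omega=\{0,\sigma\}$; this is an embedded semifan, yet $\overline{O_0}=X_\sigma$ is singular while $\strat(0)=X_\sigma\setminus\overline{O_\sigma}$ is its smooth locus. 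Once you replace ``$\overline{O_\omega}$ is a smooth toric variety'' with ``$\strat(\omega)$ is a smooth toric variety'' (and drop the detour through $\overline{O_\omega}$), your argument goes through.
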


\subsection{Isomorphisms of  embedded semifans over  partially ordered sets}

\begin{definition}\label{auto} By an {\it automorphism of an embedded semifan}  $\Omega \subset \Sigma$ in $N^{\bf Q}$ we mean an  automorphism  $\alpha: N^{\bf Q}\to N^{\bf Q}$ of the vector space $N^{\bf Q}$, preserving the lattice $N$,  which defines an automorphism of the fan $\Sigma$ such that $\alpha(\omega)=\omega$ for any $\omega\in \Omega$. The group of the automorphisms will be denoted by  $\Aut(\Sigma,\Omega)$.
\end{definition}

\begin{remark} The automorphisms 
in $\Aut(\Sigma,\Omega)$ of the embedded semifan $\Omega\subset \Sigma$ induce the automorphisms  of the toric variety $X_\Sigma$ preserving the strata in $S_\Omega$ (and the torus $T\subset X_\Sigma$).
\end{remark}

The cones in $\Omega$ are in the natural bijective correspondence with the strata of $S_\Omega$, and this bijection respects the natural partial order. The other cones of $\Sigma$ have no geometric meaning and are dependent of a particular torus action. When comparing two embedded semifans associated with the same stratification $S$, it is convenient to consider the  natural bijection $\Omega\to S_\Omega$ of the partially ordered sets. 


\begin{definition}  Let $S$ be a partially ordered set.
By {\it an embedded semifan  over $S$} we mean a semifan $\Omega\subset \Sigma$ with an injective  map $j_{\Omega, S}:\Omega\to S$ of the partially ordered sets respecting the order.
\end{definition}
\begin{definition}
Let $\Omega_i \subset \Sigma_i$ in $N_i^{\bf Q}\supset N_i$, for $i=1,2$,   be two embedded semifans over $S$.
By the {\it isomorphism  $(\Sigma_1,\Omega_1)\to (\Sigma_2,\Omega_2)$  of embedded semifans    over $S$}  we mean  a vector space isomorphism  $i_{N_1,N_2}: N_1^{\bf Q}\to N_2^{\bf Q}$ preserving the lattice structures  which defines an isomorphism of the fans $j_{\Sigma_1,\Sigma_2}: \Sigma_1\to \Sigma_2$ and induces a bijection  $j_{\Omega_1,\Omega_2}:\Omega_1\to \Omega_2$ of the sets of isomorphic cones,  commuting with the maps to $S$: $$j_{\Omega_2, S}\circ j_{\Omega_1,\Omega_2}=j_{\Omega_1, S}$$ 
\end{definition}
\begin{remark} Any embedded semifan $\Omega\subset \Sigma$  is defined over the stratification $S_\Omega$ with  the natural map $j_{\Omega, S}: \Omega\to S_\Omega$ identifying the cones of $\Omega$ with the strata of $S_\Omega$. This  interpretation is useful for glueing properties when comparing different charts.

 Any embedded semifan  $\Omega\subset \Sigma$ can be considered naturally as an embedded semifan over $S:=\Omega$ with   the  identical map $\Omega\to \Omega=S$. 
 Then the isomorphisms of $\Omega\subset \Sigma$ into itself over $S$ are simply the automorphisms in $\Aut(\Sigma,\Omega)$.
\end{remark}

Analogously one can define the stratified toric varieties over $S$ and their isomorphisms.
\begin{definition}  Let $S$ be a partially ordered set.
By {\it a stratified toric  variety  over $S$} we mean a  stratified toric  variety $X$ with a stratification $T$, and an injective  map $j_{T, S}:T\to S$ of the partially ordered sets respecting the order.


By the {\it isomorphism  $(X_1,T_1)\to (X,T_2)$  of  stratified toric varieties   over $S$}  we mean   an isomorphism  of toric varieties preserving stratifications  which defines  a bijection  $j_{T_1,T_2}:T_1\to T_2$ of the sets of strata,  commuting with the maps to $S$: $$j_{T_2, S}\circ j_{T_1,T_2}=j_{T_1, S}$$ 
\end{definition}

\subsubsection{Generalized Demushkin's theorem}
Recall that for any cone $\tau$ by $\overline{\tau}$ we mean its closure, that is, the fan consisting of the faces of $\tau$.

\begin{theorem} \cite[Theorem 4.6.1]{Wlodarczyk-toroidal} \label{th: Dem2} Let $\overline{K}$ be an algebraically closed field.
Let $S$ be a partially ordered set.

Let $\sigma$ and
$\tau$ be
two cones of maximal dimension in isomorphic lattices
$N_\sigma\simeq N_\tau$. Let $S_\sigma$, and $S_\tau$ denote the equisingular stratifications on $X_\sigma$, and $X_\tau$, and $\Omega_\sigma\subset  \overline{\sigma}$, $\Omega_\tau\subset  \overline{\tau}$ be the corresponding embedded semifans  with the natural bijections $\Omega_\sigma\to S_\sigma$ and $\Omega_\tau\to S_\tau$, commuting with given injective maps $S_\sigma\to S$, and $S_\tau\to S$. 

Then the following conditions are equivalent: 
\begin{enumerate}

\item $(\overline{\sigma},\Omega_\sigma)$ and $(\overline{\tau},\Omega_\sigma)$
 are isomorphic over $S$.
 \item $(\widehat{X}^{\overline{K}}_{\sigma},\widehat{S}^{\overline{K}}_{\sigma})$ and $(\widehat{X}^{\overline{K}}_{\tau},\widehat{S}^{\overline{K}}_{\tau})$ are isomorphic over $S$ (defining the correspondence between the strata in $\widehat{S}^{\overline{K}}_{\sigma}$ and $\widehat{S}^{\overline{K}}_{\tau}$).

\end{enumerate}
\end{theorem}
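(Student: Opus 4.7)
The plan is to prove the equivalence (1) $\Leftrightarrow$ (2) by handling the two implications separately, with the bulk of the work in (2) $\Rightarrow$ (1), building on the classical Demushkin theorem (Theorem \ref{th: Dem}) and on the semifan--stratification correspondence (Proposition \ref{le: semifans correspondence}).

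For (1) $\Rightarrow$ (2), the argument is essentially formal. By definition, an isomorphism $(\overline{\sigma},\Omega_\sigma) \to (\overline{\tau},\Omega_\tau)$ over $S$ is a lattice isomorphism $N_\sigma \to N_\tau$ taking $\sigma$ to $\tau$ and inducing a poset bijection $\Omega_\sigma \to \Omega_\tau$ compatible with the maps $j_{\Omega_\sigma,S}$ and $j_{\Omega_\tau,S}$. Such data give an isomorphism of toric varieties $X_\sigma \simeq X_\tau$ over $K$ (hence over $\overline{K}$) respecting orbits, and, by Proposition \ref{le: semifans correspondence}, carrying $S_\sigma$ to $S_\tau$ compatibly with $S$. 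Passing to completions at $\overline{K}$-points of the deepest orbit yields (2).

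For (2) $\Rightarrow$ (1), let $\phi\colon (\widehat{X}^{\overline{K}}_\sigma, \widehat{S}^{\overline{K}}_\sigma) \to (\widehat{X}^{\overline{K}}_\tau, \widehat{S}^{\overline{K}}_\tau)$ be the given isomorphism over $S$. First I apply the unparameterized Demushkin theorem (Theorem \ref{th: Dem}) to extract a lattice isomorphism $\alpha_0\colon N_\sigma \to N_\tau$ sending $\sigma$ to $\tau$; this gives an algebraic isomorphism $X_\sigma \simeq X_\tau$ of toric varieties, but in general it will not respect the semifans, nor be compatible with $S$. Composing with $\phi$ reduces the question to showing that the resulting automorphism $\phi_0$ of $\widehat{X}^{\overline{K}}_\sigma$, which takes the pulled-back stratification to the original one compatibly with $S$, may be modified by an element of the automorphism group of $\overline{\sigma}$ into one coming from a lattice isomorphism respecting $\Omega_\sigma$ over $S$. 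The key intermediate observation is that closures of strata in $\widehat{X}^{\overline{K}}_\sigma$ are in bijective, order-preserving correspondence with the cones of $\Omega_\sigma$ (this is Proposition \ref{le: semifans correspondence} applied to the completion), so $\phi$ automatically produces a bijection $\Omega_\sigma \to \Omega_\tau$ of posets over $S$.

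The main obstacle is promoting this poset bijection to a genuine lattice isomorphism $N_\sigma \to N_\tau$ realizing it. The plan is to proceed by induction on $\dim\sigma$, using at each step Theorem \ref{th: Dem} applied to the stratum closures: for matched cones $\omega \in \Omega_\sigma$ and $\omega' = j_{\Omega_\tau,S}^{-1}(j_{\Omega_\sigma,S}(\omega)) \in \Omega_\tau$, the closures $\overline{O_\omega}$ and $\overline{O_{\omega'}}$ are themselves toric varieties (Lemma \ref{normalt}), carry induced semifans in the quotient lattices $N_\sigma/N_\omega$ and $N_\tau/N_{\omega'}$, and their completions at appropriate $\overline{K}$-points inherit the restricted $\phi$. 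By the induction hypothesis applied in lower dimension (the base case being handled by the classical Demushkin theorem together with the condition $\sing(\sigma) \subseteq \omega(\sigma)$ built into embedded semifans, Definition \ref{de: embedded}), the quotient semifans are isomorphic over $S$. To glue these inductively obtained lattice isomorphisms into a single isomorphism $N_\sigma \to N_\tau$, I would exploit the splitting $\sigma = \omega(\sigma) \oplus \reg(\sigma)$ guaranteed by Definition \ref{de: embedded}(2), which gives a canonical direct-sum decomposition of $N_\sigma$ on which the combinatorial comparisons glue unambiguously. The hardest point will be verifying that the compatibility with $S$, which encodes only a partial rigidification of the stratum labelling, is enough to force the local (per-stratum) lattice isomorphisms to agree on intersections of stratum closures; here one uses that the automorphism group $\Aut(\overline{\sigma},\Omega_\sigma)$ acts by lattice automorphisms, so any ambiguity in the local lifts lies in this group and can be absorbed after one final composition, producing the required isomorphism in (1).
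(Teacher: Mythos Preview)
Your direction (1) $\Rightarrow$ (2) is fine and matches the paper, but your plan for (2) $\Rightarrow$ (1) is genuinely different from the paper's argument and has a real gap.

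The paper does \emph{not} proceed by induction on $\dim\sigma$ or by gluing lattice isomorphisms built on stratum closures. Instead, it transports the torus $T_\tau$ along the given isomorphism $\phi$ to obtain a second torus action on $\widehat{X}^{\overline{K}}_\sigma$, and then argues that $T_\sigma$ and $T_\tau$ are both \emph{maximal tori} in the proalgebraic group $\Aut(\widehat{X}^{\overline{K}}_\sigma,\widehat{S}^{\overline{K}}_\sigma)$ of stratification-preserving automorphisms. Conjugacy of maximal tori in this proalgebraic group (using that the kernel of the differential map to $\mathrm{GL}(\mathrm{Tan}_{X,x})$ is pro-unipotent, cf.\ Section \ref{inverse}) yields a single automorphism $\psi$ intertwining the two torus actions. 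Because $\psi$ is torus-equivariant it sends semi-invariant generators to semi-invariant generators, hence directly gives the lattice/cone isomorphism $(\sigma^\vee)^{\integ}\simeq(\tau^\vee)^{\integ}$ together with the face correspondence, i.e.\ the isomorphism of embedded semifans over $S$.

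Your inductive scheme, by contrast, produces for each matched pair $(\omega,\omega')$ an isomorphism of \emph{quotient} lattices $N_\sigma/N_\omega \simeq N_\tau/N_{\omega'}$. The hard step---which you flag but do not resolve---is lifting and gluing these quotient isomorphisms to a single isomorphism $N_\sigma\to N_\tau$. The splitting $\sigma=\omega(\sigma)\oplus\mathrm{r}(\sigma)$ of Definition~\ref{de: embedded}(2) depends on the choice of $\omega$, and different choices give incompatible direct-sum decompositions of $N_\sigma$; there is no evident mechanism forcing the per-stratum lifts to agree on overlaps. Saying the ambiguity ``lies in $\Aut(\overline{\sigma},\Omega_\sigma)$ and can be absorbed'' is circular: the existence of an element of this group realizing the required correspondence is precisely the content of (1). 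The maximal-torus conjugacy argument is what supplies this missing global rigidity in one stroke, and it is the key idea you are missing.
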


\begin{proof} We sketch   the idea of the proof. It is very similar to the proof of Theorem
\ref{th: Dem}.
For the details see
\cite{Wlodarczyk-toroidal}.
  
  The isomorphism  $(\widehat{X}_{\sigma},\widehat{S}_{\sigma}) \to (\widehat{X}_{\tau},\widehat{S}_{\tau})$ over $S$ defines two different  toric actions on the same variety $(\widehat{X}_x,\widehat{S}_x):=(\widehat{X}_{\sigma},\widehat{S}_{\sigma})$ by the relevant tori $T_\tau$ and  $T_\sigma$. The action of these tori determine uniquely (up to constants) the
semi-invariant parameters generating the dual cones $\sigma^\vee$, and $\tau^\vee$.

The problem translates into the fact that the tori $T_\tau$ and  $T_\sigma$ are maximal and conjugate in the proalgebraic group $\Aut((\widehat{X}_x,\widehat{S}_x))$  of all the automorphisms preserving the stratification, that is $T_\sigma=\phi^{-1} T_\tau \phi$, where $\phi\in \Aut((\widehat{X}_x,\widehat{S}_x))$.  (In fact there is a natural map to the linear group of the tangent space, and its kernel is a unitary proalgebraic group (see Section \ref{inverse}.)

The conjugation $\phi$ defines the desired automorphism  of  $(\widehat{X}_x,\widehat{S}_x)$ which translates into isomorphism $\psi: (\widehat{X}_{\sigma},\widehat{S}_{\sigma}) \to (\widehat{X}_{\tau},\widehat{S}_{\tau})$, respecting tori actions. Thus it  defines the isomorphism of the cones $$(\sigma^\vee)^{\integ}\simeq \psi^*((\sigma^\vee)^{\integ}=(\tau^\vee)^{\integ}$$ and their relevant faces, that is the embedded semifans $(\overline{\sigma},\Omega_\sigma)$ and $(\overline{\tau},\Omega_\sigma)$.\end{proof}

\subsection{Stratification on toric varieties by the singularity type}

Let $X_\Sigma$ be a toric variety of dimension $n$ associated with $\Sigma$.
 For any closed point $x\in X_\Sigma$ with the residue field $K(x)\supseteq K$ we  define   {\it the singularity type} $\sing(x)$  to be the isomorphism class of the cones  $(\sing(\sigma),N^{\bf Q}_{\sing(\sigma)}),$
 where $\sigma$  is a cone of dimension $n$ such that  $\widehat{X}_x=\widehat{X}^{K(x)}_\sigma$.

 By  the Demushkin Theorem \ref{th: Dem}, $\sing(x)$  is independent of the toric structure on $X=X_\Sigma$. If $x\in O_\sigma\subset X_\sigma$, then $\sing(x)$ is the isomorphism class of $\sing(\sigma)$. We shall write it as  $\sing(x)=\sing(\sigma)$.
 
   \begin{lemma} \cite[Lemma 4.1.7]{Wlodarczyk-toroidal}\label{strat1}  The canonical stratification $\Sing(X_\Sigma)$ on $X_\Sigma$ defined by the singularity type $\sing$ on a toric variety $X_\Sigma$ corresponds to the embedded semifan $\sing(\Sigma)\subset \Sigma$ from Example \ref{sing}. 

 That is,  for a stratum $s\in \Sing(X_\Sigma)$ through $x\in X_\Sigma$ we have
 $$s\cap U=\{x\in U\mid \sing(x)=\sing(y)\},$$
for some open neighborhood $U$ of $x$.
\end{lemma}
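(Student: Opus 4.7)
The plan is to identify the stratification by singularity type with the stratification $S_{\sing(\Sigma)}$ arising from the embedded semifan $\sing(\Sigma) \subset \Sigma$ via Proposition \ref{le: semifans correspondence}. First I would compute the function $\omega(\sigma)$ associated to the semifan $\sing(\Sigma)$. Every face $\omega$ of $\sigma$ lying in $\sing(\Sigma)$ is irreducible (it equals $\sing(\tau)$ for some $\tau$, so has no regular summand); writing $\sigma = \sing(\sigma) \oplus \reg(\sigma)$ and $\omega = \omega_1 \oplus \omega_2$ with $\omega_i$ a face of $\sing(\sigma)$ and $\reg(\sigma)$ respectively, irreducibility of $\omega$ forces $\omega_2 = \{0\}$, so $\omega \preceq \sing(\sigma)$. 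Since $\sing(\sigma) \in \sing(\Sigma)$ is itself a face of $\sigma$, it follows that $\omega(\sigma) = \sing(\sigma)$, and by Proposition \ref{le: semifans correspondence}(2) the strata of $S_{\sing(\Sigma)}$ are
$$\strat(\sing(\sigma)) = \bigcup_{\sing(\tau) = \sing(\sigma)} O_\tau,$$
where the equality $\sing(\tau) = \sing(\sigma)$ is as cones in $N^{\bf Q}$.

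Next, I would use Theorem \ref{th: Dem} to identify the singularity type: for $x \in O_\sigma$ we have $\sing(x) = \sing(\sigma)$ as isomorphism classes of cones. Thus two points $x \in O_\sigma, y \in O_\tau$ have the same singularity type precisely when $\sing(\sigma) \simeq \sing(\tau)$ (abstractly isomorphic cones), whereas lying in the same stratum of $S_{\sing(\Sigma)}$ requires the stronger equality $\sing(\sigma) = \sing(\tau)$ as subcones. The lemma must therefore be proven locally, which is the reason the statement only asserts agreement on a neighborhood $U$ of $x$.

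The key step, and the main obstacle, is to upgrade isomorphism to equality locally. I would shrink $U$ so that $U \subset X_\sigma$, where $\sigma$ is the cone with $x \in O_\sigma$; then every $z \in U$ lies in an orbit $O_\tau$ for some $\tau \preceq \sigma$. Writing $\sigma = \sing(\sigma) \oplus \reg(\sigma)$, any face $\tau$ splits compatibly as $\tau = \tau_1 \oplus \tau_2$ with $\tau_1 \preceq \sing(\sigma)$ and $\tau_2 \preceq \reg(\sigma)$; since $\tau_2$ is regular, $\sing(\tau) = \sing(\tau_1) \preceq \tau_1 \preceq \sing(\sigma)$. If moreover $\sing(\tau) \simeq \sing(\sigma)$ as abstract cones, then in particular $\dim \sing(\tau) = \dim \sing(\sigma)$, and a face of $\sing(\sigma)$ of the same dimension must coincide with $\sing(\sigma)$, giving $\sing(\tau) = \sing(\sigma)$.

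Combining these observations completes the proof: in the neighborhood $U$ of $x$, the condition $\sing(z) \simeq \sing(x)$ on singularity types is equivalent to $z \in O_\tau$ with $\sing(\tau) = \sing(\sigma)$, and this is exactly membership in the stratum $\strat(\sing(\sigma))$ of $S_{\sing(\Sigma)}$. Hence the stratification $\Sing(X_\Sigma)$ defined by singularity type coincides locally with $S_{\sing(\Sigma)}$, which is the assertion of the lemma.
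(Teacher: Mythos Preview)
Your proof is correct and follows essentially the same approach as the paper: both identify the stratum $\strat(\sing(\sigma))$ via Proposition~\ref{le: semifans correspondence}, compute the singularity type on each orbit using the decomposition $\tau=\sing(\tau)\oplus\reg(\tau)$, and observe that on the open set $X_\sigma$ the singularity types of points outside the stratum correspond to proper faces of $\sing(\sigma)$, hence of strictly smaller dimension. Your write-up is more explicit than the paper's in two places---the verification that $\omega(\sigma)=\sing(\sigma)$ for the semifan $\sing(\Sigma)$, and the dimension argument upgrading abstract isomorphism $\sing(\tau)\simeq\sing(\sigma)$ to equality of subcones on a neighborhood---but the underlying argument is the same.
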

\begin{proof}  
Consider the stratification corresponding to the embedded semifan  $\Sing(\Sigma)\subset \Sigma$.
Let $x\in s=\strat(\sigma)$ where  $\sigma$ is an irreducible face in $\Sing(\Sigma)$. 
Then  $x\in O_\tau$, with $\tau$ satisfying $\sing(\tau)=\sigma$ in the  open set $X_\tau$. We can write $$\tau=\sing(\tau)\oplus
\reg(\tau),$$ so the singularity type $\sing$ (defined up to isomorphism of cones) is the same  for all points of the stratum $\strat(\sigma)$ and equal to $\sing(\sigma)$. It  differs from the points in $X_\tau\setminus s$ which have the singularity types of the irreducible cones of smaller dimension which are the proper faces of $\sing(\sigma)$. Thus any stratum $\strat(\sigma)$, where $\sigma\in\sing(\Sigma)$ can be characterized as the set of points with locally constant singularity type equal $\sing(\sigma)$.
\end{proof}

\subsection{Toric varieties with toric divisors}
One can adapt the terminology of relative complexes to the situation of fans. (See Sections \ref{reli}, \ref{reli2}, \ref{reli4})

\begin{definition} \label{rel fan} By {\it a relative fan} we mean a pair $ (\Sigma,\Omega)$ where $\Omega\subseteq \Sigma$ are fans. A relative fan is {\it regular} if any cone $\sigma\in \Sigma$ contains a unique  maximal face $\omega\in \Omega$ such that $\sigma=\omega\oplus\tau$ for a regular cone $\tau$.
A relative fan $ (\Sigma,\Omega)$ will be called {\it saturated} if any face of $\Sigma$ with vertices (one dimensional rays) in $\Omega$, is in fact in $\Omega$. Equivalently we say that a subfan $\Omega\subset \Sigma$ is {\it saturated}.
For any $\sigma\in \Sigma$, by $\sing_\Omega(\sigma)$ we mean the smallest face  containing $\sing(\sigma)$, and the  faces of $\sigma$ which are in $\Omega$. Then $\sigma=\sing_\Omega(\sigma)\oplus \tau$, where $\tau$ is a regular cone. Moreover, $\sing_\Omega(\sigma)$ is a unique  minimal face of $\sigma$ which gives such decomposition.
\end{definition}

\begin{example} \label{embedd} Consider a relative  fan $(\Sigma,\Omega)$. Let $$\sing(\Sigma,\Omega):=\{\sing_\Omega(\sigma)\mid \sigma\in \Sigma\}.$$
Then $\sing(\Sigma,\Omega) \subset \Sigma$ is an embedded semifan. Moreover, $\sing(\Sigma,\Omega)\supseteq\Omega$.
\end{example}

For any toric variety $X_\Delta\supset T$ associated with the fan
$\Delta$ denote by $$D_\Delta:=X_\Delta\setminus T$$ the toric divisor of the complement of the big torus $T$. It is the maximal toric divisor on $X_\Delta$ (see Definition \ref{toric divisors}).
The following lemma rephrases Lemma \ref{cr} in the context of fans and toric varieties.
\begin{lemma} \label{cr2} 
 
There is a bijective correspondence between the toric divisors $D$ on a toric variety $X_\Sigma$ and the  saturated subfans $\Omega$ of $\Sigma$.
Given a toric divisor $D$  on $X_\Sigma$, 
there is a unique saturated subfan $\Omega\subset \Sigma$ which is defined by the set of vertices of $\Sigma$ corresponding to the components of $D$. Conversely, $D=\overline{D_\Omega}$, where $D_\Omega=X_\Omega\setminus T$ is a toric divisor on $X_\Omega$.
\end{lemma}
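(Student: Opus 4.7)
\bigskip
\noindent\textbf{Proof proposal for Lemma \ref{cr2}.}

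The plan is to reduce this to the bookkeeping of rays, using the standard orbit--cone correspondence. First I would recall that any $T$--stable reduced divisor on $X_\Sigma$ is uniquely a union of irreducible toric divisors, and that the irreducible toric divisors are in bijection with the rays (one--dimensional cones) $\rho \in \Sigma_{(1)}$ via $\rho \mapsto D_\rho := \overline{O_\rho}$. Thus a toric divisor $D$ corresponds bijectively to a subset $R(D)\subseteq \Sigma_{(1)}$ of rays, via $D=\bigcup_{\rho\in R(D)} D_\rho$.

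Next I would construct the map $D \mapsto \Omega_D$. Given $D$, define
$$\Omega_D := \{\sigma\in\Sigma : \Ver(\sigma) \subseteq R(D)\}.$$
This is a subfan of $\Sigma$: if $\tau\preceq\sigma\in\Omega_D$, then $\Ver(\tau)\subseteq \Ver(\sigma)\subseteq R(D)$, so $\tau\in\Omega_D$. It is saturated in the sense of Definition \ref{rel fan}/\ref{omega2} directly by construction, since any cone of $\Sigma$ whose rays all lie in $\Omega_D$ has all its vertices in $R(D)$, and hence belongs to $\Omega_D$.

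Then I would construct the inverse. Given a saturated subfan $\Omega\subseteq\Sigma$, set $R(\Omega):=\Omega_{(1)}$, the set of rays of $\Omega$, and consider the open toric subvariety $X_\Omega\subseteq X_\Sigma$ with its toric divisor $D_\Omega = X_\Omega\setminus T$. I would then identify the closure $\overline{D_\Omega}$ in $X_\Sigma$ with $\bigcup_{\rho\in\Omega_{(1)}} D_\rho$ as follows: decompose $D_\Omega = \bigsqcup_{\sigma\in \Omega\setminus\{0\}} O_\sigma$ and use the orbit closure relation $O_\sigma\subseteq \overline{O_\rho}$ for any ray $\rho\preceq\sigma$. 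Since every nonzero $\sigma\in\Omega$ has at least one ray in $\Omega_{(1)}$, we get $\overline{D_\Omega} = \bigcup_{\rho\in\Omega_{(1)}} \overline{O_\rho} = \bigcup_{\rho\in\Omega_{(1)}} D_\rho$, which is indeed a toric Weil divisor on $X_\Sigma$.

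Finally I would check that the two assignments are mutually inverse. The composition $D\mapsto\Omega_D\mapsto \overline{D_{\Omega_D}}$ recovers $D$ because $R(\Omega_D)=R(D)$ (every ray of $R(D)$ is by itself a one--dimensional cone of $\Omega_D$, and $\Omega_D$ has no other rays). Conversely, starting with $\Omega$, I have $R(\overline{D_\Omega})=\Omega_{(1)}$, so $\Omega_{\overline{D_\Omega}} = \{\sigma\in\Sigma : \Ver(\sigma)\subseteq\Omega_{(1)}\}$; the saturation hypothesis on $\Omega$ is exactly what forces this to equal $\Omega$, giving bijectivity. The only conceptual step to highlight is that the saturation condition of Definition \ref{omega2} is precisely the translation of ``$D$ is determined by its components'', so no additional effort is required; the rest is routine verification using the orbit--cone dictionary from Section \ref{orbits}. \qed
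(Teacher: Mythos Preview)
Your proof is correct and follows essentially the same approach the paper has in mind: the paper's proof of Lemma~\ref{cr2} simply says it is identical to that of Lemma~\ref{cr}, and Lemma~\ref{cr} itself is stated without proof (just \qed), so both reduce the bijection to the bookkeeping of rays versus divisor components. Your write-up just makes explicit the routine verifications (that $\Omega_D$ is a subfan, that saturation is exactly what makes the round-trip $\Omega\mapsto\overline{D_\Omega}\mapsto\Omega$ close up) that the paper leaves to the reader.
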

\begin{proof} The proof is identical to  the proof of Lemma \ref{cr}.

\end{proof}


\subsubsection{Embedded fans}

There is a relation between the notions of embedded fans (Definition \ref{de: embedded}) and relative fans as both represent properties of stratifications. The following lemma is a consequence of definition.
\begin{lemma} A relative fan $ (\Sigma,\Omega)$ is an embedded fan iff it is a regular relative fan. Moreover, if $ (\Sigma,\Omega)$ is an embedded fan then $\Omega\subset \Sigma$ is a saturated subfan, and $\Reg((\Sigma,\Omega))=\Omega$.\qed

\end{lemma}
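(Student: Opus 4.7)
The plan is to unwind the two definitions side-by-side and observe that they require essentially the same data, so the equivalence is largely formal. For the backward direction, suppose $(\Sigma,\Omega)$ is a regular relative fan, so that every $\sigma\in\Sigma$ admits a unique maximal face $\omega(\sigma)\in\Omega$ with a decomposition $\sigma=\omega(\sigma)\oplus\tau$ where $\tau$ is regular. Because $\omega(\sigma)$ is a face of $\sigma$, the complementary summand $\tau$ is spanned by the rays of $\sigma$ not lying in $\omega(\sigma)$ and hence is itself a face of $\sigma$; in particular $\tau\in\Sigma$, verifying condition~(2) of Definition~\ref{de: embedded}. For condition~(1), I read ``unique maximal'' in the definition of a regular relative fan as ``greatest'' (the only reading that makes the equivalence go through): any $\omega'\in\Omega$ with $\omega'\preceq\sigma$ is then a face of $\omega(\sigma)$. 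The forward direction is immediate, since the axioms of an embedded fan give exactly the data needed for a regular relative fan, with $r(\sigma)$ prescribed to lie in $\Sigma$.

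For the saturation claim, take $\sigma\in\Sigma$ all of whose rays $\rho_1,\dots,\rho_k$ lie in $\Omega$. By condition~(1) of the embedded fan each $\rho_i$ is a face of $\omega(\sigma)$, hence a ray of $\omega(\sigma)$. Since $\sigma$ is spanned by its rays, this forces $\sigma\subseteq\omega(\sigma)$; combined with $\omega(\sigma)\preceq\sigma$ one gets $\sigma=\omega(\sigma)\in\Omega$.

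Finally, for the equality involving $\Reg(\Sigma,\Omega)$, I would first verify directly that in an embedded fan every $\sigma\in\Sigma$ is relatively regular: it is balanced with unique maximal face $\omega(\sigma)\in\Omega$, and the decomposition $\sigma=\omega(\sigma)\oplus r(\sigma)$ with $r(\sigma)$ regular gives $\sing(\sigma)=\sing(\omega(\sigma))\preceq\omega(\sigma)$, so the pair $(\sigma,\omega(\sigma))$ is regular. Thus $\Reg(\Sigma,\Omega)=\Sigma$. The companion statement, which I suspect is the intended reading of the final equality, is $\sing(\Sigma,\Omega)=\Omega$: for any $\sigma\in\Sigma$ one has $\sigma\cap|\Omega|\subseteq\omega(\sigma)$ (all faces of $\sigma$ in $\Omega$ lie in $\omega(\sigma)$) and $\sing(\sigma)\subseteq\omega(\sigma)$, so $\sing_\Omega(\sigma)=\omega(\sigma)$, and $\sigma$ is relatively irreducible exactly when $\sigma=\omega(\sigma)\in\Omega$.

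The whole argument is essentially definition-chasing; no step presents a real technical obstacle. The only delicate point is the interpretation of ``unique maximal'' in the regular relative fan definition, which must mean ``greatest'' — otherwise incomparable faces of $\sigma$ in $\Omega$ could exist and the embedded fan condition~(1) would fail. With that reading in place, the equivalence, the saturation, and the final identity all drop out immediately.
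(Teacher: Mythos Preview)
Your argument is correct and matches the paper's approach: the lemma carries a bare \qed\ with no proof, signalling that the author regards it as immediate definition-chasing, which is exactly what you do. Your reading of ``unique maximal'' as ``greatest'' is the intended one and is consistent with the paper's usage throughout.

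You are also right to flag the final clause. As written, $\Reg(\Sigma,\Omega)=\Omega$ cannot be what is meant: in an embedded fan every $\sigma\in\Sigma$ is relatively regular, so $\Reg(\Sigma,\Omega)=\Sigma$. Your computation $\sing_\Omega(\sigma)=\omega(\sigma)$ is correct and gives the companion identity $\sing(\Sigma,\Omega)=\Omega$, which is the statement actually used downstream (see the discussion after Lemma~\ref{strat2}(4) and the identity $\sing(\Sigma,\Omega)\cap\Reg(\Sigma,\Omega)=\Omega$ in the section on singular faces). So the printed equality is almost certainly a typo for one of these two, and you have supplied proofs of both.
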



 \begin{corollary} \label{embedded fan} Let $X_\Sigma$ be  a toric variety  associated with the fan $\Sigma$. Let
 $\Omega$ be a  saturated  subfan of  $\Sigma$, corresponding to a toric divisor  $D=\overline{D_\Omega}$.
 Then
 \begin{enumerate}
  \item $(X, D)$ is a strict toroidal embedding  if  $(\Sigma,\Omega)$ is   a regular relative fan (or embedded fan). Then  $(X, D)$ is the saturation of $(X_\Omega, D_{\Omega})\subset (X, D)$, and $E:=\overline{D_X\setminus D}=X\setminus X_\Omega$ is a relative SNC divisor on $(X, D)$.
   \item Conversely, if $(X, D)$ is a strict toroidal embedding, where $D\subset D_X$ then $D=\overline{D_\Omega},$ where $(\Sigma,\Omega)$ is regular relative fan.
 \item In general, the toroidal locus $(X, D)^{\tor}$ of $(X, D)$ is defined by the saturated  subfan $\Reg(\Sigma,\Omega)$ of $\Sigma$. It is the toroidal saturation 
of  $(X_\Omega, D_\Omega)$ in $(X, D)$.

  \end{enumerate}
 \end{corollary}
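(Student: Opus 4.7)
The plan is to bootstrap from the toric local characterizations of Lemmas \ref{reg1} and \ref{reg2}, essentially mirroring the proof of Lemma \ref{regular} but working with fans directly instead of associated complexes. Throughout, I will exploit that for $\sigma \in \Sigma$, the divisor $D \cap X_\sigma$ is precisely the toric divisor corresponding to the rays of $\sigma$ that lie in $\Omega$, by Lemma \ref{cr2}.

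For part (1), assume $(\Sigma,\Omega)$ is a regular relative fan, so each $\sigma \in \Sigma$ decomposes as $\sigma = \omega \oplus \tau$ with $\omega \in \Omega$ maximal and $\tau$ a regular cone. Then $X_\sigma \cong X_\omega \times X_\tau$ and, because $\Omega$ is saturated, $D \cap X_\sigma$ corresponds to the rays in $\omega$; hence $D \cap X_\sigma = D_\omega \times X_\tau$. By Lemma \ref{reg2}, $(X_\sigma, D \cap X_\sigma)$ is a strict toroidal embedding, and these glue to make $(X,D)$ strict toroidal. On the same $X_\sigma$, the complement $E \cap X_\sigma = X_\omega \times D_\tau$ with $\tau$ regular, so $E$ is described by a part of a free coordinate system on $(X,D)$: this is exactly the definition of a relative SNC divisor (Definition \ref{nc}). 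For the saturation statement, the open subset $X_\Omega \subset X$ corresponds under the stratification $S_\Omega$ of $(X,D)$ to the strata whose cones lie in $\Omega$; every stratum of $(X,D)$ contains a generization intersecting $X_\Omega$ precisely because every $\sigma = \omega \oplus \tau$ has $\omega \preceq \sigma$ with $\omega \in \Omega$. Hence $(X,D)$ is the toroidal saturation of $(X_\Omega, D_\Omega)$ in the sense of Definition \ref{saturation}.

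For part (2), suppose $(X,D)$ is a strict toroidal embedding with $D \subseteq D_X$. For each maximal $\sigma \in \Sigma$, $(X_\sigma, D \cap X_\sigma)$ is strict toroidal with $D \cap X_\sigma$ a toric divisor, so Lemma \ref{reg2} yields a decomposition $\sigma = \tau \times \sigma_1$ with $\sigma_1$ regular and $D \cap X_\sigma = D_\tau \times X_{\sigma_1}$. Comparing with Lemma \ref{cr2}, the rays of $\tau$ are exactly the vertices of $\sigma$ lying in $\Omega$, so $\tau = \sigma \cap |\Omega|$ is the maximal face of $\sigma$ belonging to $\Omega$ (by saturation of $\Omega$ this face is automatically in $\Omega$). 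This decomposition extends to every face of $\sigma$, proving $(\Sigma,\Omega)$ is a regular relative fan.

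For part (3), recall (Definition \ref{locus}) that the toroidal locus $(X,D)^{\tor}$ is the largest open subset on which $(X,D)$ is a strict toroidal embedding. By part (2) applied locally, $(X,D)$ is strict toroidal at a point of the orbit $O_\sigma$ exactly when $\sigma$ is relatively regular, i.e.\ $\sigma \in \Reg(\Sigma,\Omega)$. The set $\Reg(\Sigma,\Omega)$ is a subfan (its faces are again relatively regular) and it is saturated since any face with rays in $\Omega$ consists of rays that, being already in $\Omega$, are vacuously part of the regular decomposition. Hence $(X,D)^{\tor} = X_{\Reg(\Sigma,\Omega)}$ is open and, by part (1) applied to the regular relative fan $(\Reg(\Sigma,\Omega),\Omega)$, equals the toroidal saturation of $(X_\Omega, D_\Omega)$.

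The main technical subtlety will be checking that the local product decompositions in part (2) are compatible with the global fan structure, specifically that the $\tau$-component identified cone-by-cone assembles into the prescribed saturated subfan $\Omega$; this uses exactly the uniqueness in Lemma \ref{reg2} together with the bijection of Lemma \ref{cr2}, which pin down $\tau = \sigma \cap |\Omega|$ unambiguously from the divisor $D$ rather than from any choice of splitting.
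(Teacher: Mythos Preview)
Your proof is correct and follows essentially the same approach as the paper: the paper's own proof simply says ``The proof is identical to the proof of the analogous statement for complexes in Lemma \ref{regular}'', and what you have written is precisely a spelled-out version of that argument in the fan setting, invoking Lemmas \ref{reg2} and \ref{cr2} at the same points.
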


\begin{proof} The proof is identical to the proof of the analogous statement for complexes in  Lemma   \ref{regular}.

 



\end{proof}



 \subsubsection{Canonical stratification on toric varieties with toric divisors} \label{inv}
  Let $(X, D)$ be a  pair of a  toric variety $X=X_\Sigma$ and a toric divisor $D$.

  Let  $x\in X$,   $x\in O_\sigma$, with $\Omega_\sigma:=\sigma\cap \Omega$.
  We  define   {\it the singularity type} at $x$ on $(X, D)$ to be  $$\sing_D(x):=(\sing(x),n_D(x)),$$ where $n_D(x)$ is the number of the components of $D$  through $x\in X$,  and by $\Sing_D(X)$ the corresponding stratification.
  
  Recall that the divisor $D$ determines the 
  divisorial stratification $S_D$ with closed strata being the intersection components of $D$.


The vertices of $\Sigma$ associated with components of $D$ determine the saturated subfan $\Omega\subset \Sigma$.   \begin{lemma} \label{strat2} Let $X_\Sigma$ be a toric variety and $D=\overline{D_\Omega}$  be a toric divisor on  $X_\Sigma$  corresponding to a saturated subfan $\Omega\subseteq \Sigma$. Then the  stratification $\Sing_\Omega(X_\Sigma)$ on $X_\Sigma$ defined by  the embedded semifan $\sing(\Sigma,\Omega)\subset \Sigma$  from  Example \ref{embedd},
 coincides with the stratification $\Sing_D(X)$,  defined by  the singularity type $\sing_D(x)$. Moreover,
 
  \begin{enumerate}
 
 \item The strata of  $\Sing_\Omega(X_\Sigma)$ on $X_\Sigma$ correspond to the cones in $\sing(\Sigma,\Omega)$. In particular, 
 $\Sing_\Omega(X_\Sigma)$ contains the strata which are extensions of the orbits in $X_\Omega$, and $\sing(\Sigma,\Omega)$ contains $\Omega$.
\item 
The stratification $S_D$ is coarser then  $\Sing_\Omega(X_\Sigma)$.
\item The saturation of $(X_\Omega, D_\Omega)$ in $(X_\Sigma,\overline{D_\Omega})$ is an open subset $X_{\Reg(\Sigma,\Omega)}$ corresponding to the subfan  $\Reg(\Sigma,\Omega)$   of $\Sigma$ containing $\Omega$. Moreover, it defines the  toroidal locus  $$(X_\Sigma, D)^{tor}=X_{\Reg(\Sigma,\Omega)}.$$


\item The restriction of the stratification $\Sing_\Omega(X_\Sigma)=\Sing_D(X)$ to $V=(X_\Sigma, D)^{tor}$ coincides with the  divisorial stratification $S_{D|V}$.  
\item In particular, if  $(X, D)$ is a strict toroidal embedding then the stratifications $S_D$ and $\Sing_D(X)$ coincide.


\end{enumerate}

\end{lemma}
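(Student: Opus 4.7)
The plan is to deduce everything from Proposition~\ref{le: semifans correspondence} (the semifan/stratification dictionary) together with Demushkin's Theorem~\ref{th: Dem} in the form that detects the singularity type of a toric point from the cone $\sing(\sigma)$, and the explicit interpretation of $\sing_\Omega(\sigma)$ in Example~\ref{embedd}. First I would verify that $\sing(\Sigma,\Omega)\subset\Sigma$ really is an embedded semifan, which Example~\ref{embedd} asserts; it gives us a well-defined stratification $\Sing_\Omega(X_\Sigma)$ via $\omega \mapsto \strat(\omega) = \bigcup_{\sing_\Omega(\sigma)=\omega} O_\sigma$. Since for every $\omega\in\Omega$ one has $\sing_\Omega(\omega)=\omega$ (as $\omega$ is itself a face of $\omega$ lying in $\Omega$), we get $\Omega\subseteq\sing(\Sigma,\Omega)$, and thus the strata of $X_\Omega$ (i.e.\ the orbits corresponding to cones in $\Omega$) extend to strata of $\Sing_\Omega(X_\Sigma)$. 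This gives (1).

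For the equality $\Sing_\Omega(X_\Sigma)=\Sing_D(X)$, I would argue that for any cone $\sigma\in\Sigma$ and any $x\in O_\sigma$ the invariant $\sing_D(x)=(\sing(x),n_D(x))$ is determined by $\sing_\Omega(\sigma)$. Indeed $n_D(x)$ equals the number of rays of $\sigma$ that belong to $\Omega$ (since $D=\overline{D_\Omega}$ and the components of $D$ correspond to the vertices of $\Omega$), and the singularity type $\sing(x)=\sing(\sigma)$ is a face of $\sing_\Omega(\sigma)$. Conversely, the cone $\sing_\Omega(\sigma)$ is by definition the smallest face containing $\sing(\sigma)$ together with the rays of $\sigma$ lying in $\Omega$, so it is recovered from $(\sing(\sigma),\text{set of rays in }\Omega)$. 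Hence two points of $X_\Sigma$ have the same $\sing_D$ invariant if and only if their associated cones $\sing_\Omega(\sigma)$ and $\sing_\Omega(\tau)$ are isomorphic as cones together with the distinguished collections of rays coming from $\Omega$, which, combined with Theorem~\ref{th: Dem}, gives the coincidence of stratifications.

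For (2), the closed strata of the divisorial stratification $S_D$ are irreducible components of intersections $\bigcap D_i$, which in the toric picture correspond to the \emph{sub}cones $\sigma\cap|\Omega|\preceq\sing_\Omega(\sigma)$; thus every stratum of $S_D$ is a union of strata of $\Sing_\Omega(X_\Sigma)$. For (3), note that $\Reg(\Sigma,\Omega)$ consists precisely of the cones $\sigma$ with $\sing_\Omega(\sigma)\in\Omega$, equivalently $\sigma=\omega\oplus\tau$ with $\omega\in\Omega$ and $\tau$ regular; by Lemma~\ref{reg2} this is exactly the locus where $(X_\sigma,D\cap X_\sigma)$ is a strict toroidal embedding, and by Corollary~\ref{embedded fan} (or its fan analogue) this is also the toroidal saturation of $(X_\Omega,D_\Omega)$. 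Finally, (4) is immediate: on $V=X_{\Reg(\Sigma,\Omega)}$ every cone in $\sing(\Sigma,\Omega)\cap V$ lies in $\Omega$, so the $\Sing_\Omega$-strata are the orbit closures associated to cones of $\Omega$, which is exactly the divisorial stratification; and (5) follows by taking $V=X$.

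The main technical obstacle is the equality $\Sing_\Omega(X_\Sigma)=\Sing_D(X)$: one has to check carefully that the pointwise invariant $\sing_D(x)$ recovers (and is recovered by) the isomorphism class of the cone $\sing_\Omega(\sigma)$ together with its marked rays in $\Omega$. Going from an isomorphism of completed local rings respecting the divisor $D$ back to an isomorphism of these marked cones is exactly the content of the Demushkin-type statement of Theorem~\ref{th: Dem2} applied to the partially ordered set recording ``cones of $\Omega$''; everything else in the lemma is a relatively mechanical translation between fans, semifans and their geometric realizations.
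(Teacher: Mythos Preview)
Your handling of items (1)--(5) is essentially the same as the paper's: (1) follows from $\Omega\subseteq\sing(\Sigma,\Omega)$, (2) from the fact that the divisorial strata correspond to cones of $\Omega\subseteq\sing(\Sigma,\Omega)$, (3) from Corollary~\ref{embedded fan}(3), and (4), (5) are immediate consequences. These parts are fine.

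The main equality $\Sing_\Omega(X_\Sigma)=\Sing_D(X)$, however, is where your argument diverges from the paper and where there is a genuine gap. You try to establish a \emph{global} biconditional: two points have the same $\sing_D$-value if and only if their marked cones $(\sing_\Omega(\sigma),\{\text{rays in }\Omega\})$ are isomorphic, and then invoke Theorem~\ref{th: Dem2}. There are two problems with this. First, the invariant $\sing_D(x)=(\sing(x),n_D(x))$ only records the isomorphism class of $\sing(\sigma)$ together with a \emph{number}; it does not record how the $\Omega$-rays sit relative to $\sing(\sigma)$ inside $\sing_\Omega(\sigma)$. So two non-isomorphic marked cones can very well share the same $\sing_D$-value, and your ``iff'' need not hold. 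Second, even if the biconditional held, it would not by itself give the equality of stratifications: $\Sing_D(X)$ is defined \emph{locally} (strata are the locally closed sets where $\sing_D$ is locally constant), so distinct strata are allowed to carry the same invariant value.

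The paper bypasses all of this with a much simpler local monotonicity argument. For $x\in O_\tau$ with $\sigma:=\sing_\Omega(\tau)$, one checks $\sing_D(x)=(\sing(\sigma),|\Ver(\Omega_\sigma)|)$, so the invariant is constant along $\strat(\sigma)$. For any $y\in X_\tau\setminus\strat(\sigma)$ one has $y\in O_{\tau'}$ with $\tau'\prec\tau$ and $\sigma':=\sing_\Omega(\tau')\prec\sigma$; since $\sigma'$ is the smallest face containing $\sing(\tau')$ and $\Omega_{\tau'}$, at least one of $\sing(\tau')\prec\sing(\sigma)$ or $\Omega_{\tau'}\subsetneq\Omega_\sigma$ must occur. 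In either case $(\dim(\sing),n_D)$ drops strictly in the componentwise (hence lexicographic) order. That is all that is needed to identify the two stratifications; no Demushkin-type argument is required. You should replace your Demushkin route for the main equality with this monotonicity step.
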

\begin{proof} Consider the embedded semifan $\sing(\Sigma,\Omega)\subset \Sigma$. 
Let $\sigma\in \sing(\Sigma,\Omega)$ be a relatively irreducible cone of $\Sigma$, with its subfan $\Omega_\sigma=\Omega\cap \sigma$.
If $x\in \strat(\sigma)$, then  $x\in O_\tau$ with $\sing_\Omega(\tau)=\sigma$. This implies that
 $\sing_D(x)=(\sing(\sigma),n_D)$, where $n_D$ is the number of vertices in the subfan $\Omega_\sigma=\Omega\cap \tau$.
If $y\in X_\tau$, and $y\not\in \strat(\sigma)$ then 
$y\in O_{\tau'}$, where $\tau'\leq \tau$ with $\sing_\Omega(\tau')\prec \sigma$. By definition $\sing_\Omega(\tau')$ is the smallest face  containing $\sing(\tau')$ and $\Omega_{\tau'}$. So either $\sing(\tau')<\sing(\sigma)$ or
$\Omega_{\tau'}\subsetneq \Omega_\sigma$.
 In any case $$(\dim(\sigma_y),n_D(y))< (\dim(\sigma_x),n_D(x))$$  with respect to the lexicographic order. This shows that the stratifications $\Sing_{\Omega}(X_{\Sigma})$ and $\Sing_D(X)$ coincide.

(1) Follows from Definition.  

(2) 
By Lemma \ref{cr2}, the  closed strata of the stratification $S_D$ are the intersections of the irreducible divisorial componenents. 
These components correspond to the vertices of $\Omega$. Since $\Omega$ is saturated  their intersections correspond to
the cones in $\Omega$. Thus the cones in the subfan $\Omega$ correspond to the  strata and the closed strata in $S_D$.
In this correspondence a cone $\omega\in \Omega$ corresponds to the closed stratum $\overline{\strat(\omega)}=\overline{O_\omega}$.
The fact that $\sing(\Sigma)$ contains $\Omega$ means   that the $S_D$ is coarser than $\Sing_D$.

(3) Follows from Lemma \ref{embedded fan}(3).

(4) Follows from (1) and (3). In this case we also have $$\sing((\Reg(\Sigma,\Omega),\Omega)=\Omega.$$



(5) Follows from (4).

\end{proof}

\subsection{Stratified toroidal varieties}
\begin{definition}\cite[Definition 4.1.6]{Wlodarczyk-toroidal}
By a {\it stratified toroidal variety} we mean a stratified
variety $(X,S)$ such that for any $x\in s$, $s\in S$ there
is an \'etale map called a {\it chart} 
$\varphi: (U, S\cap U)\to (X_{\sigma},S_\sigma)$ 
from an open neighborhood to a stratified toric variety $(X_{\sigma},S_\sigma)$ such that all strata in $U_x$ are the preimages of
strata in $S_{\sigma}$. 
\end{definition}

\begin{remark} \label{over} The stratified toric varieties $(X_{\sigma},S_\sigma)$ in the above definition correspond to the embedded semifans $\overline{\sigma}\supseteq \Omega_\sigma$, where $\overline{\sigma}$ is the fan consisting of all the faces of $\sigma$, and $\Omega_\sigma$ describes the generic orbits of the strata in $S_\sigma$.  Moreover,
the definition implies the existence of a natural injective map from  $\Omega_\sigma \to S_\sigma\to S$, associating with faces the corresponding strata.  So the induced semifans can be considered as semifans over the stratification $S$. This observation is important and helps to understand how gluing works.

\end{remark}

Toroidal embeddings with the natural stratification are  particular
example of a stratified toroidal variety. 

\subsubsection{Canonical stratification on locally toric varieties}

Using   Demushkin's  theorem, we are in position to define a combinatorial invariant on a locally toric variety.

We associate with any closed point $x\in X$, with residue field $K(x)$  the invariant $\sing(x)$ to be the isomorphism class of the cones  $\sing(\sigma)$, where $\sigma$ are the cones  of the maximal dimension, where $$\widehat{X}_x \simeq \widehat{X}^{K(x)}_{\sigma}.$$ The invariant $\sing(x)=\sing(\sigma)$ is well defined by   Theorem \ref{th: Dem}.

\begin{lemma} \label{can0} \cite[Lemma 4.2.1]{Wlodarczyk-toroidal} Let $X$ be a locally toric variety. There is a locally closed stratification $\Sing(X)$ with smooth irreducible strata $s\in \Sing(X)$, such that for any point $y\in X$, and the stratum $s$ through $y$, there is  an open neighborhood $U$ such that $$s\cap U=\{x\in U\mid \sing(x)=\sing(y)\}.$$

Moreover, the stratification $\Sing(X)$ is locally defined via charts by the inverse images of strata on  $(X_\sigma,\Sing(X_\sigma)$ associated with the embedded semifan $(\overline{\sigma}, \sing({\sigma}))$. \end{lemma}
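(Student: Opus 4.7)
\emph{Plan.} I would construct $\Sing(X)$ locally via the \'etale charts of the locally toric structure, and verify that the pieces glue canonically because the invariant $\sing(y)$ is intrinsic to points of $X$.

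First, cover $X$ by \'etale charts $\varphi_i: U_i \to X_{\sigma_i}$. On each toric model $X_{\sigma_i}$, Lemma \ref{strat1} provides the canonical stratification $\Sing(X_{\sigma_i})$ corresponding to the embedded semifan $\sing(\sigma_i) \subset \overline{\sigma}_i$; I transport it to $U_i$ as $\varphi_i^{-1}(\Sing(X_{\sigma_i}))$. Because $\varphi_i$ is \'etale, these pullback strata are locally closed and smooth in $U_i$.

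The main step is verifying that the pullback does not depend on the chart. For any $y\in U_i$ with image $x = \varphi_i(y)\in X_{\sigma_i}$, \'etaleness yields a canonical isomorphism $\widehat{\cO}_{X_{\sigma_i},x}\otimes_{K(x)}K(y)\xrightarrow{\sim}\widehat{\cO}_{U_i,y}$. Combined with Demushkin's Theorem \ref{th: Dem} applied over $K(y)$, this forces $\sing(y)=\sing(x)$. Therefore the $\varphi_i$-pullback stratum through $y$ agrees set-theoretically with the intrinsic locus $\{z\in U_i \mid \sing(z)=\sing(y)\}$, so two charts give rise to the same partition on $U_i\cap U_j$ and the pieces assemble to a globally defined locally closed stratification $\Sing(X)$ characterized by the formula in the lemma.

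To obtain \emph{irreducible} strata, I refine this stratification by taking connected components of each piece. By Proposition \ref{le: semifans correspondence}, each $\strat(\omega)\subset X_{\sigma_i}$ is irreducible (in fact toric) and smooth, so its \'etale preimage is a disjoint union of smooth irreducible components, and the local chart description in the last sentence of the lemma follows immediately. The principal obstacle throughout is the intrinsic nature of $\sing(y)$: it is not a priori evident that different charts assign the same cone class to a point, but this is exactly what Demushkin's Theorem \ref{th: Dem} provides, and once that is granted, smoothness, local closedness and the local chart description follow from Lemma \ref{strat1}.
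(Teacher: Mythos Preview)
Your proposal is correct and follows essentially the same approach as the paper. The paper's own proof is extremely terse: it cites Lemma~\ref{strat1} to say that on each toric chart $X_\sigma$ the stratification is determined by the intrinsic invariant $\sing(x)$, and then simply declares that the local pullbacks glue; you have spelled out the chart-independence step via the completed-local-ring identification and Demushkin's Theorem~\ref{th: Dem}, and you have also made explicit the passage to connected components for irreducibility, both of which the paper leaves implicit.
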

\begin{proof} 

Consider an \'etale chart $U\to X_\sigma$.
By Lemma \ref{strat1}, the stratification $\Sing(X_\sigma)$ on $X_\sigma$ is defined by the singularity type $\sing(x)$.  So the stratification $\Sing(X)$ is  induced locally by the stratification $\Sing(X_\sigma)$. The strata of $\Sing(X)$ are locally the inverse images of strata in $\Sing(X_\sigma)$.
\end{proof}

\subsubsection{Canonical stratification on  locally toric varieties with divisors}

\begin{definition}\label{divisor} Let $X$ be locally toric variety. 
A divisor $D$ on $X$ will be called a {\it locally toric divisor}
if there exists locally a chart $U\to X_\sigma$, where $D\cap U$ is the inverse image of a toric divisor on $X_\sigma$.

\end{definition}

The above extends to the relative case.
Let $X$ be a locally toric variety, and $D$ be a locally toric divisor on $X$. For any closed point $x\in X$
set as before  (see Section \ref{inv})$$\sing_D(x):=(\sing(x),n_D(x)).$$ 
\begin{theorem} \label{can} Let $X$ be locally toric variety with a locally toric divisor $D$. There is a locally closed  stratification $\Sing_D(X)$ defined by the invariant $\sing_D(x)$ such that $(X,\Sing_D(X))$ is a stratified toroidal variety.
Moreover, the divisorial stratification  $S_D$  is coarser than $\Sing_D(X)$.
If $(X, D)$ is a strict toroidal embedding then the divisorial stratification $S_D$ coincides with the stratification $\Sing_D(X)$.

\end{theorem}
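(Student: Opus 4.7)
The plan is to reduce everything to the toric case handled in Lemma \ref{strat2} and then show that the resulting local pictures glue. Fix a closed point $x \in X$ and choose an \'etale chart $\phi \colon U \to X_\sigma$ such that $\phi^{-1}(D_\sigma) = D\cap U$ for a toric divisor $D_\sigma$ on $X_\sigma$ (this exists by Definitions \ref{locally} and \ref{divisor}). On the toric model, Lemma \ref{strat2} shows that the stratification $\Sing_{D_\sigma}(X_\sigma)$ defined by the invariant $\sing_{D_\sigma}$ coincides with the stratification $\Sing_{\Omega_\sigma}(X_\sigma)$ induced by the embedded semifan $\sing(\overline{\sigma},\Omega_\sigma) \subset \overline{\sigma}$, where $\Omega_\sigma$ is the saturated subfan of $\overline{\sigma}$ corresponding to $D_\sigma$. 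In particular, the strata on $X_\sigma$ are locally closed smooth and the pair $(X_\sigma,\Sing_{D_\sigma}(X_\sigma))$ is a stratified toric variety.

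Next I would check that the invariant $\sing_D$ is preserved by \'etale maps respecting $D$. For the singularity type $\sing$ this follows from Demushkin's Theorem \ref{th: Dem}: if $u \in U$ lies over $t \in X_\sigma$ then $\widehat{\cO}_{U,u} \simeq \widehat{\cO}_{X_\sigma,t} \otimes_{K(t)} K(u)$, and the equivalent conditions of that theorem imply $\sing(u) = \sing(t)$. The count $n_D$ is preserved by Lemma \ref{Mum}, which gives a bijection between the irreducible components of $D$ through $u$ and of $D_\sigma$ through $t$ (the components of $D_\sigma$ being normal). Consequently, the fibres of $\sing_D$ on $U$ are precisely the preimages of the fibres of $\sing_{D_\sigma}$ on $X_\sigma$. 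This says that the local stratifications on the charts are pullbacks of $\Sing_{D_\sigma}(X_\sigma)$, hence they are locally closed with smooth strata, and they agree on overlaps via the equisingular pullback description. They therefore glue to a well-defined locally closed stratification $\Sing_D(X)$ on $X$. By construction each chart $\phi \colon (U, \Sing_D(X)\cap U) \to (X_\sigma, \Sing_{D_\sigma}(X_\sigma))$ is an \'etale map of stratified spaces in which strata are pulled back from strata, so $(X,\Sing_D(X))$ is a stratified toroidal variety in the sense of Definition \ref{sss}.

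For the comparison with the divisorial stratification, note that $S_D$ is itself defined locally as the pullback of $S_{D_\sigma}$ under the chart (because \'etale maps respect unions and intersections of divisor components, again by Lemma \ref{Mum}). Thus the relative coarseness of $S_D$ with respect to $\Sing_D(X)$ can be checked on the toric model, where it is Lemma \ref{strat2}(2). Similarly, if $(X,D)$ is a strict toroidal embedding then we may choose charts for which $(X_\sigma,D_\sigma)$ is itself a strict toroidal embedding (by Lemma \ref{reg2}, so that $\sigma = \tau \times \sigma_1$ with $\sigma_1$ regular and $D_\sigma = D_\tau \times X_{\sigma_1}$), and then Lemma \ref{strat2}(5) applied to this model says $S_{D_\sigma} = \Sing_{D_\sigma}(X_\sigma)$. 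Pulling back through $\phi$ gives $S_D = \Sing_D(X)$ in a neighborhood of $x$, and since $x$ was arbitrary, globally.

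The main obstacle I expect is the gluing step: one must argue that the two ingredients of the invariant, namely the abstract isomorphism class $\sing(x)$ and the divisor-component count $n_D(x)$, really are \'etale-invariant in a chart-independent way. The singularity type requires Demushkin, and the count requires the normality of the toric divisorial components together with the \'etale descent for Weil divisors in Lemma \ref{Mum}; once both are in place the remaining verifications are formal consequences of Lemma \ref{strat2}.
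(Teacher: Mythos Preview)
Your proposal is correct and follows essentially the same route as the paper: reduce to the toric model via an \'etale chart and invoke Lemma \ref{strat2}, exactly as the paper does (its proof simply cites Lemma \ref{strat2} and says ``the rest is identical as for the proof of Theorem \ref{can0}''). You have filled in the details the paper leaves implicit --- the \'etale invariance of $\sing$ via Demushkin and of $n_D$ via Lemma \ref{Mum} --- which is precisely the content hidden in that one-line reference; the only minor slip is the pointer to Definition \ref{sss} (that labels stratified \emph{toric} varieties, whereas the relevant notion here is stratified \emph{toroidal} variety).
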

\begin{proof} The proof uses Theorem \ref{strat2}. The rest is identical as  for the proof of Theorem \ref{can0}. 
\end{proof}




\subsection{Conical semicomplexes}
\subsubsection{Semicones}
Next we generalize the notion of  cone. 
The stratified toroidal varieties are locally described  by the charts to  affine stratified toric
varieties $(X_\sigma, S_\sigma)$. The induces semifan 
consists of the cone $\sigma$ and some of their faces -those corresponding to the strata in $S_\sigma$. We will call  such semifans  semicones.
In analogy to usual cones,  we  denote the semicones by small Greek letters $\sigma, \tau$, etc.:

We also shall  consider {\it semicones  over partially ordered} set $S$, i.e., a   semifan $\sigma$ over $S$ consisting of
some  faces of $\sigma$ with an injective map of partially ordered sets $\sigma\to S$. 

Again by the previous Remark \ref{over}, the semicones arising from the charts are naturally defined over the stratification $S$.


\smallskip

\begin{definition}\enspace \cite[Definition 4.3]{Wlodarczyk-toroidal} \label{semii}
Let $N$ be a lattice in the vector space $N^\QQ$. A {\it semicone\/} in $N^{\QQ}$ is a semifan $\sigma$
in $N^{\QQ}$ such that the support $|\sigma|$ of $\sigma$ occurs as an
element of $\sigma$.  
\end{definition}

This implies that the semifan $\sigma$ consists of the cone $|\sigma|$ and some of their faces.  It  defines an embedded semifan 
 $(\overline{\sigma},\sigma)$ , where $\overline{\sigma}$ is the the fan consisting of the faces of the cone $|\sigma|$.
Moreover, one can write the semicone $\sigma$ as the collection of the faces $$\sigma=\{|{\tau}| \mid \tau\leq \sigma\}.$$ 

The cone $|\sigma|$ corresponds to the minimal stratum $\strat(|\sigma|)=O_{|\sigma|}$. By abuse of notation we will use the notation $O_\sigma=\strat(\sigma)$ in the future. 

\smallskip

The {\it dimension\/} of a semicone is the dimension of its
support. Moreover, for an injection $\imath \colon N^\QQ \to (N')^\QQ$ of
vector spaces, the {\it image\/} $\imath(\sigma)$ of a semicone $\sigma$ in
$N^\QQ$ is the semicone consisting of the images of all the elements of
$\sigma$.

Note that every cone becomes a semicone by replacing it with the set
of all its faces. Moreover, every semifan is a union of maximal
semicones. Generalizing this observation, we build up in the next sections the {\it semicomplexes} associated with stratified toroidal varieties 
from semicones.

We will rephrase the Demushkin Theorem \ref{th: Dem2} in this language.
\begin{corollary} \label{th: Dem3}
Let $S$ be a partially ordered set. 
Let $\sigma_1$ and
$\sigma_2$ be
two semicones over $S$  with isomorphic lattices
$N_1\simeq N_2$. Let $S_{\sigma_1}$, and $S_{\sigma_2}$ denote the equisingular stratifications on $X_{\sigma_1}$, and $X_{\sigma_2}$ with injective maps $S_{\sigma_1}\to S$, and $S_{\sigma_2}\to S$.
For $i=1,2$ denote by $(\widehat{X}^{\overline{K}}_{\sigma})=\Spec(\widehat{\cO({X}^{\overline{K}}_{\sigma_i x_i}})$, where $x_i \in O^{\overline{K}}_{\sigma_i}$ is a closed point.
 
Then the following conditions are equivalent: 
\begin{enumerate}

\item The semicones $(\sigma_1,N_{\sigma_1})$ and $(\sigma_2,N_{\sigma_2})$ are isomorphic over $S$.
\item $(\widehat{X}^{\overline{K}}_{\sigma_1,x_1},\widehat{S}^{\overline{K}}_{\sigma,x_1})$ are isomorphic $(\widehat{X}^{\overline{K}}_{\sigma_2,x_2},\widehat{S}^{\overline{K}}_{\sigma_2,x_2})$ over $S$. 
\end{enumerate}
\end{corollary}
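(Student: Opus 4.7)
The plan is to derive Corollary \ref{th: Dem3} as a direct reformulation of Theorem \ref{th: Dem2}. By Definition \ref{semii}, a semicone $\sigma_i$ in $N_i^\QQ$ is a semifan whose support $|\sigma_i|$ belongs to $\sigma_i$; equivalently, $\sigma_i$ is the data of a maximal-dimensional cone $|\sigma_i|$ together with a distinguished subset of its faces, i.e.\ an embedded semifan $(\overline{|\sigma_i|},\sigma_i)$ where $\overline{|\sigma_i|}$ is the fan of all faces of $|\sigma_i|$. A semicone over the partially ordered set $S$ carries in addition an injection $\sigma_i \hookrightarrow S$ of partially ordered sets, so it is precisely an embedded semifan of maximal dimension over $S$ in the sense used in Theorem \ref{th: Dem2}.

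First I would unravel what an isomorphism of semicones over $S$ means and check that it coincides with an isomorphism of the associated embedded semifans over $S$. A lattice isomorphism $N_1 \to N_2$ induces an isomorphism $N_1^\QQ \to N_2^\QQ$ matching $|\sigma_1|$ with $|\sigma_2|$, hence matching the full fans $\overline{|\sigma_1|}$ and $\overline{|\sigma_2|}$, and by hypothesis it sends $\sigma_1$ to $\sigma_2$ compatibly with the maps to $S$. This identification is tautological from the definitions.

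Next I would invoke Theorem \ref{th: Dem2} applied to the top-dimensional cones $|\sigma_1|$ and $|\sigma_2|$, with their attached embedded semifans $\sigma_i \subset \overline{|\sigma_i|}$. The theorem gives precisely the equivalence between isomorphism of these embedded semifans over $S$ and isomorphism of the stratified completions $(\widehat{X}^{\overline{K}}_{|\sigma_i|},\widehat{S}^{\overline{K}}_{|\sigma_i|})$ at closed points of the minimal orbit $O_{|\sigma_i|}^{\overline{K}}$, over $S$. Independence of the chosen base point $x_i \in O_{|\sigma_i|}^{\overline{K}}$ is a minor point: the torus $T_{|\sigma_i|}$ acts transitively on $O_{|\sigma_i|}^{\overline{K}}$ and preserves the stratification, so any two such points yield canonically isomorphic stratified formal completions.

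The only non-cosmetic step, and hence the main thing to verify carefully, is that the equisingular stratifications $S_{\sigma_i}$ appearing in the statement of the corollary coincide with the stratifications whose generic orbits correspond to the cones of $\sigma_i$ under Proposition \ref{le: semifans correspondence}, and that the maps $S_{\sigma_i}\to S$ induced by the semicone structure agree with the ones from the statement. This is built into the definition of a semicone over $S$: the faces of $\sigma_i$ are exactly the faces whose corresponding orbits are generic in strata of $S_{\sigma_i}$, and the injection $\sigma_i \to S$ is the one that labels these strata. Once this identification is made, conditions (1) and (2) of the corollary are the two equivalent conditions of Theorem \ref{th: Dem2}, and there is nothing further to prove.
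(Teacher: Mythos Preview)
Your approach is essentially the same as the paper's: both recognize the corollary as a direct translation of Theorem \ref{th: Dem2}, and both identify a semicone over $S$ with an embedded semifan over $S$. The identification of the stratifications and the independence of the base point are handled just as the paper would.

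There is one small point you glossed over that the paper treats explicitly. You invoke Theorem \ref{th: Dem2} for ``the top-dimensional cones $|\sigma_1|$ and $|\sigma_2|$,'' but the definition of semicone (Definition \ref{semii}) does not require $|\sigma_i|$ to have full dimension in $N_i$; that hypothesis only enters later, in the definition of a semicomplex. Theorem \ref{th: Dem2}, however, is stated for cones of maximal dimension, and its object $\widehat{X}^{\overline K}_{\sigma}=\Spec(\overline K[[(\sigma^\vee)^{\integ}]])$ coincides with the completion at a closed point of $O_{|\sigma_i|}$ only in that case. The paper's proof closes this gap by choosing regular cones $\tau_i$ with $\delta_i:=|\sigma_i|\times\tau_i$ of maximal dimension in $N_i^{\bf Q}$, observing $(\widehat X^{\overline K}_{\sigma_i,x_i},\widehat S^{\overline K}_{\sigma_i,x_i})\simeq(\widehat X^{\overline K}_{\delta_i},\widehat S^{\overline K}_{\delta_i})$, and only then applying Theorem \ref{th: Dem2} to the pair $(\overline{\delta_i},\sigma_i)$. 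The resulting isomorphism of embedded semifans $(\overline{\delta_1},\sigma_1)\to(\overline{\delta_2},\sigma_2)$ then restricts to an isomorphism of the semicones $(\sigma_i,N_{\sigma_i})$. If you add this one-line reduction, your argument and the paper's are identical.
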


\begin{proof} The theorem is a rephrasing of the Theorem \ref{th: Dem2}.
Let $\delta_1=|\sigma_1|\times \tau_1$, and $\delta_2=|\sigma_2|\times \tau_2$ be two cones of maximal  dimension, in $N_i^{\bf Q}$,  where  $\tau_i$ are regular.

Then $(\widehat{X}^{\overline{K}}_{\sigma_i,x_i},\widehat{S}^{\overline{K}}_{\sigma,x_1}) \simeq (\widehat{X}^{\overline{K}}_{\delta_i},\widehat{S}^{\overline{K}}_{\delta_i})$, and their isomorphism implies  the isomorphism of the embedded semifans 
$(\overline{\delta_1},\sigma_1)\to (\overline{\delta_2},\sigma_2)$, which in turn defines the isomorphism  of the semicones $(\sigma_1,N_{\sigma_1})\to (\sigma_2,N_{\sigma_2})$.

\end{proof}


 
 

\subsubsection{Conical semicomplexes}

Let $\sigma$ be as semicone over $S$.
Denote by  $\Aut(\sigma)$ the group of automorphism of the semicone $\sigma=(\sigma,N_\sigma)$ (see Definition \ref{auto}),
and let $\Aut_S(\sigma)$ the groups of automorphisms of the   semicone $\sigma$ over $S$. 
In particular, we have  $\Aut(\sigma)=\Aut_{\sigma}(\sigma)$ is  the group of automorphisms $\sigma\to \sigma$ defined over $S=\sigma$. 

\begin{definition} \cite[Definition 4.3.1]{Wlodarczyk-toroidal} \label{de: conical semicomplex} \enspace

Let $\Sigma$ be a finite collection of semicones $\sigma$ in
 $N_{\sigma}^{\bf Q}\supset N_{\sigma}$ with $\dim(\sigma) = \dim(N_{\sigma})$. Moreover, suppose that there is a partial ordering ``$\le$'' on
$\Sigma$. We associate with each $\sigma\in \Sigma$  the group of automorphisms $\Aut(\sigma)$.

We call $\Sigma$ a {\it semicomplex\/} if for any pair $\tau \le
\sigma$ in $\Sigma$ there is an associated linear injection
$\imath^{\sigma}_{\tau} \colon N^\QQ_{\tau} \to N^\QQ_{\sigma}$ such that 
$\imath^{\sigma}_{\tau}(\tau)\subset \sigma$. In particular,
$\imath^{\sigma}_{\tau}(|\tau|)\in \sigma$ is a face of the cone $|\sigma|$, 
$\imath^{\sigma}_{\tau}(N_{\tau}) \subset N_{\sigma}$ is a saturated
sublattice and
\begin{enumerate}
\item  $\imath^{\sigma}_{\tau} \circ
\imath^{\tau}_{\varrho} =
\imath^{\sigma}_{\varrho}\alpha_\rho,$ where $\alpha_\rho\in \Aut(\rho)$.
\item $\imath^{\sigma}_{\varrho}(|\varrho|) =
\imath^{\sigma}_{\tau}(|\tau|)$ implies $\varrho = \tau$, 

\item $\sigma =\bigcup_{\tau \le \sigma}
\imath^{\sigma}_{\tau}(\tau)= 
\{\imath^{\sigma}_{\tau}(|\tau|)\mid \tau \le \sigma\}$. 
\end{enumerate}

\end{definition}
\begin{remark} \cite{Wlodarczyk-toroidal} The condition (1) is equivalent to the following condition: 
$$\imath^{\sigma}_{\tau} \circ
\imath^{\tau}_{\varrho}(\varrho) =
\imath^{\sigma}_{\varrho}(\varrho).$$

Indeed there is an induced isomorphism $\overline{\imath}^{\sigma}_{\varrho}:\rho\to \imath^{\sigma}_{\varrho}(\varrho)$, with the inverse \\ $(\overline{\imath}^{\sigma}_{\varrho})^{-1}: \imath^{\sigma}_{\varrho}(\varrho)\to \rho$ which defines the automorphism $$ \alpha_\rho:=(\overline{\imath}^{\sigma}_{\varrho}
)^{-1}(\imath^{\sigma}_{\tau} \circ
\imath^{\tau}_{\varrho}).$$
The gluing of the faces and the subdivisions of the semicomplex are defined up to the automorphisms in $\Aut(\sigma)$.
\end{remark}

\begin{definition}\label{semico} We say that the semicomplex $\Sigma$ is defined over a partially ordered  set $S$ if there is  an injective  map
$\Sigma\to S$ respecting the order.
\end{definition}

\begin{remark} \label{semico3} \begin{enumerate}

 \item Any semicomplex  $\Sigma$ can be considered as a semicomplex over itself with the identical map ${\rm Id}: \Sigma\to \Sigma$. 
 
 \item If a semicomplex $\Sigma$ is defined over $S$ then the
semicones $\sigma\in \Sigma$ are  defined over $S$ with 
 natural injective map map $i_\sigma: \sigma\to  \Sigma\to S$.  
  \item As we see later the semicomplexes  associated with toroidal stratification $S$ are naturally defined over $S$.

\end{enumerate}
\end{remark}


 


\begin{definition} \cite{Wlodarczyk-toroidal} By an  {\it isomorphism} of the semicomplexes $\Sigma\to \Sigma'$ over $S$ we mean a  bijection   of the sets $\Sigma\to \Sigma'$ commuting with the maps to $S$ and defining the isomorphisms of the corresponding semicones $\phi_{\sigma,\sigma'}:\sigma\to \sigma'$  over $S$. 
\end{definition}

\begin{lemma} If $\phi: \Sigma\to \Sigma'$ is an isomorphism over $S$ then for any $\tau\leq \sigma$ there is a unique $\tau'\leq \sigma'$. Moreover, $\phi_{\sigma,\sigma'}\imath^{\sigma}_{\tau}=\imath^{\sigma'}_{\tau'}\phi_{\tau,\tau'}\alpha_\tau$, for some $\alpha_\tau\in \Aut(\tau)$.
\end{lemma}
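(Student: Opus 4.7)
The plan is to establish the two assertions in order: first the existence and uniqueness of $\tau'\le\sigma'$ corresponding to $\tau\le\sigma$, then the commutation identity modulo an automorphism of $\tau$.

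For the uniqueness part, I would use that $\phi_{\sigma,\sigma'}\colon \sigma\to\sigma'$ is an isomorphism of semicones over $S$, hence a linear isomorphism $N^{\QQ}_\sigma\to N^{\QQ}_{\sigma'}$ that sends the semifan $\sigma$ bijectively onto the semifan $\sigma'$ compatibly with the maps to $S$. Applying this isomorphism to $\imath^{\sigma}_{\tau}(|\tau|)\in\sigma$ produces a face of $|\sigma'|$ that lies in the semifan $\sigma'$. By condition (2) of Definition \ref{de: conical semicomplex}, any such face is of the form $\imath^{\sigma'}_{\tau'}(|\tau'|)$ for a \emph{unique} $\tau'\le\sigma'$. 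To identify this $\tau'$ with $\phi(\tau)$, I would invoke the injectivity of $\Sigma'\to S$: the face $\imath^{\sigma}_{\tau}(|\tau|)$ corresponds, under the ``over $S$'' structure on the semicone $\sigma$, to the image of $\tau\in\Sigma$ in $S$; since $\phi_{\sigma,\sigma'}$ respects the maps to $S$ and $\phi$ commutes with $\Sigma,\Sigma'\to S$, the two elements $\tau'$ and $\phi(\tau)$ of $\Sigma'$ have the same image in $S$ and therefore coincide.

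For the commutation identity, both $\phi_{\sigma,\sigma'}\circ\imath^{\sigma}_{\tau}$ and $\imath^{\sigma'}_{\tau'}$ are linear injections into $N^{\QQ}_{\sigma'}$ with the same image, namely the linear span of the face $\imath^{\sigma'}_{\tau'}(|\tau'|)$. Hence $\psi:=(\imath^{\sigma'}_{\tau'})^{-1}\circ\phi_{\sigma,\sigma'}\circ\imath^{\sigma}_{\tau}\colon N^{\QQ}_\tau\to N^{\QQ}_{\tau'}$ is a well-defined linear isomorphism; because $\phi_{\sigma,\sigma'}$ preserves the semifan structure and the lattices, $\psi$ is in fact an isomorphism of the semicones $\tau\to\tau'$. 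Comparing $\psi$ with the isomorphism $\phi_{\tau,\tau'}\colon\tau\to\tau'$ supplied by $\phi$, the composite $\alpha_\tau:=\phi_{\tau,\tau'}^{-1}\circ\psi$ is an automorphism of the semicone $\tau$, i.e.\ an element of $\Aut(\tau)$. Substituting $\psi=\phi_{\tau,\tau'}\circ\alpha_\tau$ yields $\phi_{\sigma,\sigma'}\circ\imath^{\sigma}_{\tau}=\imath^{\sigma'}_{\tau'}\circ\phi_{\tau,\tau'}\circ\alpha_\tau$, as required.

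The only real subtlety — and it is minor — is keeping straight which maps are equalities on the nose and which are only equalities up to an element of $\Aut(\tau)$: the isomorphism $\phi_{\tau,\tau'}$ is not uniquely determined by $\phi_{\sigma,\sigma'}$, exactly because of the ambiguity encoded by $\Aut(\tau)$ in condition (1) of Definition \ref{de: conical semicomplex}. This ambiguity is precisely what the factor $\alpha_\tau$ absorbs, and the argument above shows that no further ambiguity arises.
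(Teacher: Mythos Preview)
Your proposal is correct and follows essentially the same approach as the paper: both arguments reduce to the observation that $\phi_{\sigma,\sigma'}\circ\imath^{\sigma}_{\tau}$ and $\imath^{\sigma'}_{\tau'}\circ\phi_{\tau,\tau'}$ land on the same sub-semicone of $\sigma'$ (since both correspond to the same element of $S$), and hence differ by an automorphism of $\tau$ over $S$. Your version is more explicit about the uniqueness of $\tau'$ via condition~(2) of Definition~\ref{de: conical semicomplex} and the injectivity of $\Sigma'\to S$, whereas the paper's proof leaves this implicit and goes directly to the image comparison.
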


\begin{proof} Note that the semicones $\phi_{\sigma,\sigma'}\imath^{\sigma}_{\tau}(\tau)$ and $\imath^{\sigma'}_{\tau'}\phi_{\tau,\tau'}\alpha_\tau(\tau)$ are equal as 
both are equal to the subset $\imath^{\sigma'}_{\tau'}$ of $\sigma'$ corresponding to the same subset of $S$. So they define the same semifan and two different isomorphisms $\tau\to  \imath^{\sigma'}_{\tau'}(\tau')$ over $S$ which are different by the automorphism  of $\tau$ over $S$ so an element $\alpha_\tau\in \Aut(\tau)$.

\end{proof}

As a special case of the above notion, we recover the notion of a
(conical) complex introduced by Kempf, Knudsen, Mumford and Saint-Donat in \cite{KKMS}:

\begin{definition} \cite[Definition 4.3.2]{Wlodarczyk-toroidal} \label{de: conical complex} \enspace
A {\it conical complex\/} is a semicomplex $\Sigma$ such that every $\sigma
\in \Sigma$ is a fan consisting of all the faces of its support cone $|\sigma|$. \end{definition}

\begin{remark} If $\Sigma$ is a complex, then  the the semicones  can be identified with the cones without loss of information. The groups $\Aut(\sigma)$ are trivial since  the automorphisms preserve the vertices of  $\sigma$ and are identical on each cone $|\sigma|$. In such a case, we obtain the definition of the conical complex. (Definition \ref{complex1}).

\end{remark}


\subsection{Associated semicomplexes}

\begin{definition} \cite[Definition 4.8.1]{Wlodarczyk-toroidal}\label{de: associated semicomplex}  
Let $(X,S)$ be a stratified toroidal
variety. 
We say that a semicomplex $\Sigma$  over $S$ is {\it associated\/} with
$(X,S)$ if there is a bijection $i: \Sigma \to S$ of ordered sets with the following
properties: 

Let $\sigma \in \Sigma$ map to
$s=\strat_X(\sigma) \in S$. Then any $x \in
s$ admits an open neighborhood
$U_\sigma \subset X$ and  $ \varphi_\sigma \colon U_\sigma \to X_{\sigma}$ of stratified varieties such that  $s
\cap U_\sigma$ equals $\varphi_\sigma^{-1}(O_{\sigma})=\varphi_\sigma^{-1}(\strat({\sigma}))$ and the intersections $s' \cap
U_\sigma$, $s' \in S$, are precisely the inverse images of  the relevant strata of
$X_{\sigma}$ (defined by the bijection $i$): $$s'\cap
U_\sigma=\varphi_\sigma^{-1}(\strat({\tau})),$$ 
where $\tau\in \sigma$ corresponds to $s'=i(\tau)$.
Recall that $O_{\sigma}\subset X_{\sigma}$ is a stratum corresponding to the maximal cone $|{\sigma}|\in \sigma$. (See the formula after Definition \ref{semii})

We call the  smooth morphisms $U_\sigma\to X_{\sigma}$ from the above definition 
 {\it charts}. A collection of charts
satisfying the conditions from the above definition is
called an {\it atlas}.

\end{definition}

\begin{lemma}\label{le: associated semicomplex} \cite[Lemma 4.8.2]{Wlodarczyk-toroidal} For any 
stratified toroidal variety  $(X,S)$ there
exists a unique (up to an isomorphism over $S$) associated semicomplex
$\Sigma$ over $S$.  
Moreover, $(X,S)$ is a toroidal embedding iff
$\Sigma$ is a complex.
\end{lemma}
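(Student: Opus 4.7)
The plan is to construct the associated semicomplex $\Sigma$ stratum by stratum, extracting a semicone from a local chart at each stratum, and then to verify the axioms of Definition \ref{de: conical semicomplex} up to the isomorphisms permitted by the automorphism groups, using the generalized Demushkin Theorem (Corollary \ref{th: Dem3}) as the key rigidity tool.

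First I would fix a stratum $s\in S$, pick a point $x\in s$, and take a chart $\varphi\colon U\to X_\sigma$ witnessing the stratified toroidal structure. By Proposition \ref{le: semifans correspondence}, the equisingular stratification of $X_\sigma$ induced by this chart corresponds to an embedded semifan $\Omega_\sigma\subset \overline{\sigma}$ with $|\sigma|\in\Omega_\sigma$, which is exactly a semicone in the sense of Definition \ref{semii}. Set $\sigma_s:=(\Omega_\sigma,N_\sigma)$ and equip it with the injective order map to $S$ sending each face $\tau\in\Omega_\sigma$ to the stratum of $(X,S)$ that is the $\varphi$-preimage of $O_\tau\cap \varphi(U)$. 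To see that $\sigma_s$ is independent of the chosen chart and point: given two charts $\varphi_i\colon U_i\to X_{\sigma_i}$ at points of $s$, both induce isomorphic completions $(\widehat{X}^{\overline K}_{\sigma_i,x_i},\widehat S^{\overline K}_{\sigma_i,x_i})$ over $S$ (both pull back from the generic point of $s$ with the same stratification labelling). Corollary \ref{th: Dem3} then provides an isomorphism of semicones $(\sigma_1,N_{\sigma_1})\simeq (\sigma_2,N_{\sigma_2})$ over $S$, so $\sigma_s$ is canonically well defined up to an element of $\Aut(\sigma_s)$.

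Next, for each face relation $\tau\le \sigma$ in $\Sigma$ (corresponding under the order-reversing bijection $i\colon\Sigma\to S$ of Definition \ref{de: associated semicomplex} to strata $s\le s'$ in the generization order), I would construct the injection $\imath^{\sigma}_{\tau}\colon N^\QQ_\tau\to N^\QQ_\sigma$ as follows. Choose a chart at a point of $s$ realising $\sigma=\sigma_s$; the stratum $s'$ meets $U$ and corresponds to a face $\tau'\in\Omega_\sigma$. By the previous paragraph $\tau'$ is canonically isomorphic over $S$ to $\sigma_{s'}=\tau$, and composing this isomorphism with the literal face inclusion $\tau'\hookrightarrow\sigma$ produces $\imath^{\sigma}_{\tau}$. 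Axioms (2) and (3) of Definition \ref{de: conical semicomplex} are immediate, since the map $\Omega_\sigma\hookrightarrow S$ is injective and every face of $\sigma$ in $\Omega_\sigma$ corresponds to some stratum of $(X,S)$ meeting $U$. The main obstacle is axiom (1): for $\varrho\le\tau\le\sigma$, I must show $\imath^{\sigma}_\tau\circ \imath^{\tau}_\varrho$ and $\imath^{\sigma}_\varrho$ differ by an element $\alpha_\varrho\in\Aut(\varrho)$. Both maps embed $\varrho$ onto a face of $\sigma$ corresponding to the stratum $i(\varrho)$, and by axiom (2) that face is unique; hence the two embeddings differ only by an automorphism of $\varrho$ over $S$, as required.

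Uniqueness of $\Sigma$ up to isomorphism over $S$ reduces to the pointwise uniqueness already established via Corollary \ref{th: Dem3}: any other associated semicomplex $\Sigma'$ has, for each $s\in S$, a semicone $\sigma'_s$ isomorphic to $\sigma_s$ over $S$, and the face inclusions between them are determined up to the action of $\Aut(\varrho)$ by the same argument as above, yielding a semicomplex isomorphism $\Sigma\to\Sigma'$ over $S$. For the final assertion, if $(X,S)$ is a toroidal embedding then by Definition \ref{strict} and Remark \ref{equ} each chart may be taken into a toric variety endowed with its full orbit stratification, so $\Omega_\sigma=\overline{\sigma}$ consists of \emph{all} faces of $\sigma$; thus every $\sigma_s$ is the fan of its support cone, $\Aut(\sigma_s)$ is trivial, and $\Sigma$ is a conical complex as in Definition \ref{de: conical complex}. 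Conversely, if $\Sigma$ is a complex then each $\Omega_\sigma$ contains every face of $\sigma$, so every orbit of $X_\sigma$ in the chart pulls back to a stratum of $(X,S)$, which is precisely the defining condition of a toroidal embedding.
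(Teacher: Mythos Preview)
Your proof is correct and follows essentially the same route as the paper's own argument: associate to each stratum the semicone read off from a chart, invoke the generalized Demushkin theorem (Corollary \ref{th: Dem3}) to show this is well defined up to isomorphism over $S$, build the face inclusions by identifying the face of $\sigma_s$ labelled by $s'$ with the independently constructed $\sigma_{s'}$, and then observe that the toroidal-embedding case is exactly the case where every face of each $|\sigma|$ already lies in the semicone. Your verification of axiom~(1) is spelled out more explicitly than in the paper, but the idea (both compositions land on the unique face of $\sigma$ labelled by the given stratum, so they differ by an automorphism of $\varrho$) is the same one the paper summarizes by saying the conditions ``exactly mean that the face inclusions are defined over $S$.''
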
\begin{proof}

For any 
stratum $s$ consider the associated local \'etale chart $U\to X_\sigma$ which defines  the  corresponding {\it semicone} $\sigma=\sigma_s$ over $S$, i.e.   semifan over $S$ consisting of
the faces of $\sigma_s$ 
corresponding to the strata in ${\rm Star(s,S)}$. This correspondence defines  a natural map  $\sigma\to S$  between faces $\tau$ in  $\sigma$ the strata $s_\tau\in S$.

 The semicone is defined uniquely up to an isomorphism over $S$. If one chart associates to a stratum a semicone $\sigma^1_s$ over $S$ , and
another associates $\sigma^2_s$ over $S$, then  by Demushkin's   theorem \ref{th: Dem3}  $\sigma^1_s$, and $\sigma^2_s$, are isomorphic over $S$. 
This gives us the
correspondence between  strata and semicones and defines a bijection $\Sigma\to S$.

Let  $s\leq s'$. Consider the semicones $\sigma_{s'}$ and $\sigma_s$ defined by the charts. 
 Then there is a face $\tau_{s}$ of $\sigma_{s'}$ corresponding to the stratum $s$ with a face inclusion $i: \tau_s\to \sigma_{s'}$ over $S$. Composing it with Demushkin's  isomorphism $\alpha_s: \sigma_s\to \tau_s$ over $S$ produces a face inclusion $i_{ss'}:\sigma_s\to \sigma_{s'}$ over $S$.
 
 The conditions (1) (2) (3) are satisfied. They  exactly mean that the face inclusions are defined over $S$. (See also the equivalent conditions in Remark \ref{semico3}(2).



 By construction and definition the
semicomplex $\Sigma$ is determined uniquely up to isomorphism over $S$.

For any two such semicomplexes $\Sigma$ and $\Sigma'$ defined over $S$ there is a natural bijection $\Sigma\to \Sigma'$. Moreover, the corresponding cones $\sigma_s$ and $\sigma'_s$ are defined over $S$ and  thus related by the Demushkin isomorphism over $S$.

If a
stratified toroidal variety is a toroidal embedding then
a semicone $\sigma\in \Sigma$ consists all the faces of the cone $|\sigma|$. One can   replace  semicones  with cones without loss of data. The associated semicomplex is equivalent to a usual conical complex in that case.



Conversely, if the associated semicomplex of the stratified toroidal variety is a complex, then locally, the strata are defined by toric orbits, so are induced by the intersecting of the divisors  (codimension one strata) defining the structure of a toroidal embedding.
\end{proof}
\begin{example} \label{assoc}
Let $(X_\Sigma,S_\Omega)$ be a stratified toric variety associated with the embedded semifan $(\Sigma,\Omega)$ over $S=S_\Omega$.
Then the associated semicomplex is given by $$\Omega^{red}:=\{(\omega,N^{\bf Q}_\omega)\mid \omega\in \Omega\}$$
 with the natural face inclusions. It is a semicomplex defined over the stratification $S$
\end{example}

\subsection{Relative semicomplexes and  locally toric varieties with a divisor}

One can extend the definition of  saturated subsets and toroidal divisors on strict toroidal embeddings as in Definition \ref{toroidd} to the case of the stratified toroidal varieties.

\begin{definition}\label{divisorial strata}  Let $(X,S)$ be a stratified toroidal variety. We say that a subset $V$ of $X$ is {\it saturated} in $(X,S)$ if it is the union of strata of $S$.  A divisor  $D$ on $X$ will be called {\it toroidal} if it saturated.

The strata whose closures are defined by the intersecting  components of $D$ will be called {\it divisorial strata}.

\end{definition}

\begin{lemma}\label{local divisor} Let $X$ be a locally toric variety, and $D$ be a locally toric divisor. Then $D$ is a toroidal divisor on the induced stratified toroidal variety $(X,\Sing_D(X))$.
\end{lemma}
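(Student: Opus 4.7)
The plan is to reduce the statement to the local toric situation via charts, and then invoke Lemma \ref{strat2}, which describes the relationship between the divisorial stratification and the stratification defined by the singularity type on toric varieties equipped with toric divisors.

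First I would fix a point $x\in X$ and use Definition \ref{divisor} to obtain an \'etale chart $\varphi \colon U \to X_\sigma$ from a neighborhood of $x$ to a toric variety such that $D\cap U = \varphi^{-1}(D_\Omega)$ for a toric divisor $D_\Omega$ on $X_\sigma$. By Lemma \ref{cr2} the divisor $D_\Omega$ corresponds to a unique saturated subfan $\Omega \subseteq \Sigma$, and $D_\Omega = X_\Omega\cap D_\Sigma$ is a union of the closed divisorial strata (i.e., of orbit closures $\overline{O_\omega}$ with $\omega\in \Omega$).

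Next I would invoke Lemma \ref{strat2}: the stratification $\Sing_D(X_\sigma)$ coincides with $\Sing_\Omega(X_\sigma)$ (the one associated with the embedded semifan $\sing(\Sigma,\Omega)\subset \Sigma$), and the divisorial stratification $S_D$ on $X_\sigma$ is coarser than $\Sing_D(X_\sigma)$. Since $D_\Omega$ is a union of strata of $S_D$, it is \emph{a fortiori} a union of strata of $\Sing_D(X_\sigma)$. Concretely, for every $\omega\in \Omega$ the closed stratum $\overline{\strat(\omega)}$ of $\Sing_\Omega(X_\sigma)$ lies in $D_\Omega$, and conversely every stratum meeting $D_\Omega$ lies entirely in $D_\Omega$ because the invariant $n_D$ (the number of components of $D$ through a point) is constant on strata of $\Sing_D(X_\sigma)$.

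Then I would transfer this back via the chart. By Theorem \ref{can}, the stratification $\Sing_D(X)$ is the one induced locally via the chart $\varphi$ from $\Sing_D(X_\sigma)$; that is, the strata of $\Sing_D(X)$ in $U$ are exactly the preimages under $\varphi$ of the strata of $\Sing_D(X_\sigma)$. Hence $D\cap U = \varphi^{-1}(D_\Omega)$ is a union of strata of $\Sing_D(X)$ restricted to $U$. As $x\in X$ was arbitrary, $D$ is globally a union of strata of $\Sing_D(X)$, i.e., it is saturated in $(X,\Sing_D(X))$ in the sense of Definition \ref{divisorial strata}, and therefore toroidal.

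There is essentially no obstacle: the content of the lemma is precisely the compatibility between the two stratifications on toric varieties established in Lemma \ref{strat2}, combined with the local-to-global description of $\Sing_D(X)$ in Theorem \ref{can}. The only mild subtlety to be careful about is that the charts are \'etale rather than open immersions, but since $\Sing_D(X)$ is defined precisely so as to pull back to $\Sing_D(X_\sigma)$ under such charts, saturation is preserved.
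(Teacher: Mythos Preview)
Your argument is correct and follows essentially the same idea as the paper's proof: the key fact is that the divisorial stratification $S_D$ is coarser than $\Sing_D(X)$, which immediately implies $D$ is saturated. The only difference is that the paper invokes this coarseness directly from Theorem~\ref{can} in a single sentence, whereas you rederive it locally via charts and Lemma~\ref{strat2} before transferring back; your route is a bit longer but arrives at the same point.
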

\begin{proof} It is a rephrasing of the statement in  Theorem \ref{can} that $S_D$ is coarser than $\Sing_D(X)$.
\end{proof}

We extend Definition \ref{omega2}, and  Lemma \ref{cr}, to the situation of stratified toroidal varieties.

\begin{definition}\label{omega3} By a {\it relative semicomplex} we mean a pair $(\Sigma,\Omega)$, of a semicomplex $\Sigma$ and a subcomplex $\Omega\subset \Sigma$.
 A complex $\Omega\subset \Sigma$ will be called {\it saturated} if 
any cone of $\sigma\in \Sigma$ with vertices (one dimensional faces) in $\Omega$ is in $\Omega$.

\end{definition}



We extend Theorem \ref{can}:
\begin{theorem} \label{can2} Let $X$ be locally toric variety with a locally toric divisor $D$.
Let $V:=(X, D)^{tor}$ be its toroidal locus.
 Consider a locally closed toroidal stratification $\Sing_D(X)$, and let $\Sigma$ be the associated semicomplex.
 
 Then 
 \begin{enumerate}
 \item $\Sing_D(X)_{|V}$ is the divisorial stratification associated with the strict toroidal embedding $(V, D\cap V)$. It 
corresponds to a subcomplex $\Omega$ of $\Sigma$.  
 \item $\Omega$ is a maximal subcomplex of $\Sigma$. In particular, it is saturated in $\Sigma$.
 \item The toroidal locus of $(X,D)$ is described as $$V=(X, D)^{\tor}=X(\Omega):=\bigcup_{\omega\in \Omega}\strat(\omega),$$ 
 
 \item The vertices of $\Omega$ correspond to the irreducible components of $D$.

 \item $D$ is a toroidal divisor on $(X,\Sing_D(X))$.
 
 \item $\codim(X\setminus V)\geq 2$ 
 \item  
 $D=\overline{D_{|X(\Omega)}}.$

 \end{enumerate}

\end{theorem}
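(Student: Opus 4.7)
The plan is to reduce every claim to the toric case treated in Lemma~\ref{strat2}, which already establishes the analogous statements for $(X_\sigma, D)$ with $D = \overline{D_{\Omega_\sigma}}$ for a saturated subfan $\Omega_\sigma \subset \overline{\sigma}$. By Definition~\ref{divisor} of locally toric divisor, every point $x \in X$ admits an \'etale chart $\varphi \colon U \to X_\sigma$ such that $D \cap U = \varphi^{-1}(\overline{D_{\Omega_\sigma}})$ for some saturated subfan $\Omega_\sigma \subset \overline{\sigma}$ (using Lemma~\ref{cr2}). By Theorem~\ref{can} the stratification $\Sing_D(X)$ is the pullback of $\Sing_D(X_\sigma) = \Sing_{\Omega_\sigma}(X_\sigma)$, and by Lemma~\ref{le: associated semicomplex} the associated semicomplex $\Sigma$ exists and locally, via charts, identifies with the semicomplex $\sing(\overline{\sigma}, \Omega_\sigma)^{red}$ from Example~\ref{assoc}.

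For (5), the statement that $D$ is toroidal on $(X, \Sing_D(X))$ is exactly Lemma~\ref{local divisor}, so $D$ is a union of closures of strata. For (1), I restrict to $V = (X, D)^{\tor}$: locally in a chart, $V \cap U$ corresponds to the open subset $X_{\Reg(\overline{\sigma}, \Omega_\sigma)}$ by Lemma~\ref{strat2}(3), which is strict toroidal by Corollary~\ref{embedded fan}(1). The restriction of $\Sing_D(X)$ to $V$ agrees with the divisorial stratification by Lemma~\ref{strat2}(4), and by Lemma~\ref{rr} this gives a subcomplex $\Omega \subset \Sigma$. For (3), the subset $X(\Omega)$ is exactly $V$ since the local identification via $\Reg(\overline{\sigma}, \Omega_\sigma)$ glues to $\Omega$. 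For (2), I argue the maximality: if $\Omega' \subset \Sigma$ is any subcomplex, then by Lemma~\ref{sat2} $X(\Omega')$ is open, and by Lemma~\ref{regular}(1) applied locally (via Corollary~\ref{embedded fan}) the pair $(X(\Omega'), D_{\Omega'})$ is a strict toroidal embedding; hence $X(\Omega') \subseteq V = X(\Omega)$, which forces $\Omega' \subseteq \Omega$. In particular $\Omega$ is saturated.

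For (4), the vertices of $\Omega$ correspond to codimension-one strata of $\Sing_D(X)|_V$ whose local description in a chart is given by the rays of $\Omega_\sigma$; by the correspondence of Lemma~\ref{cr2} these rays are exactly the generic points of irreducible components of $D$, and their closures sweep out all of $D$ by (5). For (6), a one-dimensional ray $\rho \in \Sigma$ is either in $\Omega$ (if it corresponds to a component of $D$) or not; in the latter case, locally $\rho \in \overline{\sigma}$ has $\rho \cap |\Omega_\sigma| = \{0\}$, and since $\rho$ is a one-dimensional cone with a single primitive lattice generator it is automatically regular, so $\rho$ is relatively regular and hence lies in $\Reg(\overline{\sigma}, \Omega_\sigma)$, which by (3) corresponds to $V$. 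Thus every codimension-one stratum is in $V$, giving $\codim(X \setminus V) \geq 2$. Finally, (7) follows from (6): since $X \setminus V$ has codimension $\geq 2$, its intersection with the divisor $D$ has codimension $\geq 1$ inside $D$, so $D \cap V$ is dense in $D$ and $D = \overline{D \cap V} = \overline{D_{|X(\Omega)}}$.

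The main technical point is the gluing argument establishing that the local subfans $\Omega_\sigma$ assemble into a globally defined subcomplex $\Omega$ of $\Sigma$ over the stratification $S = \Sing_D(X)$. This relies on the generalized Demushkin Theorem~\ref{th: Dem3}, which ensures that the local identifications with $\Omega_\sigma \subset \overline{\sigma}$ are canonical up to isomorphism over $S$ and hence compatible on overlaps of charts; the maximality argument in (2) then gives an intrinsic, chart-independent characterization of $\Omega$.
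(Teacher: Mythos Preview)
Your proposal is correct and follows essentially the same route as the paper: both reduce each item to the local toric picture via charts and invoke Lemma~\ref{strat2} (equivalently Theorem~\ref{can}) together with the correspondence of Lemma~\ref{le: associated semicomplex}. Your maximality argument for (2), the deduction of (3) from it, and the closure argument (6)$\Rightarrow$(7) match the paper's exactly; your explicit mention of the Demushkin gluing in Theorem~\ref{th: Dem3} makes visible what the paper leaves implicit in the construction of the associated semicomplex.

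One small remark on (6): your case split ``$\rho\in\Omega$ or not'' is harmless but redundant. A one-dimensional semicone is automatically a full cone, so by the maximality in (2) every one-dimensional element of $\Sigma$ already lies in $\Omega$; the ``or not'' branch is vacuous. This is also how the paper's terse ``follows from (4)'' should be read: codimension-one strata correspond to vertices of $\Sigma$, which are cones, hence lie in the maximal subcomplex $\Omega$ and therefore in $V$.
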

\begin{proof}  (1) By Theorem \ref{can}, the divisorial stratification on $V$ and the stratification $\Sing_D(X)_{|V}$ defined by the singularity type coincide on a toroidal embedding $(V,D_{|V})$. They both  correspond to a subcomplex $\Omega$ of $\Sigma$.


(2) Let $\Omega'\subset \Sigma$ be a subcomplex. 
Then, by the definition, $X(\Omega')$ is the union of locally closed strata which is closed under generization, so open. The stratification $\Sing_D(X)$ on $X(\Omega')$ corresponds to a complex $\Omega'$. So locally it corresponds to a toric variety  $X_\sigma\supset T$ with the orbit stratification corresponding to  a toric divisor $D_\sigma=X_\sigma\setminus T$, and thus it is toroidal.
Consequently  $X(\Omega')\subseteq V$, and $\Omega'\subseteq \Omega$. 

(3) Follows from (2).

(4) Any irreducible component of $D$ defines a  toroidal embedding around its generic point. So it intersects $V$, and defines  a vertex of $\Omega$. On the other hand, any vertex of $\Omega$ corresponds to an irreducible component of $D_{|V}$.

(5) Follows from the fact that $S_D(X)$ is coarser than $\Sing_D(X)$.

(6) Follows from (4).

(7) follows from (6)



\end{proof}

\subsection{Inverse systems  of affine algebraic groups} \label{inverse}

We shall give the groups of the automorphisms of the 
completions of the local rings the structure of proalgebraic groups. 
All the proalgebraic groups here are considered over an algebraically closed field $K=\overline{K}$.

\begin{definition}\cite[Definition 4.4.1]{Wlodarczyk-toroidal}\label{de: affine proalgebraic group}    
By an {\it affine proalgebraic group} we mean an affine
group scheme that is the limit of
 an inverse system $(G_i)_{i\in
 {\bf N}}$ of affine algebraic groups and 
algebraic group homomorphisms $\varphi_{ij}:G_i\rightarrow
G_j$, for $i\geq j$.
\end{definition}

\begin{lemma} \cite[Lemma 4.4.2]{Wlodarczyk-toroidal} \label{le: epimorphisms} Consider the natural morphism
$\varphi_i:G\to G_i$. Then $H_i:=\varphi_i(G)$ is an algebraic subgroup
of $G_i$, all induced morphisms $H_j\to H_i$ for $i\leq j$
are epimorphisms and 
$G=\lim_{\leftarrow}G_i=\lim_{\leftarrow}H_i$. In
particular $K[H_i]\subset K[H_{i+1}]$ and $K[G]=\bigcup K[H_i]$.
\end{lemma}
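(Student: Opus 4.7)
The plan is to exploit the Noetherianity of each $G_i$ together with the fact that the image of a morphism of affine algebraic groups is a closed algebraic subgroup, and then reduce the inverse system $(G_i)$ to the system $(H_i)$ of eventual images, where all transition maps become surjective.

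\textbf{Step 1 (Construction of $H_i$ as an algebraic subgroup).} For fixed $i$ consider the descending chain of closed algebraic subgroups
\[
\varphi_{i,i}(G_i) \supseteq \varphi_{i,i+1}(G_{i+1}) \supseteq \varphi_{i,i+2}(G_{i+2}) \supseteq \cdots
\]
of $G_i$, each term being closed because a homomorphism of affine algebraic groups has closed image. By Noetherianity of $G_i$ this chain stabilizes: there exists $N_i$ such that $\varphi_{ij}(G_j) = \varphi_{i N_i}(G_{N_i})$ for all $j \ge N_i$. Define $H_i$ to be this stable image; it is a closed algebraic subgroup of $G_i$, and we will check shortly that it coincides with $\varphi_i(G)$.

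\textbf{Step 2 (Transition maps are epimorphisms).} For $j \ge i$, choose $j' \ge j$ so large that both $H_i = \varphi_{i j'}(G_{j'})$ and $H_j = \varphi_{j j'}(G_{j'})$. Then
\[
\varphi_{ij}(H_j) \;=\; \varphi_{ij}\bigl(\varphi_{j j'}(G_{j'})\bigr) \;=\; \varphi_{i j'}(G_{j'}) \;=\; H_i,
\]
so $\varphi_{ij}\colon H_j \twoheadrightarrow H_i$ is a surjective homomorphism of algebraic groups, i.e.\ an epimorphism.

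\textbf{Step 3 (Identification $H_i = \varphi_i(G)$ and $G = \lim_{\leftarrow} H_i$).} The inclusion $\varphi_i(G) \subseteq H_i$ is immediate since $\varphi_i(G) \subseteq \varphi_{ij}(G_j)$ for every $j \ge i$. For the reverse inclusion, pick any $x \in H_i$. Using surjectivity of $\varphi_{j,j+1}\colon H_{j+1} \to H_j$ from Step~2, build inductively a compatible family $(x_j)_{j \ge i}$ with $x_i = x$ and $\varphi_{j,j+1}(x_{j+1}) = x_j$; extend it to $j < i$ by $x_j := \varphi_{ji}(x)$. The resulting element of $\prod G_j$ lies in $\lim_{\leftarrow} G_i = G$ and maps to $x$ under $\varphi_i$. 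This both proves $H_i = \varphi_i(G)$ and shows that the obvious map $G \to \lim_{\leftarrow} H_i$ (compatible with $G \to \lim_{\leftarrow} G_i$) is surjective; injectivity is automatic since $H_i \subseteq G_i$. Hence $G = \lim_{\leftarrow} H_i$.

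\textbf{Step 4 (Coordinate rings).} Each epimorphism $H_{i+1} \twoheadrightarrow H_i$ dualizes to an injection $K[H_i] \hookrightarrow K[H_{i+1}]$ of Hopf algebras. Since the coordinate ring of an affine inverse limit is the directed colimit of coordinate rings, and this colimit is now a union of injections, we get $K[G] = \varinjlim K[H_i] = \bigcup_i K[H_i]$, as required.

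\textbf{Main obstacle.} The only nontrivial ingredients are (a) that images of homomorphisms of affine algebraic groups are closed subgroups, so the chain in Step~1 really consists of algebraic subgroups, and (b) the stabilization of that chain by Noetherianity of a single $G_i$. Once these are in hand, Step~3 is a standard surjective inverse limit argument, and Step~4 is formal.
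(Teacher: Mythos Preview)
The paper does not actually give a proof of this lemma; it simply records the statement with a citation to \cite[Lemma 4.4.2]{Wlodarczyk-toroidal} and moves on. So there is no ``paper's own proof'' to compare against here.

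Your argument is the standard one and is correct. The key points --- that images of algebraic group homomorphisms are closed, that the descending chain $\varphi_{ij}(G_j)$ stabilizes in each fixed $G_i$ by Noetherianity, and that surjectivity of the transition maps on the $H_i$ lets you lift any $x\in H_i$ to a compatible sequence in $G$ --- are all in order. One tiny notational slip: in Step~3 you write $x_j := \varphi_{ji}(x)$ for $j<i$, but with the paper's convention $\varphi_{ij}\colon G_i\to G_j$ for $i\ge j$ this should read $x_j := \varphi_{ij}(x)$. This does not affect the argument.
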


\begin{lemma} \cite[Lemma 4.4.3]{Wlodarczyk-toroidal} \label{le: K-points}
 The set $G^K$ of $K$-rational points  of $G$  is an
abstract group which is the inverse limit
$G^K=\lim_{\leftarrow}G^K_i$ in the category of abstract groups.
\end{lemma}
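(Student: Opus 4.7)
The plan is to deduce the statement formally from Lemma \ref{le: epimorphisms} together with the universal property of the inverse limit of affine group schemes, using the standard fact that the functor of $K$-points commutes with limits. The argument is in three short steps.

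First, I would unwind definitions on the coordinate ring side. By definition, $G^K=\Hom_{K\text{-alg}}(K[G],K)$. By Lemma \ref{le: epimorphisms}, $K[G]=\bigcup_i K[H_i]$, so $K[G]$ is the directed colimit $\lim_{\to}K[H_i]$ in the category of $K$-algebras. By the universal property of colimits of algebras, a $K$-algebra homomorphism $K[G]\to K$ is the same datum as a compatible family of $K$-algebra homomorphisms $K[H_i]\to K$. Translating back to points, this yields a canonical bijection of sets $G^K\cong \lim_{\leftarrow}H_i^K$, where the transition maps are induced by the $\varphi_{ij}$.

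Second, I would identify $\lim_{\leftarrow}H_i^K$ with $\lim_{\leftarrow}G_i^K$. The closed immersions $H_i\hookrightarrow G_i$ produce an injection $\lim_{\leftarrow}H_i^K\hookrightarrow\lim_{\leftarrow}G_i^K$. Conversely, given a compatible family $(g_i)\in\lim_{\leftarrow}G_i^K$, the universal property of $G=\lim_{\leftarrow}G_i$ in the category of affine schemes produces a morphism $\Spec K\to G$, and its composition with the structural projection $\varphi_i\colon G\to G_i$ factors through the scheme-theoretic image $H_i=\varphi_i(G)\subseteq G_i$. Hence $g_i\in H_i^K$ for every $i$, and $(g_i)$ lies in $\lim_{\leftarrow}H_i^K$. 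Combining with the previous step, $G^K=\lim_{\leftarrow}H_i^K=\lim_{\leftarrow}G_i^K$ as sets.

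Finally, to promote this bijection to an isomorphism of abstract groups, I would observe that the multiplication, inversion, and identity morphisms on $G$ are, by construction of the limit in the category of affine group schemes, the inverse limits of the corresponding morphisms on the $G_i$. Consequently the induced operations on $G^K$ are computed componentwise on $\lim_{\leftarrow}G_i^K$, which is precisely the group structure on the inverse limit in the category of abstract groups. There is no genuine obstacle in this proof: the content is entirely formal, resting on Lemma \ref{le: epimorphisms} and the fact that $\Hom_K(\Spec K,-)$ commutes with limits of affine schemes; the only point requiring a moment of attention is the verification that compatible families in $\prod G_i^K$ automatically land in $\prod H_i^K$, which is handled by the image-factorization argument above.
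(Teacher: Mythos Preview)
Your argument is correct. Note, however, that the paper does not supply its own proof of this lemma; it merely cites \cite[Lemma 4.4.3]{Wlodarczyk-toroidal} and moves on, so there is nothing in the present paper to compare against.

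One minor remark on efficiency: your detour through the subgroups $H_i$ is not really needed. Since $G=\lim_{\leftarrow}G_i$ is by definition the inverse limit in the category of affine (group) schemes, and the functor $T\mapsto \Hom_K(\Spec K,T)$ preserves limits, one obtains $G^K=\lim_{\leftarrow}G_i^K$ directly, with the group structure coming along for free because the limit is taken in group schemes. Your first step already essentially does this on the coordinate-ring side (via $K[G]=\varinjlim K[H_i]$), but then the second step, verifying that a compatible family in $\prod G_i^K$ lands in $\prod H_i^K$, is redundant: it re-proves the universal property that is built into the definition of $G$. Nothing is wrong, just slightly longer than necessary.
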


By abuse of notation, we shall identify $G$ with $G^K$. 

\begin{example} \cite[Example 4.5.4]{Wlodarczyk-toroidal}\label{ex: differential} Let $\varphi_n:{\rm Aut}(\widehat{X}_x,S)\to {\rm Aut}(X^{(n)}_x,
S)$ denote the natural morphisms. For $n=1$ we get 
the differential mapping:
$$d=\varphi_1:{\rm Aut}(\widehat{X}_x, S) \longrightarrow
{\rm Aut}(X^{(1)}_x,S)\subset
{\rm Gl}({\rm Tan}_{X,x}).$$  
\end{example}

\bigskip
\bigskip
\begin{definition} \cite[Definition 4.9.1]{Wlodarczyk-toroidal} \label{de: orientation}
 We shall call a proalgebraic group
$G$ {\it connected} if
it is a connected affine scheme. 
For any proalgebraic group $G=\lim_{\leftarrow}G_i$, denote by 
$G^0$  its maximal
connected proalgebraic subgroup $G^0=\lim_{\leftarrow}G^0_i$.
\end{definition}
\subsection{Oriented semicomplexes}

Let $\sigma$ be a semicone in $N_\sigma^{\QQ}$.
Denote by $\Aut(\sigma)$  the group of automorphisms
of the semicone $\sigma$. Consider the natural inclusion $\phi: \Aut(\sigma)\to {\rm Aut}(\widehat{X}^{\overline{K}}_{\sigma})$. Then set $$\Aut(\sigma)^0:=\phi^{-1}({\rm Aut}(\widehat{X}^{\overline{K}}_{\sigma})^0)$$



\begin{definition} \cite[Definition 4.11.1]{Wlodarczyk-toroidal} By an {\it oriented semicomplex}  we mean a semicomplex  $\Sigma$ together with the associated groups ${\rm Aut}(\sigma)^0$ such that for any $\sigma\leq\tau\leq\gamma$ there is 
$\alpha_\sigma\in {\rm Aut}(\sigma)^0$ for which 
$\imath^\gamma_{\tau}\imath^\tau_{\sigma}=\imath^{\gamma}_\sigma\alpha_\sigma$.
\end{definition}

 We will not use the notion  of  oriented semicomplexes in this paper. It was introduced for \cite[Definition 4.10.1]{Wlodarczyk-toroidal} to apply  certain nonfunctorial algorithms.
Since  our algorithm of the  combinatorial desingularization in Section  is functorial for arbitrary automorphisms of semicomplexes our reasoning will not require  this notion. However we will use the subgroups 
${\rm Aut}(\sigma)^0$.

\subsection{Subdivisions of semicomplexes}
If $\Sigma$ is a fan and 
 $\sigma$ be its face.
Then, by definition,   for any subdivision $\Delta$  of  $\Sigma$ and
 restriction  $$\Delta{|\sigma}:=\{\delta\in
\Delta \mid \delta\subset\sigma\}$$ of $\Delta$ to the cone $\sigma$ is a subdivision of the fan
$\overline{\sigma}$.

We shall use this notation in the context of semicomplexes.
Recall that we can associate with any semicone $\sigma$ a fan $\overline{\sigma}$ consisting of all faces of the cone $|\sigma|$.

\noindent \begin{definition} \cite[Definition 4.11.4]{Wlodarczyk-toroidal} \label{de: subdivision of
an oriented semicomplex} A {\it subdivision }  of a semicomplex (respectively an oriented
semicomplex) $\Sigma$ is a collection $\Delta=\{\Delta^\sigma\mid \sigma\in \Sigma\}$ of fans 
$\Delta^\sigma$ in $N_\sigma^{\QQ}$ where $\sigma\in \Sigma$
such that

\begin{enumerate}
\item For any $\sigma\in \Sigma$, $\Delta^\sigma$ is a
subdivision of the fan $\overline{\sigma}$ which is
$\Aut(\sigma)$-invariant (resp. $\Aut(\sigma)^0$-invariant).  

\item For any $\tau\leq \sigma$,
$\Delta^{\sigma}|{|\tau|}= \imath^{\sigma}_\tau(\Delta^\tau)$.

\end{enumerate}
\end{definition}

\begin{remark}\begin{enumerate} 
\item By abuse of terminology, we shall understand by
a subdivision of a semicone $\sigma$ simply a subdivision of the relevant
fan $\overline{\sigma}$.

\item By definition,  the vectors in faces
$\sigma$ of a semicomplex (respectively an oriented semicomplex $\Sigma$) are defined up
to automorphisms from $\Aut(\sigma)$ (respectively from $\Aut(\sigma)^0$).  Consequently, the
faces of subdivisions $\Delta^\sigma$  do not have any geometric  meaning as they are defined up
to automorphisms from $\Aut(\sigma)^0$. However, subdivisions $\Delta^\sigma$ describe the birational modifications locally.

\item The condition that $\Delta^\sigma$ are
$\Aut(\sigma)$-invariant  
subdivisions is replaced for {\it canonical} subdivisions with a  somewhat stronger condition
of similar nature which says
that the induced morphism
$\widehat{X}_{\Delta^\sigma}:={X}_{\Delta^\sigma}\times_{{X}_{\sigma}}
\widehat{X}_{\sigma}\to \widehat{X}_{\sigma}$
is $\Aut(\widehat{X}_{\sigma})$-equivariant.
 
\end{enumerate}
\end{remark}

\subsection{Toroidal modifications}
We will ignore the notion of orientation in the sequel, which makes the reasoning easier for most of the part.

\begin{definition} \cite[Definition 4.12.1]{Wlodarczyk-toroidal}\label{de: toroidal modification}  Let $(X, S)$ be 
a stratified toroidal variety. We say that $Y$ is a {\it toroidal
modification} of $(X, S)$  if

\begin{enumerate} 

\item There is given a proper morphism $f:Y\to X$ such that 
for any $x\in s=\strat_X(\sigma)$ there exists a chart $x\in U_\sigma\to
X_{\sigma}$, a subdivision $\Delta^\sigma$ of $\sigma$,
and a fiber square  
\[\begin{array}{rcccccccc}
&& & U_\sigma & \buildrel \varphi_\sigma \over\longrightarrow & X_{\sigma}&&&\\

&&&\uparrow {\scriptstyle f} & & \uparrow  &&&\\

U_\sigma \times_{X_{\sigma}}
X_{\Delta^\sigma} &&\simeq& f^{-1}(U_\sigma)
 &\buildrel \varphi^f_\sigma \over \longrightarrow &
X_{\Delta^\sigma} &&& \\

\end{array}\] 

\item (Hironaka's condition)
For any geometric  point $ {\overline{x}}: \Spec({\overline{K}})\to s\subset X$ in a stratum $s$,    every
automorphism  
$\alpha$ of $\widehat{X}^{\overline{K}}_{\overline{x}}$ preserving strata  
 can be lifted
to an automorphism $\alpha'$ of $Y\times_X\widehat{X}^{\overline{K}}_{\overline{x}}$.  Here $$\widehat{X}^{\overline{K}}_{\overline{x}}= (X\times_{\Spec(K)}(\Spec{\overline{K}}))_{\overline{x}}.$$
\end{enumerate}
\end{definition}

\subsection{Canonical subdivisions of semicomplexes}

We will rewrite  the Hironaka condition in the above definition in a more convenient form
\begin{lemma} \cite[Lemma 4.13.1]{Wlodarczyk-toroidal} \label{le: tilde} Let $(X,S)$ be a  
stratified toroidal
variety of dimension $n$ with associated
 semicomplex $\Sigma$ and let $f:Y\to (X,S) $ be a toroidal modification.
Let $ {\overline{x}}: \Spec({\overline{K}})\to \{x\}\subset X$ be a
geometric  point in the stratum $\strat_X(\sigma)\in S$, $\varphi_\sigma: U\to
X_{\sigma}$ a chart of a neighborhood $U$ of $x$,
and $\Delta^\sigma$ a subdivision of $\sigma$ for which
there is a fiber square 

\[\begin{array}{rcccccc}
&& & U & \buildrel \varphi_\sigma \over \longrightarrow &X_{\sigma} &\\

&&&\uparrow {\scriptstyle f} &&\uparrow&\\

&&&f^{-1}(U)

 & \buildrel \varphi^f_\sigma \over \longrightarrow &X_{\Delta^\sigma}& \\

\end{array}\]
\noindent where the horizontal morphisms are smooth.
Set $$\reg(\sigma):=\langle
e_1,\ldots,e_{n-\dim(N_\sigma)}\rangle=\langle
e_1,\ldots,e_{\dim(\strat_X(\sigma))}\rangle.$$

Then 
\begin{enumerate}
\item
there is a fiber square of \'etale extensions
\[\begin{array}{rcccccc}
&& & U & \buildrel \widetilde{\varphi}_\sigma \over \longrightarrow &X_{\widetilde{\sigma}} &\\

&&&\uparrow {\scriptstyle f} &&\uparrow&\\

&&&f^{-1}(U)

&\buildrel \widetilde{\varphi}^f_\sigma \over \longrightarrow 
&X_{\widetilde{\Delta}^\sigma}& \\

\end{array}\]
\noindent where the horizontal morphisms are \'etale and where
$$\widetilde{\sigma} := \overline{\sigma}\times
\reg(\sigma) \mbox{ and }  \widetilde{\Delta}^\sigma:=\Delta^\sigma\times \reg(\sigma).
$$

\item $X_{\widetilde{\sigma}}$ is a
stratified toric variety with the strata
described by the embedded semifan
$\sigma\subset \widetilde{\sigma}$. Moreover, the strata on
$U$ are exactly the inverse images of strata of $X_{\widetilde{\sigma}}$.

\item There is a fiber square of isomorphisms
\[\begin{array}{rcccccc}
&& & \widehat{X}^{\overline{K}}_x & \buildrel \widehat{\varphi}_\sigma \over \simeq &{\widetilde{X}_{\sigma}} &\\

&&&\uparrow {\scriptstyle \widehat{f}_x} &&\uparrow&\\

&&&Y\times_X \widehat{X}^{\overline{K}}_x

 & \buildrel \widehat{\varphi}^{f \overline{K}}_\sigma \over \simeq &{\widetilde{X}_{\Delta^\sigma}} & \\

\end{array}\]
where

 $$\widetilde{X}_{\sigma}:=
\widehat{X}^{\overline{K}}_{\widetilde{\sigma}} \mbox{ and } 
 \widetilde{X}_{\Delta^\sigma}:=
X_{\widetilde{\Delta}^\sigma}\times_{{X}_{
\widetilde{\sigma}}}
\widetilde{X}_{\sigma}=X^{\overline{K}}_{\widetilde{\Delta}^\sigma}\times_{{X}^{\overline{K}}_{
\widetilde{\sigma}}}
\widetilde{X}_{\sigma}..$$
\item $\widetilde{X}_{\sigma} $ is a
stratified toroidal scheme with the strata 
described by the embedded semifan
$\sigma\subset \widetilde{\sigma}$. The isomorphism
$\widehat{\varphi}_\sigma$ preserves strata.
\item
The morphism $\widehat{f}_x:Y\times_X\widehat{X}^{\overline{K}}_{\overline{x}}\to
\widehat{X}^{\overline{K}}_{\overline{x}}$ is
 ${\rm Aut}(\widehat{X}^{\overline{K}}_{\overline{x}},S)$-equivariant 
.

\item
The morphism $\widetilde{X}_{\Delta^\sigma}\to
\widetilde{X}_{\sigma} $  is 
${\rm Aut}(\widetilde{X}_{\sigma})$-equivariant.
 \qed

\end{enumerate}
\end{lemma}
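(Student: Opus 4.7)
The plan is to enlarge the given chart $\varphi_\sigma$ so that the complementary ``regular'' directions — which correspond to motion along the stratum $\strat_X(\sigma)$ — are built into the toric target, then complete at $\overline{x}$ and invoke Hironaka's condition to upgrade the resulting fiber square to an equivariant one.

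\textbf{Step 1 (constructing $\widetilde\varphi_\sigma$).} The chart $\varphi_\sigma:U\to X_\sigma$ is smooth of relative dimension $d=\dim\strat_X(\sigma)=n-\dim N_\sigma$, and its fiber over a point of $O_\sigma\subset X_\sigma$ is the stratum through $x$. Near $\overline{x}$ I choose $d$ regular functions $u_1,\ldots,u_d\in \cO_{U}$ whose restrictions to $\strat_X(\sigma)\cap U$ form a local coordinate system on the (smooth) stratum; shrinking $U$ I may arrange $u_i\in \cO_U^*$ so that the tuple $(u_1,\ldots,u_d):U\to T^d=\Spec K[t_1^{\pm 1},\ldots,t_d^{\pm 1}]$ lands in a torus. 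Setting $\widetilde\varphi_\sigma=(\varphi_\sigma,u_1,\ldots,u_d):U\to X_\sigma\times T^d$, where $X_\sigma\times T^d=X_{\widetilde\sigma}$ for $\widetilde\sigma=\overline\sigma\times\reg(\sigma)$, the morphism $\widetilde\varphi_\sigma$ is quasi-finite (its fiber over $\widetilde\varphi_\sigma(\overline{x})$ is the intersection of the fiber of $\varphi_\sigma$ with the vanishing locus of $u_i-u_i(\overline{x})$, hence a point) and smooth, therefore \'etale. For the second row of the diagram in (1) I just pull back: since $\widetilde X_{\Delta^\sigma}=X_{\widetilde\Delta^\sigma}$ is defined by base change along $X_{\widetilde\sigma}\to X_\sigma$, one has
\[
f^{-1}(U)\;=\;U\times_{X_\sigma}X_{\Delta^\sigma}\;=\;U\times_{X_{\widetilde\sigma}}X_{\widetilde\Delta^\sigma},
\]
so the lower square is cartesian and its horizontal arrow is \'etale as a base change of $\widetilde\varphi_\sigma$.

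\textbf{Step 2 (the stratification on $X_{\widetilde\sigma}$; items (2) and (4)).} The toric stratification on $X_{\widetilde\sigma}$ associated to the embedded semifan $\sigma\subset\overline{\widetilde\sigma}$ has strata $\strat_{\widetilde\sigma}(\tau)=\strat_\sigma(\tau)\times T^d$ for each $\tau\le\sigma$, by Proposition \ref{le: semifans correspondence} applied to the product. Because $\widetilde\varphi_\sigma$ is \'etale, the preimage of this stratification on $U$ agrees with $S\cap U$ (it already did at the level of $\varphi_\sigma$, and the additional $T^d$-factor is trivially stratified), which proves (2). Statement (4) is obtained from (2) by the faithfully flat base change $\widetilde X_\sigma=\widehat X^{\overline K}_{\widetilde\sigma}\to X_{\widetilde\sigma}$, under which strata pull back to strata.

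\textbf{Step 3 (completion; item (3)).} Passing to the completion at $\overline{x}$ and its image, the \'etale morphism $\widetilde\varphi_\sigma$ induces an isomorphism
$
\widehat\varphi_\sigma:\widehat X^{\overline K}_{\overline{x}}\toisom \widetilde X_\sigma,
$
since \'etale morphisms induce isomorphisms on complete local rings at geometric points with identified residue fields. Taking fiber products with $Y\to X$ on the left and with $X_{\widetilde\Delta^\sigma}\to X_{\widetilde\sigma}$ on the right yields the lower isomorphism and the whole cartesian square of (3).

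\textbf{Step 4 (equivariance; items (5) and (6)).} This is the step where the Hironaka condition is used, and it is the main point of the lemma. By condition (2) of Definition \ref{de: toroidal modification}, every $\alpha\in\Aut(\widehat X^{\overline K}_{\overline{x}},S)$ lifts to an automorphism $\alpha'$ of $Y\times_X\widehat X^{\overline K}_{\overline{x}}$ compatible with $\widehat f_{\overline{x}}$; the uniqueness of the lift (as a morphism over $\widehat X^{\overline K}_{\overline{x}}$ that is an isomorphism on the dense open where $\widehat f_{\overline{x}}$ is) gives a group homomorphism $\Aut(\widehat X^{\overline K}_{\overline{x}},S)\to \Aut(Y\times_X\widehat X^{\overline K}_{\overline{x}})$ making $\widehat f_{\overline{x}}$ equivariant. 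This is (5). For (6), transport the action through the isomorphisms $\widehat\varphi_\sigma$, $\widehat\varphi^{f\overline K}_\sigma$ of (3): because the stratification on $\widetilde X_\sigma$ is the one described in (4), $\Aut(\widetilde X_\sigma)$ is precisely the group of stratum-preserving automorphisms of $\widetilde X_\sigma$ identified with $\Aut(\widehat X^{\overline K}_{\overline{x}},S)$ under $\widehat\varphi_\sigma$, and the equivariance of (5) transfers directly to the equivariance of $\widetilde X_{\Delta^\sigma}\to\widetilde X_\sigma$ claimed in (6).

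The only real obstacle is Step 4: one must check that the lift $\alpha'$ provided abstractly by Hironaka's condition assembles into a genuine group action, i.e.\ that it is canonical. This is handled by the density argument above — both $Y\times_X\widehat X^{\overline K}_{\overline x}$ and $\widehat X^{\overline K}_{\overline x}$ are separated over the algebraic completion and $\widehat f_{\overline x}$ is an isomorphism over the open dense locus of trivial logarithmic/toroidal structure, so any two lifts of the same $\alpha$ must agree. Once canonicity of the lift is in hand, the translation to an action of $\Aut(\widetilde X_\sigma)$ on $\widetilde X_{\Delta^\sigma}$ is formal from (3).
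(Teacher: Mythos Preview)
Your proof is correct and is exactly the unpacking of what the paper's own proof says in one line: ``The Lemma is a reinterpretation of the Hironaka condition and follows directly from Definition \ref{de: toroidal modification}.'' One small slip: you write $X_\sigma\times T^d=X_{\widetilde\sigma}$, but in fact $X_{\widetilde\sigma}=X_\sigma\times\AA^d$ (since $\reg(\sigma)$ is a regular cone, not the zero cone); your map lands in the open subset $X_\sigma\times T^d\subset X_{\widetilde\sigma}$, which is all that is needed for the \'etale claim.
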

\begin{proof} The Lemma is a reinterpretation of the Hironaka condition and follows  directly from Definition  \ref{de: toroidal modification}. For more details see \cite{Wlodarczyk-toroidal}.

\end{proof}

We shall assign to the faces of an 
semicomplex $\Sigma$ the collection of
connected proalgebraic groups over ${\overline{K}}$:
$$G_{\sigma}:=
{\rm
Aut}(\widetilde{X}_{\sigma}), \quad 
G_{\sigma}^0:=
{\rm
Aut}(\widetilde{X}_{\sigma})^0.$$

\begin{definition}\cite[Definition 4.13.3]{Wlodarczyk-toroidal} \label{de: canonical} A 
subdivision $\Delta=\{\Delta^\sigma\mid\sigma\in\Sigma\}$ of a  semicomplex  $\Sigma$ is
called {\it canonical}  if
 for any $\sigma \in \Sigma$, 
$G_{\sigma}$  acts
on $\widetilde{X}_{\Delta^\sigma}$ (as an abstract group) and the morphism
$\widetilde{X}_{\Delta^\sigma}\to
\widetilde{X}_{\sigma} $ is $G_{\sigma}$-equivariant.
\end{definition}

\begin{lemma}(\cite{Demushkin}, \cite[Lemma 7.3.2]{Wlodarczyk-toroidal})\label{le: surjection}  
Let $\sigma$ be a semicone. 
Then ${\rm Aut}(\widetilde{X}_{\sigma})^0\subset 
{\rm Aut}(\widetilde{X}_{\sigma})$ is a normal subgroup and
there is a natural surjection 
${\rm Aut}(\sigma) \rightarrow 
{\rm Aut}(\widetilde{X}_{\sigma})/{\rm Aut}(\widetilde{X}_{\sigma})^0.$
\end{lemma}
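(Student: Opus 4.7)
The plan is to treat the two assertions in turn, leveraging the structure of $\Aut(\widetilde{X}_\sigma)$ as a proalgebraic group with a distinguished maximal torus coming from the toric structure of $\widetilde{X}_\sigma = \widehat{X}^{\overline{K}}_{\widetilde\sigma}$, where $\widetilde\sigma = \overline\sigma \times \reg(\sigma)$.

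First, for the normality statement, I would write $\Aut(\widetilde{X}_\sigma) = \varprojlim H_i$ as an inverse limit of affine algebraic groups (as in Lemma \ref{le: epimorphisms}), where each $H_i$ is the image of $\Aut(\widetilde{X}_\sigma)$ in $\Aut(X^{(n)}_x, S)$ for truncations of the local ring. For each affine algebraic group $H_i$, the identity component $H_i^0$ is a normal subgroup, and the structure maps $H_j \twoheadrightarrow H_i$ send $H_j^0$ onto $H_i^0$. Hence $\Aut(\widetilde{X}_\sigma)^0 = \varprojlim H_i^0$ is a normal subgroup of $\Aut(\widetilde{X}_\sigma)$, by passing the conjugation relations through the inverse system.

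Second, for the surjection $\Aut(\sigma) \to \Aut(\widetilde{X}_\sigma)/\Aut(\widetilde{X}_\sigma)^0$, I would use the conjugacy of maximal tori in connected proalgebraic groups, which is exactly the engine behind the generalized Demushkin Theorem \ref{th: Dem2} and its sketch in the excerpt. Let $T \subset \widetilde{X}_\sigma$ be the big torus from the toric structure of $\widetilde\sigma$; its action embeds $T$ as a maximal torus of the connected proalgebraic group $\Aut(\widetilde{X}_\sigma)^0$. Given $\phi \in \Aut(\widetilde{X}_\sigma)$, the conjugate torus $\phi^{-1}T\phi \subset \Aut(\widetilde{X}_\sigma)^0$ is another maximal torus (normality of $\Aut(\widetilde{X}_\sigma)^0$ is used here to keep $\phi^{-1}T\phi$ inside the identity component). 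By the conjugacy theorem for maximal tori in connected (pro)algebraic groups, there exists $\psi \in \Aut(\widetilde{X}_\sigma)^0$ with $\psi^{-1}\phi^{-1}T\phi\psi = T$, so that $\phi' := \phi\psi$ normalizes the $T$-action on $\widetilde{X}_\sigma$.

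Now $\phi'$ normalizing $T$ permutes the $T$-semi-invariant regular parameters (up to units), hence descends to an automorphism of the character lattice $M_{\widetilde\sigma}$, which in turn dualizes to an automorphism of $N_{\widetilde\sigma}$ carrying $\widetilde\sigma$ to itself. Because $\phi'$ preserves the stratification by hypothesis, the induced lattice automorphism preserves the embedded semifan $\sigma \subset \overline{\widetilde\sigma}$; thanks to the product form $\widetilde\sigma = \overline\sigma \times \reg(\sigma)$ and the fact that $\sigma$ is unaffected by the regular factor, this descends to an element $\overline\phi \in \Aut(\sigma)$. Thus $\phi \equiv \phi' \pmod{\Aut(\widetilde{X}_\sigma)^0}$ comes from $\Aut(\sigma)$, giving the desired surjection.

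The main obstacle I expect is the careful handling of the torus-conjugacy step in the proalgebraic setting: the group $\Aut(\widetilde{X}_\sigma)^0$ is pro-unipotent modulo its (Levi-type) torus factor coming from the differential representation of Example \ref{ex: differential}, and one must argue conjugacy of the two maximal tori by truncating to each $H_i^0$, applying the classical conjugacy theorem there, and then lifting coherently through the inverse system. Once this is in place, the descent to $\Aut(\sigma)$ is routine because any element of $\Aut(\widetilde{X}_\sigma)$ normalizing $T$ and preserving the strata acts on generators of the dual monoid by scaling, and the scaling is absorbed into the toric automorphism.
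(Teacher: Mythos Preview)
The paper does not supply its own proof of this lemma; it merely cites Demushkin and \cite[Lemma 7.3.2]{Wlodarczyk-toroidal}. Your argument is correct and is precisely the Demushkin-type argument that underlies those references, the same torus-conjugacy mechanism the paper sketches in its proof of Theorem~\ref{th: Dem2}. In particular, your normality argument via the inverse system of identity components is standard, and your surjectivity argument---conjugating an arbitrary $\phi$ by an element of $\Aut(\widetilde{X}_\sigma)^0$ so that it normalizes the torus, then reading off a lattice automorphism preserving the embedded semifan---is exactly the intended route.

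Two small points worth tightening. First, you should state explicitly the natural map $\Aut(\sigma)\to \Aut(\widetilde{X}_\sigma)$ whose surjectivity onto the quotient you are proving: an automorphism of the semicone $(\sigma,N_\sigma)$ extends by the identity on the regular factor $\reg(\sigma)$ to an automorphism of $\widetilde\sigma$, hence of $\widetilde{X}_\sigma$. Second, in the descent step, the reason the lattice automorphism of $N_{\widetilde\sigma}$ induced by $\phi'$ restricts to $N_\sigma$ is that $\phi'$ preserves the maximal stratum, hence fixes the cone $|\sigma|$ setwise, and $|\sigma|$ spans $N_\sigma$; this is implicit in what you wrote but deserves one sentence. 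With those clarifications your proof is complete and aligned with the cited literature.
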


 \begin{theorem}(\cite[Theorem 4.14.1]{Wlodarczyk-toroidal})\label{th: modifications} Let  $(X,S)$ be
   stratified toroidal variety 
with the associated   semicomplex    
$\Sigma$. There exists a bijective  correspondence
between the toroidal modifications $Y$ of  $(X,S)$ 
and 
the canonical subdivisions $\Delta$ of $\Sigma$. 
\begin{enumerate}
\item

If $\Delta$ is a canonical subdivision of $\Sigma$ then
the
toroidal modification associated with it is defined locally by 
\[\begin{array}{rcccccccc}
&& & U_\sigma &  \rightarrow & X_{\sigma}&&&\\

&&&\uparrow {\scriptstyle f} & & \uparrow  &&&\\

U_\sigma \times_{X_{\sigma}}
X_{\Delta^\sigma} &&\simeq& f^{-1}(U_\sigma)
 & \rightarrow &
X_{\Delta^\sigma} &&& \\

\end{array}\] 

\item
If $Y^1\to X$,
$Y^2\to X$ are toroidal modifications associated with
canonical subdivisions $\Delta_{1}$ and $\Delta_{2}$ of $\Sigma$ then 
the natural birational map $Y^1\to Y^2$ is a morphism iff
$\Delta_{1}$ is a subdivision of $\Delta_{2}$.
\end{enumerate}
\end{theorem}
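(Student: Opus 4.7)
The plan is to prove the bijection in both directions, with the canonicity condition playing the role of the descent data necessary to glue local toric modifications into a global one, while Hironaka's condition is the geometric shadow of this equivariance on formal completions.

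For the forward direction $\Delta \leadsto Y$, I would, for each stratum $s = \strat_X(\sigma)$ and each chart $\varphi_\sigma \colon U_\sigma \to X_\sigma$ around a point of $s$, form the local modification $U_\sigma \times_{X_\sigma} X_{\Delta^\sigma}$ and then glue these as $\sigma$ varies over $\Sigma$ and as the chart varies within a single stratum. The first and harder gluing is chart-independence within a single stratum: two \'etale charts $U_\sigma \to X_\sigma$ about points of the same stratum produce, upon passage to formal completions as in Lemma \ref{le: tilde}, two isomorphic formal stratified schemes that differ by an element of $G_\sigma = \Aut(\widetilde{X}_\sigma)$; the canonicity condition in Definition \ref{de: canonical} says precisely that $\widetilde X_{\Delta^\sigma} \to \widetilde X_\sigma$ is $G_\sigma$-equivariant, so the pulled-back modifications coincide. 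The component-group ambiguity captured by $\Aut(\sigma) \to G_\sigma/G_\sigma^0$ in Lemma \ref{le: surjection} is absorbed by the $\Aut(\sigma)$-invariance of $\Delta^\sigma$ required in Definition \ref{de: subdivision of an oriented semicomplex}(1). The second gluing, between charts attached to strata $\tau \le \sigma$, is controlled by compatibility (2) of the same definition, $\Delta^\sigma|_{|\tau|} = \imath^\sigma_\tau(\Delta^\tau)$, which ensures that restricting a chart near $\strat(\sigma)$ to a neighborhood of a point of $\strat(\tau)$ produces the same local modification as the chart naturally attached to $\tau$. Once $Y$ is assembled, properness is local and toric, and Hironaka's condition is an immediate consequence of the $G_\sigma$-equivariance via the formal fiber square of Lemma \ref{le: tilde}(3)--(6).

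For the inverse direction $Y \leadsto \Delta$, the subdivision $\Delta^\sigma$ is the one appearing in the fiber square of Definition \ref{de: toroidal modification}(1); independence of the chart (up to $\Aut(\sigma)$) follows by applying the Hironaka condition (2) to the automorphism of $\widehat X^{\overline K}_{\overline x}$ that relates two charts, and then using Lemma \ref{le: surjection} to descend the ambiguity to the semicone level. The $G_\sigma$-equivariance of $\widetilde X_{\Delta^\sigma} \to \widetilde X_\sigma$, i.e.\ canonicity, is a direct reformulation of the Hironaka condition applied to formal automorphisms, again via Lemma \ref{le: tilde}. Face compatibility $\Delta^\sigma|_{|\tau|} = \imath^\sigma_\tau(\Delta^\tau)$ follows by comparing two local fiber squares at a single point of $\strat(\tau)$: the one obtained from the chart at $\tau$, and the one obtained from restricting a chart at $\sigma$ to a neighborhood of the point. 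The two assignments $\Delta \mapsto Y$ and $Y \mapsto \Delta$ are mutual inverses by construction. For assertion (2), a birational map $Y^1 \dashrightarrow Y^2$ over $X$ is a morphism iff it is one over each chart $\varphi_\sigma$, and the fiber product description reduces this to the classical toric fact that $X_{\Delta_1^\sigma} \to X_{\Delta_2^\sigma}$ is a morphism iff $\Delta_1^\sigma$ subdivides $\Delta_2^\sigma$.

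The main obstacle is chart-independence in the forward direction. The difficulty is that two \'etale charts to the same $X_\sigma$ in general are not related by any actual automorphism of $X_\sigma$ or even of its henselization; they become identified only after passing to the formal stratified scheme $\widetilde X_\sigma$, and only up to an automorphism from the full proalgebraic group $G_\sigma$, which is vastly larger than the toric automorphism group. One must therefore check that the canonicity condition, which by definition controls equivariance under $G_\sigma$ on the formal model $\widetilde X_{\Delta^\sigma}$, descends to an actual identification of the two \'etale-local modifications $U_\sigma \times_{X_\sigma} X_{\Delta^\sigma}$ defined by the two charts. This descent, together with the matching $\Aut(\sigma)$-invariance needed to handle the component group, is the technical heart of the proof; the remaining verifications are routine gluing.
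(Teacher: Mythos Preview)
Your proposal is correct and follows essentially the same approach as the paper: glue the local pieces $U_\sigma \times_{X_\sigma} X_{\Delta^\sigma}$, reduce chart-independence to the formal completion where two charts differ by an element of $G_\sigma$, and invoke the canonicity condition (full $G_\sigma$-equivariance of $\widetilde X_{\Delta^\sigma} \to \widetilde X_\sigma$) to match them; the inverse direction and part (2) are handled identically. Two minor remarks: the paper packages the chart-independence as a contradiction via the graph of $V_1 \dashrightarrow V_2$ and a faithfully flat base-change to the formal fiber, rather than your direct identification; and your separate invocation of the component-group surjection $\Aut(\sigma) \to G_\sigma/G_\sigma^0$ is unnecessary here, since Definition~\ref{de: canonical} already demands equivariance under the full $G_\sigma$, not just $G_\sigma^0$.
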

 \begin{proof}
 Sketch of the proof. 
 Let $\Delta$ be a canonical subdivision of $\Sigma$.
 The variety $Y$ is obtained by gluing the pieces $V=U\times_{X_\sigma}X_{\Delta^\sigma}$, defined by the toric charts $\phi: U\to X_\sigma$  which are birational to $U\subset X$. We need to show that the gluing is independent of the charts. 
 Suppose that there are two different charts $\phi_1$, $\phi_2$ inducing birational varieties $V_1$ and $V_2$ over $X$ which do not glue over
 a certain point $x\in s$, where $s$ is a stratum corresponding to the cone $\sigma$. If we consider the graph $V$ of $V_1\dashrightarrow V_2$, then at least one of the birational morphisms $V\to V_1$ or $V\to V_2$ is not an isomorphism over $x$, so contracts a curve.
 
 Observe that by shrinking and restricting charts  we can reduce the situation to two smooth charts of the form $\phi_1,\phi_2: U\to X_\sigma$. Then we can further assume that the charts are \'etale replacing $\phi_i$ with $\widetilde{\phi}_i:U\to X_\sigma=X_{\widetilde{\sigma}}$ with both charts taking $x$ to the closed orbit $O_{\widetilde{\sigma}}$. 
 
 Passing to the algebraic  closure $\overline{K}$, and 
 to the local rings at a geometric point $\overline{x}$ over $x$ we see that the spaces $V^{\overline{K}}_i:=U^{\overline{K}}\times_{X_{\widetilde{\sigma}}^{\overline{K}}}X^{\overline{K}}_{\Delta_{\widetilde{\sigma}}}$  does not glue over $\overline{x}$.
 In other words, the induced spaces  $\widehat{V}_i^{\overline{K}}:=\widehat{X}^{\overline{K}}_{\overline{x}}\times_ 
{\widetilde{X}_{\sigma}^{\overline{K}}}\widetilde{X}^{\overline{K}}_{\Delta^\sigma}$ are not isomorphic, 
for two different induced isomorphisms $\widehat{\phi}_{i{\overline{x}}}: \widehat{X}^{\overline{K}}_{\overline{x}}\to 
\widetilde{X}_{\sigma}$ since the map $\widehat{V}_1^{\overline{K}}\dashrightarrow \widehat{V}_1^{\overline{K}}$ is induced  from the map between $V_i\times_X\Spec(\cO_{X,x})$ by the  faithful flat  change of base  $\Spec(\widehat{\cO_{X^{\overline{K}},\overline{x}}})\to  \Spec(\cO_{X,x})$.

On the other hand, by the assumption,  the isomorphisms $\widehat{\phi}_{i{\overline{x}}}$ differ by an  automorphism in ${\rm Aut}(\widetilde{X}_{\sigma})$, and this automorphism lifts to  an automorphism of $\widetilde{X}_{\Delta^\sigma}$ inducing  the isomorphism between $\widehat{V}_i^{\overline{K}}=U_\sigma^{\overline{K}}\times_{X_{\widetilde{\sigma}}^{\overline{K}}}X^{\overline{K}}_{\Delta_{\widetilde{\sigma}}}$,
 which is a contradiction.

The converse follows from the Definition and Lemma \ref{le: tilde}.
By definition, for any $\sigma$ there is a subdivision $\Delta^\sigma$ of $\sigma$ such that $\widetilde{X}_{\Delta^\sigma}\to
\widetilde{X}_{\sigma} $ is $G_{\sigma}$-equivariant.

One needs to show that the subdivisions $\Delta^\sigma$ of $\sigma$ are defined uniquely. To this end we use  diagram (3) from Lemma \ref{le: tilde}. 
The isomorphisms $\widehat{\phi}_\sigma$ are differ by element of $G_\sigma$. On the other hand, the induced morphisms $\widetilde{X}_{\Delta^\sigma}\to \widetilde{X}_{\sigma}$ are $G_\sigma$-equivariant. Thus  we conclude that there is a   isomorphism $\widetilde{X}_{\Delta^\sigma}\to \widetilde{X}_{(\Delta^\sigma)'}$ over $\widetilde{X}_{\sigma}$. It is a torus 
equivariant and  takes affine torus subschemes $\widetilde{X}_\delta\subset \widetilde{X}_{\Delta^\sigma}$ to $\widetilde{X}_{\delta'}\subset \widetilde{X}_{(\Delta^\sigma})'$, and defines an isomorphism of the cones of semminvariant functions $(\delta^\vee)^\integ\to ((\delta')^\vee)^\integ$ and $\delta\simeq\delta'$.

(2) This part follows from the analogous properties of toric varieties. We can assume that both morphisms are locally described in the same \'etale chart by the toric morphisms.
For the details, see \cite{Wlodarczyk-toroidal}.

 
 \end{proof}

\subsubsection{Canonical birational modifications  of strict toroidal embeddings} 
  The   theorem below shows that the  canonical birational modifications of  strict toroidal embeddings  can be considered as a particular case of the canonical morphisms of stratified toroidal varieties (see also Theorem \ref{sub}). In particular,  we have
  
  \begin{lemma} \label{le: tembeddings} \cite[Lemma 6.3.1(2)]{Wlodarczyk-toroidal}. Let  $(X,S)$ be a  strict toroidal
embedding with the associated complex $\Sigma$. Then 
for any  subdivision $\Delta^\sigma$ of $\sigma$  the induced morphism $\widetilde{X}_{\Delta^\sigma}\to \widetilde{X}_{\sigma}$
is $G_{\sigma}$-equivariant. Consequently, all subdivisions of $\Sigma$ are canonical.  

\end{lemma}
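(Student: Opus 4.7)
Proof Proposal. The plan is to reduce canonicity to a statement about intrinsic stability of toroidal valuation ideals under the stratum-preserving automorphism group, exploiting the fact that for a strict toroidal embedding the associated semicomplex is an honest complex and thus rigid in a strong sense.

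First I would note that since $(X,S)$ is a strict toroidal embedding, Lemma \ref{le: associated semicomplex} ensures that the associated semicomplex $\Sigma$ is a conical complex, i.e., every face of each cone $|\sigma|$ appears in the semicone $\sigma$. By the remark following Definition \ref{de: conical complex}, in this situation $\Aut(\sigma)$ is trivial: any element of $\Aut(\sigma)$ preserves every face of $\sigma$ as a set with its lattice, and the only lattice automorphism fixing all rays and all intermediate faces is the identity. Consequently, the $\Aut(\sigma)$-invariance condition in Definition \ref{de: subdivision of an oriented semicomplex}(1) is vacuous, so any subdivision $\Delta^\sigma$ of $\sigma$ qualifies as a subdivision of $\Sigma$. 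Moreover, applying Lemma \ref{le: surjection}, the surjection $\Aut(\sigma)\to G_\sigma/G_\sigma^0$ forces $G_\sigma = G_\sigma^0$; the entire stratum-preserving automorphism group is connected.

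Next I would show that the morphism $\widetilde{X}_{\Delta^\sigma}\to\widetilde{X}_\sigma$ is built from $G_\sigma$-stable data. Every proper toric subdivision factors as a finite composition of star subdivisions at integer vectors $v\in |\widetilde{\sigma}|$, and by Lemma \ref{le: blow-up valuation} (together with Corollary \ref{divisors}) each such star subdivision realizes the morphism as the normalized blow-up of a coherent toroidal valuation ideal $\cI_{\val(v),d}:=\{f\mid \val(v)(f)\ge d\}$ for $d$ sufficiently divisible. This definition is intrinsic: it depends only on the valuation $\val(v)$ of the function field of $\widetilde{X}_\sigma$, not on any choice of toric action. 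Hence it suffices to verify that each $\cI_{\val(v),d}$ is $G_\sigma$-stable.

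The core of the argument is the stability of toroidal valuations. In a strict toroidal embedding the irreducible boundary divisors on $\widetilde{X}_\sigma$ are precisely the closures of the codimension-one strata, and by Theorem \ref{cc} they are in bijection with the rays of $\sigma$; the corresponding valuations $\val(\rho)$ are orders of vanishing along these divisors. Any $\alpha\in G_\sigma$ preserves the stratification, hence permutes the divisorial strata; but the induced action on the combinatorial data lies in the image of $G_\sigma/G_\sigma^0$ in $\Aut(\sigma)$, which is trivial. Therefore $\alpha$ preserves each ray-valuation $\val(\rho)$, and since an arbitrary integer vector $v=\sum a_i\rho_i\in |\sigma|$ yields a valuation $\val(v)$ that restricts on monomials to the nonnegative combination $\sum a_i\val(\rho_i)$, $\alpha$ preserves $\val(v)$ as well. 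It follows that each ideal $\cI_{\val(v),d}$ is $G_\sigma$-stable, so its normalized blow-up and thus $\widetilde{X}_{\Delta^\sigma}\to\widetilde{X}_\sigma$ is $G_\sigma$-equivariant.

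The main obstacle I anticipate is bookkeeping rather than conceptual: one must distinguish carefully between automorphisms of the cone $\sigma$ (which are trivial for a complex) and automorphisms of the formal completion $\widetilde{X}_\sigma$ in $G_\sigma^0$, which may send a monomial to a power series with mixed terms. The key observation that resolves this is that only the action on the ideals of strata, and on the divisorial valuations they determine, matters for the equivariance of the blow-up; and this combinatorial shadow of an element of $G_\sigma$ factors through $\Aut(\sigma)=1$. Once this is in place, the equivariance of the normalized blow-up of a $G_\sigma$-stable ideal is formal, and canonicity in the sense of Definition \ref{de: canonical} follows.
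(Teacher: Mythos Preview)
Your central observation coincides with the paper's: since every ray of $\sigma$ corresponds to a codimension-one stratum, any $g\in G_\sigma$ must preserve each boundary divisor $D_\rho$. From there, however, the paper's argument is both shorter and avoids two genuine gaps in your proposal.

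\textbf{The factorization gap.} You assert that every proper toric subdivision factors as a finite composition of star subdivisions. This is not true in general: already for simplicial cones there exist non-regular triangulations (in the sense of polytope theory), and the corresponding subdivisions cannot be reached from $\overline{\sigma}$ by star subdivisions alone. Nothing in Lemma~\ref{le: blow-up valuation} provides such a factorization; it treats a single star subdivision only. So your reduction to blow-ups of valuation ideals $\cI_{\val(v),d}$ does not cover arbitrary $\Delta^\sigma$.

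\textbf{The valuation step.} Even granting the factorization, the inference ``$\alpha$ preserves each $\val(\rho_i)$, hence $\alpha$ preserves $\val(v)$ for every $v=\sum a_i\rho_i$'' is incomplete as stated. The identity $\val(v)=\sum a_i\val(\rho_i)$ holds on monomials, but on a general power series the minima defining the various $\val(\rho_i)$ and $\val(v)$ may be attained at different monomials, so there is no formula expressing $\val(v)(f)$ in terms of the $\val(\rho_i)(f)$. What you actually need is the paper's intermediate conclusion: preservation of each $D_{\rho_i}$ forces the principal divisor of every monomial $x^w$ to be preserved, hence $g(x^w)=u_w\,x^w$ with $u_w$ a unit. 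From \emph{that} all monomial valuations and all monomial ideals are visibly $G_\sigma$-stable. You acknowledge in your final paragraph that $g$ may scramble monomials, but you do not supply this missing step.

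\textbf{How the paper closes the argument.} Rather than passing through blow-ups, the paper uses the ``monomials $\mapsto$ unit $\times$ monomial'' conclusion directly: for each $\delta\in\Delta^\sigma$ the ring of $\widetilde{X}_\delta=\widetilde{X}_\sigma\times_{X_{\widetilde{\sigma}}}X_{\widetilde{\delta}}$ is obtained from that of $\widetilde{X}_\sigma$ by adjoining Laurent monomials $m\in(\widetilde{\delta}^\vee)^{\integ}$, and the rule $g'(m):=g(m_1)/g(m_2)$ for $m=m_1/m_2$ gives a well-defined lift of $g$ to $\widetilde{X}_\delta$. These lifts agree on overlaps, so $g$ lifts to $\widetilde{X}_{\Delta^\sigma}=\bigcup_\delta\widetilde{X}_\delta$. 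No projectivity, no factorization, no valuation bookkeeping is required.

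If you want to keep the blow-up viewpoint, the clean repair is to drop the star-subdivision factorization and instead argue that every monomial ideal on $\widetilde{X}_\sigma$ is $G_\sigma$-stable (immediate from $g(x^w)=u_w x^w$); but then you still need the paper's affine-chart argument for non-projective subdivisions, so little is gained.
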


\begin{proof} Let  $\delta\in \Delta^\sigma$.  Since ${X}_{\sigma}$ is a toroidal embedding (and $\widetilde{X}_{\sigma}$ is the completion of its local ring)  each automorphism $g$ from $G_\sigma$  
preserves the toric irreducible divisors on $\widetilde{X}_{\sigma}$ defined by the rays of $\sigma$, hence it 
multiplies the generating monomials in $(\sigma^\vee)^\integ$ by invertible
functions. 
This implies that the action  $g$ on $\widetilde{X}_{\sigma}$
lifts to an automorphism $g'$ of
$\widetilde{X}_{\delta}=\widetilde{X}_{\sigma}
\times_{X_{\widetilde{\sigma}}}
{X}_{\widetilde{\delta}}$ which also multiplies monomials
by suitable invertible functions. 
Therefore $g$ lifts to $\widetilde{X}_{\delta}$ and to
to the scheme $\widetilde{X}_{\Delta^\sigma}=\bigcup_{\delta\in \Delta^\sigma} \widetilde{X}_{\delta} $. 

\end{proof}

\subsection{Use of minimal vectors} 
 The following observations are critical. They allow
 to run certain  desingularization combinatorial algorithms on  stratified toroidal varieties.
 
 By abuse of terminology a vector $v\in \sigma$ will be
called 
 {\it
$G_{\sigma}$-invariant} (respectively {\it
$G^0_{\sigma}$-invariant}) if the corresponding valuation  
$\val(v)$ on  on $\widetilde{X}_{\sigma}$  is  $G^0_{\sigma}$-invariant (respectively {\it
$G^0_{\sigma}$-invariant}).
 
 There are not too many $G_{\sigma}$-invariant vectors, but  quite  a few $G_{\sigma}^0$-invariant ones.

 \begin{lemma} \cite[Lemma 5.3.15]{Wlodarczyk-toroidal} \label{minimal vectors} 
 Let $\sigma$ be a
semicone and
 $\widetilde{X}_{\Delta^\sigma}\to
\widetilde{X}_{\sigma}$ be a $G^0_{\sigma}$-equivariant
birational morphism induced by a toric morphism ${X}_{\Delta^\sigma}\to
{X}_{\sigma}$ associated with  subdivision $\Delta^\sigma$ of $\sigma$.
 \begin{enumerate}
 \item  \cite[Lemma 5.3.15(3,4)]{Wlodarczyk-toroidal} Let $\delta$  be an irreducible  
 face of  $\Delta^\sigma$.
Then   all minimal internal points of
 $\delta$ are $G^0_{\sigma}$-invariant.

\item \cite[Lemma 5.3.15(5)]{Wlodarczyk-toroidal} If $v$ is a vector in the ray (one dimensional face) of  the semicone $\sigma$ then $v$ is $G^0_{\sigma}$-invariant.
\item  \cite[Lemma 6.2.1(1)]{Wlodarczyk-toroidal}The set of the $G^0_{\sigma}$-invariant vectors in $\sigma$ is convex. 
\end{enumerate}
\end{lemma}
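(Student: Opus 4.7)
The overall approach is to translate statements about $G^0_\sigma$-invariance of vectors $v \in \sigma$ into invariance of the associated toric valuations $\val(v)$ on $\widetilde{X}_\sigma$ and on the modification $\widetilde{X}_{\Delta^\sigma}$. Throughout, the key leverage is that $G^0_\sigma$ is a connected proalgebraic group in the sense of Definition \ref{de: affine proalgebraic group}, so any continuous action on a discrete set is trivial; combined with the fact that the $G^0_\sigma$-action permutes certain distinguished finite or discrete sets of divisorial valuations, this forces pointwise fixing.

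For part (2), I would argue as follows. A ray of $\sigma$ corresponds to an irreducible toric boundary divisor on $\widetilde{X}_\sigma$, and the primitive generator of the ray corresponds, up to positive scalar, to the associated divisorial valuation (this is the analog of Lemma \ref{primo} and Corollary \ref{divisors} in the toroidal setting). The full group $G_\sigma = \Aut(\widetilde{X}_\sigma)$ permutes the finite set of such irreducible boundary divisors through $O_\sigma$, and since $G^0_\sigma$ is connected, it must fix each of them individually. Hence the corresponding valuations, and therefore the primitive vectors along the rays, are $G^0_\sigma$-invariant. Rescaling by positive integers gives the claim for all integral vectors in the ray.

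For part (1), which is the main point, I would exploit the hypothesis that $\widetilde{X}_{\Delta^\sigma} \to \widetilde{X}_\sigma$ is $G^0_\sigma$-equivariant. The minimal internal points of an irreducible face $\delta \in \Delta^\sigma$ correspond to a distinguished finite subset of divisorial toric valuations on $\widetilde{X}_\delta$: by Lemma \ref{le: blow-up valuation} they appear as certain exceptional divisors in a further toric blow-up of $X_\delta$, and their characterization (Definition \ref{mi}) is purely combinatorial, hence preserved by any isomorphism of the cone $\delta$. Because $\delta$ is irreducible, this finite set is intrinsic: $G^0_\sigma$ acts by equivariance on $\widetilde{X}_{\Delta^\sigma}$ and hence permutes these finitely many valuations. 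Connectedness of $G^0_\sigma$ forces each to be fixed individually, and the corresponding minimal internal vector is then $G^0_\sigma$-invariant.

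For part (3), once (1) and (2) are in hand, one notes that $\val(v_1 + v_2) = \val(v_1) + \val(v_2)$ as an equality in the monoid of toric valuations (realized, for instance, via the piecewise linear functions $F_{v,a}$ of Lemma \ref{primo}), so that invariance is preserved under non-negative integral (hence non-negative rational, hence non-negative real by density and closedness of the fixed-point set) linear combinations; this gives convexity of the $G^0_\sigma$-invariant locus inside $\sigma$. The main obstacle I expect is the justification in (1) that the discrete set of minimal internal points of an irreducible $\delta$ is genuinely preserved by the $G^0_\sigma$-action on $\widetilde{X}_{\Delta^\sigma}$: this requires checking that the combinatorial "minimality" condition matches a geometric property (being the valuation of an exceptional divisor in an appropriate star subdivision) that is intrinsically attached to the scheme structure, so that equivariance of the modification indeed carries it through. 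Once that matching is established, the connectedness argument is immediate and the rest of the proof becomes formal.
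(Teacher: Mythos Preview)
The paper does not supply its own proof of this lemma: the statement is imported wholesale from \cite{Wlodarczyk-toroidal} (Lemmas 5.3.15 and 6.2.1 there), and the paper proceeds directly to the corollaries without argument. So there is no in-paper proof to compare against; the result functions as a black box in this ``crash course'' section on stratified toroidal varieties.

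That said, your sketch for (1) and (2) follows the spirit of the original argument in \cite{Wlodarczyk-toroidal}: connectedness of $G^0_\sigma$ forces a trivial action on any finite set of divisorial valuations that is intrinsically permuted, and the work lies in showing the relevant set (boundary divisors for (2), minimal internal points of an irreducible $\delta$ for (1)) is indeed intrinsic. You correctly flag the matching step in (1) as the crux.

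Your argument for (3), however, contains a genuine error. The identity $\val(v_1+v_2) = \val(v_1) + \val(v_2)$ is false as an equality of valuations on $\widetilde{X}_\sigma$: for $f = \sum a_w x^w$ one has $\val(v_1+v_2)(f) = \min_{a_w\neq 0}\bigl[(v_1,w)+(v_2,w)\bigr]$, which in general strictly exceeds $\min_{a_w\neq 0}(v_1,w) + \min_{a_w\neq 0}(v_2,w)$ because the two minima need not be achieved at the same monomial. So invariance of $\val(v_1)$ and $\val(v_2)$ does not transparently yield invariance of $\val(v_1+v_2)$ by addition. The convexity in \cite[Lemma 6.2.1(1)]{Wlodarczyk-toroidal} is established by a different route, going through the associated graded ideals $\cI_{\val(v),a}$ and their behaviour under the $G^0_\sigma$-action rather than through a putative additivity of the valuations themselves.
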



\begin{corollary}  \label{minimal vectors1} With the above notation

\begin{enumerate}
\item Let $\delta\in\Delta^\sigma$ be an  irreducible face.  Then  the sum of the minimal internal vectors of  $\delta$  are  $G^0_{\sigma}$-invariant. In particular, the canonical barycenter of $\delta$ is $G^0_{\sigma}$-invariant.

\item Let $\delta\in\Delta^\sigma$. Then  the
 minimal vectors of $\delta$ are $G^0_{\sigma}$-invariant. 
\end{enumerate}
\end{corollary}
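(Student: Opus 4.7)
The plan is to deduce both parts of Corollary \ref{minimal vectors1} directly from the already-proved Lemma \ref{minimal vectors}. Part (1) of that lemma gives $G^0_\sigma$-invariance of individual minimal internal vectors of irreducible faces, and part (3) (convexity of the set of $G^0_\sigma$-invariant vectors in $\sigma$) will let me transfer invariance to finite sums.

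For part (1) of the corollary, I would first apply Lemma \ref{minimal vectors}(1) to the irreducible face $\delta$: every minimal internal vector of $\delta$ is $G^0_\sigma$-invariant. By Lemma \ref{minimal vectors}(3), the set of $G^0_\sigma$-invariant vectors is convex; being also closed under $\QQ_{\geq 0}$-scaling, it is a convex cone and hence closed under finite sums. The (finitely many) minimal internal vectors of $\delta$ therefore have a $G^0_\sigma$-invariant sum; by Definition \ref{mi} this sum is exactly the canonical barycenter $v_\delta$, which gives the ``in particular'' assertion.

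For part (2), let $v$ be a minimal vector of $\delta$ in the sense of Definition \ref{minimall}. By Lemma \ref{minimal4}, $v$ lies in $\sing(\delta)$; let $\tau \preceq \sing(\delta)$ be the unique face of $\delta$ in whose relative interior $v$ lies. The key step is to show that $\tau$ is irreducible. If not, we could write $\tau = \sing(\tau) \times \reg(\tau)$ with $\reg(\tau) \neq \{0\}$; since this is a direct product decomposition of cones, it also decomposes the lattice as $N_\tau = N_{\sing(\tau)} \oplus N_{\reg(\tau)}$, so the unique splitting $v = v_1 + v_2$ with $v_1 \in \inte(\sing(\tau))^\integ$ and $v_2 \in \inte(\reg(\tau))^\integ$ has both summands nonzero (each lying in the relative interior of its factor). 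This contradicts minimality of $v$ in $\delta$, so $\tau$ is irreducible. Moreover $v$ is trivially a minimal internal vector of $\tau$ (any decomposition in $\tau^\integ$ would a fortiori be one in $\delta^\integ$), and $\tau$ is a face of $\Delta^\sigma$ since $\Delta^\sigma$ is a fan containing $\delta$. Lemma \ref{minimal vectors}(1) applied to $\tau$ then yields $G^0_\sigma$-invariance of $v$.

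The only substantive content beyond invoking Lemma \ref{minimal vectors} is the face-reduction in part (2), and I do not expect any real obstacle there: it rests on the uniqueness of the decomposition $\tau = \sing(\tau) \times \reg(\tau)$ and its compatibility with the ambient lattice, both already established.
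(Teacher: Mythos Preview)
Your proof is correct and follows essentially the same route as the paper. The paper's proof is extremely terse---it only writes out part (2), asserting without justification that a minimal vector of $\delta$ lies in the relative interior of an irreducible face $\delta'$ and is a minimal internal vector there; your argument supplies precisely the missing verification of irreducibility via the decomposition $\tau=\sing(\tau)\times\reg(\tau)$, and your part (1) (Lemma \ref{minimal vectors}(1) together with convexity from Lemma \ref{minimal vectors}(3)) is the intended, if unwritten, argument.
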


\begin{proof} A minimal vector of $\delta$ is in the relative interior of an irreducible face $\delta'$ of $\delta$. It is a minimal internal vector of $\delta'$ and, hence it is $G^0_{\sigma}$-invariant, by Lemma \ref{minimal vectors}.
	
\end{proof}

\begin{corollary}  \label{minimal vectors2} Let $\sigma$ be a
semicone containing a complex $\Omega_\sigma\subset \sigma$ and
 $\widetilde{X}_{\Delta^\sigma}\to
\widetilde{X}_{\sigma}$ be a $G^0_{\sigma}$-equivariant
birational morphism induced by a toric morphism ${X}_{\Delta^\sigma}\to
{X}_{\sigma}$ associated with  subdivision $(\Delta^\sigma,\Omega_\sigma)$ of $(\overline{\sigma},\Omega_\sigma)$. 

\begin{enumerate}

\item All the integral vectors in $|\Omega_\sigma|$ are $G^0_{\sigma}$-invariant.

\item Let $\delta\in\Delta^\sigma$ be a relatively irreducible face of $(\Delta^\sigma,\Omega_\sigma)$.  Then the canonical barycenter of $(\delta,\Omega_\delta)$ is $G^0_{\sigma}$-invariant.

\item Let $(\delta,\omega)\in (\Delta^\sigma,\Omega_\sigma)$ be a  simplicial pair
then  the the minimal vectors of $(\delta,\omega)$ are $G^0_{\sigma}$-invariant 
\end{enumerate}
\end{corollary}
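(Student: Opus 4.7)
The plan is to handle the three parts in increasing order of difficulty, exploiting the fact that all the hard work has already been packaged into Lemma \ref{minimal vectors} and Corollary \ref{minimal vectors1}, and that the set of $G^0_\sigma$-invariant vectors in $\sigma$ forms a convex (sub)cone by Lemma \ref{minimal vectors}(3).

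For (1), I would first observe that every vertex of a cone in $\Omega_\sigma$ is a vertex of a cone in the semicone $\sigma$ itself (because $\Omega_\sigma \subseteq \sigma$ is a subcomplex of the semifan consisting of faces of $|\sigma|$), so it lies on a ray of $|\sigma|$. By Lemma \ref{minimal vectors}(2) every such ray vector is $G^0_\sigma$-invariant. An arbitrary integral vector in $|\Omega_\sigma|$ lies in some $\omega \in \Omega_\sigma$ and is therefore a nonnegative integral combination of the vertices of $\omega$; invariance propagates through nonnegative combinations by the convexity clause of Lemma \ref{minimal vectors}(3). For (3) the argument is immediate: by Definition \ref{minrel0} a minimal vector of the simplicial pair $(\delta,\omega)$ is just a minimal vector of $\delta$ (not lying in $\omega$), and Corollary \ref{minimal vectors1}(2) asserts that every minimal vector of any face $\delta\in\Delta^\sigma$ is $G^0_\sigma$-invariant.

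The real content is (2). For a relatively irreducible $\delta\in\Delta^\sigma$ the canonical barycenter decomposes as $v_\delta=w_\delta+z_\delta$ where $w_\delta$ is the sum of the vertices of $\Omega\cap\delta$ and $z_\delta$ is the sum of the minimal internal vectors of $\delta$. The piece $w_\delta$ is $G^0_\sigma$-invariant by part (1). For the piece $z_\delta$ I would reduce to the absolutely irreducible case as follows. Since $\delta$ is relatively irreducible, $\delta=\sing_\Omega(\delta)$ is the smallest face containing $\sing(\delta)$ and $\omega:=\Omega\cap\delta$; moreover $\omega$ is a face of $\delta$ because $(\Delta^\sigma,\Omega_\sigma)$ is a subdivision of the pair. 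Write $\delta=\sing(\delta)\oplus\reg(\delta)$: the vertices of $\reg(\delta)$ must, by relative irreducibility, all belong to $\omega$, hence to $|\Omega|$. Any minimal internal vector $v\in\inte(\delta)$ can then be split as a part in the irreducible face $\sing(\delta)$ plus a contribution from $\omega$, and the latter contribution is $G^0_\sigma$-invariant by part (1), while the former part is captured by Lemma \ref{minimal vectors}(1) applied to the irreducible face $\sing(\delta)$. Putting everything together using convexity of the invariant locus gives the invariance of $z_\delta$ and hence of $v_\delta$.

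The main obstacle is the last paragraph of step (2): making the decomposition of a minimal internal vector of the relatively irreducible $\delta$ into an irreducible-singular part plus an $\Omega$-part rigorous. One subtle point is that a minimal internal vector of $\delta$ need not itself be expressible as a sum of a minimal internal vector of $\sing(\delta)$ and a vertex of $\omega$; rather, what saves the day is that the $G^0_\sigma$-action permutes the finite set of minimal internal vectors of $\delta$ while preserving the $\Omega$-components (which are pointwise fixed by (1)), and the connectedness of $G^0_\sigma$ then forces this finite permutation to be trivial. Formalising this "finite orbit of a connected group" argument in a way that matches the earlier framework of Lemma \ref{minimal vectors} is the one step I expect to require the most care.
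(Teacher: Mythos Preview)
Your arguments for parts (1) and (3) are essentially identical to the paper's: vertices of $\Omega_\sigma$ lie on rays of $\sigma$ and are invariant by Lemma~\ref{minimal vectors}(2), so their nonnegative combinations are invariant by convexity (Lemma~\ref{minimal vectors}(3)); and minimal vectors of a pair $(\delta,\omega)$ are in particular minimal vectors of $\delta$, hence invariant by Corollary~\ref{minimal vectors1}(2).

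For part (2), however, you are working much harder than necessary. The paper's proof is a single line: the canonical barycenter is the sum of the minimal internal vectors of $\sing(\delta)$ together with the vertices of $\Omega_\delta=\Omega\cap\delta$. The first summand is $G^0_\sigma$-invariant directly by Lemma~\ref{minimal vectors}(1), since $\sing(\delta)$ is an (absolutely) irreducible face of $\Delta^\sigma$; the second summand is invariant by part (1); and the sum is invariant by convexity. No decomposition of individual minimal internal vectors of $\delta$ is needed, and the ``finite orbit of a connected group'' argument never enters.

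Your elaborate reduction arises because you take $z_\delta$ to be the sum of minimal internal vectors of $\delta$ itself rather than of $\sing(\delta)$. The paper simply works with the latter description of the barycenter (which is what matters for the algorithm), so the obstacle you flag in your final paragraph does not appear. If one insists on the literal reading of $z_\delta$ as a sum over $\inte(\delta)$, your connectedness idea could be made to work, but it requires first showing that $G^0_\sigma$ permutes the finitely many valuations attached to the minimal internal vectors of $\delta$, which is itself nontrivial and essentially amounts to reproving the content of Lemma~\ref{minimal vectors}(1). The paper's shortcut of passing to $\sing(\delta)$ avoids all of this.
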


\begin{proof} 
(1) The vertices of $\Omega_\sigma$ are $G^0_{\sigma}$-invariant by Lemma \ref{minimal vectors}(2) so, by Lemma \ref{minimal vectors2}, their nonnegative linear combinations  are also  $G^0_{\sigma}$-invariant.

(2) The canonical barycenter is the sum of the minimal internal vectors of $\sing(\delta)$ and the vertices in $\Omega_\delta:=\Omega\cap \delta$.

(3) The minimal vectors of the pair $(\delta,\omega)$ are the minimal vectors of $\delta$. Hence they are $G^0_{\sigma}$-invariant by Lemma \ref{minimal vectors}(3).

\end{proof}

\begin{remark}
Both corollaries state that the centers used in the desingularization algorithm of the cones (or relative cones) are $G^0_{\sigma}$-invariant. 
\end{remark}

\subsection{Locally toric valuations}
Let $X$ be an
algebraic variety and 
 $\nu$ be a valuation of the field $K(X)$ of rational functions. By the valuative criterion of
separatedness and properness the valuation ring of $\nu$
dominates the local ring of a uniquely determined  point (in general nonclosed)
$c_{\nu}$ on a complete variety $X$. (If $X$ is not
complete such a point may not exist). We call the
closure of $c_\nu$ the {\it center of the valuation } $\nu$ and
denote it by $\ce(\nu)$ or $\ce(\nu,X)$. For any $x\in \ce(\nu)$
and $a\in {\bf Z}_{\geq 0}$ let
$$I_{\nu,a,x}:=\{f\in {\cO}_{X,x}\mid 
\nu(f)\geq a\}$$ \noindent be an ideal in ${\cO}_{X,x}$. 
For  a fixed $a$  these ideals define a coherent
sheaf of ideals ${\cI}_{\nu,a}$ supported at $\ce(\nu)$.  

 The following is a well-known fact from the theory of toric varieties. 
 \begin{lemma} \label{le: center} Let $\Sigma$ be a fan, and $X$ be the associated toric variety.
Let $v$ be an integral vector in the support of the fan
$\Sigma$. Then the toric valuation $\val(v)$ on $X$ is centered on
$\overline{O}_\sigma$, where $\sigma$ is the cone whose
relative interior contains $v$.
\end{lemma}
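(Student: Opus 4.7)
The plan is to pin down the center by finding the unique point of $X_\Sigma$ whose local ring is dominated by the valuation ring $V_{\val(v)}$, and then to argue that this point is the generic point of $\overline{O}_\sigma$. Since the support $|\Sigma|$ is the disjoint union of the relative interiors of its cones, there is a unique $\sigma\in\Sigma$ with $v\in\inte(\sigma)$; I would work entirely inside the affine chart $X_\sigma=\Spec K[(\sigma^\vee)^{\integ}]$, which contains $O_\sigma$ and, in particular, the generic point of $\overline{O}_\sigma$.

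First I would verify that $K[X_\sigma]\subset V_{\val(v)}$: for each monomial $x^w$ with $w\in(\sigma^\vee)^{\integ}$ one has $\val(v)(x^w)=(v,w)\ge 0$ since $v\in\sigma=\sigma^{\vee\vee}$, so for any $f=\sum a_w x^w\in K[X_\sigma]$ the value $\val(v)(f)=\min_{a_w\ne 0}(v,w)$ is nonnegative. Next I would identify the ideal
\[
\mathfrak{p}:=\{f\in K[X_\sigma]\mid \val(v)(f)>0\}.
\]
The key combinatorial ingredient is the characterization of the relative interior: for $v\in\inte(\sigma)$ and $w\in\sigma^\vee$, one has $(v,w)=0$ iff $w\in\sigma^\perp$, and $(v,w)>0$ otherwise. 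Consequently $\mathfrak{p}$ is the monomial ideal generated by $\{x^w:w\in\sigma^\vee\setminus\sigma^\perp\}$, which is precisely the prime ideal cutting out $\overline{O}_\sigma$ in $X_\sigma$ (recall from Section \ref{orbits} that $K[\overline{O}_\sigma\cap X_\sigma]=K[(\sigma^\perp)^{\integ}]$, realized as the quotient of $K[(\sigma^\vee)^{\integ}]$ killing monomials outside $\sigma^\perp$).

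To finish, let $\eta$ be the generic point of $\overline{O}_\sigma$, so $\cO_{X_\Sigma,\eta}=K[X_\sigma]_{\mathfrak{p}}$. For any $g\in K[X_\sigma]\setminus\mathfrak{p}$ the polynomial $g$ contains a monomial $x^{w_0}$ with $w_0\in\sigma^\perp$, giving $\val(v)(g)=\min(v,w)=0$, so $g$ is a unit in $V_{\val(v)}$; hence the whole localization embeds into $V_{\val(v)}$, and any $f/g$ with $f\in\mathfrak{p}$ has $\val(v)(f/g)=\val(v)(f)>0$, i.e.\ the maximal ideal of $\cO_{X_\Sigma,\eta}$ lands in the maximal ideal of $V_{\val(v)}$. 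Thus $V_{\val(v)}$ dominates $\cO_{X_\Sigma,\eta}$, and by uniqueness of the center on the separated scheme $X_\Sigma$ we conclude $\ce(\val(v))=\overline{\{\eta\}}=\overline{O}_\sigma$.

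The argument is essentially a direct unwinding of definitions, and the only conceptual step is the identification of $\mathfrak{p}$ with the vanishing ideal of $\overline{O}_\sigma$; this hinges on the standard characterization of $\inte(\sigma)$ via the dual pairing, which I would expect to be the one point worth stating carefully rather than a genuine obstacle.
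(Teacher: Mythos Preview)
Your proof is correct and follows essentially the same approach as the paper: the paper's argument is a one-line sketch noting that $\val(v)(m)=(v,m)>0$ for $m\in(\sigma^\vee)^{\integ}\setminus(\sigma^\perp)^{\integ}$ and that this set generates the ideal $I_{O_\sigma}$. You have spelled out the same combinatorial identification of the contraction ideal $\mathfrak{p}$ and added the explicit verification that $V_{\val(v)}$ dominates $\cO_{X_\Sigma,\eta}$, which the paper leaves implicit.
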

\begin{proof} If $v\in \inte(\sigma)$ then $\val(v)(m)=(v,m) >0$ for any $m\in (\sigma^\vee)^\integ \setminus (\sigma^\tau)^\integ$. But the set $(\sigma^\vee)^\integ \setminus (\sigma^\tau)^\integ$ generates the ideal $I_{O_\sigma}$ of $O_\sigma$.

\end{proof}

\begin{definition} 

Let $X$ be  a locally toric variety. We say that the valuation $\nu$ is locally toric if for any $x\in X$ there exists  a neighborhood $U$ of $x$, and an \'etale morphism $\phi: U\to X_\sigma$ and a vector $v\in \sigma\cap N_\sigma$ such that ${\cI}_{\nu,a}=\phi^{-1}(I_{\val(v),a})$ for any $a\in \NN$.
 \end{definition}

Smooth morphisms  to toric varieties (charts) allow to define  valuations locally:

\begin{lemma} \cite{Wlodarczyk-toroidal} \label{le: ind2} Let $X_\Sigma$ be a toric
variety (associated with a  fan $\Sigma$  and  $f: U\to X_\Sigma$ be a
smooth  morphism. Let $v\in \inte(\sigma)$, where $\sigma\in \Sigma$, be an integral vector. Assume that the inverse
image of $\overline{O}_\sigma$ is irreducible. Then there
exists a unique  valuation $\mu$ on $U$ such that $
{\cI}_{\mu,a}=f^{-1}({\cI}_{\val(v),a})\cdot{\cO}_{U}$.
\end{lemma}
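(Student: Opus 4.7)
My plan is to realize the valuation $\mu$ as the divisorial valuation of an exceptional component on a suitable base change of the toric blow-up at $\val(v)$, then verify the ideal identity using flat base change.

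First I would invoke Lemma~\ref{le: blow-up valuation}: let $\pi\colon Y=\bl_{\val(v)}(X_\Sigma)\to X_\Sigma$ be the blow-up, corresponding to the star subdivision $\langle v\rangle\cdot \Sigma$. The valuation $\val(v)$ is the one attached to an irreducible $\QQ$-Cartier exceptional divisor $D_v\subset Y$, and, by Corollary~\ref{divisors} applied to a sufficiently divisible $d\in\NN$ making $dD_v$ Cartier, we have $\pi_*\cO_Y(-ndD_v)=\cI_{\val(v),nd}$ for all $n\in\NN$. Since $v\in\inte(\sigma)$, Lemma~\ref{le: center} shows $\pi(D_v)=\overline{O}_\sigma$, and in fact $D_v$ is a toric fibration over $\overline{O}_\sigma$.

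Next I would form the fiber product $\tilde{f}\colon \widetilde{U}:=U\times_{X_\Sigma}Y\to U$, which is a proper birational morphism. Because $f\colon U\to X_\Sigma$ is smooth (hence flat), the induced morphism $\widetilde{U}\to Y$ is smooth, so $\widetilde{U}$ is normal, and the pull-back $\tilde{f}^{-1}(D_v)$ is a divisor on $\widetilde{U}$ (namely the inverse image of $D_v$ as a closed subscheme). The key step is to check that this divisor is irreducible: its components dominate the irreducible divisor $D_v$, and they map inside $f^{-1}(\overline{O}_\sigma)$, which is irreducible by hypothesis; since the smooth fibration $D_v\to\overline{O}_\sigma$ has connected fibers (its fibers are toric varieties corresponding to the quotient fan $\Star(\tau,\langle v\rangle\cdot\Sigma)/\tau$, where $\tau=\langle v\rangle$), and smoothness of $\widetilde{U}\to Y$ preserves connectedness of fibers, we conclude $\tilde{f}^{-1}(D_v)$ has as many components as $f^{-1}(\overline{O}_\sigma)$, i.e., it is irreducible.

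Define $\mu$ to be the divisorial valuation on $K(U)$ along the irreducible component $E:=\tilde{f}^{-1}(D_v)\subset\widetilde{U}$, normalized so that $\mu$ coincides with the vanishing order along $E$ scaled by the same factor as $\val(v)$ along $D_v$ (concretely, $\mu(g):=\frac{1}{d}\cdot\mathrm{ord}_E(\tilde{f}^*g\otimes 1)$ on the component containing $E$, for $d$ as above). For the ideal identity, pick $a\in d\NN$. By flat base change applied to the proper morphism $\pi$ and the flat morphism $f$,
\[
f^{-1}(\cI_{\val(v),a})\cdot\cO_U \;=\; f^{-1}\pi_*\cO_Y(-aD_v)\cdot\cO_U \;=\; \tilde{f}_*\cO_{\widetilde{U}}(-a\,\tilde{f}^{-1}(D_v)) \;=\; \cI_{\mu,a},
\]
where the last equality uses that $\widetilde{U}$ is normal and $\tilde{f}^{-1}(D_v)=E$ is irreducible. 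For arbitrary $a\in\NN$, one notes that both sides are coherent, and by taking sufficiently divisible multiples one checks them on the affine charts corresponding to the star subdivision: ${\bf I}_{\val(v),a,\sigma}$ is generated by integral linear functionals $m\in(\sigma^\vee)^\integ$ with $m(v)\geq a$, and these pull back to generators of $\cI_{\mu,a}$ at the level of local rings using that $\tilde{f}$ is smooth along $E$. Uniqueness is immediate: two valuations with the same filtration of ideals agree on every nonzero $g$ by the definition of $\cI_{\mu,a}$.

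The main obstacle I anticipate is the passage from sufficiently divisible $a$ (where everything reduces cleanly to Cartier divisors and flat base change) to arbitrary $a\in\NN$; the cleanest way is to verify on \'etale charts $U\to X_\sigma$ that the pull-back of the piecewise-linear function $F_{v,a}$ from Lemma~\ref{primo} coincides with the valuation function of $E$, so that the two filtrations $\{\cI_{\mu,a}\}$ and $\{f^{-1}(\cI_{\val(v),a})\cdot\cO_U\}$ are equal filtration-by-filtration, not only after multiplication by $d$.
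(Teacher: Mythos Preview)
Your approach is correct and genuinely different from the paper's. The paper argues locally and formally: it picks a geometric point $\overline{x}$ over $x\in f^{-1}(O_\sigma)$, factors the smooth morphism through an \'etale map $U\to X^{\overline{K}}_\sigma\times\AA^n_{\overline{K}}$, and then simply transports the toric valuation $\val(v)$ through the resulting isomorphism of completed local rings, checking afterwards that the construction is independent of the choices of $\overline{x}$ and of the \'etale factorization. Your route is global and geometric: you realize $\mu$ as the divisorial valuation of the irreducible exceptional divisor on the base-changed blow-up, and deduce the ideal identity from flat base change for $\pi_*$.

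What each approach buys: the paper's argument is shorter and makes the equality $\cI_{\mu,a}=f^{-1}(\cI_{\val(v),a})\cdot\cO_U$ immediate for \emph{all} $a$, since it is read off directly from the isomorphism of completions; the irreducibility hypothesis enters only implicitly, to ensure the locally defined valuations at various $x$ glue to a single valuation on $K(U)$. Your argument makes the role of the irreducibility hypothesis transparent (it is exactly what forces $E=\tilde f^{-1}(D_v)$ to be a single prime divisor, via normality of $\widetilde U$ and connectedness of the fibers of $D_v\to\overline{O}_\sigma$), and it explains conceptually why $\mu$ is a divisorial valuation on a concrete model. The cost is the extra bookkeeping you already flagged: the passage from sufficiently divisible $a$ to arbitrary $a$ is cleanest not via $\cO_Y(-aD_v)$ but by observing that for the smooth map $g:\widetilde U\to Y$ one has $g^{-1}(\cI_{D_v,a})\cdot\cO_{\widetilde U}=\cI_{E,a}$ for all $a$ (the DVRs at the generic points of $E$ and $D_v$ share a uniformizer), and then applying flat base change to $\pi_*\cI_{D_v,a}=\cI_{\val(v),a}$ from Corollary~\ref{divisors}. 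Your normalization remark should also be stated as $\mu=k\cdot\val_E$ when $v=k v_0$ with $v_0$ primitive, rather than via the auxiliary integer $d$.
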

\begin{proof} We consider the completion $\widehat{\cO}_{\overline{x},X^{\overline{K}}}$ of the local ring at a geometric $\overline{K}$-point $\overline{x}$ over a closed point $x\in f^{-1}({O}_\sigma)$. The smooth morphism $f$ defines  \'etale morphism $\overline{f}: U\to X^{\overline{K}}_\sigma\times \AA_{\overline{K}}^n$ in a sufficiently small neighborhood $U$. 

We have the induced isomorphism $\widehat{f}^*_x: \widehat{\cO}_{f(\overline{x}),X^{\overline{K}}_\sigma\times \AA_{\overline{K}}^n}\to \widehat{\cO}_{\overline{x},X^{\overline{K}}}$.
 The vector $v$ defines a valuation on $\widehat{\cO}_{f(\overline{x}),X^{\overline{K}}_\sigma\times \AA_{\overline{K}}^n}$ and on 
 $\widehat{\cO}_{\overline{x},X^{\overline{K}}}$. Its restriction to  ${\cO_{x,X}}$ defines a valuation on $U$. The verification of the condition $
{\cI}_{\mu,a}=f^{-1}({\cI}_{\val(v),a})\cdot{\cO}_{U}$ and  independence of $\overline{f}$ and $x$ is straightforward.

\end{proof}

\subsubsection{ Filtered centers}

As before one can represent the center of the blow-ups of a  locally toric valuation $\nu$  by the filtered sequence  of ideals  $\cI_{\nu,a}$. 

\begin{definition} \cite[Definition 5.2.6]{Wlodarczyk-toroidal}\label{de: blow2} ( see also Definition \ref{filtered})
By the
{\it blow-up} $\bl_{\nu}(X)$ of $X$ at a locally toric
valuation $\nu$ we mean the normalization of the blow-up at  the filtered center $\{{\cI}_{\nu,k}\}$:
$$\Proj({\cO\oplus \cI}_{\nu,1}\oplus {\cI}_{\nu,2}\oplus\ldots).$$
\end{definition}

\begin{proposition} \cite[Proposition 5.2.9]{Wlodarczyk-toroidal}\label{pr: blow} For any locally toric
valuation $\nu$ on $X$ there exists an
integer $d$ such that 

$\bullet$ $\bl_\nu(X)=\bl_{{\cI}_{\nu, d}}(X)$. 

$\bullet$ If $Y:=\bl_\nu(X)\to X$ is the blow-up of $\nu$,
then the exceptional divisor $D$ is irreducible and ${\bf Q}$-Cartier on $X$. Moreover,  $\nu=\nu_D$  on $Y$.  
\end{proposition}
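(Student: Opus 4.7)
The plan is to reduce both assertions to the toric case treated in Lemma~\ref{le: blow-up valuation} by working in a finite atlas of étale charts and then patching.

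First, since $X$ is of finite type one can cover $X$ by finitely many open subsets $U_i$ admitting étale charts $\phi_i\colon U_i\to X_{\sigma_i}$ together with integral vectors $v_i\in\sigma_i\cap N_{\sigma_i}$ such that
\[
{\cI}_{\nu,a}|_{U_i}=\phi_i^{-1}({\cI}_{\val(v_i),a})\cdot \cO_{U_i}
\quad\text{for every } a\in\NN,
\]
by the definition of a locally toric valuation. Lemma~\ref{le: blow-up valuation} (together with Lemma~\ref{primo}) provides for each $i$ an integer $d_i$ with the property that ${\cI}_{\val(v_i),nd_i}={\cI}_{\val(v_i),d_i}^{\,n}$ for all $n\in\NN$ and such that the normalized blow-up of ${\cI}_{\val(v_i),d_i}$ on $X_{\sigma_i}$ coincides with $\bl_{\val(v_i)}(X_{\sigma_i})$. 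I take $d$ to be a common multiple of the $d_i$.

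Next I verify the first claim: for such $d$, the equality ${\cI}_{\nu,nd}={\cI}_{\nu,d}^{\,n}$ holds on $X$ for every $n\in\NN$. This is a local statement on each $U_i$, where it follows by pulling back the corresponding equality from Lemma~\ref{le: blow-up valuation}. Consequently the Rees algebra $\cO_X\oplus\bigoplus_{k\ge 1}{\cI}_{\nu,k}$ and its $d$-th Veronese subalgebra $\cO_X\oplus\bigoplus_{n\ge 1}{\cI}_{\nu,d}^{\,n}$ have the same $\Proj$, so by Definition~\ref{de: blow2} one has $\bl_\nu(X)=\bl_{{\cI}_{\nu,d}}(X)$ (the normalization is built into the definition).

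For the second claim I observe that the fibre square
\[
\bl_\nu(X)\times_{X}U_i \;\simeq\; U_i\times_{X_{\sigma_i}} X_{\langle v_i\rangle\cdot\sigma_i}
\]
holds because formation of normalized blow-ups commutes with étale base change and both sides coincide with the normalized blow-up of ${\cI}_{\val(v_i),d}\cdot\cO_{U_i}$. By Lemma~\ref{le: blow-up valuation}, on each $X_{\langle v_i\rangle\cdot\sigma_i}$ the toric valuation $\val(v_i)$ is realized by the irreducible $\QQ$-Cartier exceptional toric divisor $D_{v_i}$ corresponding to the new ray $\langle v_i\rangle$. Pulling these $D_{v_i}$ back to $\bl_\nu(X)$ gives a $\QQ$-Cartier Weil divisor $D$ on $Y$ whose associated valuation $\nu_D$ agrees, chart by chart, with $\nu$ in view of Lemma~\ref{le: ind2}. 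Since $\nu$ is a single valuation of $K(Y)=K(X)$, its centre on the normal variety $Y$ is a unique irreducible subvariety (of codimension one, as $\nu$ is now divisorial); thus the divisor $D$ just constructed is irreducible, and $\nu=\nu_D$. The main subtle point in this step is the irreducibility of $D$: although $\phi_i^{-1}(D_{v_i})$ may a priori split on the étale cover, the local components must all restrict to the same valuation $\nu$ of $K(X)$, so they glue to a single irreducible divisor on $Y$; the hard part of the argument is really to set this gluing up cleanly, using normality of $Y$ and the uniqueness of the centre of $\nu$.
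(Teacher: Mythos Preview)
Your proposal is correct and follows essentially the same approach as the paper: reduce to the toric case of Lemma~\ref{le: blow-up valuation} via a finite atlas of étale charts, then use quasi-compactness and Lemma~\ref{primo} to choose a single sufficiently divisible $d$ that works on every chart. The paper's own proof is extremely terse (it just says the argument is identical to that of Lemma~\ref{le: blow-up valuation} and invokes quasi-compactness for the common $d$), whereas you have spelled out the Veronese/Rees comparison and the gluing of the exceptional divisor more explicitly---in particular, your discussion of why $D$ is irreducible (via uniqueness of the centre of $\nu$ on the normal variety $Y$) makes explicit a point the paper leaves implicit.
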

\begin{proof} The proof is identical as the proof of Lemma \ref{le: blow-up valuation}. Locally the blow-ups are induced by blow-ups of toric valuations of the form $\bl_{\val(v)}(X)$. By quasi-compactness of $X$ and using Lemma \ref{primo}, we can find the same sufficiently divisible $d$ for the open cover with toric charts.

\end{proof}

Also we have

\begin{corollary} \label{divisors2} Let $\nu$ be  a locally toric
valuation  on a locally toric variety $X$, and    $\pi: Y:=\bl_\nu(X)=\bl_{{\cI}_{\nu, a}}(X)\to X$ be the associated normalized blow-up with the exceptional Weil, $\QQ$-Cartier divisor $D$.

Then for the ideals  $\cI_{aD}:=\cI_{\val_{D},a}=\{f\in {\cO}_{Y}\mid 
\nu_D(f)\geq a\}$ we have 
$$\pi_*(\cI_{aD})={\cI}_{\nu, a}.$$

Thus the   valuation $\nu$ is induced by an irreducible  exceptional Weil (${\bf Q}$-Cartier)
divisor on the variety $Y=\bl_{\val(v)}(X)$.

\end{corollary}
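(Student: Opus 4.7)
The plan is to deduce the identity $\pi_*(\cI_{aD}) = \cI_{\nu,a}$ from Proposition \ref{pr: blow} combined with the normality of $X$ and the birational projectivity of $\pi$. Since $X$ is a locally toric variety, every point admits an \'etale chart onto a toric variety, and toric varieties are normal, so $X$ itself is normal. The blow-up $\pi: Y = \bl_\nu(X) \to X$ is birational and projective (being defined as the normalization of a projective blow-up), and $Y$ is normal by construction. Therefore by a standard argument using Zariski's Main Theorem and normality, $\pi_* \cO_Y = \cO_X$.

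First I would unwind the definitions. By construction, $\cI_{aD} \subset \cO_Y$ is the subsheaf of regular functions $f$ on $Y$ with $\nu_D(f) \geq a$. Taking pushforward, $\pi_*(\cI_{aD})$ is a subsheaf of $\pi_* \cO_Y = \cO_X$; concretely, for an open $U \subset X$,
$$\Gamma(U, \pi_* \cI_{aD}) = \{f \in \Gamma(\pi^{-1}(U), \cO_Y) \mid \nu_D(f) \geq a\} = \{f \in \Gamma(U, \cO_X) \mid \nu_D(\pi^*f) \geq a\},$$
where the second equality uses $\pi_* \cO_Y = \cO_X$ to identify sections of $\cO_Y$ over $\pi^{-1}(U)$ with sections of $\cO_X$ over $U$. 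Next I invoke Proposition \ref{pr: blow}, which asserts that $\nu = \nu_D$ on $Y$, that is, under the identification $K(Y) = K(X)$ afforded by the birational morphism $\pi$, the valuation of the exceptional divisor $D$ coincides with the original valuation $\nu$. Hence $\nu_D(\pi^* f) = \nu(f)$ for every rational function $f$, and the right-hand side above reduces to $\{f \in \Gamma(U, \cO_X) \mid \nu(f) \geq a\} = \Gamma(U, \cI_{\nu, a})$. The final sentence of the corollary, that $\nu$ is induced by the irreducible exceptional $\QQ$-Cartier divisor $D$, is then just the restatement of Proposition \ref{pr: blow}.

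The proof contains no serious obstacle; the verification essentially packages two facts. The first is that $X$ is normal and so $\pi_* \cO_Y = \cO_X$, which I would confirm by passing to an \'etale chart onto a toric variety and using the well-known normality of toric varieties together with flat descent of normality. The second is the compatibility of the valuations $\nu$ on $K(X)$ and $\nu_D$ on $K(Y)$ under $\pi$, which is automatic from the birationality of $\pi$ and is precisely what Proposition \ref{pr: blow} delivers. No further input is required, so the statement follows by these two routine steps.
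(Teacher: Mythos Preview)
Your proof is correct and the underlying content matches the paper's. The paper's own proof simply says ``the problem reduces locally to the toric situation, where we use Corollary \ref{divisors},'' and Corollary \ref{divisors} is itself the computation $\pi_*(\cI_{D,n})=\{f\in \pi_*\cO_Y=\cO_X\mid \nu_D(f)\geq a\}=\cI_{\nu,a}$, i.e.\ exactly the normality-plus-valuation-identification argument you wrote out. The only organizational difference is that you work globally, invoking Proposition \ref{pr: blow} (which already gives $\nu=\nu_D$ at the locally toric level) and the normality of $X$, whereas the paper passes to toric charts first and then runs the identical computation there; your route is slightly more direct since Proposition \ref{pr: blow} has already done the chart-wise work.
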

\begin{proof} The problem reduces locally to the toric situation, where we use Corollary \ref{divisors}.

\end{proof}


\subsection{Desingularization theorems}

\begin{theorem} \label{th: resolution2}
 For any \'etale locally binomial (or locally toric) variety $X$ \footnote{ Definition \ref{locally}} over a  field $K$ of any characteristic  there exists a canonical resolution of singularities i.e. a birational projective $f: Y\to X$ such that
 \begin{enumerate}
\item $Y$ is smoth over $K$.
 \item $f$ is an isomorphism over the open set of the  nonsingular points.
 \item The inverse image $f^{-1}(\Sing(X)$ of the singular locus $\Sing(X)$ is a  simple normal crossing  divisor on $Y$.

 \item $f$ is a composition of the normalization and the normalized blow-ups of the locally monomial filtered centers $\{\cJ_{in}\}_{n\in \NN}$ \footnote{Definition \ref{filtered}} 
 defined locally by  valuations.

 \item
 $f$ commutes with smooth morphisms and field extensions, in the sense that the centers are transformed functorially, and the trivial blow-ups are omitted.

\item Moreover, if $D$  is an \'etale  locally toric divisor $D$ on a \'etale locally toric $X$\footnote{Definition \ref{divisor}} then there is functorial desingularization of $(X, D)$ as above such that the strict transform of $D$ has SNC with $f^{-1}(\Sing(X)$.
 \end{enumerate}

\end{theorem}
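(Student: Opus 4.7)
The plan is to reduce, via normalization, to the locally toric case and then run the combinatorial algorithm of Theorem~\ref{can des} (and its relative version Theorem~\ref{can des2}) on the semicomplex associated to the canonical stratification by singularity type. If $X$ is \'etale locally binomial then, as observed just after Definition~\ref{locally}, the normalization $X^\nu\to X$ is \'etale locally toric, and normalization is functorial for smooth morphisms and field extensions. So, after performing this initial canonical step (and reusing the same reduction for the divisorial statement~(6), where the pullback of $D$ remains locally toric), we may assume $X$ is \'etale locally toric. By Lemma~\ref{can0} (respectively Theorem~\ref{can} in the presence of $D$) the canonical singularity-type stratification $\Sing(X)$ (respectively $\Sing_D(X)$) turns $X$ into a stratified toroidal variety, and by Lemma~\ref{le: associated semicomplex} it has a well-defined associated semicomplex $\Sigma$ over the stratification.

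Next I would apply Theorem~\ref{can des} to $\Sigma$ (or, in the divisorial case, Theorem~\ref{can des2} to the relative semicomplex $(\Sigma,\Omega)$ where $\Omega$ is the subcomplex cut out by the irreducible components of $D$ as in Theorem~\ref{can2}). This yields, semicone by semicone, a canonical sequence of star subdivisions whose centers are, at each intermediate stage, either the canonical barycenters of relatively irreducible faces or the minimal vectors selected by the marking introduced in Section~\ref{marking}. The output $\Delta=\{\Delta^\sigma\}$ is a regular (respectively relatively regular) subdivision of $\Sigma$ that leaves $\Reg(\Sigma)$ (respectively $\Reg(\Sigma,\Omega)$) untouched.

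The main technical step is to verify that this combinatorial subdivision is \emph{canonical} in the sense of Definition~\ref{de: canonical}, so that Theorem~\ref{th: modifications} produces a genuine toroidal modification $f\colon Y\to X$. This is exactly where Corollary~\ref{minimal vectors1} (and Corollary~\ref{minimal vectors2} in the relative setting) enters: the centers chosen by the algorithm, being either minimal vectors or canonical barycenters of (relatively) irreducible faces, are $G^0_{\sigma}$-invariant on each $\widetilde X_\sigma$, and the marking order on $\Omega$ guarantees that the automorphisms in $\Aut(\sigma)/\Aut(\sigma)^0$ preserve the induced order (Lemma~\ref{le: surjection}); the functoriality statement in Theorems~\ref{can des} and~\ref{can des2} then implies that each individual star subdivision is $G_{\sigma}$-equivariant, hence so is the composition $\widetilde X_{\Delta^\sigma}\to\widetilde X_\sigma$. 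This is the step I expect to be the real obstacle, because one must check that at every intermediate stage the already-constructed part of the subdivision stays compatible with $G^0_{\sigma}$ as fresh centers are added; the marking mechanism of Lemma~\ref{mark} is designed precisely to keep this bookkeeping under control.

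Finally, I would read off the conclusions of the theorem from Theorem~\ref{th: modifications} and Corollary~\ref{divisors2}. Regularity of $\Delta$ means every local model is smooth, giving~(1). Because the subdivision is trivial on $\Reg(\Sigma)$ and $\Reg(\Sigma)$ corresponds to the smooth locus of $X$ by Lemma~\ref{can0} / Theorem~\ref{can2}, we get~(2), and the inverse image of the singular locus is locally described by a part of a free coordinate system, hence is SNC, giving~(3). Each star subdivision at a vector $v$ is, by Lemma~\ref{le: blow-up valuation}, Lemma~\ref{le: ind2} and Proposition~\ref{pr: blow}, a normalized blow-up at a locally monomial filtered center $\{\cI_{\nu,n}\}$, which yields~(4). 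Functoriality~(5) follows from the functoriality clauses of Theorem~\ref{can des}, the flat-descent argument used in the proof of Theorem~\ref{des toroidal}, and the fact that smooth morphisms pull the singularity-type stratification back to the singularity-type stratification (so charts and semicones transport). For~(6), applying Theorem~\ref{can des2} to $(\Sigma,\Omega)$ produces a relatively regular subdivision and, as in the proof of Theorem~\ref{des toroidal2}, an additional functorial clean-up by blow-ups of strata (Proposition~\ref{normal}) promotes the NC property of the strict transform of $D$ together with the exceptional locus to full SNC.
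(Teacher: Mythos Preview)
Your overall architecture for parts (1)--(5) is essentially the paper's: normalize, pass to the stratified toroidal variety $(X,\Sing(X))$, run the combinatorial algorithm of Theorem~\ref{can des} on the associated semicomplex, verify $G_\sigma$-equivariance via Corollary~\ref{minimal vectors1} together with Lemma~\ref{le: surjection}, and invoke Theorem~\ref{th: modifications} and Corollary~\ref{divisors2}. One small imprecision: Theorem~\ref{can des} is stated for conical \emph{complexes}, not semicomplexes. The paper handles this by applying the algorithm to the complex $\overline{\Sigma}$ given by the disjoint union of the fans $\overline{\sigma}$, $\sigma\in\Sigma$, and then uses functoriality under local isomorphisms to check that the resulting subdivisions $\Delta^\sigma$ satisfy $(\Delta^\sigma)_{|\tau}=\Delta^\tau$ for $\tau\le\sigma$, i.e.\ that they assemble into a subdivision of the semicomplex in the sense of Definition~\ref{de: subdivision of
an oriented semicomplex}. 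You should make this reduction explicit.

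Your treatment of (6), however, does not work as written. Applying the \emph{relative} algorithm Theorem~\ref{can des2} to $(\Sigma,\Omega)$, with $\Omega$ the subcomplex of Theorem~\ref{can2}, produces only a \emph{relatively regular} subdivision: each cone decomposes as $\omega\times\tau$ with $\tau$ regular and $\omega\in\Omega$. But the cones $\omega\in\Omega$ need not be regular (the toroidal locus $(X,D)^{\tor}$ may well be singular), so the resulting $Y$ is a strict toroidal embedding, not a smooth variety; Proposition~\ref{normal} only upgrades NC to SNC and does nothing about these singularities. The paper's approach is different and simpler: for (6) it replaces the stratification $\Sing(X)$ by the finer $\Sing_D(X)$ and runs the \emph{absolute} algorithm of Theorem~\ref{can des} on the corresponding semicomplex. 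Then both the strict transform of $D$ and the exceptional divisor are locally pull-backs of toric divisors on a smooth toric variety, hence SNC. The relative algorithm is reserved for Theorem~\ref{th: resolution4}, where one genuinely wants to leave a toroidal subset unmodified.
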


\noindent{\bf Proof.} First, we prove the theorem in the case of Zariski locally binomial variety $X$. By normalizing $X$ we can further assume that it is locally toric.

Consider  the canonical stratification ${\rm
Sing}(X)$ on $X$ as in Theorem \ref{can0}. (In the case of a locally toric divisor $D$ on $X$, as in (5), we consider the stratification ${\rm
Sing}_D(X)$  as in Theorem \ref{can}, instead of ${\rm
Sing}(X)$.)
Then the pair  $(X,{\rm
Sing}(X))$  is a stratified toroidal variety. 

Let  $\Sigma$ be the associated conical semicomplex.
Consider the complex $\overline{\Sigma}$ consisiting of the disjoint union of the fans  $\overline{\sigma}$ associated with the semicones $\sigma\in \Sigma$.

 Let $$\overline{\Delta}:=V_{k}\cdot\ldots\cdot  V_1\cdot \overline{\Sigma}$$
 be the canonical desingularization of $\overline{\Sigma}$ as in Lemma \ref{can des}. It is obtained by a sequence of the star subdivisions at sets $V_i$ of the points which are 
either minimal vectors  or  the barycenters in singular irreducible faces. Denote by $$\overline{\Delta}_i=V_{i}\cdot\ldots\cdot  V_1\cdot \overline{\Sigma}$$
the intermediate subdivisions.
 
 The subdivision $\overline{\Delta}$  defines for any face $\sigma\in \Sigma$ the canonical   desingularization $$\Delta^\sigma=V^{\sigma}_{k}\cdot\ldots\cdot  V^{\sigma}_1\cdot \sigma,$$
where $V^{\sigma}_i=V_i\cap |\sigma|$, and the intermediate subdivisions:
 $$\Delta_i^\sigma=V^{\sigma}_{i}\cdot\ldots\cdot  V^{\sigma}_1\cdot \sigma$$

By Lemmas \ref{minimal vectors}(2), \ref{minimal vectors1}(1), \ref{minimal vectors2}(2), all the points in the sets $V^{\sigma}_i$ define  the $G_\sigma^0$- invariant valuations on $\widetilde{X_\sigma}$.
  The action of $G_\sigma^0$ on $\widetilde{X_\sigma}$ lifts to $\widetilde{X}_{\Delta_i^\sigma}$, and 
$\widetilde{X}_{\Delta_i^\sigma}\to \widetilde{X}_\sigma$ is $G_\sigma^0$-equivariant. Also,   by the canonicity of the algorithm, the action of $\Aut(\sigma)$ on $\sigma$ lifts to $\Delta_i^\sigma$, so that the subdivisions $\Delta_i^\sigma\to \sigma$ are 
$\Aut(\sigma)$ invariant. By Lemma \ref{le: surjection}, $G_\sigma$ is generated by $G_\sigma^0$ and $\Aut(\sigma)$. This implies  that the  action of $G_\sigma$ on $\widetilde{X_\sigma}$ lifts to each scheme $\widetilde{X}_{\Delta^\sigma_i}$. 
On other hand,  the functoriality of the algorithm implies that  $(\Delta_i^\sigma)_{|\tau}=\Delta_i^{\tau}$ for $\tau\leq \sigma$. 
 These  data define canonical subdivisions $\Delta_i=\{\Delta_i^\sigma\mid \sigma\in\Sigma\}$ of $\Sigma$ (see Definition \ref{de: subdivision of
an oriented semicomplex}). By Theorem \ref{de: toroidal modification}, there exists a unique toroidal modification $X_i$ of $X$ locally defined by the relevant  diagram.

The canonical subdivisions $\{\Delta_i^\sigma\}$ define the intermediate varieties $X_i$. The morphisms $\pi_i:X_i\to X_{i-1}$ are  locally described by the star subdivisions $\Delta^i_\sigma=V_i^\sigma\cdot \Delta^{i-1}_\sigma$ of fans  of $\Delta^{i-1}_\sigma$. Thus, by Corollary \ref{divisors2}, the exceptional divisors $D_{ij}$ of each $\pi_i$
define  locally toric valuations, coresponding  to vectors $v_{ij}\in V^\sigma_i=\{v_{i1},\ldots,v_{ik_\sigma}\} $.
 This implies that   the sets of the vectors $V^\sigma_i$ 
 correspond to the sets of locally toric valuations, and each
 morphism  $ X_i\to X_{i-1}$ is  a composition of the blow-ups at valuations  with disjoint centers.

 In particular, the corresponding filtered ideals  $$ \cI_{i,n}:=\prod_{v\in V_i}\pi_*(\cI_{nD_i}) ,$$  on $X_{i-1}$, defining the blow-ups are locally described, by Lemma \ref{divisors}, as the pull-backs of the monomial ideals associated with $ V^\sigma_i$ on $X_{\Delta_i^\sigma}$: $$\cI_{V_i,n}:=\prod_{v_\in V^\sigma_i}\cI_{\val(v_{ij},n)}$$
Then, by Corollary \ref{divisors2}, the morphism $X_i\to X_{i-1}$ is the blow -up of the filtered ideal center $\cI_{i,n}$.

The resulting variety $Y=X_k$ defined locally by $\{\Delta^\sigma \mid \sigma\in \Sigma \}$ is  regular. Since  the desingularizations $\Delta^\sigma$   do not affect  regular cones of $\overline{\sigma}$ the nonsingular points remain unaffected. The morphism $Y\to X$ is the composition of the blow-ups of the functorial filtered centers $\cI_{i,n}$. The inverse image  $f^{-1}(\Sing(X))$ of the singular locus $ \Sing(X)$ is defined locally by a toric divisor on a nonsigular toric variety $X_{\Delta^\sigma}$ so it is SNC.


The algorithm commutes with smooth  morphisms and field extensions.  The pull-backs of the charts can be used to describe the algorithm locally.

 For  the condition  (5), the strict transforms of locally toric divisors, and the exceptional components  are both local pull-backs of  toric divisors on  smooth toric varieties and thus they have SNC.
 
Now assume that $X$ is \'etale locally binomial  variety. 
Consider its normalization. Since the normalization 
commutes with \'etale maps we obtain an \'etale locally toric  variety. By the previous case there exist compatible desingularizations on \'etale locally toric cover. These compatible desingularizations descend to  the desingualarization $Y$ of $X$. Moreover, they define  an SNC exceptional divisor $E$ on a strict toroidal cover which descends to an NC divisor on $Y$. We  apply Proposition \ref{normal} to further transform it to an SNC divisor.
We use 
 identical  arguments as in the proof of Theorem \ref{des toroidal}.

\subsection{Desingularization of \'etale locally toric varieties except for a toroidal subset}

\begin{theorem} \label{th: resolution4} Let $X$
 be an  \'etale locally toric variety over a 
 field $K$ with a locally toric Weil divisor $D$. Assume that $D$ has  locally ordered components.\footnote{Definition \ref{order}}\footnote{Definitions \ref{locally}, \ref{divisor}}. Let  $(V, D_V)\subseteq (X, D)$ be an open  saturated  toroidal subset  \footnote{Definition \ref{saturation}}, where  $D_V:=D\cap V$ in $(X, D)$.  
 
   There exists a canonical  resolution of singularities of $(X, D)$ except for $V$ i.e. a birational projective toroidal map $f: Y\to X$ such that
 \begin{enumerate}
 \item $f$ is an isomorphism over the open set $ V$.
 
 
 \item The variety $(Y, D_Y)$ is a  strict toroidal embedding, where $D_Y:=\overline{D_V}$ is the closure of the divisor $D_V$ in $Y$.
 \item The variety $(Y, D_Y)$ is the  saturation
 of $(V, D_V)$ in $(Y, D_Y)$. \footnote{Definition \ref{saturation}}
 \item The 
 complement  $E_{V,Y}:=Y\setminus V$ is  a divisor which has  simple  normal crossings  on  $(Y, D_Y)$, and so does the exceptional divisor   $E_{\exc}\subset E_{V,Y}$. 
 \item If $V=(X, D)^{tor}$ is the  toroidal  locus of $(X, D)$ then $E_{\exc}=E_{V,Y}=Y\setminus V$.
 
 \item $f$ is obtained by a sequence of blow-ups at the canonical filtered ideals centers\footnote{Definition \ref{filtered}}.
 \item
 $f$ commutes with smooth morphisms and field extensions respecting the subset $V$, and the order of the components $D$. 

 \end{enumerate}
\end{theorem}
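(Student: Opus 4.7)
The plan is to mirror the proof of Theorem \ref{th: resolution2}, but with the absolute combinatorial desingularization of complexes (Theorem \ref{can des}) replaced by its relative version (Theorem \ref{can des2}), and to ensure throughout that the saturated toroidal subset $V$ corresponds to the untouched subcomplex. First I would reduce to the locally toric case: by normalization (which is an isomorphism on $V$, since $V$ is already a strict toroidal embedding and hence normal) and by descent along an \'etale cover by strict toroidal pieces, it suffices to treat the locally toric case with an \'etale cover that allows the later descent argument used in the proof of Theorem \ref{des toroidal}. I then equip $X$ with the canonical stratification $\Sing_D(X)$ of Theorem \ref{can}, so that $(X,\Sing_D(X))$ is a stratified toroidal variety with associated semicomplex $\Sigma$, and by Theorem \ref{can2} the toroidal locus $(X,D)^{\tor}$ corresponds to a subcomplex of $\Sigma$, while the saturated toroidal subset $V$ corresponds to a further subcomplex $\Omega\subseteq\Sigma$ whose vertices are exactly the irreducible components of $D_V$. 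The local ordering of components of $D_V$ supplies an order on $\Ver(\Omega)$ which is total on each $\Ver(\omega)$, $\omega\in\Omega$, as required for Theorem \ref{can des2}.

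The combinatorial step is to apply the canonical relative desingularization of Theorem \ref{can des2} to each fan $(\overline{\sigma},\Omega_\sigma)$, where $\Omega_\sigma=\Omega\cap\overline{\sigma}$, producing a canonical sequence of star subdivisions $\overline{\Delta}_i^\sigma=V_i^\sigma\cdot\ldots\cdot V_1^\sigma\cdot\overline{\sigma}$ whose centers $V_j^\sigma$ are sets of minimal vectors or canonical barycenters of relatively irreducible faces (and in particular lie in $|\sing(\overline{\sigma},\Omega_\sigma)|$, untouched in $\Omega_\sigma$). The algorithm's functoriality under regular local projections and local isomorphisms, combined with $\Aut(\sigma)$-invariance (the order on $\Omega$ is used here to exclude the obstruction of Example \ref{obstruction}), ensures compatibility with the face inclusions of $\Sigma$. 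The key technical input is that each center $V_j^\sigma$ defines a $G^0_\sigma$-invariant set of valuations: this is exactly the content of Corollary \ref{minimal vectors2}, which covers both the minimal-vector and the barycenter centers in the relative setting, with vectors of $|\Omega_\sigma|$ being $G^0_\sigma$-invariant via Lemma \ref{minimal vectors}(2). Together with Lemma \ref{le: surjection}, which identifies $G_\sigma/G^0_\sigma$ as a quotient of $\Aut(\sigma)$, this shows that the subdivisions $\{\Delta_i^\sigma\}$ are \emph{canonical} in the sense of Definition \ref{de: canonical}, hence by Theorem \ref{th: modifications} assemble into a canonical sequence of toroidal modifications $X=X_0\leftarrow X_1\leftarrow\ldots\leftarrow X_k=:Y$.

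Each step $X_i\to X_{i-1}$ is, by Proposition \ref{pr: blow} and Corollary \ref{divisors2}, the normalized blow-up of a canonical filtered ideal $\cI_{i,n}=\prod_{v\in V_i}\pi_{i*}(\cI_{nD_{i,v}})$, yielding claim (6). Since the relative desingularization does not touch $\Omega$, the morphism $Y\to X$ is an isomorphism over $V$ (claim (1)); and since the resulting pair $(\Delta^\sigma,\Omega_\sigma)$ is regular, Corollary \ref{embedded fan} applied locally shows that $(Y,D_Y)$ with $D_Y:=\overline{D_V}$ is a strict toroidal embedding which is the saturation of $(V,D_V)$ (claims (2), (3)), and the complement $E_{V,Y}$ is a relative SNC divisor on $(Y,D_Y)$ (claim (4)); claim (5) then follows because when $V=(X,D)^{\tor}$ the centers all lie in $|\Sigma\setminus\Reg(\Sigma,\Omega)|$, which is precisely $X\setminus V$. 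Functoriality with respect to smooth morphisms and field extensions preserving $V$ and the component order (claim (7)) follows from the corresponding functoriality assertions in Theorems \ref{can des2} and \ref{can}. To obtain the strict SNC (rather than just NC) property across the \'etale locally toric case, I would conclude, as in the last paragraph of the proof of Theorem \ref{des toroidal2}, by applying Proposition \ref{normal} to replace NC by SNC via further canonical blow-ups of strata; these extra blow-ups are isomorphisms on $V$ and commute with smooth morphisms. The main obstacle I anticipate is bookkeeping the invariance: specifically, verifying that the $G^0_\sigma$-invariance statements of Corollary \ref{minimal vectors2} apply uniformly to the relative minimal vectors produced by Theorem \ref{can des2} after the barycentric preparation step, and that the ordering of the components of $D_V$ (which is only local) glues to furnish the $\Aut(\sigma)$-invariance required on each semicone--this is the precise role played by the hypothesis of locally ordered components, and the reason Example \ref{obstruction} obstructs the unordered case.
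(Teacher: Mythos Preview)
Your proposal is correct and uses the same essential ingredients as the paper: the canonical stratification $\Sing_D(X)$, the associated semicomplex $\Sigma$, the relative combinatorial desingularization of Theorem \ref{can des2}, the $G_\sigma$-invariance of centers via Corollary \ref{minimal vectors2} and Lemma \ref{le: surjection}, the gluing via Theorem \ref{th: modifications}, \'etale descent, and Proposition \ref{normal} for the final NC-to-SNC step.

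The one structural difference worth noting is how you handle an arbitrary saturated $V$. The paper proceeds in three cases: \textbf{Case 1} treats only $V=(X,D)^{\tor}$ by the semicomplex method (so the subcomplex $\Omega$ is the \emph{maximal} one, canonically determined by the stratification); \textbf{Case 2} reduces a general $V$ in the Zariski-locally-toric situation to Case 1 followed by the toroidal-embedding partial desingularization of Theorem \ref{des toroidal2}; \textbf{Case 3} is \'etale descent plus Proposition \ref{normal}. You instead run the semicomplex argument directly with the subcomplex $\Omega_V\subseteq\Sigma$ corresponding to $V$, collapsing Cases 1--2 into a single step. Your route is legitimate because the vertices of $\Omega_V$ are rays of each semicone (they correspond to divisorial strata, which lie in $\sing(\overline{\sigma},\Omega_\sigma)$), so $\Aut(\sigma)$ fixes them individually and Corollary \ref{minimal vectors2} applies verbatim. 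What the paper's decomposition buys is modularity: the semicomplex step is isolated to the situation where $\Omega$ is intrinsic to $(X,D)$, and the passage from $V_0$ to $V$ is delegated to the already-established Theorem \ref{des toroidal2}; your direct route is slightly shorter but requires the extra observation just stated about $\Aut(\sigma)$ fixing the vertices of $\Omega_V$. Two small points: the normalization step you mention is unnecessary here (\'etale locally toric varieties are already normal), and the ``\'etale cover by strict toroidal pieces'' should read ``\'etale cover by Zariski-locally-toric pieces'', matching the hypotheses of Theorems \ref{can} and \ref{can2}.
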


\begin{proof}  

{\bf Case 1}. Assume that $X$ is a locally toric variety with a locally toric divisor $D$ and  $V=(X, D)^{tor}$ is the toroidal locus of $(X, D)$. 

By Theorem  \ref{can2}, the complement $X\setminus V$ is of codimension $2$. 
Also, the locally toric divisor $D$ on the variety $X$ 
 defines the natural stratification $S=\Sing_D(X)$, and the associated semicomplex $\Sigma$. 
 Moreover, the divisor $D$ defines a saturated subcomplex $\Omega\subset\Sigma$.   
 
 Consider  a chart $\phi: U\to X_\sigma$, where $\sigma\in \Sigma$ is a semicone. The intersection of $\sigma$ with $\Omega$ defines a unique saturated subcomplex $\Omega_\sigma$ of $\sigma$. It corresponds
 to the divisor $\overline{D}_{\Omega_\sigma}$ on $X_\sigma$, such that
 $D_{|U}=\phi^{-1}(\overline{D}_{\Omega_\sigma})$.
  Here $\overline{D}_{\Omega_\sigma}$
    is the closure of  $D_{\Omega_\sigma}\subset X_{\Omega_\sigma}$  on $X_\sigma$ (see Lemma \ref{cr2}, and Theorem \ref{can2}(7)).  
  
 Denote the embedded fan ${\Reg(\overline{\sigma},\Omega_\sigma})$ by $(\Omega_\sigma^0,\Omega_\sigma)$. Then, by Lemma   \ref{embedded fan}(3), the open subset $X_{{\Omega_\sigma}^0}$ is  the toroidal saturation of $(X_{\Omega_\sigma}, D_{\Omega_\sigma})$ in $X_\sigma$, and  $X_{{\Omega_\sigma}^0}=(X_\sigma,\overline{D}_{\Omega_\sigma})^\tor$ is the toroidal locus.
 
 Thus the toroidal locus $(U, D\cap U)^\tor= V\cap U$ is defined locally as $V\cap U=\phi^{-1}(X_{\Reg(\sigma,\Omega_\sigma)})$.

 
 As before consider the complex $\overline{\Sigma}$ consising of the disjoint union of the fans  $\overline{\sigma}$,  of all the faces of the cones $|\sigma|$, where $\sigma\in \Sigma$, and its subcomplex $\overline{\Omega}$ defined by $\Omega_\sigma$ 
 where $\sigma\in \Sigma$.

 Let $$\overline{\Delta}:=V_{k}\cdot\ldots\cdot  V_1\cdot \overline{\Sigma}$$
 be the canonical desingularization of the relative complex $(\overline{\Sigma},\overline{\Omega})$ from Theorem \ref{can des2}.
 
 In the process we the 
use centers which are either barycenters, so the sums of the minimal internal vectors in faces $\sigma$ and the vertices 
in $\sigma\cap\Omega$ or  the minimal generators in $\sigma$. Such  centers are $G^0_{\sigma}$-invariant by Lemma \ref{minimal vectors2}. Since the algorithm is functorial  they are also $\Aut(\sigma)$- invariant. By the same argument as before  $\widetilde{X}_{\Delta^\sigma_i}\to  \widetilde{X}_{\sigma}$ is $G^0_{\sigma}$-equivariant.

  Moreover, locally for any semicone $\sigma\in \Sigma$  we obtain a  subdivision complex $(\Delta^\sigma,\Omega_\sigma)$ which is relatively regular.
This defines a collection of the subdivisions $\{(\Delta^\sigma,\Omega_\sigma)\}$ which determines the subdivision $\Delta=\{\Delta^\sigma\}$ and, induces, by Theorem \ref{de: toroidal modification},
 a unique transformation $Y$ of $(X,S)$.  

Repeating the reasoning from the previous proof we we see that
the modification is given by 
a sequence of the blow-ups $\pi_i:X_i\to X_{i-1}$ at invariant valuations.


 The desingularization $\Delta^\sigma$  does not affect  relatively regular cones with respect to  $\Omega_\sigma$.  Thus, in particular, the cones in $\Omega_\sigma^0$  remain unaffected. Consequently, the points in $V$ are not modified in the process. Since $(\Delta^\sigma,\Omega_\sigma)$ is a regular relative fan (an embedded fan) we get by Proposition \ref{embedded fan}, that:
\begin{enumerate} 
\item $(X_{\Delta^\sigma},\overline{D_{\Omega_\sigma}})$  is a strict toroidal embedding locally corresponding to $(Y, D_Y)$. 
\item The exceptional divisor $E_\sigma\subseteq  X_{\Delta^\sigma}\setminus X_{\Omega_\sigma}$   of $X_{\Delta^\sigma}\to X_\sigma$ has SNC. So $E_{\exc}$ has SNC
on $(Y, D_Y)$.
 \item $(X_{\Delta^\sigma},\overline{D_{\Omega_\sigma}})$ is the saturation of $(X_{\Omega_\sigma}, D_{\Omega_\sigma})$ and   of 
$(X_{\Omega_\sigma^0}, D_{\Omega_\sigma^0})$ so $(Y, D_Y)$ is the saturation of $(V, D_V)$.
\end{enumerate}
 By definition, $Y\setminus V$ coincides with the exceptional divisor $E_{\exc}$. Otherwise $Y$ would contain an unmodified toroidal open subset of $X$ strictly bigger than $V$. This contradicts the assumption that $V$ is the toroidal locus of $X$.


{\bf Case 2.} Assume that the variety  $(X, D)$ is Zariski locally toric with a locally toric divisor, and  the open subset $V$ is an arbitrary saturated toroidal subset.  

Let $V_0:=(X, D)^{tor} \supset V$ be    the  toroidal locus of $(X, D)$. 
 By definition, $V$ is a toroidal subset which is saturated in $V_0$.
 By Case 1, there is a desingularization except for $V_0$.  Now $(Y, D_Y)$ is a strict toroidal embedding and  $E:=Y\setminus V_0$ is a divisor having SNC on $(Y, D_Y)$.
 So $(Y, D_Y\cup E)$ is a strict toroidal embedding. Since $V$ is saturated in $V_0$ and $V_0$ is saturated in $Y$  it follows that
 $(V, D_V)$ is an open saturated toroidal subset  of $(Y, D_Y\cup E)$. It suffices to use Theorem \ref{des toroidal2} and apply the partial desingularization of toroidal embeddings $(V, D_V)\subset (Y, D_Y\cup E)$.
 
 {\bf Case 3.} Assume that the variety  $(X, D)$ is \'etale locally toric with an \'etale locally toric divisor, and  the open subset $V$ is an arbitrary saturated toroidal subset of $(X, D)$. We repeat the  reasoning  from the proof of Theorem \ref{th: resolution}. 
 
 We consider an \'etale  covering $(X^0, D^0)$ of $(X, D)$ consisting of locally toric varieties with locally toric divisor. Note that, by Lemma \ref{extension2}, the inverse image  $V^0$ of $V$ is a  saturated toroidal subset of $(X^0, D^0)$.
  By Step 2, there is    a desingularization at functorial centers 
on  an  $X^0$. By functoriality, the centers descend 
(by the flat descent) to the centers on $X$.  
We obtain a toroidal embedding $(Y, D_Y\cup E_{V,Y})$ containing a toroidal subset  $(V, D_Y)$. 
Since $(Y, D_Y)$ contains an open toroidal subset $(V, D_V)$ intersecting all strata  then
$(Y,\overline{D_V})$ is a strictly toroidal embedding, by Lemma \ref{extension1}.  Consequently, $E_{V,Y}:=X\setminus V$ has NC on $(Y, D_Y)$. To  obtain the condition that $E_{V,Y}$ has SNC with $\overline{D_V}$ it suffices to apply Proposition \ref{normal}. The  process commutes with
 field extensions  and smooth 
 morphisms 
  preserving the order of the components $D_V$

\end{proof}
\section{Comparison of the desingualarization algorithm for toroidal embeddings and locally toric varieties} \label{compari}
 
 When we forget the toroidal structure, a strict toroidal embedding is a locally toric variety.
The desingularization algorithm  is identical in both cases. 
If $\Sigma$ is the  complex associated with the toroidal structures then $\sing(\Sigma)$ defines  the  semicomplex  associated with the canonical singular stratification $S=\Sing(X)$ on $X$. 

The same algorithm in both situations is induced by the decompositions of  the faces of $\sing(\Sigma)$. In the case of toroidal embedding the decomposition of $\sing(\Sigma)$ simply  extends uniquely to decomposition of $\Sigma$ defining the relevant toroidal modification. In the locally toric situation the decomposition of $\sing(\Sigma)$ induces the modification directly via the charts. 

A point $x$ on a toroidal embedding belongs to an open neighborhood associated with a face $\sigma\in \Sigma$. The modification of $\sigma$ is induced by the subdivision of its singular part $\sing(\sigma)$.
The very same point on the same variety with  a locally toric structure with the singular stratification   will be associated via a chart with a face $\sing(\sigma)$ of the semicomplex $\sing(\Sigma)$, and the transformation will be again defined by the same subdivision of $\sing(\sigma)\in \sing(\Sigma)$. So both algorithms agree locally and globally.

\section{Counterexample to the Hironaka desingularization of locally binomial varieties in positive characteristic}
Let $x\in X$ be a point on a smooth variety over $K$.
Let $\cI$ be an ideal sheaf on $X$ of maximal order $p$. Denote by $\Sing_p(\cI)$ the locus  of the points of  order $p$. 
Recall that a {\it maximal contact} is a local parameter $u$ such that its zero locus $H=V(u)$ contains $\Sing_p(\cI)$, and this property will be preserved after any sequence of blow-ups of smooth centers contained in $\Sing_p(\cI)$.
The following example shows that there is no maximal contact for locally binomial varieties over a field of positive characteristic. Thus the Hironaka characteristic zero approach fails already in this case.
\begin{example} \label{Hir} Let $X\subset \bA^4=\Spec(K[x,y,z,w])$  be the hypersurface over a field of characteristic $p$, defined by a single  equation $x^p-y^pz$.
It admits the action of the group of automorphisms $\bA^1=\Spec(K[t])$:

$$x\mapsto x+ty, \quad y\mapsto y, \quad z\to z+t^p.$$
The locus $\Sing_p(X)$ of the points of maximal order $p$ is given by the  smooth subvariety $Z=V(x,y)$. Suppose that there is a maximal contact $u\in \cO_X$ such that $V(u)\supset \Sing_p(X)=V(x,y)$ and that the property will be preserved  after blow-ups at smooth centers contained in $\Sing_p(X)$.  Thus $u$ is a local parameter which can be written in the  form $$u=a(x,z,w)x+b(x,y,z,w)y.$$   Using automorphisms above we can always assume that $b(0,0,0,0)\neq 0$.
 Applying the blow-up at $x=y=z=w=0$ in the chart of $w$: $$(x,y,z,w)\mapsto (xw,yw,zw,w)$$ we transform
$x^p-y^pz$,    into $$w^p(x^p-y^pzw).$$ Moreover, the equation of the new maximal contact $$u'=1/w(a(xw,yw,zw,w))wx+b(xw,yw,zw,w)wy)=$$ $$a(xw,yw,zw,w))y+b(xw,yw,zw,w)y$$ will maintain its form with the condition $b(0,0,0,0)\neq 0$ in the corresponding chart of $w$.

After $p$ such blow-ups (and factoring the exceptional divisors) we obtain the form $$x^p-y^pzw^p.$$

The  order  $p$ locus $\Sing_p(X)$  has now two components $V(x,y)$ and $V(x,w)$. 
The intersection $V(x,w)\cap V(u)$ is equal to $V(x,y,w)$ in a neighborhood with $b(x,y,z,w)\neq 0$. Thus 
the component $V(x,w)$ is not 
contained in $V(u)$ which contradicts to the assumption on maximal contact.

\end{example}




\bibliographystyle{amsalpha}
\bibliography{Binomial}

\end{document}